\def\BState{\State\hskip-\ALG@thistlm}
\newcommand\independent{\protect\mathpalette{\protect\independent}{\perp}} 
\def\independent#1#2{\mathrel{\rlap{$#1#2$}\mkern2mu{#1#2}}}
\newcommand{\gss}{\mathbb{G}_1}
\newcommand{\gs}{\mathbb{G}_2}
\newcommand{\bin}{\mathrm{Bin}}
\newcommand{\dd}{D_+}
\newcommand{\argmax}{\mathrm{argmax}}
\newcommand{\mR}{\mathbb{R}} 
\newcommand{\mZ}{\mathbb{Z}}
\newcommand{\pp}{\mathbb{P}}
\newcommand{\e}{\varepsilon}
\DeclareMathOperator{\diag}{diag}
\newcommand{\X}{\mathcal{X}}
\newtheorem{theorem}{Theorem}
\newtheorem{lemma}{Lemma}
\newtheorem{corollary}{Corollary}
\newtheorem{definition}{Definition}
\newtheorem{remark}{Remark}
\begin{document}

\title{Recovering communities in the general stochastic block model without knowing the parameters}
\author{Emmanuel Abbe and Colin Sandon}

\author{Emmanuel Abbe\thanks{Program in Applied and Computational Mathematics, and EE department, Princeton University, Princeton, USA, \texttt{eabbe@princeton.edu}. This research was partially supported by the 2014 Bell Labs Prize.} \and 
Colin Sandon\thanks{Department of Mathematics, Princeton University, USA,
\texttt{sandon@princeton.edu}.
}}

\maketitle

\begin{abstract}
Most recent developments on the stochastic block model (SBM) rely on the knowledge of the model parameters, or at least on the number of communities. This paper introduces efficient algorithms that do not require such knowledge and yet achieve the optimal information-theoretic tradeoffs identified in [AS15] for linear size communities. The results are three-fold: (i) in the constant degree regime, an algorithm is developed that requires only a lower-bound on the relative sizes of the communities and detects communities with an optimal accuracy scaling for large degrees; (ii) in the regime where degrees are scaled by $\omega(1)$ (diverging degrees), this is enhanced into a fully agnostic algorithm that only takes the graph in question and simultaneously learns the model parameters (including the number of communities) and detects communities with accuracy $1-o(1)$, with an overall quasi-linear complexity; (iii) in the logarithmic degree regime, an agnostic algorithm is developed that learns the parameters and achieves the optimal CH-limit for exact recovery, in quasi-linear time. These provide the first algorithms affording efficiency, universality and information-theoretic optimality for strong and weak consistency in the general SBM with linear size communities.
\end{abstract}

\thispagestyle{empty}
\newpage

\tableofcontents

\thispagestyle{empty}
\newpage

\pagenumbering{arabic}


\section{Introduction}
This paper studies the problem of recovering communities in the general stochastic block model with linear size communities, for constant and slowly diverging degree regimes. In contrast to \cite{colin1}, this paper does not require knowledge of the SBM parameters. In particular, the problem of learning the model parameters is solved when average degrees are diverging. We next provide some motivations on the problem and further background on the model. 

Detecting communities (or clusters) in graphs is a fundamental problem in networks, computer science and machine learning. This applies to a large variety of complex networks (e.g., social and biological networks) as well as to data sets engineered as networks via similarly graphs, where one often attempts to get a first impression on the data by trying to identify groups with similar behavior. In particular, finding communities allows one to find like-minded people in social networks \cite{newman-girvan,social1}, to improve recommendation systems \cite{amazon,xu-rec}, to segment or classify images \cite{image1,image2}, to detect protein complexes \cite{ppi2,marcotte}, to find genetically related sub-populations \cite{genetics,gene-survey}, or discover new tumor subclasses \cite{tumor}. 

While a large variety of community detection algorithms have been deployed in the past decades, the understanding of the fundamental limits of community detection has only appeared more recently, in particular for the SBM \cite{coja-sbm,decelle,massoulie-STOC,Mossel_SBM2,abh,mossel-consist,chen-xu,colin1}. The SBM is a canonical model for community detection \cite{holland,sbm1,sbm3,sbm4,bickel,newman2,bui,dyer,boppana,jerrum,condon,carson,snij,mcsherry,bickel,rohe,choi,sbm-algos}, where $n$ vertices are partitioned into $k$ communities of relative size $p_i$, $i \in [k]$, and pairs of nodes in communities $i$ and $j$ connect independently with probability $W_{i,j}$. 

Recently the SBM came back to the center of the attention at both the practical level, due to extensions allowing overlapping communities \cite{mixed-core} that have proved to fit well real data sets in massive networks \cite{prem}, and at the theoretical level due to new phase transition phenomena \cite{coja-sbm,decelle,massoulie-STOC,Mossel_SBM2,abh,mossel-consist}. The latter works focus exclusively on the SBM with two symmetric communities, i.e., each community is of the same size and the connectivity in each community is identical. Denoting by $p$ the intra- and $q$ the extra-cluster probabilities, most of the results are concerned with two figure of merits: {\bf (i) recovery} (also called exact recovery or strong consistency), which investigates the regimes of $p$ and $q$ for which there exists an algorithm that recovers with high probability the two communities completely \cite{bui,dyer,boppana,jerrum,condon,carson,snij,mcsherry,bickel,rohe,choi,sbm-algos,Vu-arxiv,chen-xu},  {\bf (ii) detection}, which investigates the regimes for which there exists an algorithm that recovers with high probability a positively correlated partition \cite{coja-sbm,decelle,Mossel_SBM1,massoulie-STOC,Mossel_SBM2}.

The sharp threshold for exact recovery was obtained in \cite{abh,mossel-consist}, showing\footnote{\cite{mossel-consist} generalizes this to $a,b=\Theta(1)$.} that for $p=a \log(n)/n$, $q= b\log(n)/n$, $a,b>0$, exact recovery is solvable if and only if $\sqrt{a}- \sqrt{b} \geq 2$, with efficient algorithms achieving the threshold provided in \cite{abh,mossel-consist}. In addition, \cite{abh} introduces an SDP proved to achieve the threshold in \cite{new-xu,afonso_single}, while \cite{prout} shows that a spectral algorithm also achieves the threshold. Prior to these, the sharp threshold for detection was obtained in \cite{massoulie-STOC,Mossel_SBM2}, showing that detection is solvable (and so efficiently) if and only if $(a-b)^2 > 2(a+b)$, when $p=a/n$, $q=b/n$, settling a conjecture made in \cite{decelle} and improving on \cite{coja-sbm}. 

Besides the detection and the recovery properties, one may ask about the partial recovery of the communities, studied in \cite{mossel2,sbm-groth,Vu-arxiv,new-vu,colin1}. Of particular interest to this paper is the case of {\bf almost exact recovery} (also called weak consistency), where only a vanishing fraction of the nodes is allowed to be misclassified. For two-symmetric communities, \cite{mossel-consist} shows that almost exact recovery is possible if and only if $n(p-q)^2/(p+q)$ diverges, generalized in \cite{colin1} for general SBMs.

In the next section, we discuss the results for the general SBM of interest in this paper and the problem of learning the model parameters. We conclude this section by providing motivations on the problem of achieving the threshold with an efficient and universal algorithm.

 Threshold phenomena have long been studied in fields such as information theory (e.g., Shannon's capacity) and constraint satisfaction problems (e.g., the SAT threshold). In particular, the quest of achieving the threshold has generated major algorithmic developments in these fields (e.g., LDPC codes, polar codes, survey propagation to name a few). Likewise, identifying thresholds in community detection models is key to benchmark and guide the development of clustering algorithms. Most reasonable algorithms may succeed in some regimes, while in others they may be doomed to fail due to computational barriers. However, it is particularly crucial to develop benchmarks that do not depend sensitively on the knowledge of the model parameters. A natural question is hence whether one can solve the various recovery problems in the SBM {\it without} having access to the parameters. This paper answers this question by the affirmative for the exact and almost exact recovery of the communities.

\subsection{Related results on the general SBM with known parameters}
Most of the previous works are concerned with the SBM having symmetric communities (mainly 2 or sometimes $k$), with the exception of \cite{Vu-arxiv} which provides some achievability results for the general SBM.\footnote{\cite{sbm-groth} also study variations of the $k$-symmetric model.} Recently, \cite{colin1} studied the fundamental limits for the general SBM, with results as follows (where SBM$(n,p,W)$ is the SBM with community prior $p$ and connectivity matrix $W$). \\

\noindent
{\bf I. Partial and almost exact recovery in the general SBM.} The first result of \cite{colin1} concerns the regime where the connectivity matrix scales as $Q/n$ for a positive symmetric matrix $Q$ (i.e., the node average degree is constant). The following notion of SNR is introduced\footnote{Note that this in a sense the ``worst-case'' notion of SNR, which ensures that all of the communities can be separated (when amplified); one could consider other ratios of the kind $|\lambda_{j}|^2/\lambda_{\mathrm{max}}$, for subsequent eigenvalues ($j=2,3,\dots$), if interested in separating only subset of the communities.} 
\begin{align}
\mathrm{SNR}=|\lambda_{\mathrm{min}}|^2/\lambda_{\mathrm{max}}
\end{align}
where $\lambda_{\mathrm{min}}$ and $\lambda_{\mathrm{max}}$ are respectively the smallest\footnote{The smallest eigenvalue of $\diag(p)Q$ is the one with least magnitude.} and largest eigenvalue of $\diag(p)Q$.  

The algorithm {\tt Sphere-comparison} is proposed that solves partial recovery with exponential accuracy and quasi-linear complexity when the SNR diverges, solving in particular almost exact recovery. 
\begin{theorem}\cite{colin1}
Given any $k\in \mathbb{Z}$, $p\in (0,1)^k$ with $|p|=1$, and symmetric matrix $Q$ with no two rows equal, let $\lambda$ be the largest eigenvalue of $PQ$, and $\lambda'$ be the eigenvalue of $PQ$ with the smallest nonzero magnitude.
If $\rho:=\frac{|\lambda'|^2}{\lambda}>4$, $\lambda^7<(\lambda')^8,$ and $4\lambda^3<(\lambda')^4$, then for some $\e=\e(\lambda,\lambda')$ and $C=C(p,Q)>0$, {\tt Sphere-comparison} (see Section \ref{pt1}) detects with high probability communities in graphs drawn from SBM$(n,p,Q/n)$ with accuracy $1- 4ke^{-\frac{C \rho}{16k}}/(1-exp(-\frac{C\rho}{16k}\left(\frac{(\lambda')^4}{\lambda^3}-1\right)))$,
provided that the above is larger than $1-\frac{\min_i p_i}{2\ln(4k)}$, and runs in $O(n^{1+\epsilon})$ time. Moreover, $\e$ can be made arbitrarily small with $8\ln (\lambda\sqrt{2}/|\lambda'|)/\ln(\lambda)$, and $C(p,\alpha Q)$ is independent of $\alpha$.
\end{theorem}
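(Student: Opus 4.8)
The plan is to analyze the statistics that \texttt{Sphere-comparison} extracts, which are built from the distance-$r$ neighborhoods $N_r(v):=\{u: d(u,v)=r\}$ and the community-profile vectors $\vec N_r(v)\in\mZ^k$ recording how many vertices of $N_r(v)$ lie in each community. The organizing principle is that in SBM$(n,p,Q/n)$ with $P:=\diag(p)$, as long as the ball $B_r(v)$ is small compared with $n$, exploring one more BFS layer multiplies the profile — up to a controllable error from collisions with already-explored vertices — by $PQ$, so that $\vec N_r(v)\approx (PQ)^r\mathbf{e}_{\sigma(v)}$ with $\sigma(v)$ the community of $v$. Since $PQ$ is conjugate to the symmetric $P^{1/2}QP^{1/2}$ it has real eigenvalues $\lambda=\mu_1>|\mu_2|\geq\cdots\geq|\mu_k|>0$, with $\lambda'$ the one of least magnitude, and left eigenvectors $\phi_t$; diagonalizing, a bilinear cross-statistic between two neighborhoods — e.g.\ a normalized count of the edges between $N_r(v)$ and $N_{r'}(v')$ — has conditional mean, given the community assignment, of the form $\tfrac1n\sum_t c_t\,\mu_t^{\,r+r'+O(1)}\langle\phi_t,\mathbf{e}_{\sigma(v)}\rangle\langle\phi_t,\mathbf{e}_{\sigma(v')}\rangle$ with $c_t>0$, whereas — being a sum of near-independent indicators — its fluctuation is of the order of the square root of its size, i.e.\ relatively $\sim\lambda^{-(r+r')/2}\sqrt n$. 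The community information sits in the directions $t\geq 2$, and the bottleneck for separating all $k$ communities is the direction of least magnitude; the corresponding signal-to-noise ratio at depth $r+r'$ is of order $\rho^{(r+r')/2}/\sqrt n$ up to $(p,Q)$-dependent factors, so tuning $r+r'$ so that the neighborhoods grow to near-macroscopic but still sub-linear size makes the SNR of order $\sqrt\rho$ — this is where the $e^{-\Theta(\rho)}$ terms come from.

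\textbf{Main steps.} (1) \emph{Neighborhood growth.} Run BFS from $v$, reveal the edges of $B_r(v)$, and condition on them; the number of fresh neighbors of the current layer in each community is a sum of near-independent Bernoullis, so Bernstein/Azuma gives $\vec N_{r+1}(v)=PQ\,\vec N_r(v)+(\text{error})$ with high probability, and the task is to show that this error, amplified through $r\asymp\log_\lambda n$ rounds of the linear recursion, stays below the signal $\sim|\lambda'|^{r+r'}$. This is exactly where the hypotheses $\lambda^7<(\lambda')^8$ and $4\lambda^3<(\lambda')^4$ enter: the first bounds $\lambda$ in terms of $\rho$ (essentially $\lambda<\rho^{4/3}$), which is what guarantees a depth at which the neighborhoods are macroscopic \emph{and} the SNR is still $\sim\sqrt\rho$; the second ($(\lambda')^4/\lambda^3>1$) makes the error-to-signal ratio contract geometrically as the depth is increased. (2) \emph{Extracting labels.} In the known-parameter setting of \cite{colin1} the spectrum is available, so one forms linear combinations of cross-statistics at neighboring radii that kill the Perron contribution (and recursively the larger non-Perron ones); equivalently and more robustly, for each $v$ one computes the feature vector $(\hat Z_{r,r'}(v,v_j))_j$ against a fixed bank of reference vertices $v_j$ hitting every community, whose conditional mean is a vector $m_{\sigma(v)}$ depending on $\sigma(v)$ only. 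The assumption that no two rows of $Q$ coincide forces $m_a\neq m_b$ for $a\neq b$, with $\|m_a-m_b\|\gtrsim|\lambda'|^{r+r'}c(p,Q)$, while $\|\hat Z(v)-m_{\sigma(v)}\|$ is at most the noise scale; a greedy/threshold clustering of the $\hat Z(v)$ then recovers the partition except on vertices whose feature vector deviates by more than half the minimum separation. (3) \emph{Independence.} Each BFS reveals only $O(|B_r(v)|+|B_{r'}(v')|)=n^{1-\Omega(1)}$ edges while the neighborhoods used are polynomial but sub-linear in size, and a uniformly random pair $v,v'$ is at distance $\asymp\log_\lambda n$ in the constant-degree regime, so the cross-statistics are evaluated on an essentially untouched, still-i.i.d.\ portion of the edge set and the handful of already-revealed cross-edges is negligible. (4) \emph{Accuracy, alignment, running time.} Putting (1)--(3) together, the probability that a fixed vertex is misclassified is bounded, via the SNR-$\sqrt\rho$ concentration and a union bound over the $O(k)$ community pairs together with a geometric sum over the admissible depths — whose ratio is $\exp(-\tfrac{C\rho}{16k}(\tfrac{(\lambda')^4}{\lambda^3}-1))<1$ by the second hypothesis — by $4ke^{-C\rho/(16k)}/\big(1-\exp(-\tfrac{C\rho}{16k}(\tfrac{(\lambda')^4}{\lambda^3}-1))\big)$. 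Linearity of expectation bounds the expected misclassified fraction, and since these events are essentially independent across vertices once neighborhoods are conditioned, this promotes to a high-probability bound on the realized fraction. The side condition that the accuracy exceed $1-\tfrac{\min_i p_i}{2\ln(4k)}$ ensures that within each true community a strict majority of vertices is correctly placed, which is what lets the rough clustering be matched up with the true communities and refined into a globally consistent labeling. Finally the total cost is dominated by BFS to sub-linear depth from each vertex plus the cross-counts, which is $O(n^{1+\e})$ with the exponent $\e$ proportional to the number of required BFS layers and hence governed by $\ln(\lambda\sqrt2/|\lambda'|)/\ln\lambda$; and $C(p,\alpha Q)$ is $\alpha$-independent because replacing $Q$ by $\alpha Q$ scales every $\mu_t$ by $\alpha$, leaving $\rho$, the ratios $\mu_t/\mu_s$, and the normalized statistics unchanged.

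\textbf{The main obstacle.} The hard part will be the coupled estimate in Steps (1)--(2): controlling how the layer-by-layer growth errors propagate through $r\asymp\log_\lambda n$ rounds of the (cycle-laden at polynomial scale) recursion so that they stay below the $|\lambda'|^{r+r'}$-scale community signal \emph{uniformly} over the range of depths the algorithm sweeps, while simultaneously coping with a possibly negative $\lambda'$ — which makes that signal oscillate in sign with the parity of $r+r'$ and forces one to work with radius sums of fixed parity, or with $|\lambda'|^2$ and squared statistics. This is why the hypotheses are stated as $\rho=|\lambda'|^2/\lambda>4$ together with the asymmetric inequalities $\lambda^7<(\lambda')^8$ and $4\lambda^3<(\lambda')^4$ rather than via a clean spectral gap; once these estimates are secured, the remaining ingredients — the Chernoff bounds, the geometric summation, the alignment/refinement step, and the running-time bookkeeping — are routine.
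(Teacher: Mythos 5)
Your plan follows essentially the same route as the paper's: BFS sphere profiles satisfying $N_{r+1}(v)\approx PQ\,N_r(v)$ up to layer-by-layer errors, a bilinear cross-edge statistic between two spheres expanded in the eigenbasis of $PQ$, cancellation of the dominant eigenvalue contributions to expose the $|\lambda'|$-direction signal (the paper realizes your ``linear combinations of cross-statistics at neighboring radii'' concretely as determinants and ratios of the Hankel-type matrices $M_{m,r,r'[E]}(v\cdot v')$), anchor vertices in each community, a good-vertex/bad-vertex dichotomy that produces the $e^{-\Theta(\rho)}$ accuracy via a geometric sum governed by $(\lambda')^4/\lambda^3>1$, and the majority-alignment role of the $1-\frac{\min_i p_i}{2\ln(4k)}$ condition. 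The one point where your outline as written would fail is step (3). The paper explicitly notes that whether a vertex lies in $N_r(v)$ is \emph{not} independent of whether it lies in $N_{r'}(v')$, and that this bias is large enough to ``disrupt plans to use it for approximations''; moreover, since the cancellation step needs $N_{r+i+j}(v)$ for $i,j$ up to $m$, the BFS from $v$ is run well past depth $r$, so the cross-edges between $N_r(v)$ and $N_{r'}(v')$ are in fact revealed rather than ``essentially untouched.'' The paper's fix is structural, not asymptotic: reserve a random subset $E$ of edges (each edge independently with probability $c$), grow all spheres in $G\setminus E$, and count only the cross-edges lying in $E$, so that the counted edges are (approximately) independent Bernoullis conditioned on the spheres. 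With that device substituted for your step (3), the remaining ingredients of your outline --- the role of $\lambda^7<(\lambda')^8$ and $4\lambda^3<(\lambda')^4$, the depth tuning, and the scale-invariance of $C(p,\alpha Q)$ --- match the paper's argument.
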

Note that for $k$ symmetric clusters, SNR reduces to $\frac{(a-b)^2}{k(a+(k-1)b)}$, which is the quantity of interest for detection \cite{decelle,Mossel_SBM1}. Moreover, the SNR must diverge to ensure almost exact recovery in the symmetric case \cite{colin1}. The following is an important consequence of the previous theorem, as it shows that {\tt Sphere-comparison} achieves almost exact recovery when the entries of $Q$ are scaled. 
\begin{corollary}\cite{colin1}
For any $k\in \mathbb{Z}$, $p\in (0,1)^k$ with $|p|=1$, and symmetric  matrix $Q$ with no two rows equal, there exists $\epsilon(\delta)=O(1/\ln(\delta))$ such that for all sufficiently large $\delta$ there exists an algorithm ({\tt Sphere-comparison}) that detects communities in graphs drawn from SBM$(n,p,\delta Q)$ with accuracy $1-e^{-\Omega(\delta)}$ and complexity $O_n(n^{1+\epsilon(\delta)})$.
\end{corollary}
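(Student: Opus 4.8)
The plan is to apply the Theorem directly to the rescaled model, taking the matrix ``$Q$'' of the Theorem to be $\delta Q$, and then to track the $\delta$-dependence of each hypothesis and of the resulting accuracy and running-time exponent. Write $\lambda$ and $\lambda'$ for the largest eigenvalue and the nonzero eigenvalue of smallest magnitude of $PQ$, where $P=\diag(p)$; since $P$ has positive diagonal and $Q$ has positive entries, $PQ$ is similar to a symmetric positive matrix, so Perron--Frobenius gives $\lambda>0$ and $|\lambda'|\le\lambda$. Multiplying $Q$ by $\delta>0$ multiplies every eigenvalue by $\delta$ and preserves pairwise distinctness of the rows of the connectivity matrix, so the relevant eigenvalues of $P(\delta Q)=\delta PQ$ are $\lambda_\delta=\delta\lambda$ and $\lambda'_\delta=\delta\lambda'$, and the Theorem applies to $\mathrm{SBM}(n,p,\delta Q)$ as soon as its three numerical hypotheses hold.

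First I would check those hypotheses for large $\delta$. One has $\rho_\delta:=|\lambda'_\delta|^2/\lambda_\delta=\delta\,|\lambda'|^2/\lambda=\delta\rho$, so $\rho_\delta>4$ once $\delta>4\lambda/|\lambda'|^2$; the condition $\lambda_\delta^7<(\lambda'_\delta)^8$ reads $\delta>\lambda^7/(\lambda')^8$; and $4\lambda_\delta^3<(\lambda'_\delta)^4$ reads $\delta>4\lambda^3/(\lambda')^4$. Hence there is a threshold $\delta_0=\delta_0(p,Q)$ above which all three hold and the Theorem yields a guarantee for $\mathrm{SBM}(n,p,\delta Q)$.

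Next I would read off the accuracy and the exponent. By the last sentence of the Theorem, $C(p,\delta Q)=C(p,Q)=:C$ is independent of the scaling, so, using $\rho_\delta=\delta\rho$ and $(\lambda'_\delta)^4/\lambda_\delta^3=\delta\,(\lambda')^4/\lambda^3$, the accuracy guarantee becomes
\[
1-\frac{4k\,e^{-C\rho_\delta/(16k)}}{1-\exp\!\left(-\tfrac{C\rho_\delta}{16k}\left(\tfrac{(\lambda'_\delta)^4}{\lambda_\delta^3}-1\right)\right)},
\qquad\text{with}\quad \frac{(\lambda'_\delta)^4}{\lambda_\delta^3}=\delta\,\frac{(\lambda')^4}{\lambda^3}\xrightarrow[\delta\to\infty]{}\infty .
\]
For $\delta$ large the bracketed factor is positive and grows linearly, so the denominator tends to $1$, while the numerator is $4k\,e^{-C\rho\delta/(16k)}=e^{-\Omega(\delta)}$; thus the accuracy is $1-e^{-\Omega(\delta)}$, and (after possibly enlarging $\delta_0$) it exceeds the technical requirement $1-\min_i p_i/(2\ln(4k))$ of the Theorem. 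For the exponent, the Theorem allows $\e$ of order $8\ln(\lambda_\delta\sqrt2/|\lambda'_\delta|)/\ln(\lambda_\delta)$; since $\lambda_\delta\sqrt2/|\lambda'_\delta|=\lambda\sqrt2/|\lambda'|$ is scale-invariant while $\ln\lambda_\delta=\ln\delta+\ln\lambda$, this is $\e(\delta)=O(1/\ln\delta)$, so the running time $O(n^{1+\e(\delta)})$ has the claimed form. Combining the three pieces proves the Corollary.

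There is essentially no hard analytic content here: the only points requiring care are (a) choosing a single $\delta_0$ large enough that all three numerical hypotheses hold simultaneously and the accuracy clears the $1-\min_i p_i/(2\ln(4k))$ hurdle, and (b) invoking the two scale-invariances supplied by the Theorem --- that $C(p,\alpha Q)$ does not depend on $\alpha$, and that $\lambda\sqrt2/|\lambda'|$ does not depend on the scaling --- which are precisely what make $\e(\delta)$ decay like $1/\ln\delta$ rather than remain bounded away from $0$. Both facts are granted by the Theorem's statement, so the argument is purely bookkeeping.
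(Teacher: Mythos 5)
Your proposal is correct and is essentially the argument the paper intends: the corollary is presented as a direct consequence of the preceding theorem via the scaling $Q\mapsto\delta Q$, using exactly the two scale-invariances the theorem supplies ($C(p,\alpha Q)$ independent of $\alpha$, and $\lambda\sqrt{2}/|\lambda'|$ scale-invariant so that $\e=O(1/\ln\delta)$); the paper performs the identical bookkeeping for its agnostic analogue (``Considering the way $\delta$, $\epsilon$, $x$, and $x'$ scale when $Q$ is multiplied by a scalar yields the following corollary''). No gaps.
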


\noindent
{\bf II. Exact recovery in the general SBM.} The second result in \cite{colin1} is for the regime where the connectivity matrix scales as $\log(n)Q/n$, $Q$ fixed, where it is shown that exact recovery has a sharp threshold characterized by the divergence function $$D_+(f,g)=\max_{t \in [0,1]} \sum_{x \in [k]} \left( tf(x) + (1-t)g(x)- f(x)^t g(x)^{1-t} \right),$$ named the CH-divergence in \cite{colin1}. Specifically, if all pairs of columns in $\diag(p)Q$ are at $D_+$-distance at least 1 from each other, then exact recovery is solvable in the general SBM. This provides in particular an operational meaning to a new divergence function analog to the KL-divergence in the channel coding theorem (see Section 2.3 in \cite{colin1}).  
Moreover, an algorithm ({\tt Degree-profiling}) is developed that solves exact recovery down to the $D_+$ limit in quasi-linear time, showing that exact recovery has no informational to computational gap (as opposed to the conjectures made for detection with more than 4 communities \cite{decelle}). The following gives a more general statement characterizing which subset of communities can be extracted --- see Definition \ref{def-exact} for formal definitions.
\begin{theorem}\cite{colin1}
{\bf(i)} Exact recovery is solvable in the stochastic block model $\gs(n,p,Q)$ for a partition $[k] = \sqcup_{s=1}^t A_s$ if and only if for all $i$ and $j$ in different subsets of the partition,\footnote{The entries of $Q$ are assumed to be non-zero.}
\begin{align}
\dd ((PQ)_i , (PQ)_j) \geq 1, \label{d1}
\end{align}
In particular, exact recovery is information-theoretically solvable in SBM$(n,p,Q\log(n)/n)$ if and only if $\min_{i,j \in [k], i \neq j} \dd ((PQ)_i || (PQ)_j) \geq 1$.\\
{\bf(ii)} The {\tt Degree-profiling} algorithm (see \cite{colin1}) recovers the finest partition that can be recovered with probability $1-o_n(1)$ and runs in $o(n^{1+\epsilon})$ time for all $\epsilon>0$. In particular, exact recovery is efficiently solvable whenever it is information-theoretically solvable. 
\end{theorem}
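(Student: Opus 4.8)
The plan is to prove the information‑theoretic characterization in (i) in both directions, and then note that its achievability side is exactly what the algorithm in (ii) carries out. The common device throughout is the \emph{genie‑aided} classification of a single vertex: for a vertex $v$ known to lie in community $i$ or $j$, reveal the true communities of all other vertices; the only relevant statistic is then the degree profile $d(v)=(d_x(v))_{x\in[k]}$ with $d_x(v)=|\{u\sim v:\sigma(u)=x\}|$, and in the regime $W=Q\log n/n$ each $d_x(v)$ is well approximated by $\mathrm{Poisson}((PQ)_{xi}\log n)$ or $\mathrm{Poisson}((PQ)_{xj}\log n)$. The optimal rule is a Bayes binary test, and a Cram\'er/Chernoff computation shows its error probability is $n^{-\dd((PQ)_i,(PQ)_j)\pm o(1)}$, the tilting parameter of the test being the $t$ attaining the maximum that defines $\dd$.

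\textbf{Impossibility.} Suppose $\dd((PQ)_i,(PQ)_j)<1$ for some $i,j$ in different blocks. The reduction is: if for some $v\in V_i$ the genie posterior is strictly larger at the configuration obtained from $\sigma$ by moving $v$ to $j$, then $\sigma$ does not maximize the full posterior, so the optimal (MAP) estimator of the partition fails; hence it suffices to exhibit such a $v$ with high probability. Using the matching lower bound $n^{-\dd-o(1)}$ on the test error (a change‑of‑measure estimate), the expected number of such vertices in $V_i\cup V_j$ is $\gtrsim n^{\,1-\dd((PQ)_i,(PQ)_j)-o(1)}\to\infty$, and a second‑moment (or Poisson‑approximation) argument — the bad events for distinct $v,v'$ depend on edge sets meeting only in $\{v,v'\}$, hence are nearly independent — promotes this to a high‑probability statement. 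Running the argument for every cross‑block pair with divergence below $1$ shows that any partition recoverable with probability $1-o(1)$ must merge all such pairs, so it is no finer than the partition in the theorem; since that partition \emph{is} recoverable (below) and recoverable partitions are closed under common refinement, it is the finest one.

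\textbf{Achievability and the algorithm ({\tt Degree-profiling}).} Assume every cross‑block divergence is $\ge1$. \emph{Round~1}: run an almost‑exact‑recovery procedure — e.g.\ {\tt Sphere-comparison}, whose hypotheses hold here because the effective degree $\Theta(\log n)$ diverges — producing a labeling $\hat\sigma_0$ of the blocks that, up to relabeling, errs on only $n^{1-\Omega(1)}$ vertices, in quasi‑linear time. \emph{Round~2}: reclassify each $v$ by maximizing the community posterior evaluated at the degree profile of $v$ computed \emph{from} $\hat\sigma_0$, then collapse to its block. This profile differs from the true one in $\ell_1$ by $t(v):=$ the number of $v$'s neighbors that $\hat\sigma_0$ mislabels, and each such discrepancy shifts the relevant log‑likelihood ratios by $O(1)$; hence Round~2 misclassifies $v$ only if the true margin of $v$ (the gap in log‑likelihood between its community and the best competitor in another block, each of which has divergence $\ge1$) is $\le O(t(v))$, an event of probability $\le e^{O(t(v))}\cdot O(n^{-1}/\sqrt{\log n})$ by a sharp saddle‑point (local‑CLT) tail estimate. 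Since $\E[\#\{v:t(v)=t\}]\lesssim\frac1{t!}(n^{-\Omega(1)}\log n)^t n$, one gets $\E[\sum_v e^{O(t(v))}]=(1+o(1))n$, hence $\E[\#\text{Round-2 errors}]\le O(n^{-1}/\sqrt{\log n})\cdot(1+o(1))n=O((\log n)^{-1/2})\to0$, and Markov's inequality finishes. Restricting the posterior maximization to competitors in other blocks gives the finest‑partition claim, and since Round~2 visits each edge $O(1)$ times, the total running time is $O(n^{1+\epsilon})$ for every $\epsilon>0$.

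\textbf{Where the difficulty sits.} Two points are genuinely delicate. First, driving the Round~2 error to \emph{exactly} zero at the boundary $\dd=1$: there the crude Chernoff bound gives only a per‑vertex error $\Theta(n^{-1})$, which is not summable over $n$ vertices, so one needs the sharp asymptotics $\Theta(n^{-1}/\sqrt{\log n})$ for the log‑likelihood‑ratio tail — and this is exactly where the hypothesis that all entries of $Q$ are non‑zero enters: a single zero entry would make the dominant deviation an atom (``$v$ has no neighbor in community $x$''), destroying the $(\log n)^{-1/2}$ gain and indeed making exact recovery fail at the boundary, consistently with the classical connectivity threshold for the disconnected SBM. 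Second, the statistical dependence between $\hat\sigma_0$ and the edges reused in Round~2: when $\dd>1$ strictly one can simply split the edges (run Round~1 on a vanishing fraction, Round~2 on the rest) so that $\hat\sigma_0$ is independent of the Round~2 data; but at $\dd=1$ one cannot discard any non‑negligible fraction of edges in Round~2, so one must instead prove that $\hat\sigma_0$ is essentially insensitive to any single vertex's incident edges, which is the main technical work.
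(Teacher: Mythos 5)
Your plan is essentially the paper's: the statement is quoted from \cite{colin1}, and both there and in the agnostic version proved here (Theorem \ref{thm2} together with Lemmas \ref{hell-expo}, \ref{testing-lemma}, \ref{testing-lemma2}), achievability is a two-round scheme (almost-exact recovery, then degree-profile reclassification), the per-vertex error is controlled by the MAP test between multivariate Poissons with means $\log(n)(PQ)_i$ whose error is $\Theta\bigl(n^{-\dd((PQ)_i,(PQ)_j)-o(1)}\bigr)$, and the converse is the genie-aided first/second-moment argument you describe. Your identification of where the sharp $n^{-\dd}/\sqrt{\log n}$ asymptotics and the positivity of $Q$'s entries are needed matches Lemma \ref{hell-expo} and Remark \ref{qzero}.

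The one place you depart from the paper is the treatment of the boundary case $\dd=1$ and the dependence between rounds. You assert that at the boundary one cannot afford to split off edges for Round~1 and must instead prove a leave-one-out insensitivity of $\hat\sigma_0$ to a single vertex's incident edges. The paper does the opposite: it splits off a \emph{vanishing} fraction $\gamma=\frac{\ln\ln n}{4\ln n}$ of the edges for the first round, so that the exponent loss is only $n^{\gamma\Delta}=(\ln n)^{\Delta/4}$, which is absorbed by the $(\ln n)^{-1/2}$ gain from the local-CLT estimate (yielding a per-vertex error $O(n^{-1}(\ln n)^{-1/4})$ at $\Delta=1$); the residual correlation between $G'$ and $G''$ is then dismissed by a direct $1+o(1)$ likelihood-ratio bound, and the propagation of Round-1 errors into Round-2 degree profiles is handled by the robust testing lemma (Lemma \ref{testing-lemma2}) exactly as in your $e^{O(t(v))}$ accounting. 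Your insensitivity route is also viable (it is the style of argument in \cite{abh,mossel-consist}), but it is heavier than needed; the quantitative point you missed is that a $\frac{\ln\ln n}{4\ln n}$ fraction of edges is both enough for Round~1 (degrees still diverge) and cheap enough at the boundary. This is a difference in bookkeeping, not a gap.
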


In summary, exact or almost exact recovery is closed for the general SBM (and detection is closed for 2 symmetric communities). However this is for the case where the parameters of the SBM are assumed to be known, and with linear-size communities. 

\subsection{Estimating the parameters}
For the estimation of the parameter, some results are known for two-symmetric communities. In the logarithmic degree regime, since the SDP is agnostic to the parameters (it is a relaxation of the min-bisection), and the parameters can be estimated by recovering the communities \cite{abh,new-xu,afonso_single}. For the constant-degree regime, \cite{Mossel_SBM1} shows that the parameters can be estimated above the threshold by counting cycles (which is efficiently approximated by counting non-backtracking walks). These are however for a fixed number of communities, namely 2. We also became recently aware of a parallel work \cite{borgs_nips}, which considers private graphon estimation (including SBMs). In particular, for the logarithmic degree regime, \cite{borgs_nips} obtains a procedure to estimate parameters of graphons in an appropriate version of the $L_2$ norm. This procedure is however not efficient. 

For the general SBM, the results of \cite{colin1} allow to find communities efficiently, however these rely on the knowledge of the parameters. Hence, a major open problem is to understand if these results can be extended without such a knowledge.

\section{Results}\label{res-sec}
Agnostic algorithms are developed for the constant and diverging node degrees. These afford optimal accuracy scaling for large node degrees and achieve the CH-divergence limit for logarithmic node degrees in quasi-linear time. In particular, these solve the parameter estimation problems for SBM$(n,p,\omega(1)Q)$ without knowing the number of communities. An example with real data is provided in Section \ref{data-sec}.

\subsection{Definitions and terminologies}\label{def-term}
The general stochastic block model $\text{SBM}(n,p,W)$ is a random graph ensemble defined on the vertex-set $V=[n]$, 
where each vertex $v \in V$ is assigned independently a hidden (or planted) label $\sigma_v$ in $[k]$ under a probability distribution $p=(p_1,\dots,p_k)$ on $[k]$, and each (unordered) pair of nodes $(u,v) \in V\times V$ is connected independently with probability $W_{\sigma_u,\sigma_v}$, where $W_{\sigma_u,\sigma_v}$ is specified by a symmetric $k \times k$ matrix $W$ with entries in $[0,1]$.
Note that $G\sim \text{SBM}(n,p,W)$ denotes a random graph drawn under this model, without the hidden (or planted) clusters (i.e., the labels $\sigma_v$ ) revealed. The goal is to recover these labels by observing only the graph. 

This paper focuses on $p$ independent of $n$ (the communities have linear size), $W$ dependent on $n$ such that the average node degrees are either constant or logarithmically growing, and $k$ fixed. These assumptions on $p$ and $k$ could be relaxed, for example to slowly growing $k$, but we leave this for future work. As discussed in the introduction, the above regimes for $W$ are both motivated by applications, as networks are typically sparse \cite{sparse-network1,sparse-network2} though the average degrees may not be small, and by the fact that interesting mathematical phenomena take place in these regimes. For convenience, we attribute specific notations for the model in these regimes:
\begin{definition}
For a symmetric matrix $Q \in \mR_+^{k \times k}$, $\gss(n,p,Q)$ denotes $\text{SBM}(n,p,Q/n)$,
and $\gs(n,p,Q)$ denotes $\text{SBM}(n,p,\ln(n)Q/n)$.
\end{definition}
\begin{definition} (Partial recovery.) 
An algorithm recovers or detects communities in $\text{SBM}(n,p,W)$ with an accuracy of $\alpha \in [0,1]$, if it outputs a labelling of the nodes $\{\sigma'(v), v \in V\}$, which agrees with the true labelling $\sigma$ on a fraction $\alpha$ of the nodes with probability $1-o_n(1)$. The agreement is maximized over relabellings of the communities.
\end{definition}

\begin{definition}\label{def-exact} (Exact recovery.) 
Exact recovery is solvable in $\text{SBM}(n,p,W)$ for a community partition $[k] = \sqcup_{s=1}^t A_s$, where $A_s$ is a subset of $[k]$, if there exists an algorithm that takes $G \sim SBM(n,p,W)$ and assigns to each node in $G$ an element of $\{A_1,\dots,A_t\}$ that contains its true community\footnote{This is again up to relabellings of the communities.} with probability $1-o_n(1)$.  Exact recovery is solvable in $\text{SBM}(n,p,W)$ if it is solvable for the partition of $[k]$ into $k$ singletons, i.e., all communities can be recovered.  
\end{definition}
The problem is solvable information-theoretically if there exists an algorithm that solves it, and efficiently if the algorithm runs in polynomial-time in $n$. Note that exact recovery for the partition $[k]=\{i\} \sqcup ([k] \setminus \{i\})$ is equivalent to extracting community $i$. In general, recovering a partition $[k] = \sqcup_{s=1}^t A_s$ is equivalent to merging the communities that are in a common subset $A_s$ and recovering the merged communities. 
Note also that exact recovery in $\text{SBM}(n,p,W)$ requires the graph not to have vertices of degree $0$ in multiple communities with high probability (i.e., connectivity in the symmetric case). 
Therefore, for exact recovery, we focus below on $W=\frac{\ln(n)}{n}Q$ where $Q$ is fixed.

\subsection{Partial recovery} 
Our main result in the Appendix (Theorme \ref{thm1}) applies to SBM$(n,p,Q/n)$ with arbitrary $Q$. 
We provided here a specific instance which is easier to parse. 


\begin{theorem}[See Theorem \ref{thm1}]
Given $\delta>0$ and for any $k\in \mathbb{Z}$, $p\in (0,1)^k$ with $\sum p_i=1$ and $0<\delta\le \min p_i$, and any symmetric matrix $Q$ with no two rows equal such that every entry in $Q^k$ is positive (in other words, $Q$ such that there is a nonzero probability of a path between vertices in any two communities in a graph drawn from $\gss(n,p,c Q)$), there exists $\epsilon(c)=O(1/\ln(c))$ such that for all sufficiently large $c$, {\tt Agnostic-sphere-comparison}$(G,\delta)$ detects communities in graphs drawn from $\gss(n,p,c Q)$ with accuracy at least $1-e^{-\Omega(c)}$ in $O_n(n^{1+\epsilon(c)})$ time.
\end{theorem}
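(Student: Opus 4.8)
The plan is to bootstrap the non-agnostic guarantee of \cite{colin1}. Writing $\lambda,\lambda'$ for the top and least-nonzero-magnitude eigenvalues of $\diag(p)(cQ)$, both scale linearly in $c$, so $\rho=|\lambda'|^2/\lambda=\Theta(c)\to\infty$ and the side conditions $\lambda^7<(\lambda')^8$, $4\lambda^3<(\lambda')^4$, $\rho>4$ all hold once $c$ is large; hence the {\tt Sphere-comparison} theorem of \cite{colin1} and its corollary already give accuracy $1-e^{-\Omega(c)}$ in time $O_n(n^{1+\epsilon(c)})$, with $\epsilon(c)\asymp \ln(\lambda\sqrt2/|\lambda'|)/\ln\lambda=O(1/\ln c)$ (numerator bounded in $c$, denominator $\Theta(\ln c)$). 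The only thing missing is to run {\tt Sphere-comparison} \emph{without} $p$, $Q$, or $k$, using only $\delta\le\min_i p_i$. The algorithm consumes the parameters in essentially three places: (1) the choice of exploration radius; (2) the coefficients of the linear combination of sphere cross-counts that cancels the dominant spectral component; (3) the scale at which the resulting per-vertex spectral embeddings are clustered and the number of clusters. The proposal is to replace each by a data-driven surrogate and show the surrogates are accurate enough.

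\textbf{Local coupling and estimating $\lambda$.} I would use the local analysis of \cite{colin1}: for a radius $r=r(n,c)=\Theta(\epsilon(c)\ln n/\ln c)$, chosen so a typical neighbourhood $N_r(v)$ has size $n^{\Theta(\epsilon(c))}$ --- polynomially large but $o(\sqrt n)$ --- the ball of radius $r$ about $v$ is whp a tree, coupled up to depth $r$ to the multitype Galton--Watson process with mean offspring matrix $\diag(p)(cQ)$. Hence the type-count vectors $Z_r(v)\in\mZ_{\ge0}^k$ concentrate about their conditional means, which are $\lambda^r$ times the projection of $e_{\sigma_v}$ onto the Perron vector, plus corrections of order $|\lambda'|^r,|\lambda''|^r,\dots$. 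The hypothesis that every entry of $Q^k$ is positive makes this branching process supercritical with a strictly positive, irreducible mean matrix, so $|N_r(v)|\ge\lambda^r/\mathrm{poly}$ for every $v$ in the giant component, which carries a $1-e^{-\Theta(c)}$ fraction of the vertices (this also accounts for part of the tolerated error, since off-giant vertices cannot be classified reliably). The algorithm can then set $\hat\lambda:=\mathrm{median}_v\,|N_{r+1}(v)|/|N_r(v)|$ and, if the comparison step needs to peel further spectral scales, estimate the next eigenvalue magnitudes from the decay of the centred neighbourhood autocorrelations; the coupling gives $\hat\lambda=\lambda(1+o_n(1))$ whp, which since $\lambda=\Theta(c)$ is ample precision. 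The radius itself is chosen adaptively to make $|N_r(v)|$ reach the desired power of $n$.

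\textbf{Plug-in comparison and building the labelling.} Given $\hat\lambda$, form exactly the {\tt Sphere-comparison} statistic of \cite{colin1}: a linear combination, over a bounded window of radii, of the cross-counts between explored neighbourhoods, with coefficients tuned (using $\hat\lambda$ in place of $\lambda$) to annihilate the rank-one $\lambda$-component and expose the sub-dominant components, yielding for each vertex an estimate of its spectral embedding along the non-Perron eigenvectors of $\diag(p)(cQ)$. Since $\hat\lambda=\lambda(1+o_n(1))$ and the statistic is Lipschitz in these coefficients on the relevant range, the plug-in perturbs it by a lower-order amount, so the concentration argument of \cite{colin1} goes through: embeddings of vertices in a common community concentrate on a common point, distinct communities sit at mutual distance far exceeding the fluctuations once $c$ is large --- here the assumption that no two rows of $Q$ are equal keeps the separating coordinates bounded away from $0$ --- and the per-vertex error probability is $e^{-\Omega(\rho/k)}=e^{-\Omega(c)}$. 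To output a labelling without knowing $k$, draw a random set $S$ of $\Theta(\delta^{-1}\ln n)$ vertices (whp it meets every community), cluster the embeddings within $S$ (the number of clusters found is $k$, with the separation scale read off from $\hat\lambda$), and assign every other vertex to the nearest cluster. As only an $e^{-\Omega(c)}$ fraction of individual comparisons fail, the labelling agrees with $\sigma$ on a $1-e^{-\Omega(c)}$ fraction of vertices. The cost is one radius-$r$ BFS per vertex, $O_n(n\cdot n^{\epsilon(c)})$, plus a polylogarithmic overhead for comparisons against $S$, giving $O_n(n^{1+\epsilon(c)})$ with $\epsilon(c)=O(1/\ln c)$.

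\textbf{Main obstacle.} The delicate point is the robustness of the comparison step: the {\tt Sphere-comparison} statistic is a deliberate near-cancellation of large quantities, and one must show it retains its concentration when the cancellation coefficients are built from $\hat\lambda$ rather than $\lambda$. This requires quantifying both the estimation error of $\hat\lambda$ (and of any further estimated scales) and the sensitivity of the post-cancellation, genuinely lower-order, statistic to that error, then carrying the slack through the large-deviation bound so the failure probability still decays like $e^{-\Omega(c)}$ and beats a union bound over the $\Theta(n\ln n)$ comparisons. By comparison, the book-keeping for ``not knowing $k$'' (ensuring $S$ realises all communities, and that the embedding-clustering is scale-correct using only $\hat\lambda$ and $\delta$) is routine. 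The general statement (Theorem~\ref{thm1}) would be obtained by the same argument with an arbitrary $Q$ in place of $cQ$, tracking the dependence of every estimate and concentration bound on the spectral gap.
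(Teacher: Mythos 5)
Your proposal follows the same overall architecture as the paper's: estimate spectral quantities from neighbourhood counts, plug them into the sphere-comparison statistic, classify against a small anchor set, and charge the accuracy loss to ``bad'' vertices. But it leaves unresolved precisely the steps that constitute the bulk of the paper's argument, and the part you defer is not a robustness afterthought --- it is the content. The comparison statistic requires not just $\lambda_1$ but \emph{all} distinct eigenvalues $\lambda_1,\dots,\lambda_{h'}$ of $PQ$, their signs, and their number $h'$ (which fixes the dimension of the embedding $(\zeta_1,\dots,\zeta_{h'})$ and is what substitutes for knowing $k$). Estimating $\lambda_1$ from the growth of $|N_r(v)|$ is easy; the sub-dominant eigenvalues are exponentially suppressed relative to $\lambda_1^{r}$, and the paper extracts them from ratios of determinants of the Hankel matrices $M_{m,r,r'[E]}(v\cdot v')$ of split-edge cross-counts (the Determinant Lemma), detects $h'$ by noticing when that ratio collapses to the error floor $((1-c)\lambda_1)^{3/4}$, and disambiguates pairs with $|\lambda_m|=|\lambda_{m+1}|$ (i.e.\ $\lambda_m=-\lambda_{m+1}$) by comparing consecutive radii. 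Your single sentence about ``the decay of the centred neighbourhood autocorrelations'' names the goal but supplies no mechanism. Moreover, for any such ratio estimate to be meaningful one needs a \emph{lower} bound showing the sub-dominant eigencomponents $w_i(N_r(v))$ of a typical vertex's neighbourhood are not themselves at the noise floor; the paper proves this via an anti-concentration argument on the branching process ($|w'\cdot P^{-1}N_{\sqrt{\ln n}}(v)|\ge \lambda_i^{\sqrt{\ln n}}/\ln n$ w.h.p.). This is absent from your sketch and is exactly what licenses treating the plug-in error as lower order.

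Two further points. On sensitivity: the estimated eigenvalues enter the cancellation coefficients raised to powers $r+r'=\Theta(\log n)$, so ``$\hat\lambda=\lambda(1+o_n(1))$, which since $\lambda=\Theta(c)$ is ample precision'' is the wrong reason --- you need additive error $o(1/\log n)$ (the paper achieves $\ln^{-3/2}n$ by taking medians over $\sqrt{\ln n}$ vertices), and your estimator plausibly attains this but you never impose or verify the requirement. On the anchor set: clustering the embeddings of $\Theta(\delta^{-1}\ln n)$ random vertices does not directly work, because an $e^{-\Omega(c)}$ (constant in $n$) fraction of vertices are bad, so among $\Theta(\ln n)$ anchors some will be bad w.h.p.\ and the pairwise comparisons need not be consistent. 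The paper instead uses a \emph{constant} number $m=\ln(4\lfloor 1/\delta\rfloor)/\delta$ of anchors, accepts a constant per-trial failure probability, and amplifies by running the whole classification $\ln n$ times and extracting a majority set of mutually consistent classifications. Some such reliability-amplification (or a clustering step proved robust to a constant number of outliers) is needed and is missing from your argument.
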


Note that a vertex in community $i$ has degree $0$ with probability exponential in $c$, and there is no way to differentiate between vertices of degree $0$ from different communities. So, an error rate that decreases exponentially with $c$ is optimal. The above gives in particular the parameter estimation in the case $c=\omega(1)$ (see also Lemma \ref{estim-lemma} in the Appendix).

The general result in the Appendix yields the following refined results in the $k$-block symmetric case.

\begin{theorem}
Consider the $k$-block symmetric case. In other words, $p_i=\frac{1}{k}$ for all $i$, and $Q_{i,j}$ is $\alpha$ if $i=j$ and $\beta$ otherwise. The vector whose entries are all $1$s is an eigenvector of $PQ$ with eigenvalue $\frac{\alpha+(k-1)\beta}{k}$, and every vector whose entries add up to $0$ is an eigenvector of $PQ$ with eigenvalue $\frac{\alpha-\beta}{k}$. So, $\lambda=\frac{\alpha+(k-1)\beta}{k}$ and $\lambda'=\frac{\alpha-\beta}{k}$
and 
$\frac{(\lambda')^2}{\lambda}= \frac{(a-b)^2}{k(a+(k-1)\beta)}.$
Then, as long as 
$\frac{160}{9}k(\alpha+(k-1)\beta)^7<(\alpha-\beta)^8$ and $4k(\alpha+(k-1)\beta)^3<(\alpha-\beta)^4$, 
there exist a constant $c>0$ such that {\tt Agnostic-sphere-comparison($G,1/k$)} detects communities with an accuracy of $1-O(e^{-c(\alpha-\beta)^2/(\alpha+(k-1)\beta)})$ for sufficiently large $(\alpha-\beta)^2/(\alpha+(k-1)\beta)$. 
\end{theorem}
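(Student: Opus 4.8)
The plan is to derive this statement as a direct specialization of Theorem~\ref{thm1} to the $k$-block symmetric connectivity matrix, so that the work reduces to (a) computing the spectrum of $PQ$, (b) checking that the two displayed inequalities coincide with the hypotheses of Theorem~\ref{thm1} after this substitution, and (c) reading off the resulting accuracy.

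First I would record the spectral decomposition of $PQ$. Write $Q=(\alpha-\beta)I_k+\beta J_k$ with $J_k$ the all-ones matrix and $P=\tfrac1k I_k$, so $PQ=\tfrac1k\big((\alpha-\beta)I_k+\beta J_k\big)$. Since $J_k\mathbf{1}=k\mathbf{1}$, the all-ones vector is an eigenvector with eigenvalue $\tfrac{\alpha+(k-1)\beta}{k}$, and every vector whose entries sum to $0$ lies in $\ker J_k$ and is an eigenvector with eigenvalue $\tfrac{\alpha-\beta}{k}$. Hence $\lambda=\tfrac{\alpha+(k-1)\beta}{k}$ is the simple top eigenvalue and $\lambda'=\tfrac{\alpha-\beta}{k}$ has multiplicity $k-1$. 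Because the entries of $Q$ are nonnegative, $|\alpha-\beta|\le\alpha+(k-1)\beta$, so $\lambda'$ is indeed the eigenvalue of least nonzero magnitude (it is nonzero since the ``no two rows equal'' hypothesis forces $\alpha\ne\beta$), and $\rho=|\lambda'|^2/\lambda=\tfrac{(\alpha-\beta)^2}{k(\alpha+(k-1)\beta)}$, matching the claimed SNR identity.

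Next I would verify the hypotheses of Theorem~\ref{thm1}. The connectivity requirement (every entry of $Q^k$ positive) is immediate: if $\beta>0$ then $Q$ itself is entrywise positive (or $Q^2$ is, if $\alpha=0$), and the degenerate case $\beta=0$ may be set aside since then the communities are information-theoretically decoupled. Substituting the eigenvalues above, the eigenvalue-growth hypothesis of Theorem~\ref{thm1} --- which in the known-parameter version of \cite{colin1} reads $\lambda^7<(\lambda')^8$ and in the agnostic version carries an extra constant $\tfrac{160}{9}$ --- becomes exactly $\tfrac{160}{9}k(\alpha+(k-1)\beta)^7<(\alpha-\beta)^8$, while the condition $4\lambda^3<(\lambda')^4$ becomes exactly $4k(\alpha+(k-1)\beta)^3<(\alpha-\beta)^4$; these are the two displayed inequalities. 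They also force the remaining side conditions of Theorem~\ref{thm1}: using $|\lambda'|\le\lambda$, the first inequality gives $\lambda>\tfrac{160}{9}>4$, the second gives $(\lambda')^4/\lambda^3>4$, and therefore $\rho^2=(\lambda')^4/\lambda^2=\big((\lambda')^4/\lambda^3\big)\lambda>4\lambda>16$, i.e.\ $\rho>4$; moreover $(\lambda')^4/\lambda^3>1$ keeps the denominator in the accuracy expression bounded away from $0$. Finally, the algorithm is invoked with lower-bound parameter $1/k=\min_i p_i$, as Theorem~\ref{thm1} requires.

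It then remains to read off the accuracy. Theorem~\ref{thm1} gives accuracy $1-\dfrac{4k\,e^{-C\rho/(16k)}}{\,1-e^{-\frac{C\rho}{16k}\left((\lambda')^4/\lambda^3-1\right)}\,}$; since $(\lambda')^4/\lambda^3>4$, the denominator is at least $1-e^{-3C\rho/(16k)}$, which is bounded below by a positive constant once $\rho$ is large, so the error term is $O\!\big(e^{-C\rho/(16k)}\big)$. As $k$ is fixed and $\rho=\tfrac1k\cdot\tfrac{(\alpha-\beta)^2}{\alpha+(k-1)\beta}$, this is $O\!\big(e^{-c(\alpha-\beta)^2/(\alpha+(k-1)\beta)}\big)$ with $c=C/(16k^2)>0$, as claimed; the hypothesis ``$(\alpha-\beta)^2/(\alpha+(k-1)\beta)$ sufficiently large'' is used precisely to push this accuracy past the threshold $1-\tfrac{\min_i p_i}{2\ln(4k)}=1-\tfrac1{2k\ln(4k)}$ required in Theorem~\ref{thm1}. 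I do not anticipate a genuine obstacle: the only point needing care is the bookkeeping of the constants inherited from the agnostic Theorem~\ref{thm1}, in particular the factor $\tfrac{160}{9}$, which is exactly the extra multiplicative slack (beyond the bare $\lambda^7<(\lambda')^8$ of the known-parameter case) that absorbs the error in estimating $\lambda$ and $\lambda'$ from the observed graph.
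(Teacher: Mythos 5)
Your proposal is correct and follows essentially the same route as the paper: the statement is obtained by specializing Theorem \ref{thm1} to the symmetric matrix $Q=(\alpha-\beta)I+\beta J$, whose spectrum you compute correctly, and the two displayed inequalities are exactly the conditions $\tfrac{9}{10}(\lambda'^2/2)^4>\lambda^7$ and $4\lambda^3<\lambda'^4$ after substituting $\lambda=\tfrac{\alpha+(k-1)\beta}{k}$, $\lambda'=\tfrac{\alpha-\beta}{k}$. The one hypothesis of Theorem \ref{thm1} you never verify is the separation condition $13\bigl(2x'(\min_j p_j)^{-1/2}+(x')^2\bigr)<\min_{\ne 0}\,(w_i(\{v\})-w_i(\{v'\}))\cdot P^{-1}(w_i(\{v\})-w_i(\{v'\}))$; in the symmetric case $e_u-e_v$ lies entirely in the $\lambda'$-eigenspace and the right-hand side equals $(e_u-e_v)\cdot P^{-1}(e_u-e_v)=2k$, so one needs $13(2x'\sqrt{k}+x'^2)<2k$, i.e.\ $0<x\le x'<\sqrt{15k/13}-\sqrt{k}$ (together with $x\le \min_i w_i/2=1/2$ for the Perron vector $w=\mathbf{1}$). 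This is not mere bookkeeping: it is what fixes the admissible constant $x'$ and hence the exponent $c\asymp x'^2/\bigl(16k^{7/2}(x'+\sqrt{k})\bigr)$ in the final accuracy, so it should be stated explicitly even though it causes no obstruction.
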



We refer to Section \ref{data-sec} for an example of implementation with real data.
\subsection{Exact recovery} 
Recall that from \cite{colin1}, exact recovery is information-theoretically solvable in the stochastic block model $\gs(n,p,Q)$ for a partition $[k] = \sqcup_{s=1}^t A_s$ if and only if for all $i$ and $j$ in different subsets of the partition,
\begin{align}
\dd ((PQ)_i , (PQ)_j) \geq 1. \label{d1}
\end{align}
We next show that this can be achieved without knowing the parameters. Recall that the finest partition is the largest partition of $[k]$ that ensure \eqref{d1}.
\begin{theorem}(See Theorem \ref{thm2})
The {\tt Agnostic-degree-profiling} algorithm (see Section \ref{pt2}) recovers the finest partition in any $\gs(n,p,Q)$, it uses no input except the graph in question, and runs in $o(n^{1+\epsilon})$ time for all $\epsilon>0$. In particular, exact recovery is efficiently and universally solvable whenever it is information-theoretically solvable. 
\end{theorem}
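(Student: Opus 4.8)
The plan is to bootstrap the known‑parameter {\tt Degree-profiling} algorithm of \cite{colin1} by first \emph{learning} $p$, $Q$ and the number of communities $k$ from the graph, and then running the two‑round refinement of \cite{colin1} with the estimated parameters in place of the true ones. Concretely: (1) use the standard graph‑splitting / leave‑one‑out device of \cite{colin1} so that the preliminary labeling of each vertex is independent of that vertex's own edges; (2) since $W=Q\ln(n)/n$ has diverging average degree, invoke the fully agnostic diverging‑degree procedure underlying result (ii) of the introduction to produce, with probability $1-o_n(1)$, a preliminary labeling $\widehat\sigma$ that agrees with $\sigma$ on a $1-o_n(1)$ fraction of vertices, together with an estimate $\widehat k=k$; (3) form the empirical estimates $\widehat p_\ell=|\widehat\sigma^{-1}(\ell)|/n$ and $\widehat Q_{\ell,\ell'}$ from the number of edges between $\widehat\sigma^{-1}(\ell)$ and $\widehat\sigma^{-1}(\ell')$ rescaled by $n/\ln n$ and the community sizes; (4) for each vertex $v$, compute its degree profile $d(v)\in\mZ^k$ relative to $\widehat\sigma$ on $v$'s neighbors and assign $v$ the community maximizing the Poisson MAP score computed from $\widehat P\widehat Q$; (5) finally compute $\dd((\widehat P\widehat Q)_i,(\widehat P\widehat Q)_j)$ for every pair $i,j$ and merge the communities whose estimated CH‑divergence falls below $1$, outputting the induced partition.

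The first technical point is consistency of the estimates: since $\widehat\sigma$ misclassifies only $o(n)$ vertices, each community has $\Theta(n)$ vertices with expected degree $\Theta(\ln n)\to\infty$, so Bernstein/Chernoff concentration for the $O(k^2)$ edge counts, together with a union bound, gives $\widehat p=p+o_n(1)$ and $\widehat Q = Q(1+o_n(1))$ entrywise with probability $1-o_n(1)$; one also checks that the $o(n)$ mislabeled vertices shift these counts by only an $o(1)$ fraction of their mass and hence do not affect the conclusion.

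The crux is to show that the \emph{sharp} exact‑recovery threshold of \cite{colin1} is robust to this $o_n(1)$ perturbation of the parameters. For a fixed vertex $v$ in community $i$, the probability that its degree‑profile MAP test run with the true $PQ$ prefers community $j$ is $n^{-\dd((PQ)_i,(PQ)_j)+o_n(1)}$; replacing $PQ$ by $\widehat P\widehat Q=PQ(1+o_n(1))$ perturbs every hypothesis's log‑likelihood score by $o(\ln n)$, hence perturbs this exponent by $o_n(1)$, which is absorbed into the existing error term. Thus, exactly as in \cite{colin1}, when $\dd((PQ)_i,(PQ)_j)\ge 1$ for all pairs $i,j$ lying in different blocks of the finest partition, a union bound — followed, in the boundary case $\dd=1$, by the additional clean‑up round of \cite{colin1} that repairs the $O(\poly\log n)$ residual misclassifications — labels every vertex correctly with probability $1-o_n(1)$. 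Since $\dd$ is continuous in its arguments, $\dd((\widehat P\widehat Q)_i,(\widehat P\widehat Q)_j)\to\dd((PQ)_i,(PQ)_j)$, so step (5) merges precisely the pairs with $\dd<1$ and recovers the finest partition; the one genuinely delicate situation is a pair with $\dd((PQ)_i,(PQ)_j)=1$ exactly, which cannot be decided from a consistent estimate alone and is handled by the same threshold tie‑breaking/refinement mechanism as in \cite{colin1} (and is vacuous under a generic‑position assumption on $Q$).

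For the running time: round one is the quasi‑linear agnostic diverging‑degree algorithm, parameter estimation is one pass over the edges, and the MAP clean‑up examines each edge a bounded number of times, so the overall complexity is dominated by the first round and is $o(n^{1+\epsilon})$ for every $\epsilon>0$. The main obstacle, as indicated, is the robustness analysis of steps (4)–(5): one must control the error exponents uniformly enough that the $o_n(1)$ estimation error never pushes any pair's divergence across the value $1$, and one must argue that the rare vertices with anomalously many mislabeled neighbors — arising from the $o(n)$ errors of round one — are few enough to be repaired by the final clean‑up round rather than forcing a spurious merge.
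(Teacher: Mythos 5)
Your overall architecture (graph splitting, agnostic partial recovery, parameter estimation from the preliminary labels, Poisson--MAP refinement by degree profiles) is the same as the paper's, and you correctly flag parameter consistency and robustness of the MAP test as the points needing proof. But the quantitative heart of the paper's argument is missing, and as written your robustness step fails at or near the threshold. The claim that replacing $PQ$ by $\widehat P\widehat Q=PQ(1+o_n(1))$ and using an $o_n(1)$-inaccurate neighbor labeling ``perturbs the exponent by $o_n(1)$, which is absorbed'' is too coarse: Lemma \ref{testing-lemma2} shows the per-vertex error is multiplied by $(1+c_1\delta)^{c_3\ln n}=n^{\Theta(\delta)}$, and the $\delta$ that {\tt Agnostic-sphere-comparison} delivers on the sparsified graph (average degree $\Theta(\ln\ln n)$) is only $(\ln n)^{-\Omega(1)}$, so this factor is super-polylogarithmic and swamps any $(\ln n)^{O(1)}$ margin. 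A single MAP round therefore only achieves exponent roughly $(1-\gamma)\Delta/2$, not $\Delta$. This is exactly why the paper's algorithm has \emph{two} refinement rounds with re-estimation in between (steps 4--6): the first round's output $\sigma''$ has polynomially small error $O(n^{-(1-\gamma)\Delta/2}+n^{-2})$, and only with that $\delta'$ does the penalty $(1+c_1\delta')^{c_3\ln n}$ become $1+o(1)$ so that the second round attains the full exponent. Your ``clean-up round'' is invoked only in the boundary case and described as repairing $O(\mathrm{polylog}\, n)$ residuals, which misstates both when it is needed (always) and what it does (a complete re-classification of every vertex using re-estimated parameters).

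Separately, even with perfect parameters, a per-vertex error of $n^{-\Delta+o_n(1)}$ union-bounded over $n$ vertices gives $n^{o(1)}$ expected mistakes at $\Delta=1$, which is not $o(1)$. The paper closes this gap with the sharper estimate of Lemma \ref{hell-expo}, whose exponent carries an extra $-\tfrac{1}{2}\ln\ln n/\ln n$ (a $(\ln n)^{-1/2}$ saving), combined with the specific choice $\gamma=\tfrac{\ln\ln n}{4\ln n}$ so that sparsifying into $G'$ and $G''$ costs only a $(\ln n)^{1/4}$ factor; the net per-vertex error is $O\bigl(n^{-1}(\ln n)^{-1/4}\bigr)$ and the union bound closes. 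Your proposal has no source for this polylogarithmic saving, so the boundary case $\Delta=1$ is not actually handled. (Your observation that the finest partition cannot be read off consistent estimates when some $\dd((PQ)_i,(PQ)_j)$ equals $1$ exactly is a fair concern, though note the paper's final step is a composite hypothesis test assigning each vertex directly to a group $A_s$, rather than your ``classify, then merge pairs with estimated divergence below $1$.'')
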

The proof assumes that the entries of $Q$ are non-zero, see Remark \ref{qzero} for zero entries. 
To achieve this result we rely on a two step procedure. First an algorithm is developed to recover all but a vanishing fraction of nodes --- this is the main focus of our partial recovery result --- and then a procedure is used to ``clean up'' the leftover graphs using the node degrees of the preliminary classification. This turns out to be much more efficient than aiming for an algorithm that directly achieves exact recovery. We already used this technique in \cite{colin1}, but here we also deal with the difficulties resulting from not knowing the SBM's parameters.

\section{Proof Techniques and Algorithms}
\subsection{Partial recovery and the {\tt Agnostic-sphere-comparison} algorithm}\label{pt1}
The first key observation used to classify graphs' vertices is that if $v$ is a vertex in a graph drawn from $\gss(n,p,Q)$ then for all small $r$ the expected number of vertices in community $i$ that are $r$ edges away from $v$ is approximately $e_i\cdot(PQ)^re_{\sigma_v}$. So, we define: 

\begin{definition}\label{def-n1}
For any vertex $v$, let $N_r(v)$ be the set of all vertices with shortest path to $v$ of length $r$. We also refer to the vector with $i$-th entry equal to the number of vertices in $N_r(v)$ that are in community $i$ as $N_r(v)$. If there are multiple graphs that $v$ could be considered a vertex in, let $N_{r[G]}(v)$ be the set of all vertices with shortest paths in $G$ to $v$ of length $r$.
\end{definition}

One could probably determine $PQ$ and $e_{\sigma}$ given the values of $(PQ)^re_{\sigma_v}$ for a few different $r$, but using $N_r(v)$ to approximate that would require knowing how many of the vertices in $N_r(v)$ are in each community. So, we attempt to get information relating to how many vertices in $N_r(v)$ are in each community by checking how it connects to $N_{r'}(v')$ for some vertex $v'$ and integer $r'$. The obvious way to do this would be to compute the cardinality of their intersection. Unfortunately, whether a given vertex in community $i$ is in $N_r(v)$ is not independent of whether it is in $N_{r'}(v')$, which causes the cardinality of $|N_r(v)\cap N_{r'}(v')|$ to differ from what one would expect badly enough to disrupt plans to use it for approximations.

In order to get around this, we randomly assign every edge in $G$ to a set $E$ with probability $c$. We hence define the following. 

\begin{definition}
For any vertices $v, v'\in G$, $r,r'\in \mathbb{Z}$, and subset of $G$'s edges $E$, let $N_{r,r'[E]}(v\cdot v')$ be the number of pairs of vertices $(v_1,v_2)$ such that $v_1\in N_{r[G\backslash E]}(v)$, $v_2\in N_{r'[G\backslash E]}(v')$, and $(v_1,v_2)\in E$.
\end{definition}

Note that $E$ and $G\backslash E$ are disjoint; however, $G$ is sparse enough that even if they were generated independently a given pair of vertices would have an edge between them in both with probability $O(\frac{1}{n^2})$. So, $E$ is approximately independent of $G\backslash E$. Thus, for any $v_1\in N_{r[G/E]}(v)$ and $v_2\in N_{r'[G/E]}(v')$, $(v_1,v_2)\in E$ with a probability of approximately $cQ_{\sigma_{v_1},\sigma_{v_2}}/n$. As a result,

\begin{align*}
N_{r,r'}[E](v\cdot v')&\approx N_{r[G\backslash E]}(v)\cdot \frac{cQ}{n} N_{r'[G\backslash E]}(v')\\
&\approx ((1-c)PQ)^re_{\sigma_v}\cdot \frac{cQ}{n} ((1-c)PQ)^{r'}e_{\sigma_{v'}}\\
&=c(1-c)^{r+r'}e_{\sigma_v}\cdot Q(PQ)^{r+r'}e_{\sigma_{v'}}/n
\end{align*}

 Let $\lambda_1,...,\lambda_h$ be the distinct eigenvalues of $PQ$, ordered so that $|\lambda_1|\ge|\lambda_2|\ge...\ge|\lambda_h|\ge 0$. Also define $h'$ so that $h'=h$ if $\lambda_h\ne 0$ and $h'=h-1$ if $\lambda_h=0$. If $W_i$ is the eigenspace of $PQ$ corresponding to the eigenvalue $\lambda_i$, and $P_{W_i}$ is the projection operator on to $W_i$, then 
\begin{align}
N_{r,r'[E]}(v\cdot v')&\approx c(1-c)^{r+r'}e_{\sigma_v}\cdot Q(PQ)^{r+r'}e_{\sigma_{v'}}/n\\
&=\frac{c(1-c)^{r+r'}}{n} \left(\sum_i P_{W_i}(e_{\sigma_v})\right)\cdot Q(PQ)^{r+r'}\left(\sum_j P_{W_j}(e_{\sigma_{v'}})\right)\\
&=\frac{c(1-c)^{r+r'}}{n} \sum_{i,j}P_{W_i}(e_{\sigma_v})\cdot Q(PQ)^{r+r'}P_{W_j}(e_{\sigma_{v'}})\\
&=\frac{c(1-c)^{r+r'}}{n} \sum_{i,j}P_{W_i}(e_{\sigma_v})\cdot P^{-1}(\lambda_j)^{r+r'+1}P_{W_j}(e_{\sigma_{v'}})\\
&=\frac{c(1-c)^{r+r'}}{n} \sum_{i}\lambda_i^{r+r'+1}P_{W_i}(e_{\sigma_v})\cdot P^{-1}P_{W_i}(e_{\sigma_{v'}})\label{NformB}
\end{align}
where the final equality holds because for all $i\ne j$, 
\begin{align*}
\lambda_i  P_{W_i}(e_{\sigma_v})\cdot P^{-1} P_{W_j}(e_{\sigma_{v'}})&=(PQ  P_{W_i}(e_{\sigma_v}))\cdot P^{-1} P_{W_j}(e_{\sigma_{v'}})\\
&= P_{W_i}(e_{\sigma_v})\cdot Q P_{W_j}(e_{\sigma_{v'}})\\
&= P_{W_i}(e_{\sigma_v})\cdot P^{-1} \lambda_j P_{W_j}(e_{\sigma_{v'}}),
\end{align*}
and since $\lambda_i\ne \lambda_j$, this implies that $P_{W_i}(e_{\sigma_v})\cdot P^{-1} P_{W_j}(e_{\sigma_{v'}})=0$. In order to simplify the terminology,
\begin{definition}
Let $\zeta_i(v\cdot v')=P_{W_i}(e_{\sigma_v})\cdot P^{-1}P_{W_i}(e_{\sigma_{v'}})$ for all $i$, $v$, and $v'$.
\end{definition}

Equation \eqref{Nform} is dominated by the $\lambda_1^{r+r'+1}$ term, so getting good estimate of the $\lambda_2^{r+r'+1}$ through $\lambda_{h'}^{r+r'+1}$ terms requires cancelling it out somehow. As a start, if $\lambda_1>\lambda_2>\lambda_3$ then 
\begin{align*}
&N_{r+2,r'[E]}(v\cdot v')\cdot N_{r,r'[E]}(v\cdot v')-N^2_{r+1,r'[E]}(v\cdot v')\\
&\approx \frac{c^2(1-c)^{2r+2r'+2}}{n^2}(\lambda_1^2+\lambda_2^2-2\lambda_1\lambda_2)\lambda_1^{r+r'+1}\lambda_2^{r+r'+1} \zeta_1(v\cdot v')\zeta_2(v\cdot v')
\end{align*}

Note that the left hand side of this expression is equal to 
$\det{}\begin{vmatrix}N_{r,r'[E]}(v\cdot v')&N_{r+1,r'[E]}(v\cdot v')\\
N_{r+1,r'[E]}(v\cdot v')&N_{r+2,r'[E]}(v\cdot v')\end{vmatrix}$. More generally, in order to get an expression that can be used to estimate the $\lambda_i$ and $\zeta_i(v\cdot v')$, we consider the determinant of the following.
\begin{definition}
Let $M_{m,r,r'[E]}(v\cdot v')$ be the $m\times m$ matrix such that $M_{m,r,r'[E]}(v\cdot v')_{i,j}=N_{r+i+j,r'[E]}(v\cdot v')$ for each $i$ and $j$.
\end{definition}

To the degree that approximation $\ref{NformB}$ holds and $c$ is small, each column of $M_{m,r,r'[E]}(v\cdot v')$ is a linear combination of the vectors 
\[\frac{c(1-c)^{r+r'}}{n}\zeta_i(v\cdot v')\lambda_i^{r+r'}[1,\lambda_i,\lambda_i^2,...,\lambda_i^{m-1}]^{t}\]
 with coefficients that depend only on $\{\lambda_1,...,\lambda_{h}\}$. So, by linearity of the determinant in one column, $\det(M_{m,r,r'[E]}(v\cdot v'))$ is a linear combination of these vector's wedge products with coefficients that are independent of $r$ and $r'$. By antisymmetry of wedge products, only the products that use $m$ different such vectors contribute to the determinant, and the products involving the eigenvalues of highest magnitude will dominate. As a result, there exist constants $\gamma(\lambda_1,...,\lambda_m)$ and $\gamma'(\lambda_1,...,\lambda_m)$ such that 
\[\det(M_{m,r,r'[E]}(v\cdot v'))\approx \frac{c^m(1-c)^{m(r+r')}}{n^m}\gamma(\lambda_1,...,\lambda_m)\prod_{i=1}^m \lambda_i^{r+r'+1}\zeta_i(v\cdot v')\]
 if $|\lambda_m|>|\lambda_{m+1}|$, and 
\begin{align*}
&\det(M_{m,r,r'[E]}(v\cdot v'))\\
&\approx \frac{c^m(1-c)^{m(r+r')}}{n^m}\cdot\prod_{i=1}^{m-1} \lambda_i^{r+r'+1}\zeta_i(v\cdot v')\\
&\indent \cdot\left(\gamma(\lambda_1,...,\lambda_m) \lambda_m^{r+r'+1}\zeta_m(v\cdot v')+\gamma'(\lambda_1,...,\lambda_m) \lambda_{m+1}^{r+r'+1} \zeta_{m+1}(v\cdot v')\right)
\end{align*}
if $|\lambda_m|=|\lambda_{m+1}|$. These facts suggest the following plan for estimating the eigenvalues corresponding to a graph. First, pick several vertices at random. Then, use the fact that $|N_{r[G\backslash E]}(v)|\approx ((1-c)\lambda_1)^r$ for any good vertex $v$ to estimate $\lambda_1$. Next, use the formulas above about $\det(M_{m,r,r'[E]}(v\cdot v))$ to get an approximation of $h'$ and all of $PQ$'s eigenvalues for each selected vertex. Finally, take the median of these estimates.

Now, note that whether or not $|\lambda_m|=|\lambda_{m+1}|$, we have 
\begin{align*}
&\det(M_{m,r+1,r'[E]}(v\cdot v'))-(1-c)^m\lambda_{m+1}\prod_{i=1}^{m-1} \lambda_i \det(M_{m,r,r'[E]}(v\cdot v'))\\
&\approx \frac{c^m}{n^m} \gamma(\lambda_1,...,\lambda_m)\frac{\lambda_m-\lambda_{m+1}}{(1-c)^m\lambda_m}\prod_{i=1}^{m} ((1-c)\lambda_i)^{r+r'+2}\zeta_i(v\cdot v')
\end{align*}

That means that 
\begin{align*}
&\frac{\det(M_{m,r+1,r'[E]}(v\cdot v'))-(1-c)^m\lambda_{m+1}\prod_{i=1}^{m-1} \lambda_i \det(M_{m,r,r'[E]}(v\cdot v'))}{\det(M_{m-1,r+1,r'[E]}(v\cdot v'))-(1-c)^{m-1}\lambda_{m}\prod_{i=1}^{m-2} \lambda_i \det(M_{m-1,r,r'[E]}(v\cdot v'))}\\
&\approx\frac{c}{(1-c)n}\frac{\gamma(\lambda_1,...,\lambda_m)}{\gamma(\lambda_1,...,\lambda_{m-1})}\frac{\lambda_{m-1}(\lambda_m-\lambda_{m+1})}{\lambda_m(\lambda_{m-1}-\lambda_m)}((1-c)\lambda_m)^{r+r'+2} \zeta_m(v\cdot v')
\end{align*}
This fact can be used in combination with estimates of $PQ$'s eigenvalues to approximate $\zeta_i(v\cdot v')$ for arbitrary $v$, $v'$, and $i$.

Of course, this requires $r$ and $r'$ to be large enough that \[\frac{c(1-c)^{r+r'}}{n} \lambda_i^{r+r'+1}\zeta_i(v\cdot v')\] is large relative to the error terms for all $i\le h'$. At a minimum, that requires that $|(1-c)\lambda_i|^{r+r'+1}=\omega(n)$.

On a different note, for any $v$ and $v'$, 
\[0 \le P_{W_i}(e_{\sigma_v}-e_{\sigma_{v'}})\cdot P^{-1}P_{W_i}(e_{\sigma_v}-e_{\sigma_{v'}})=\zeta_i(v\cdot v)-2\zeta_i(v\cdot v')+\zeta_i(v'\cdot v')\]
with equality for all $i$ if and only if $\sigma_v=\sigma_{v'}$, so sufficiently good approximations of $\zeta_i(v\cdot v),\zeta_i(v\cdot v')$ and $\zeta_i(v'\cdot v')$ can be used to determine which pairs of vertices are in the same community.

One could generate a reasonable classification based solely on this method of comparing vertices. However, that would require computing $N_{r,r'[E]}(v\cdot v)$ for every vertex in the graph with fairly large $r+r'$, which would be slow. Instead, we use the fact that for any vertices $v$, $v'$, and $v''$ with $\sigma_v=\sigma_{v'}\ne\sigma_{v''}$, 
\[\zeta_i(v'\cdot v')-2\zeta_i(v\cdot v')+\zeta_i(v\cdot v)=0\le \zeta_i(v''\cdot v'')-2\zeta_i(v\cdot v'')+\zeta_i(v\cdot v)\]
for all $i$, and the inequality is strict for at least one $i$. So, subtracting $\zeta_i(v\cdot v)$ from both sides gives us that
\[\zeta_i(v'\cdot v')-2\zeta_i(v\cdot v')\le \zeta_i(v''\cdot v'')-2\zeta_i(v\cdot v'')\]
for all $i$, and the inequality is still strict for at least one $i$.

So, given a representative vertex in each community, we can determine which of them a given vertex, $v$, is in the same community as without needing to know the value of $\zeta_i(v\cdot v)$. 

This runs fairly quickly if $\zeta_i(v\cdot v')$ is approximated using $N_{r,r'[E]}(v'\cdot v)$ such that $r$ is large and $r'$ is small because the algorithm only requires focusing on $|N_{r'}(v)|$ vertices. This leads to the following plan for partial recovery. First, randomly select a set of vertices that is large enough to contain at least one vertex from each community with high probability. Next, compare all of the selected vertices in an attempt to determine which of them are in the same communities. Then, pick one in each community. After that, use the algorithm referred to above to attempt to determine which community each of the remaining vertices is in. As long as there actually was at least one vertex from each community in the initial set and none of the approximations were particularly bad, this should give a reasonably accurate classification.

The risk that this randomly gives a bad classification due to a bad set of initial vertices can be mitigated as follows. First, repeat the previous classification procedure several times. Assuming that the procedure gives a good classification more often than not, the good classifications should comprise a set that contains more than half the classifications and which has fairly little difference between any two elements of the set. Furthermore, any such set would have to contain at least one good classification, so none of its elements could be too bad. So, find such a set and average its classifications together. This completes the {\tt Agnostic-Sphere-comparison-algorithm}. We refer to Section \ref{details} for a detailed version.

\subsection{Exact recovery and the {\tt Agnostic-degree-profiling} algorithm}\label{pt2}
The exact recovery part is similar to \cite{colin1} and uses the fact that once a good enough clustering has been obtained from {\tt Agnostic-sphere-comparison}, the classification can be finished by making local improvements based on the nodes' neighborhoods. 
The key result here is that, when testing between two multivariate Poisson distributions of means $\log(n) \lambda_1$ and $\log(n) \lambda_2$ respectively, where $\lambda_1,\lambda_2 \in \mZ_+^k$, the probability of error (of say maximum a posteriori decoding) is
\begin{align}
\Theta\left(n^{- \dd(\lambda_1,\lambda_2) - o(1)} \right).
\end{align}
This is proved in \cite{colin1}. In the case of unknown parameters, the algorithmic approach is largely unchanged, adding a step where the best known classification is used to estimate $P$ and $Q$ prior to any step in which vertices are classified based on their neighbors.  The analysis of the algorithm requires however some careful handling. 

 First, it is necessary to prove that given a labelling of the graph's vertices with an error rate of $x$, one can compute approximations of $P$ and $Q$ that are within $O(x+\log(n)/\sqrt{n})$ of their true values with probability $1-o(1)$. Secondly, one needs to modify the robust degree profiling lemma to show that attempting to determine vertices' communities based on estimates of $p$ and $Q$ that are off by at most $\delta$, $p'$ and $Q'$,  and a classification of its neighbors that has an error rate of $\delta$ classifies the vertices with an error rate only $e^{O(\delta\log n )}$ times higher than it would be given accurate values of $p$ and $Q$ and accurate classifications of the vertices' neighbors. Combining these yields the conclusion that any errors in the estimates of the SBM's parameters do not disrupt vertex classification any worse than the errors in the preliminary classifications already were.

{\bf The {\tt Agnostic-degree-profiling} algorithm.} The inputs are $(G, \gamma)$, where $G$ is a graph, and $\gamma\in [0,1]$ (see Theorem \ref{thm2} for how to set $\gamma$).

The algorithm outputs an assignment of each vertex to one of the groups of communities $\{A_1,\dots,A_t\}$, where $A_1,\dots,A_t$ is the partition of $[k]$ in to the largest number of subsets such that $\dd((pQ)_i,(pQ)_j) \geq 1$ for all $i,j$ in $[k]$ that are in different subsets (this is called the ``finest partition''). It does the following:

(1) Define the graph $g'$ on the vertex set $[n]$ by selecting each edge in $g$ independently with probability $\gamma$, and define the graph $g''$ that contains the edges in $g$ that are not in $g'$. 

(2) Run {\tt Agnostic-sphere-comparison} on $g'$ to obtain the preliminary classification $\sigma' \in [k]^n$ (see Section \ref{partial-sec}.) 

(3) Determine the size of each alleged community, and the edge density between each pair of alleged communities. 

(4) For each node $v \in [n]$, determine in which community node $v$ is most likely to belong to based on its degree profile computed from the preliminary classification $\sigma'$ (see Section \ref{testing}), and call it $\sigma''_v$

(5) Use $\sigma''_v$ to get new estimates of $p$ and $Q$.

(6) For each node $v \in [n]$, determine in which group $A_1,\dots,A_t$ node $v$ is most likely to belong to based on its degree profile computed from the preliminary classification $\sigma''$ (see Section \ref{testing}). 

\section{An example with real data}\label{data-sec}

We have tested a simplified version of our algorithm on the data from ``The political blogosphere and the 2004 US Election" \cite{blogs}, which contains a list of political blogs that were classified as liberal or conservative, and links between the blogs. 

The algorithm we used has a few major modifications relative to our standard algorithm. First of all, instead of using $N_{r,r'}(v\cdot v')$ as its basic tool for comparing vertices, it uses a different measure, $N'_{r,r'}(v\cdot v')$ which is defined as the fraction of pairs of an edge leaving the ball of radius $r$ centered on $v$ and an edge leaving the ball of radius $r'$ centered on $v'$ which hit the same vertex but are not the same edge. Making the measure a fraction of the pairs rather than a count of pairs was necessary to prevent $N'_{r,r'}(v\cdot v')$ from being massively dependent on the degrees of $v$ and $v'$, which would have resulted in the increased variance in vertex degree obscuring the effects of $\sigma_v$ and $\sigma_{v'}$ on $N'_{r,r'}(v\cdot v')$. The other changes to the definition make the measure somewhat less reliable, but it is still useable as long as the average degree is fairly high and $v\ne v'$.

Secondly, the version of {\it Vertex-comparison-algorithm} we used simply concludes that two vertices, $v$ and $v'$, are in different communities if  $N'_{r,r'}(v\cdot v')$ is below average and the same community otherwise. This is reasonable because of the following facts. For one thing, because the normalization converts the dominant term to a constant, $N'_{r,r'}(v\cdot v')$ is approximately affine in $(\lambda_2/\lambda_1)^{r+r'}\zeta_2(v\cdot v')/n$. Also, as a result of the symmetry between communities, $\zeta_2(v\cdot v)$ is the same for all $v$. So, $\zeta_2(v\cdot v)-2\zeta_2(v\cdot v')+\zeta_2(v'\cdot v')$ is also affine in $\zeta_2(v\cdot v')$. Furthermore, there are only two possible values of $\zeta_2(v\cdot v')$ by symmetry, and $\lambda_2>0$, so $\zeta_2(v\cdot v)-2\zeta_2(v\cdot v')+\zeta_2(v'\cdot v')>0$ iff $(\lambda_2/\lambda_1)^{r+r'}\zeta_2(v\cdot v')/n$ is below average. Finally, because both communities have the same average degree, $\zeta_1(v,v')$ is independent of $v$ and $v'$ so $\zeta_1(v\cdot v)-2\zeta_1(v\cdot v')+\zeta_1(v'\cdot v')$ is always $0$. The version of {\it Vertex-classification-algorithm} we used is comparably modified.

Finally, our algorithm generates reference vertices by repeatedly picking two vertices at random and comparing them. If it concludes that they are in different communities and they both have above-average degree, it accepts them as reference vertices; otherwise it tries again. Requiring above-average degree is useful because a higher degree vertex is less likely to have its neighborhood distorted by a couple of atypical neighbors.

Out of $40$ trials, the resulting algorithm gave a reasonably good classification $37$ times. Each of these classified all but $56$ to $67$ of the $1222$ vertices in the graph's main component correctly. The state-of-the-art described in \cite{harrison} gives a lowest value at 58, with the best algorithms around $60$, while algorithms regularized spectral methods such as the one in \cite{rohe80} obtain about 80 errors.

\begin{figure}[h]
\centering
  \includegraphics[width=.5\linewidth]{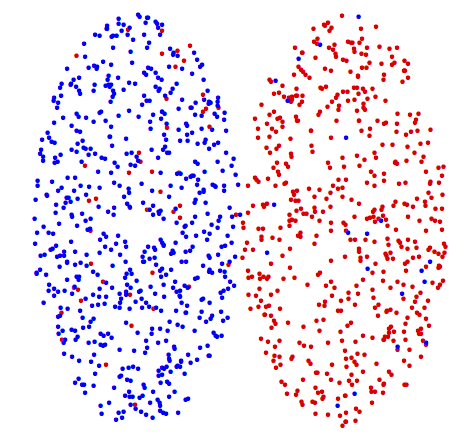}
  \caption{Visual representation of the clustering obtained on the Adamic and Glance '05 blog data.}
  \label{blog_plot}
\end{figure}

\section{Open problems}
The current result should also extend directly to a slowly growing number of communities (e.g., up to logarithmic). It would be interesting to extend the current approach to smaller sized communities or larger numbers of communities (watching the complexity scaling with the number of communities), as well as more general models with corrected-degrees, labeled-edges, or overlapping communities (though linear-sized overlapping communities can be treated with the approach of \cite{colin1}).

\bibliographystyle{amsalpha}
\bibliography{gen_sbm}

\section{The {\tt Agnostic-sphere-comparison} algorithm in details}\label{details}
Recall the following motivation and definitions.

\begin{definition}\label{def-n1}
For any vertex $v$, let $N_r(v)$ be the set of all vertices with shortest path to $v$ of length $r$. We also refer to the vector with $i$-th entry equal to the number of vertices in $N_r(v)$ that are in community $i$ as $N_r(v)$. If there are multiple graphs that $v$ could be considered a vertex in, let $N_{r[G]}(v)$ be the set of all vertices with shortest paths in $G$ to $v$ of length $r$.
\end{definition}

One could probably determine $PQ$ and $e_{\sigma}$ given the values of $(PQ)^re_{\sigma_v}$ for a few different $r$, but using $N_r(v)$ to approximate that would require knowing how many of the vertices in $N_r(v)$ are in each community. So, we attempt to get information relating to how many vertices in $N_r(v)$ are in each community by checking how it connects to $N_{r'}(v')$ for some vertex $v'$ and integer $r'$. The obvious way to do this would be to compute the cardinality of their intersection. Unfortunately, whether a given vertex in community $i$ is in $N_r(v)$ is not independent of whether it is in $N_{r'}(v')$, which causes the cardinality of $|N_r(v)\cap N_{r'}(v')|$ to differ from what one would expect badly enough to disrupt plans to use it for approximations.

In order to get around this, we randomly assign every edge in $G$ to a set $E$ with probability $c$. We hence define the following. 

\begin{definition}
For any vertices $v, v'\in G$, $r,r'\in \mathbb{Z}$, and subset of $G$'s edges $E$, let $N_{r,r'[E]}(v\cdot v')$ be the number of pairs of vertices $(v_1,v_2)$ such that $v_1\in N_{r[G\backslash E]}(v)$, $v_2\in N_{r'[G\backslash E]}(v')$, and $(v_1,v_2)\in E$.
\end{definition}

Note that $E$ and $G\backslash E$ are disjoint; however, $G$ is sparse enough that even if they were generated independently a given pair of vertices would have an edge between them in both with probability $O(\frac{1}{n^2})$. So, $E$ is approximately independent of $G\backslash E$. Thus, for any $v_1\in N_{r[G/E]}(v)$ and $v_2\in N_{r'[G/E]}(v')$, $(v_1,v_2)\in E$ with a probability of approximately $cQ_{\sigma_{v_1},\sigma_{v_2}}/n$. As a result,

\begin{align*}
N_{r,r'}[E](v\cdot v')&\approx N_{r[G\backslash E]}(v)\cdot \frac{cQ}{n} N_{r'[G\backslash E]}(v')\\
&\approx ((1-c)PQ)^re_{\sigma_v}\cdot \frac{cQ}{n} ((1-c)PQ)^{r'}e_{\sigma_{v'}}\\
&=c(1-c)^{r+r'}e_{\sigma_v}\cdot Q(PQ)^{r+r'}e_{\sigma_{v'}}/n
\end{align*}

 Let $\lambda_1,...,\lambda_h$ be the distinct eigenvalues of $PQ$, ordered so that $|\lambda_1|\ge|\lambda_2|\ge...\ge|\lambda_h|\ge 0$. Also define $h'$ so that $h'=h$ if $\lambda_h\ne 0$ and $h'=h-1$ if $\lambda_h=0$. If $W_i$ is the eigenspace of $PQ$ corresponding to the eigenvalue $\lambda_i$, and $P_{W_i}$ is the projection operator on to $W_i$, then 
\begin{align}
N_{r,r'[E]}(v\cdot v')&\approx c(1-c)^{r+r'}e_{\sigma_v}\cdot Q(PQ)^{r+r'}e_{\sigma_{v'}}/n\\
&=\frac{c(1-c)^{r+r'}}{n} \left(\sum_i P_{W_i}(e_{\sigma_v})\right)\cdot Q(PQ)^{r+r'}\left(\sum_j P_{W_j}(e_{\sigma_{v'}})\right)\\
&=\frac{c(1-c)^{r+r'}}{n} \sum_{i,j}P_{W_i}(e_{\sigma_v})\cdot Q(PQ)^{r+r'}P_{W_j}(e_{\sigma_{v'}})\\
&=\frac{c(1-c)^{r+r'}}{n} \sum_{i,j}P_{W_i}(e_{\sigma_v})\cdot P^{-1}(\lambda_j)^{r+r'+1}P_{W_j}(e_{\sigma_{v'}})\\
&=\frac{c(1-c)^{r+r'}}{n} \sum_{i}\lambda_i^{r+r'+1}P_{W_i}(e_{\sigma_v})\cdot P^{-1}P_{W_i}(e_{\sigma_{v'}})\label{Nform}
\end{align}
where the final equality holds because for all $i\ne j$, 
\begin{align*}
\lambda_i  P_{W_i}(e_{\sigma_v})\cdot P^{-1} P_{W_j}(e_{\sigma_{v'}})&=(PQ  P_{W_i}(e_{\sigma_v}))\cdot P^{-1} P_{W_j}(e_{\sigma_{v'}})\\
&= P_{W_i}(e_{\sigma_v})\cdot Q P_{W_j}(e_{\sigma_{v'}})\\
&= P_{W_i}(e_{\sigma_v})\cdot P^{-1} \lambda_j P_{W_j}(e_{\sigma_{v'}}),
\end{align*}
and since $\lambda_i\ne \lambda_j$, this implies that $P_{W_i}(e_{\sigma_v})\cdot P^{-1} P_{W_j}(e_{\sigma_{v'}})=0$. In order to simplify the terminology,
\begin{definition}
Let $\zeta_i(v\cdot v')=P_{W_i}(e_{\sigma_v})\cdot P^{-1}P_{W_i}(e_{\sigma_{v'}})$ for all $i$, $v$, and $v'$.
\end{definition}
Equation \eqref{Nform} is dominated by the $\lambda_1^{r+r'+1}$ term, so getting good estimate of the $\lambda_2^{r+r'+1}$ through $\lambda_{h'}^{r+r'+1}$ terms requires cancelling it out somehow. As a start, if $\lambda_1>\lambda_2>\lambda_3$ then 
\begin{align*}
&N_{r+2,r'[E]}(v\cdot v')\cdot N_{r,r'[E]}(v\cdot v')-N^2_{r+1,r'[E]}(v\cdot v')\\
&\approx \frac{c^2(1-c)^{2r+2r'+2}}{n^2}(\lambda_1^2+\lambda_2^2-2\lambda_1\lambda_2)\lambda_1^{r+r'+1}\lambda_2^{r+r'+1} \zeta_1(v\cdot v')\zeta_2(v\cdot v')
\end{align*}

Note that the left hand side of this expression is equal to 
$\det{}\begin{vmatrix}N_{r,r'[E]}(v\cdot v')&N_{r+1,r'[E]}(v\cdot v')\\
N_{r+1,r'[E]}(v\cdot v')&N_{r+2,r'[E]}(v\cdot v')\end{vmatrix}$. More generally, in order to get an expression that can be used to estimate the $\lambda_i$ and $\zeta_i(v\cdot v')$, we consider the determinant of the following.
\begin{definition}
Let $M_{m,r,r'[E]}(v\cdot v')$ be the $m\times m$ matrix such that $M_{m,r,r'[E]}(v\cdot v')_{i,j}=N_{r+i+j,r'[E]}(v\cdot v')$ for each $i$ and $j$.
\end{definition}

To the degree that approximation $\ref{NformB}$ holds and $c$ is small, each column of $M_{m,r,r'[E]}(v\cdot v')$ is a linear combination of the vectors 
\[\frac{c(1-c)^{r+r'}}{n}\zeta_i(v\cdot v')\lambda_i^{r+r'}[1,\lambda_i,\lambda_i^2,...,\lambda_i^{m-1}]^{t}\]
 with coefficients that depend only on $\{\lambda_1,...,\lambda_{h}\}$. So, by linearity of the determinant in one column, $\det(M_{m,r,r'[E]}(v\cdot v'))$ is a linear combination of these vector's wedge products with coefficients that are independent of $r$ and $r'$. By antisymmetry of wedge products, only the products that use $m$ different such vectors contribute to the determinant, and the products involving the eigenvalues of highest magnitude will dominate. As a result, there exist constants $\gamma(\lambda_1,...,\lambda_m)$ and $\gamma'(\lambda_1,...,\lambda_m)$ such that 
\[\det(M_{m,r,r'[E]}(v\cdot v'))\approx \frac{c^m(1-c)^{m(r+r')}}{n^m}\gamma(\lambda_1,...,\lambda_m)\prod_{i=1}^m \lambda_i^{r+r'+1}\zeta_i(v\cdot v')\]
 if $|\lambda_m|>|\lambda_{m+1}|$, and 
\begin{align*}
&\det(M_{m,r,r'[E]}(v\cdot v'))\\
&\approx \frac{c^m(1-c)^{m(r+r')}}{n^m}\cdot\prod_{i=1}^{m-1} \lambda_i^{r+r'+1}\zeta_i(v\cdot v')\\
&\indent \cdot\left(\gamma(\lambda_1,...,\lambda_m) \lambda_m^{r+r'+1}\zeta_m(v\cdot v')+\gamma'(\lambda_1,...,\lambda_m) \lambda_{m+1}^{r+r'+1} \zeta_{m+1}(v\cdot v')\right)
\end{align*}
if $|\lambda_m|=|\lambda_{m+1}|$. In the later case, $\lambda_m=-\lambda_{m+1}$, so either $\gamma(\lambda_1,...,\lambda_m) \lambda_m^{r+r'+1}$ and $\gamma'(\lambda_1,...,\lambda_m) \lambda_{m+1}^{r+r'+1}$ have the same sign or $\gamma(\lambda_1,...,\lambda_m) \lambda_m^{r+r'+2}$ and $\gamma'(\lambda_1,...,\lambda_m) \lambda_m^{r+r'+2}$ have the same sign. In any of these cases, 
\[ \max(|\det M_{m,r,r[E]}(v\cdot v)|,|\det M_{m,r+1,r[E]}(v\cdot v)|)\approx \frac{c^m(1-c)^{m(r+r')}}{n^m}\prod_{i=1}^m |\lambda_i|^{r+r'+1}\]
This suggests the following algorithm for finding $PQ$'s eigenvalues.

\vspace{5 mm}\noindent
{\bf The Basic-Eigenvalue-approximation-algorithm}
The inputs are $(E,c,v)$, where $v$ is a vertex of the graph, $c\in (0,1)$, and $E$ is a subset of $G$'s edges.

The algorithm ouputs a claim about how many distinct nonzero eigenvalues $PQ$ has and a list of approximations of them.

(1) Compute $N_{r[G\backslash E]}(v)$ for each $r$ until $|N_{r[G\backslash E]}(v)|>\sqrt{n}$, and then set $\lambda_1''=\sqrt[2r]{n}/(1-c)$.

(2) Set $r=r'=\frac{2}{3}\log n/\log ((1-c)\lambda''_1)-\sqrt{\ln n}$. Then, compute 
\[\sqrt[2r]{\frac{n \max(|\det M_{m,r,r[E]}(v\cdot v)|,|\det M_{m,r+1,r[E]}(v\cdot v)|)}{c\max(|\det M_{m-1,r,r[E]}(v\cdot v)|,|\det M_{m-1,r+1,r[E]}(v\cdot v)|)}}\]
 until an $m$ is found for which this expression is less than $((1-c)\lambda''_1)^{3/4}+\frac{1}{\sqrt{\ln n}}$. Then, set $h''=m-1$.

(3) Then, set \[|\lambda'_i|=\frac{1}{1-c}\sqrt{\det(M_{i,r+3,r'[E]}(v\cdot v'))/\det(M_{i,r+1,r'[E]}(v\cdot v'))}/\prod_{j=1}^{i-1}(1-c)|\lambda'_j|\] unless $|\det(M_{i,r+1,r'[E]}(v\cdot v'))|<\sqrt{|\det(M_{i,r,r'[E]}(v\cdot v'))|\cdot|\det(M_{i,r+2,r'[E]}(v\cdot v'))|}$, in which case set \[|\lambda'_i|=\frac{1}{1-c}\sqrt{\det(M_{i,r+2,r'[E]}(v\cdot v'))/\det(M_{i,r,r'[E]}(v\cdot v'))}/\prod_{j=1}^{i-1}|\lambda'_j|\] Repeat this for each $i\le h''$

(4) Next, for each $i<h''$, if $||\lambda'_i|-|\lambda'_{i+1}||<\frac{1}{\ln n}$ then set $\lambda'_i=|\lambda'_i|$ and $\lambda'_{i+1}=-|\lambda'_{i+1}|$. For each $i\le h''$ such that $||\lambda'_i|-|\lambda'_{i+1}||\ge\frac{1}{\ln n}$ and $||\lambda'_{i-1}|-|\lambda'_i||\ge\frac{1}{\ln n}$ set\[\lambda'_i=\frac{1}{1-c}\det(M_{i,r+1,r'[E]}(v\cdot v'))/\det(M_{i,r,r'[E]}(v\cdot v'))/\prod_{j=1}^{i-1}\lambda'_j\]

(5) Return $(\lambda'_1,...,\lambda'_{h''})$\vspace{15 mm}

The risk when using this algorithm is that if the set of edges in $v$'s immediate neighborhood is sufficiently atypical it may not work correctly. This can be solved by repeating it for several vertices and taking the median estimates.

\noindent
{\bf The Improved-Eigenvalue-approximation-algorithm}
The input is $c\in(0,1)$

The algorithm ouputs a claim about how many distinct nonzero eigenvalues $PQ$ has and a list of approximations of them.

(1) Create a set of edges $E$, that each of $G$'s edges is independently assigned to with probability $c$.

(2) Randomly select $\sqrt{\ln n}$ of $G$'s vertices, $v[1]$, $v[2]$,..., $v[\sqrt{\ln n}]$.

(3) Run {\it Basic-Eigenvalue-approximation-algorithm(E,c,v[i])} for each $i\le \sqrt{\ln n}$, stopping the algorithm prematurely if it takes more than $O(n\sqrt{\ln n})$ time.

(4) Return $(\lambda'_1,...,\lambda'_{h''})$ where $h''$ and $\lambda_i$ are the median outputs of the executions of {\it Basic-Eigenvalue-approximation-algorithm} for each $i$.\vspace{15 mm}

Now, note that whether or not $|\lambda_m|=|\lambda_{m+1}|$, we have 
\begin{align*}
&\det(M_{m,r+1,r'[E]}(v\cdot v'))-(1-c)^m\lambda_{m+1}\prod_{i=1}^{m-1} \lambda_i \det(M_{m,r,r'[E]}(v\cdot v'))\\
&\approx \frac{c^m}{n^m} \gamma(\lambda_1,...,\lambda_m)\frac{\lambda_m-\lambda_{m+1}}{(1-c)^m\lambda_m}\prod_{i=1}^{m} ((1-c)\lambda_i)^{r+r'+2}\zeta_i(v\cdot v')
\end{align*}

That means that 
\begin{align*}
&\frac{\det(M_{m,r+1,r'[E]}(v\cdot v'))-(1-c)^m\lambda_{m+1}\prod_{i=1}^{m-1} \lambda_i \det(M_{m,r,r'[E]}(v\cdot v'))}{\det(M_{m-1,r+1,r'[E]}(v\cdot v'))-(1-c)^{m-1}\lambda_{m}\prod_{i=1}^{m-2} \lambda_i \det(M_{m-1,r,r'[E]}(v\cdot v'))}\\
&\approx\frac{c}{(1-c)n}\frac{\gamma(\lambda_1,...,\lambda_m)}{\gamma(\lambda_1,...,\lambda_{m-1})}\frac{\lambda_{m-1}(\lambda_m-\lambda_{m+1})}{\lambda_m(\lambda_{m-1}-\lambda_m)}((1-c)\lambda_m)^{r+r'+2} \zeta_m(v\cdot v')
\end{align*}
This fact can be used in combination with estimates of $PQ$'s eigenvalues to approximate $\zeta_i(v\cdot v')$ for arbitrary $v$, $v'$, and $i$ as follows. 

{\bf The Vertex-product-approximation-algorithm} The inputs are \\$(v,v',r,r',E,c,(\lambda'_1,...,\lambda'_{h''}))$, where $v,v'$ are vertices, $r,r'$ are positive integers, $E$ is a subset of $G$'s edges, $c\in (0,1)$, and $\lambda'_i\in\mathbb{R}$ for all $i$. It is assumed that $N_{r''[G\backslash E]}(v)$ has already been computed for $r''\le r+2h''+3$ and that $N_{r''[G\backslash E]}(v')$ has already been computed for $r''\le r'$.

The algorithm outputs $(z_1(v\cdot v'),...,z_{h''}(v\cdot v'))$ such that $z_i(v\cdot v')\approx\zeta_i(v\cdot v')$ for all $i$.

(1) For each $i\le h''$, set 
\begin{align*}
z_i(v\cdot v')&=\frac{\det(M_{i,r+1,r'[E]}(v\cdot v')-(1-c)^i\lambda'_{i+1}\prod_{j=1}^{i-1} \lambda'_j\det(M_{i,r,r'[E]}(v\cdot v')}{\det(M_{i-1,r+1,r'[E]}(v\cdot v')-(1-c)^{i-1}\lambda'_{i}\prod_{j=1}^{i-2} \lambda'_j \det(M_{i-1,r,r'[E]}(v\cdot v')}\\
&\cdot\frac{ n(\lambda'_{i-1}-\lambda'_i)\gamma(\{(1-c)\lambda'_j\},i-1)}{c\lambda'_{i-1}(\lambda'_i-\lambda'_{i+1})\gamma(\{(1-c)\lambda'_j\},i)}((1-c)\lambda'_i)^{-r-r'-1}
\end{align*}

(2) Return $(z_1(v\cdot v'),...,z_{h''}(v\cdot v'))$.\vspace{15 mm}

Of course, this requires $r$ and $r'$ to be large enough that \[\frac{c(1-c)^{r+r'}}{n} \lambda_i^{r+r'+1}\zeta_i(v\cdot v')\] is large relative to the error terms for all $i\le h'$. At a minimum, that requires that $|(1-c)\lambda_i|^{r+r'+1}=\omega(n)$, so \[r+r' >\log( n)/\log((1-c)|\lambda_{h'}|).\] On the flip side, one also needs \[r,r'<\log(n)/\log ((1-c)\lambda_1)\] because otherwise the graph will start running out of vertices before one gets $r$ steps away from $v$ or $r'$ steps away from $v'$.

Furthermore, for any $v$ and $v'$, 
\begin{align*}
0 &\le P_{W_i}(e_{\sigma_v}-e_{\sigma_{v'}})\cdot P^{-1}P_{W_i}(e_{\sigma_v}-e_{\sigma_{v'}})\\
&=\zeta_i(v\cdot v)-2\zeta_i(v\cdot v')+\zeta_i(v'\cdot v')
\end{align*}
with equality for all $i$ if and only if $\sigma_v=\sigma_{v'}$, so sufficiently good approximations of $\zeta_i(v\cdot v),\zeta_i(v\cdot v')$ and $\zeta_i(v'\cdot v')$ can be used to determine which pairs of vertices are in the same community as follows. \\

\noindent
{\bf The Vertex-comparison-algorithm}. 
The inputs are $(v, v', r, r',E, x, c, (\lambda'_1,...,\lambda'_{h''}))$, where $v,v'$ are two vertices, $r, r'$ are positive integers, $E$ is a subset of $G$'s edges, $x$ is a positive real number, $c$ is a real number between $0$ and $1$, and $(\lambda'_1,...,\lambda'_{h''})$ are real numbers.

The algorithm outputs a decision on whether $v$ and $v'$ are in the same community or not. It proceeds as follows. 

(1) Run {\it Vertex-product-approximation-algorithm(v,v',r,r',E,c,$(\lambda'_1,...,\lambda'_{h''})$)},{\it Vertex-product-approximation-algorithm(v,v,r,r',E,c,$(\lambda'_1,...,\lambda'_{h''})$)}, and {\it Vertex-product-approximation-\\algorithm(v',v',r,r',E,c,$(\lambda'_1,...,\lambda'_{h''})$)}.

(2) If $\exists i: z_i(v\cdot v)-2z_i(v\cdot v')+z_i(v'\cdot v')> 5(2x(\min p_j)^{-1/2}+x^2)$ then conclude that $v$ and $v'$ are in different communities. Otherwise, conclude that $v$ and $v'$ are in the same community.\vspace{15 mm}

One could generate a reasonable classification based solely on this method of comparing vertices (with an appropriate choice of the parameters, as later detailed). However, that would require computing $N_{r,r'[E]}(v\cdot v)$ for every vertex in the graph with fairly large $r+r'$, which would be slow. Instead, we use the fact that for any vertices $v$, $v'$, and $v''$ with $\sigma_v=\sigma_{v'}\ne\sigma_{v''}$, 
\begin{align*}
&\zeta_i(v'\cdot v')-2\zeta_i(v\cdot v')+\zeta_i(v\cdot v)=0\\
&\le \zeta_i(v''\cdot v'')-2\zeta_i(v\cdot v'')+\zeta_i(v\cdot v)
\end{align*}
for all $i$, and the inequality is strict for at least one $i$. So, subtracting $\zeta_i(v\cdot v)$ from both sides gives us that
\[\zeta_i(v'\cdot v')-2\zeta_i(v\cdot v')\le \zeta_i(v''\cdot v'')-2\zeta_i(v\cdot v'')\]
for all $i$, and the inequality is still strict for at least one $i$.

So, given a representative vertex in each community, we can determine which of them a given vertex, $v$, is in the same community as without needing to know the value of $\zeta_i(v\cdot v)$ as follows.\vspace{5 mm}

\noindent
{\bf The Vertex-classification-algorithm}. The inputs are $(v[],v', r,r',E,c, \lambda'_1,...,\lambda'_{h''}))$, where $v[]$ is a list of vertices, $v'$ is a vertex, $r,r'$ are positive integers, $E$ is a subset of $G$'s edges, $c$ is a real number between $0$ and $1$, and $(\lambda'_1,...,\lambda'_{h''})$ are real numbers. It is assumed that $z_i(v[\sigma]\cdot v[\sigma] )$ has already been computed for each $i$ and $\sigma$.

The algorithm is supposed to output $\sigma$ such that $v'$ is in the same community as $v[\sigma]$. It works as follows. 

(1) Run {\it Vertex-product-approximation-algorithm($v[\sigma]$,v',r,r',E,c,$(\lambda'_1,...,\lambda'_{h''})$)} for each $\sigma$.

(2) Find a $\sigma$ that minimizes the value of 
\[\max_{\sigma'\ne\sigma, i\le h''}z_i(v[\sigma]\cdot v[\sigma])-2z_i(v[\sigma]\cdot v')- (z_i(v[\sigma']\cdot v[\sigma'])-2z_i(v[\sigma']\cdot v'))\] and conclude that $v'$ is in the same community as $v[\sigma]$.\vspace{15 mm}

This runs fairly quickly if $r$ is large and $r'$ is small because the algorithm only requires focusing on $N_{r'}(v')$ vertices. This leads to the following plan for partial recovery. First, randomly select a set of vertices that is large enough to contain at least one vertex from each community with high probability. Next, compare all of the selected vertices in an attempt to determine which of them are in the same communities. Then, pick one anchor vertex in each community. After that, use the algorithm above to attempt to determine which community each of the remaining vertices is in. As long as there actually was at least one vertex from each community in the initial set and none of the approximations were particularly bad, this should give a reasonably accurate classification. \vspace{5 mm}

\noindent
{\bf The Unreliable-graph-classification-algorithm}. The inputs are $(G,c,m,\epsilon,x, (\lambda'_1,...,\lambda'_{h''}))$, where $G$ is a graph, $c$ is a real number between $0$ and $1$, $m$ is a positive integer, $\epsilon$ is a real number between $0$ and $1$, $x$ is a positive real number, and $(\lambda'_1,...,\lambda'_{h''})$ are real numbers.

The algorithm outputs an alleged list of communities for $G$. It works as follows.

(1) Randomly assign each edge in $G$ to $E$ independently with probability $c$.

(2) Randomly select $m$ vertices in $G$, $v[0],...,v[m-1]$.

(3) Set $r=(1-\frac{\epsilon}{3})\log n/\log ((1-c)\lambda'_1)-\sqrt{\ln n}$ and $r'=\frac{2\epsilon}{3}\cdot \log n/\log((1-c) \lambda'_1)$

(4) Compute $N_{r''[G\backslash E]}(v[i])$ for each $r''<r+2h''+3$ and $0\le i<m$.

(5) Run {\it vertex-comparison-algorithm}(v[i],v[j],r,r',E,x,$(\lambda'_1,...,\lambda'_{h''})$) for every $i$ and $j$

(6) If these give consistent results, randomly select one alleged member of each community $v'[\sigma]$. Otherwise, fail.

(7) For every $v''$ in the graph, compute $N_{r''[G\backslash E]}(v'')$ for each $r''\le r'$. Then, run\newline {\it Vertex-classification-algorithm}(v'[],v'', r,r',E,$(\lambda'_1,...,\lambda'_{h''})$) in order to get a hypothesized classification of $v''$

(8) Return the resulting classification.\\

The risk that this randomly gives a bad classification due to a bad set of initial vertices can be mitigated as follows. First, repeat the previous classification procedure several times. Assuming that the procedure gives a good classification more often than not, the good classifications should comprise a set that contains more than half the classifications and which has fairly little difference between any two elements of the set. Furthermore, any such set would have to contain at least one good classification, so none of its elements could be too bad. So, find such a set and average its classifications together. 

So, the overall {\tt Agnostic-sphere-comparison-algorithm} starts by estimating $PQ$'s eigenvalues. Then, it uses those estimates to pick appropriate values of $x$ and $\epsilon$ for the {\it Unreliable-graph-classification-algorithm}. Finally, it runs it several times and combines the resulting classifications as explained above. The only inputs it requires are the graph itself and some $\delta>0$ such that $p_i\ge \delta$ for all $i$.\vspace{5 mm}

\noindent
{\bf The Reliable-graph-classification-algorithm (i.e., {\tt Agnostic Sphere comparison})}. The inputs are $(G,m,\delta,T(n))$, where $G$ is a graph, $m$ is a positive integer, $\delta$ is a positive real number, and $T$ is a function from the positive integers to itself. 

The algorithm outputs an alleged list of communities for $G$. It works as follows.

(1) Run {\it Improved-Eigenvalue-approximation-algorithm}(.1) in order to compute $(\lambda'_1,...,\lambda'_{h''})$

(2) Let $\lambda''_1=\lambda'_1+2\ln^{-3/2}(n)$, $\lambda''_{h''}=\lambda'_{h''}-2\ln^{-3/2}(n)$, and $k'=\lfloor 1/\delta\rfloor$  

(3) Let $x$ be the smallest rational number of minimal numerator such that 
\[k'(1-\delta)^m+m\cdot 2k'e^{-\frac{x^2(\lambda''_{h''})^2 \delta}{16\lambda''_1(k')^{3/2}((\delta)^{-1/2}+x)}}/\left(1-e^{-\frac{x^2(\lambda''_{h''})^2\delta}{16\lambda''_1(k')^{3/2}((\\delta)^{-1/2}+x)}\cdot((\frac{(\lambda''_{h''})^4}{4(\lambda''_1)^3})-1)}\right)<\frac{1}{2}\]

(4) Let $\epsilon$ be the smallest rational number of the form $\frac{1}{z}$ or $1-\frac{1}{z}$ such that $(2(\lambda''_1)^3/(\lambda''_{h''})^2)^{1-\epsilon/3}<\lambda''_1$ and
$(1+\epsilon/3)>\log(\lambda''_1)/\log ((\lambda''_{h''})^2/2\lambda''_1)$

(5) Let $c$ be the largest unit reciprocal less than $1/9$ such that all of the following hold:
\begin{align*}
&(1-c)(\lambda''_{h''})^4>4(\lambda''_1)^3\\
&(2(1-c)(\lambda''_1)^3/(\lambda''_{h'})^2)^{1-\epsilon/3}<(1-c)\lambda''_1\\
&(1+\epsilon/3)>\log((1-c)\lambda''_1)/\log ((1-c)(\lambda''_{h'})^2/2\lambda''_1)\\
&k'(1-\delta)^m+m\cdot 2k'e^{-\frac{x^2(1-c)(\lambda''_{h'})^2\delta}{16\lambda''_1(k')^{3/2}((\delta)^{-1/2}+x)}}/\left(1-e^{-\frac{x^2(1-c)(\lambda''_{h''})^2\delta}{16\lambda''_1(k')^{3/2}((\delta)^{-1/2}+x)}\cdot((\frac{(1-c)(\lambda''_{h''})^4}{4(\lambda''_1)^3})-1)}\right)<\frac{1}{2}
\end{align*}

(6) Run {\it Unreliable-graph-classification-algorithm}$(G,c,m,\epsilon,x,(\lambda'_1,...,\lambda'_{h''}))$ $T(n)$ times and record the resulting classifications.

(7) Find the smallest $y''$ such that there exists a set of more than half of the classifications no two of which have more than $y''$ disagreement, and discard all classifications not in the set. In this step, define the disagreement between two classifications as the minimum disagreement over all bijections between their communities.

(8) For every vertex in $G$, randomly pick one of the remaining classifications and assert that it is in the community claimed by that classification, where a community from one classification is assumed to correspond to the community it has the greatest overlap with in each other classification.

(9) Return the resulting combined classification.

If the conditions of theorem $2$ are satisfied, then there exists $\delta$ such that $$ \text{{\it Reliable-graph-classification-algorithm}}(G,\ln(4\lfloor 1/\delta\rfloor)/\delta,\delta,\ln n)$$ classifies at least 
\begin{align}
1- \frac{6ke^{-\frac{C \rho^2}{4k}}}{1-e^{-\frac{C\rho^2}{4k}\left(\frac{.9(\lambda_{h'})^2}{\lambda_1^2}\rho^2-1\right)}}
\end{align}
of $G$'s vertices correctly with probability $1-o(1)$ and it runs in $O(n^{1+\epsilon})$ time, for appropriate $C$ and $\rho=\sqrt{\lambda_{h'}^2/4\lambda_1}$.

\section{Appendix}

\subsection{Partial Recovery}\label{partial-sec}
\subsubsection{Formal results}

\begin{theorem}\label{thm1}
For any $\delta>0$, there exists an algorithm ({\tt Agnostic-sphere-comparison}) such that the following holds. Given any $k\in \mathbb{Z}$, $p\in (0,1)^k$ with $|p|=1$, and symmetric matrix $Q$ with no two rows equal, let $\lambda$ be the largest eigenvalue of $PQ$, and $\lambda'$ be the eigenvalue of $PQ$ with the smallest nonzero magnitude. For any $x$, $x'$, and $\epsilon$ such that $x$ is either a unit reciprocal or an integer, $\epsilon$ is a rational number of the form $\frac{1}{z}$ or $1-\frac{1}{z}$, and all of the following hold:
\begin{align*}
&2ke^{-\frac{.9x^2\lambda'^2\min p_i}{16\lambda k^{3/2}((\min p_i)^{-1/2}+x)}}/\left(1-e^{-\frac{.9x^2\lambda'^2\min p_i}{16\lambda k^{3/2}((\min p_i)^{-1/2}+x)}\cdot((\frac{.9\lambda'^4}{4\lambda^3})-1)}\right)<\frac{1}{2}\\
&.9(\lambda'^2/2)^{4}>\lambda^{7}\\
&0<x\le x'<\frac{\lambda k}{\lambda'\min p_i}\\
&(2\lambda^3/\lambda'^2)^{1-\epsilon/3}<\lambda\\
&(1+\epsilon/3)>\log(\lambda)/\log (\lambda'^2/2\lambda)\\
&13(2x'(\min p_j)^{-1/2}+(x')^2)<\min_{\ne0} (w_i(\{v\})- w_i(\{v'\}))\cdot P^{-1}(w_i(\{v\})- w_i(\{v'\}))\\
&\text{Every entry of } Q^k \text{ is positive}\\
&\exists w\in \mathbb{R}^k \text{ such that } QPw=\lambda w, w\cdot Pw=1, \text{ and } x\le \min w_i/2.\\
&\delta\le \min p_i\\
& 8\ln(4\lfloor 1/\delta\rfloor)\lfloor 1/\delta\rfloor e^{-\frac{x^2\lambda'^2\delta}{16\lambda\lfloor 1/\delta\rfloor^{3/2}(\delta^{-1/2}+x)}}/\left(1-e^{-\frac{x^2\lambda'^2\delta}{16\lambda\lfloor 1/\delta\rfloor^{3/2}(\delta^{-1/2}+x)}\cdot((\frac{\lambda'^4}{4\lambda^3})-1)}\right)< \delta\\
&\min p_i>8ke^{-\frac{.9x^{\prime 2}\lambda'^2\min p_i}{16\lambda k^{3/2}((\min p_i)^{-1/2}+x')}}/\left(1-e^{-.9\frac{x^{\prime 2}\lambda'^2\min p_i}{16\lambda k^{3/2}((\min p_i)^{-1/2}+x')}\cdot((\frac{.9\lambda'^4}{4\lambda^3})-1)}\right)
\end{align*}

With probability $1-o(1)$, the algorithm runs in $O(n^{1+\frac{2}{3}\epsilon}\log n)$ time and detects communities in graphs drawn from $\gss(n,p,Q)$ with accuracy at least $1-3y'$ without any input beyond $\delta$ and the graph, where
\[y'=2ke^{-\frac{.9x^{\prime 2}\lambda'^2\min p_i}{16\lambda k^{3/2}((\min p_i)^{-1/2}+x')}}/\left(1-e^{-.9\frac{x^{\prime 2}\lambda'^2\min p_i}{16\lambda k^{3/2}((\min p_i)^{-1/2}+x')}\cdot((\frac{.9\lambda'^4}{4\lambda^3})-1)}\right)\]
\end{theorem}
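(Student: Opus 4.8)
The heuristic derivations around \eqref{Nform} and the sub-algorithms of Section~\ref{details} already supply the skeleton; the proof consists in making every ``$\approx$'' quantitative and assembling the resulting error bounds. \emph{Local step.} Fix a vertex $v$ and condition on the labels. For $r$ up to roughly $(1-\epsilon/3)\log n/\log((1-c)\lambda)$ the ball of radius $r$ around $v$ in $G\backslash E$ has size $n^{1-\Omega(1)}$, so the usual breadth-first exposure couples the community-count vector $N_{r[G\backslash E]}(v)$ with the generations of a multitype Poisson Galton--Watson process of mean operator $(1-c)PQ$, the coupling failing only on an event of probability $n^{-\Omega(1)}$ (a collision during exposure). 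On the good event $N_{r[G\backslash E]}(v)$ equals $((1-c)PQ)^re_{\sigma_v}$ up to a random scalar $W_v$ in the Perron direction (with $\E[W_v\mid\sigma_v]$ a positive linear functional of $e_{\sigma_v}$ but $\Var(W_v)=\Theta(1)$) plus additive fluctuations of order $\sqrt{((1-c)\lambda)^r}$ in the other eigendirections; here $Q^k>0$ makes $PQ$ irreducible, hence $\lambda$ simple with a strictly positive eigenvector, and the normalization hypothesis on $w$ supplies the eigenvector lower bound used below. Since $E$ and $G\backslash E$ agree on any given edge with probability $O(1/n^2)$, conditionally on the two exposed balls $N_{r,r'[E]}(v\cdot v')$ is a sum of essentially independent Bernoulli$(cQ_{\cdot\cdot}/n)$ variables and concentrates around $N_{r[G\backslash E]}(v)^{\top}\tfrac{cQ}{n}N_{r'[G\backslash E]}(v')$ with a $(1\pm n^{-\Omega(\epsilon)})$ error, once $r+r'$ lies in the window forced by $(2\lambda^3/\lambda'^2)^{1-\epsilon/3}<\lambda$ and $(1+\epsilon/3)>\log\lambda/\log(\lambda'^2/2\lambda)$ --- these are exactly what make $((1-c)|\lambda_{h'}|)^{r+r'}=\omega(n)$ (the $\lambda_{h'}$-term is above the noise, even in the degenerate case $\lambda_{h'}=-\lambda_{h'+1}$ handled by the $\max$ over $r,r+1$) while $((1-c)\lambda)^{\max(r,r')}=o(n)$ (the balls do not exhaust the graph). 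Substituting the eigendecomposition yields \eqref{Nform} with controlled error, and the multilinearity/wedge-product argument then gives the two stated regimes for $\det M_{m,r,r'[E]}(v\cdot v')$; the only new point is to carry the errors through the multilinear expansion and check they remain a $(1\pm o(1))$ factor on the dominant product.

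\emph{Eigenvalues, $\zeta$'s, and classification.} From the local step, {\it Improved-Eigenvalue-approximation} returns with probability $1-o(1)$ the correct count $h'$ --- here $.9(\lambda'^2/2)^4>\lambda^7$ guarantees that every $|\lambda_i|$, $i\le h'$, exceeds the cutoff $\approx\lambda^{3/4}$, so the loop stops at $m=h'$ --- and eigenvalue estimates accurate to $O(\ln^{-3/2}n)$, since its ratios depend on $v$ only through the scalar $W_v$, which for all but an $o(1)$ fraction of $v$ is polynomially bounded away from $0$ and $\infty$. Feeding these into {\it Vertex-product-approximation} and propagating, $z_i(v\cdot v')=\zeta_i(v\cdot v')+o(1)$ whenever $v,v'$ are \emph{good}, ``good'' meaning that their $r$- and $r'$-neighborhoods in $G\backslash E$ deviate in no sub-dominant eigendirection by more than the slack permitted by $x,x'$. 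The separation fact is that, because the Perron space is one-dimensional with a strictly positive eigenvector while $e_{\sigma_v}-e_{\sigma_{v'}}$ is a $\pm 1$ indicator difference, $\sum_{i:\,|\lambda_i|<\lambda}\big(\zeta_i(v\cdot v)-2\zeta_i(v\cdot v')+\zeta_i(v'\cdot v')\big)=\sum_{i:\,|\lambda_i|<\lambda}P_{W_i}(e_{\sigma_v}-e_{\sigma_{v'}})\cdot P^{-1}P_{W_i}(e_{\sigma_v}-e_{\sigma_{v'}})$ vanishes iff $\sigma_v=\sigma_{v'}$; the Perron coordinate carries no usable per-vertex information (its realized value is $W_vu$ with $W_v$ non-degenerate), which is exactly why {\it Vertex-comparison} and {\it Vertex-classification} are built to cancel it. Together with $13(2x'(\min p_j)^{-1/2}+(x')^2)<\min_{\ne0}(w_i(\{v\})-w_i(\{v'\}))\cdot P^{-1}(w_i(\{v\})-w_i(\{v'\}))$, good vertices in distinct communities have some coordinate exceeding the algorithm's threshold $5(2x(\min p_j)^{-1/2}+x^2)$ while good vertices in the same community never do, and the monotonicity identity $\zeta_i(v'\cdot v')-2\zeta_i(v\cdot v')\le\zeta_i(v''\cdot v'')-2\zeta_i(v\cdot v'')$ for $\sigma_v=\sigma_{v'}\ne\sigma_{v''}$ is what lets {\it Vertex-classification} run without ever knowing $\zeta_i(v\cdot v)$. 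Hence every good vertex is classified correctly by any run that picked good anchors.

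\emph{Bad fraction, seeds, and consensus.} A large-deviation estimate for the multitype Poisson Galton--Watson process bounds the probability that a uniformly random vertex is not good by $y'$ after optimizing the free parameter $x'$: summing the large-deviation cost of a lopsided generation over radii, with the binding eigendirection being the smallest eigenvalue, produces precisely the exponent $x'^2\lambda'^2\min p_i/(16\lambda k^{3/2}((\min p_i)^{-1/2}+x'))$ and the geometric factor $1/(1-e^{-\cdots(\lambda'^4/4\lambda^3-1)})$ in $y'$. Consequently: (i) in one {\it Unreliable-graph-classification} run the $m=\ln(4\lfloor 1/\delta\rfloor)/\delta$ random seeds contain a good representative of every community except with probability at most $k'(1-\delta)^m+m\cdot(\text{bad-pair probability})<\tfrac12$ --- exactly the hypotheses $8\ln(4\lfloor 1/\delta\rfloor)\lfloor 1/\delta\rfloor e^{-\cdots}/(1-e^{-\cdots})<\delta$ together with $k'(1-\delta)^m\le\tfrac14$ --- and $\min p_i>8ke^{-\cdots}/(1-e^{-\cdots})$ ensures each community really contains a good vertex to be chosen as anchor; given this the run misclassifies only bad vertices, a $\le y'$ fraction, so it is ``successful'' with probability $>\tfrac12$; (ii) over $T(n)=\ln n$ runs a Chernoff bound makes a strict majority successful with probability $1-o(1)$; two successful runs disagree on $\le 2y'$ of the vertices, so in Step~(7) the minimal $y''$ is $\le 2y'$ and the retained set must contain a successful run, whence every retained run --- and the per-vertex random choice of Step~(8), up to $o(1)$ concentration over the independent coins --- agrees with the truth on $\ge 1-3y'$ of the vertices. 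For the running time, Step~(7)–(8) computes $N_{r''[G\backslash E]}(v'')$ for all $v''$ and $r''\le r'=\tfrac{2\epsilon}{3}\log n/\log((1-c)\lambda'_1)$, costing $O(n\cdot n^{2\epsilon/3})$; the $O(1)$ seeds, the eigenvalue routine (run on $\sqrt{\ln n}$ vertices with an $O(n\sqrt{\ln n})$ cap) and the $O(\ln n)$ repetitions only add lower-order terms and a $\log n$ factor, giving $O(n^{1+\frac{2}{3}\epsilon}\log n)$.

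\emph{Main obstacle.} The delicate part is the error propagation through $\det M_{m,r,r'[E]}$ in the local step: the cancellations that expose the sub-dominant eigenvalues amplify relative error, so one must show that within the narrow window for $r,r'$ dictated by the $\epsilon$-conditions the surviving error stays a $(1\pm o(1))$ factor on the relevant term, and simultaneously push the large-deviation probability of a bad neighborhood down to exactly the claimed $e^{-\Omega(\lambda'^2/\lambda)}$-type bound rather than a cruder one --- this is the source of the constants $16$, $k^{3/2}$, $(\min p_i)^{-1/2}$ and the $.9$ slack that pervade the hypotheses.
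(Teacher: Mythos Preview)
Your overall architecture matches the paper's: a chain of lemmas (concentration of $N_{r,r'[E]}$, the Determinant Lemma, eigenvalue estimation, vertex-product approximation, comparison/classification, unreliable classification, then the majority-consensus wrapper) culminating in a one-line invocation of {\it Reliable-graph-classification-algorithm}$(G,\ln(4\lfloor 1/\delta\rfloor)/\delta,\delta,\ln n)$. The paper's proof of the theorem is literally that sentence; the substance is in the preceding lemmas, whose roles you correctly identify.

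There is, however, a genuine gap in your handling of the Perron direction. You assert that under a GW coupling $N_{r[G\backslash E]}(v)$ equals $((1-c)PQ)^re_{\sigma_v}$ ``up to a random scalar $W_v$ in the Perron direction'' with $\Var(W_v)=\Theta(1)$, that this coordinate ``carries no usable per-vertex information'', and that {\it Vertex-comparison} and {\it Vertex-classification} are ``built to cancel it''. None of these is correct. First, neither algorithm cancels $i=1$: {\it Vertex-comparison} tests $\exists i$ with no restriction, and {\it Vertex-classification} maximizes over all $i\le h''$; moreover $\zeta_1(v\cdot v')=P_{W_1}(e_{\sigma_v})\cdot P^{-1}P_{W_1}(e_{\sigma_{v'}})$ genuinely depends on $\sigma_v,\sigma_{v'}$ in the general (asymmetric) model. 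Second, if the Perron component really were $W_v u$ with $W_v$ random, then $z_1(v\cdot v)-2z_1(v\cdot v')+z_1(v'\cdot v')\approx \zeta_1(v\cdot v)\,(W_v-W_{v'})^2$, which is typically nonzero even when $\sigma_v=\sigma_{v'}$, so {\it Vertex-comparison} would spuriously separate same-community pairs. The paper resolves this not by cancellation but by the \emph{definition} of $(R,x)$-good, which requires $|w\cdot N_{r+1}(v)-w\cdot PQN_r(v)|\le \tfrac{x\lambda_{h'}}{2}(\lambda_{h'}^2/2\lambda_1)^r$ for \emph{every} $w$ with $w\cdot Pw=1$, in particular the Perron eigenvector; thus for good vertices every eigencomponent, Perron included, tracks the deterministic $(PQ)^re_{\sigma_v}$ to within the $x$-slack, and the variability you attribute to $W_v$ is absorbed into the bad-vertex fraction bounded by $y'$ (the two lemmas imported from \cite{colin1}). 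Relatedly, your claim $z_i(v\cdot v')=\zeta_i(v\cdot v')+o(1)$ is too strong: the paper's lemma on {\it Vertex-product-approximation} gives only $|z_i-\zeta_i|\le 2x(\min p_j)^{-1/2}+x^2+o(1)$, a constant-order error, which is precisely why the hypothesis $13(2x'(\min p_j)^{-1/2}+(x')^2)<\min_{\ne0}(\cdots)$ and the threshold $5(2x(\min p_j)^{-1/2}+x^2)$ appear. Once you strengthen your notion of ``good'' to match the paper's (control in all eigendirections, not just sub-dominant ones) and replace the $o(1)$ by the correct constant, your sketch coincides with the paper's proof.
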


Considering the way $\delta$, $\epsilon$, $x$, and $x'$ scale when $Q$ is multiplied by a scalar yields the following corollary.

\begin{corollary}\label{partial-delta}
For any $k\in \mathbb{Z}$, $p\in (0,1)^k$ with $|p|=1$, and symmetric  matrix $Q$ with no two rows equal such that $Q^k$ has all positive entries, there exist $\epsilon(c)=O(1/\ln(c))$ such that for all sufficiently large $c$, {\tt Agnostic-sphere-comparison} detects communities in graphs drawn from $\gss(n,p,c Q)$ with accuracy at least $1-e^{-\Omega(c)}$ in $O_n(n^{1+\epsilon(c)})$.
\end{corollary}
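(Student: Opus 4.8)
The plan is to obtain Corollary~\ref{partial-delta} as a direct consequence of Theorem~\ref{thm1}, applied to the model $\gss(n,p,cQ)$ (i.e., with connectivity matrix $cQ$ in place of $Q$), by checking each hypothesis of Theorem~\ref{thm1} and tracking its dependence on the scaling factor $c$. Write $\lambda,\lambda'$ for the largest and smallest-nonzero-magnitude eigenvalues of $PQ$. Then the relevant eigenvalues of $P(cQ)=cPQ$ are $c\lambda$ and $c\lambda'$, while the eigenspaces of $cPQ$, the matrix $P^{-1}=\diag(1/p_i)$, and (after the normalization $w\cdot Pw=1$) the dominant eigenvector $w$ are all the same as for $PQ$. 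The key observation is that the combination $(c\lambda')^2/(c\lambda)=c\,\lambda'^2/\lambda$ scales linearly in $c$, so every exponent appearing in Theorem~\ref{thm1} is multiplied by $c$, whereas each of the upper bounds constraining $x$ and $x'$ is scale-invariant.

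First I would fix the $c$-independent parameters. Take $\delta:=\min_i p_i$ (allowed, since only a lower bound on the $p_i$ is needed, and it does not depend on $c$). Since $P^{-1}$ is positive definite and, because no two rows of $Q$ are equal, for every pair of communities some eigenspace of $PQ$ separates their indicator vectors, the quantity
\[ m_0:=\min_{\ne 0}\bigl(w_i(\{v\})-w_i(\{v'\})\bigr)\cdot P^{-1}\bigl(w_i(\{v\})-w_i(\{v'\})\bigr) \]
is a minimum of finitely many strictly positive reals, hence $m_0>0$, and it is unchanged under $Q\mapsto cQ$. Moreover, since every entry of $Q^k$ is positive, $PQ$ (equivalently $QP$) is a nonnegative primitive matrix, so by Perron--Frobenius its dominant eigenvalue $\lambda$ is simple with a strictly positive eigenvector $w$ (normalized by $w\cdot Pw=1$), and this same $w$ witnesses the corresponding hypothesis for $cQ$. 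Consequently one can pick a single unit-reciprocal value $x=x'>0$, independent of $c$, small enough that $x\le\tfrac12\min_i w_i$, $x<\lambda k/(\lambda'\min_i p_i)$, and $13\bigl(2x(\min_j p_j)^{-1/2}+x^2\bigr)<m_0$. These are precisely the hypotheses of Theorem~\ref{thm1} that do not improve with $c$, and the present choice of $x$ satisfies them for $cQ$ at every $c$.

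Next I would choose $\epsilon$ and verify the remaining hypotheses for all large $c$. The two conditions $(2(c\lambda)^3/(c\lambda')^2)^{1-\epsilon/3}<c\lambda$ and $1+\epsilon/3>\log(c\lambda)/\log((c\lambda')^2/2c\lambda)$, after taking logarithms, each reduce to requiring $\epsilon$ to exceed a quantity that tends to $0$ at rate $1/\ln c$; so one may take $\epsilon=\epsilon(c)$ of the required form $1/z$ with $\epsilon(c)=O(1/\ln c)$. The polynomial condition $.9\bigl((c\lambda')^2/2\bigr)^4>(c\lambda)^7$ is $.9\,c^8(\lambda'^2/2)^4>c^7\lambda^7$, hence holds for all $c$ beyond an absolute constant. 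Every remaining hypothesis — the two ``$<\tfrac12$'' inequalities, the ``$<\delta$'' inequality, and the ``$\min_i p_i>\cdots$'' inequality — has the shape $(\mathrm{const})\cdot e^{-\Theta(c)}/\bigl(1-e^{-\Theta(c^2)}\bigr)<(\text{positive const})$, because each numerator exponent is a negative constant times $x^2(c\lambda')^2\delta/(c\lambda\cdots)=\Theta(c)$, the denominator exponent carries an extra factor $(c\lambda')^4/4(c\lambda)^3-1=c\lambda'^4/4\lambda^3-1$, and this auxiliary positivity (also needed so the geometric-series denominators make sense) holds once $c$ is large; so all of these hold for $c$ sufficiently large.

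With these choices Theorem~\ref{thm1} applies to $\gss(n,p,cQ)$ and guarantees, with probability $1-o(1)$, accuracy at least $1-3y'$ where $y'=2ke^{-\Theta(c)}/(1-e^{-\Theta(c^2)})=e^{-\Omega(c)}$, together with running time $O\bigl(n^{1+\frac23\epsilon(c)}\log n\bigr)=O_n(n^{1+\epsilon(c)})$ (the $\log n$ being absorbed since $\epsilon(c)>0$), with $\epsilon(c)=O(1/\ln c)$; this is the assertion of Corollary~\ref{partial-delta}. I do not expect a genuine obstacle here: the only steps needing real (if short) argument are the strict positivity of $m_0$ and of the entries of $w$ — which is exactly where the hypotheses ``no two rows of $Q$ equal'' and ``$Q^k$ entrywise positive'' enter — and the elementary logarithmic estimate showing the two $\epsilon$-constraints are met by some $\epsilon(c)=O(1/\ln c)$; everything else is bookkeeping of powers of $c$.
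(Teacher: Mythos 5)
Your proposal is correct and follows exactly the route the paper intends: the paper's own proof of Corollary~\ref{partial-delta} is the one-line remark that one should consider how $\delta$, $\epsilon$, $x$, and $x'$ scale when $Q$ is multiplied by a scalar and then invoke Theorem~\ref{thm1}, which is precisely the bookkeeping you carry out (scale-invariance of the eigenspaces, of $m_0$, and of the Perron--Frobenius vector $w$; linear growth of $(c\lambda')^2/(c\lambda)$ in $c$; and the $O(1/\ln c)$ estimate for $\epsilon$). Your write-up simply makes explicit the details the paper leaves to the reader.
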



If instead of having constant average degree, one has an average degree which increases as $n$ increases, one can slowly reduce $b$, $\delta$, and $\epsilon$ as $n$ increases, leading to the following corollary.

\begin{corollary}
For any $k\in \mathbb{Z}$, $p\in [0,1]^k$ with $|p|=1$, symmetric  matrix $Q$ with no two rows equa such that $Q^m$ has all positive entries for sufficiently large $m$l, and $c(n)$ such that $c=\omega(1)$, {\tt Agnostic-sphere-comparison} detects the communities with accuracy $1-o(1)$ in $\gss(n,p,c(n)Q)$ and runs in $o(n^{1+\epsilon})$ time for all $\epsilon>0$.
\end{corollary}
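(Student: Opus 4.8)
The plan is to derive this as the diverging-degree limit of Theorem~\ref{thm1}, applied to the matrix $c(n)Q$ in place of $Q$ and with the auxiliary parameters allowed to depend on $n$; this is the same bookkeeping already carried out for fixed $c$ in Corollary~\ref{partial-delta}. First I would record how the hypotheses scale: if $\lambda,\lambda'$ are the largest and smallest-nonzero-magnitude eigenvalues of $PQ$, then those of $P(c(n)Q)$ are $c(n)\lambda$ and $c(n)\lambda'$, whereas the eigenspaces of $PQ$ — hence all ``geometric'' quantities in the theorem, namely $(w_i(\{v\})-w_i(\{v'\}))\cdot P^{-1}(w_i(\{v\})-w_i(\{v'\}))$, the bound $\lambda k/(\lambda'\min p_i)$, and the top eigenvector normalized by $w\cdot Pw=1$ — are unchanged. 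Accordingly I would fix once and for all $\delta=\min_{i:\,p_i>0}p_i$ (communities with $p_i=0$ receive no vertices and may be ignored) and a small unit reciprocal $x=x'$ satisfying the three scale-invariant hypotheses $13(2x'(\min p_j)^{-1/2}+(x')^2)<\min_{\ne0}(w_i(\{v\})-w_i(\{v'\}))\cdot P^{-1}(w_i(\{v\})-w_i(\{v'\}))$, $x\le\min w_i/2$, and $x'<\lambda k/(\lambda'\min p_i)$; none of these depends on $n$.

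Next I would verify the degree-dependent hypotheses. Under $Q\mapsto c(n)Q$ every exponent occurring in Theorem~\ref{thm1} of the form $\tfrac{x'^2\lambda'^2\min p_i}{16\lambda k^{3/2}((\min p_i)^{-1/2}+x')}$ (and its variants with $\delta$, with the factor $.9$, etc.) becomes $\Theta(c(n))$, and the polynomial SNR conditions $.9(\lambda'^2/2)^4>\lambda^7$ and $.9\lambda'^4/(4\lambda^3)>1$ pick up a net favorable power of $c(n)$; hence all of them hold once $c(n)\ge c_0$. Plugging $c(n)\lambda,c(n)\lambda'$ into $(2\lambda^3/\lambda'^2)^{1-\epsilon/3}<\lambda$ and $(1+\epsilon/3)>\log\lambda/\log(\lambda'^2/2\lambda)$ shows both hold for every $\epsilon\ge\epsilon_0(c(n))$ with $\epsilon_0(c(n))=O(1/\ln c(n))\to0$, and I would note that Step~(4) of the {\tt Reliable-graph-classification-algorithm} already selects such an $\epsilon$ from its own eigenvalue estimates, so $c(n)$ need not be known. (The hypothesis ``$Q^k$ positive'' of Theorem~\ref{thm1} also loosens to ``$Q^m$ positive for some $m$'' here, since the radius $r\asymp\log n/\log c(n)$ used by the algorithm beats any fixed $m$ for $n$ large, so every sphere still meets every community.) Theorem~\ref{thm1} then gives, with probability $1-o(1)$, accuracy at least $1-3y'$ with $y'=e^{-\Omega(c(n))}=o(1)$, in time $O(n^{1+\frac23\epsilon_0(c(n))}\log n)=o(n^{1+\epsilon})$ for every fixed $\epsilon>0$.

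The part I expect to need real care — essentially the only nontrivial point — is \emph{uniformity}: Theorem~\ref{thm1} is stated for a fixed instance, while here the instance $c(n)Q$ drifts with $n$, so I must check that the ``$1-o(1)$'' success probability and the constants hidden in the $O(\cdot)$ and $\Omega(\cdot)$ above stay uniform along the drift. I expect this to go through because every probabilistic estimate in the proof of Theorem~\ref{thm1} (concentration of $|N_r(v)|$, of $N_{r,r'[E]}(v\cdot v')$, and of the eigenvalue and $\zeta$ approximations) only improves as the degree scale grows — the spheres contain more vertices and the relevant spectral gaps widen — so the failure probabilities are dominated by monotone terms like $n^{-\Omega(1)}$ and $e^{-\Omega(\sqrt{\ln n})}$.

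Finally, the one genuine upper constraint is that $r\asymp\log n/\log c(n)$ must remain a usable divergent integer, which is automatic when $c(n)=n^{o(1)}$; for faster-growing degrees I would first independently subsample each edge of $G$ with probability $\min(c(n),\log n)/c(n)$, which produces a sample from $\gss(n,p,\tilde c(n)Q)$ with $\tilde c(n)=\min(c(n),\log n)=\omega(1)$, and apply the argument to $\tilde c(n)$. Since $c(n)=\omega(1)$, one may moreover estimate $\delta$ from the empirical degree sequence (or from a coarse preliminary clustering), so the algorithm ends up taking the graph as its only input.
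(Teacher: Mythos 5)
Your proposal is correct and follows essentially the same route as the paper, which derives this corollary by observing that the parameters $\delta$, $x$, $x'$, $\epsilon$ of Theorem \ref{thm1} can be tracked under the rescaling $Q\mapsto c(n)Q$ and slowly relaxed as $n$ grows (the paper offers only this one-line remark, whereas you flesh out the scaling of each hypothesis, the uniformity issue along the drift, and the subsampling fix for fast-growing $c(n)$). No gaps.
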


These corollaries are important as they show that if the entries of the connectivity matrix $Q$ are amplified by a coefficient growing with $n$, almost exact recovery is achieved by ({\tt Agnostic-sphere-comparison}) without parameter knowledge. 

\subsubsection{Proof of Theorem \ref{thm1}}
Proving Theorem \ref{thm1} will require establishing some terminology. First, let $\lambda_1,...,\lambda_h$ be the distinct eigenvalues of $PQ$, ordered so that $|\lambda_1|\ge|\lambda_2|\ge...\ge|\lambda_h|\ge 0$ and if $|\lambda_i|=|\lambda_{i+1}|$ then $\lambda_i>0>\lambda_{i+1}$. Also define $h'$ so that $h'=h$ if $\lambda_h\ne 0$ and $h'=h-1$ if $\lambda_h=0$. In addition to this, let $d$ be the largest sum of a column of $PQ$.
\begin{definition}
For any graph $G$ drawn from $\gss(n,p,Q)$ and any set of vertices in $G$, $V$, let $\overrightarrow{V}$ be the vector such that $\overrightarrow{V}_i$ is the number of vertices in $V$ that are in community $i$. Define $w_1(V)$, $w_2(V)$, ..., $w_h(V)$ such that $\overrightarrow{V}=\sum w_i(V)$ and $w_i(V)$ is an eigenvector of $PQ$ with eigenvalue $\lambda_i$ for each $i$.
\end{definition}

$w_1(V), ...,w_h(V)$ are well defined because $\mathbb{R}^k$ is the direct sum of $PQ$'s eigenspaces. The key intuition behind their importance is that if $V'$ is the set of vertices adjacent to vertices in $V$ then $\overrightarrow{V'}\approx PQ\overrightarrow{V}$, so $w_i(V')\approx PQ\cdot w_i(V)=\lambda_iw_i(V)$.

\begin{definition}
For any vertex $v$, let $N_r(v)$ be the set of all vertices with shortest path to $v$ of length $r$. If there are multiple graphs that $v$ could be considered a vertex in, let $N_{r[G']}(v)$ be the set of all vertices with shortest paths in $G'$ to $v$ of length $r$.
\end{definition}
We also typically refer to $\overrightarrow{N_{r[G']}(v)}$ as simply $N_{r[G']}(v)$, as the context will make it clear whether the expression refers to a set or vector. 

\begin{definition}
A vertex $v$ of a graph drawn from $\gss(n,p,Q)$ is $(R,x)$-good if for all $0\le r<R$ and $w\in \mathbb{R}^k$ with $w\cdot Pw=1$ \[|w\cdot N_{r+1}(v)-w\cdot PQN_r(v)|\le \frac{x\lambda_{h'}}{2}\left(\frac{\lambda_{h'}^2}{2\lambda_1}\right)^{r}\] and $(R,x)$-bad otherwise.
\end{definition}

Note that since any such $w$ can be written as a linear combination of the $e_i$, $v$ is $(R,x)$-good if $|e_i\cdot N_{r+1}(v)-e_i\cdot PQN_r(v)|\le \frac{x\lambda_{h'}}{2}\left(\frac{\lambda_{h'}^2}{2\lambda_1}\right)^{r}\sqrt{p_i/k}$ for all $1\le i\le k$ and $0\le r<R$.

\begin{lemma}
If $v$ is a $(R,x)$-good vertex of a graph drawn from $\gss(n,p,Q)$, then for every $0\le r\le R$, $|N_r(v)|\le \lambda_1^r\sqrt{k}((\min p_i)^{-1/2}+x)$.
\end{lemma}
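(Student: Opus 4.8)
The plan is to bound $|N_r(v)|$ by induction on $r$, using the $(R,x)$-goodness of $v$ to control the growth of the neighborhood spheres. The key observation is that $|N_r(v)| = \mathbb{1}^t \overrightarrow{N_r(v)}$ where $\mathbb{1}$ is the all-ones vector, and that $\mathbb{1}$ can be written (up to normalization) as a combination of the $e_i$'s — more to the point, we want a bound of the form $w \cdot N_r(v) \lesssim \lambda_1^r$ for every unit vector $w$ (in the $P$-weighted inner product), and then sum over $i$ to get the cardinality. So first I would set up the right inductive quantity: for each $i$, track $|e_i \cdot N_r(v)|$ and show it is at most $\lambda_1^r \sqrt{p_i/k}\,((\min p_j)^{-1/2} + x)$ or something comparable; summing these $k$ bounds and using $\sum_i \sqrt{p_i/k} \le \sqrt{k}$ (Cauchy–Schwarz, since $\sum p_i = 1$) would give the stated $\lambda_1^r \sqrt{k}((\min p_i)^{-1/2}+x)$.

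The inductive step goes as follows. By the goodness hypothesis applied with an appropriate unit vector $w$ (rescaled $e_i$), we have $|e_i \cdot N_{r+1}(v) - e_i \cdot PQ\, N_r(v)| \le \frac{x \lambda_{h'}}{2}\left(\frac{\lambda_{h'}^2}{2\lambda_1}\right)^r \sqrt{p_i/k}$, as noted in the remark following the definition of goodness. The main term $e_i \cdot PQ\, N_r(v)$ is bounded via $\|PQ\|$-type estimates: since $\lambda_1$ is the largest eigenvalue of $PQ$ and (by Perron–Frobenius, as $PQ$ has nonnegative entries and $Q^k$ is irreducible) $\lambda_1 > 0$ dominates, one controls $|e_i \cdot PQ\, N_r(v)|$ by $\lambda_1$ times a bound on the relevant components of $N_r(v)$, i.e. by $\lambda_1 \cdot \lambda_1^r \sqrt{p_i/k}((\min p_j)^{-1/2}+x)$ using the inductive hypothesis (here one must be slightly careful and work in the $P$-weighted inner product where $PQ$ is self-adjoint, so that the operator norm is exactly $\lambda_1$). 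Adding the error term, and observing that $\frac{\lambda_{h'}^2}{2\lambda_1} < \lambda_1$ (a consequence of $\rho = \lambda_{h'}^2/4\lambda_1$ being bounded and the hypotheses like $\lambda^7 < (\lambda')^8$ in Theorem~\ref{thm1}, which force $\lambda_{h'}^2 < 2\lambda_1^2$), the error term is dominated by $\lambda_1^{r+1}$ scale, and the induction closes — the base case $r=0$ being $|N_0(v)| = 1 \le \sqrt{k}((\min p_i)^{-1/2}+x)$.

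The main obstacle I expect is bookkeeping the constants so that the induction actually closes with the \emph{same} constant $((\min p_i)^{-1/2}+x)$ at every step rather than one that degrades geometrically — this requires that the error term contribute a convergent geometric series whose total is absorbed into the slack between $\lambda_1^{r+1}$ and the main term $\lambda_1 \cdot (\text{bound at } r)$, which is exactly why the definition of goodness was tuned with the factor $(\lambda_{h'}^2/2\lambda_1)^r$ decaying strictly faster than $\lambda_1^r$. A secondary subtlety is passing correctly between the Euclidean inner product (in which cardinality is $\mathbb{1}^t \overrightarrow{N_r(v)}$) and the $P$-weighted inner product (in which $PQ$ is symmetric and the eigenspace decomposition is orthogonal); keeping $P^{1/2}$ factors straight here is where an off-by-$\sqrt{\min p_i}$ error would creep in, and it is presumably the reason the $\sqrt{p_i/k}$ weights appear in the goodness definition in the first place.
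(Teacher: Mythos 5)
Your overall skeleton (apply the one-step goodness bound, iterate, absorb the error via a geometric series controlled by $(\lambda_{h'}^2/2\lambda_1)^r\le(\lambda_1/2)^r$, finish with Cauchy--Schwarz) matches the paper, but the inductive quantity you chose does not close. You propose to track $|e_i\cdot N_r(v)|$ for each $i$ with weights $\sqrt{p_i/k}$ and to bound the main term by $|e_i\cdot PQ\,N_r(v)|\le\lambda_1\cdot(\text{bound at }r)$. However $e_i\cdot PQ\,N_r(v)=(QPe_i)\cdot N_r(v)=p_i\sum_j Q_{ij}(N_r(v))_j$ mixes all coordinates, and $e_i$ is not an eigenvector of $QP$. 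What the inductive hypothesis actually gives is the constant $p_i\sum_j Q_{ij}\sqrt{p_j/k}$, and requiring this to be at most $\lambda_1\sqrt{p_i/k}$ amounts to asking that every weighted row sum of the symmetric matrix $\sqrt{P}Q\sqrt{P}$ be at most its spectral radius, i.e.\ that $(\sqrt{p_j})_j$ be its Perron vector --- false for general $(p,Q)$. The operator-norm bound $\lambda_1$ you invoke lives in the $P^{-1}$-weighted $\ell_2$ norm, not coordinatewise, so it cannot be applied to a single coordinate. (The coordinatewise base case also fails: $1\le\sqrt{p_{\sigma_v}/k}\,((\min p_j)^{-1/2}+x)$ need not hold, e.g.\ for uniform $p$ it forces $x\gtrsim\sqrt{k}$.) A minor additional slip: $\lambda_{h'}^2/2\lambda_1\le\lambda_1/2$ follows immediately from $|\lambda_{h'}|\le\lambda_1$; no hypothesis such as $\lambda^7<(\lambda')^8$ is needed there.

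The paper closes the recursion by pairing $N_r(v)$ against genuine eigenvectors of $PQ$: writing $p=\sum_i w_i$ with $PQw_i=\lambda_iw_i$, one has $(P^{-1}w_i)\cdot PQ\,N_r(v)=\lambda_i\,(P^{-1}w_i)\cdot N_r(v)$ exactly, so each scalar sequence $(P^{-1}w_i)\cdot N_r(v)$ satisfies a clean recursion with ratio $|\lambda_i|\le\lambda_1$ plus the goodness error; then $|N_r(v)|=(P^{-1}p)\cdot N_r(v)=\sum_i(P^{-1}w_i)\cdot N_r(v)$, the $w_i$ are $P^{-1}$-orthogonal with $\sum_i w_i\cdot P^{-1}w_i=p\cdot P^{-1}p=1$, and Cauchy--Schwarz over the at most $k$ eigenspaces produces the $\sqrt{k}$. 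To repair your argument, either adopt that decomposition, or track the single scalar $\|N_r(v)\|_{P^{-1}}:=\sqrt{N_r(v)\cdot P^{-1}N_r(v)}$, for which goodness gives $\|N_{r+1}(v)-PQ\,N_r(v)\|_{P^{-1}}\le\tfrac{x\lambda_{h'}}{2}(\lambda_{h'}^2/2\lambda_1)^r$ and self-adjointness of $PQ$ in this inner product gives $\|PQ\,N_r(v)\|_{P^{-1}}\le\lambda_1\|N_r(v)\|_{P^{-1}}$; one then concludes with $|N_r(v)|=\langle p,N_r(v)\rangle_{P^{-1}}\le\|p\|_{P^{-1}}\|N_r(v)\|_{P^{-1}}=\|N_r(v)\|_{P^{-1}}$.
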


\begin{proof}
First, note that for any eigenvector of $PQ$, $w$, and $r<R$, \[|(P^{-1}w)\cdot N_{r+1}(v)-(P^{-1}w)\cdot PQN_r(v)|\le \frac{x\lambda_{h'}}{2}\left(\frac{\lambda_{h'}^2}{2\lambda_1}\right)^{r}\sqrt{w\cdot P^{-1}w}\] So, by the triangle inequality,
\begin{align*}
|(P^{-1}w)\cdot N_{r+1}(v)|&\le |(P^{-1} PQw)\cdot N_r(v)|+\frac{x\lambda_{h'}}{2}\left(\frac{\lambda_{h'}^2}{2\lambda_1}\right)^{r}\sqrt{w\cdot P^{-1}w}\\
&\le \lambda_1|(P^{-1}w)\cdot N_r(v)|+x\left(\frac{\lambda_1}{2}\right)^{r+1}\sqrt{w\cdot P^{-1}w}
\end{align*}

Thus, for any $r\le R$, it must be the case that 
\begin{align*}
|(P^{-1}w)\cdot N_r(v)|&\le \lambda_1^r|(P^{-1}w)\cdot N_0(v)|+\sum_{r'=1}^{r} \lambda_1^{r-r'}\cdot x\left(\frac{\lambda_1}{2}\right)^{r'}\sqrt{w\cdot P^{-1}w}\\
&\le \lambda_1^r\left (|w_{\sigma_v}/p_{\sigma_v}|+x\sqrt{w\cdot P^{-1}w}\right)
\end{align*}

Now, define $w_1$,..., $w_h$ such that $PQw_i=\lambda_iw_i$ for each $i$ and $p=\sum_{i=1}^h w_i$. For any $i,j$, 
\begin{align*}
\lambda_iw_i\cdot P^{-1}w_j&=(PQw_i)\cdot P^{-1}w_j\\
&=w_i\cdot P^{-1}PQw_j\\
&=\lambda_jw_i\cdot P^{-1}w_j
\end{align*}
If $i\ne j$, then $\lambda_i\ne \lambda_j$, so this implies that $w_i\cdot P^{-1} w_j=0$. It follows from this that
\begin{align*}
\sum_i w_i\cdot P^{-1} w_i&=\sum_{i,j} w_i\cdot P^{-1}w_j\\
&= \left(\sum_i w_i\right)\cdot P^{-1}\left(\sum_j w_j\right)\\
&=p\cdot P^{-1} p=1
\end{align*}

Also, for any $i$, it is the case that 
\[|(w_i)_{\sigma_v}/p_{\sigma_v}|\le \sqrt{(w_i)_{\sigma_v}\cdot p^{-1}_{\sigma_v}\cdot (w_i)_{\sigma_v}}/\sqrt{p_{\sigma_v}}\le (\min p_i)^{-1/2}\sqrt{w_i\cdot P^{-1}w_i}\]

Therefore, for any $r\le R$, we have that
\begin{align*}
|N_r(v)|&=|(P^{-1}p)\cdot N_r(v)|\\
&\le \sum_i |(P^{-1} w_i)\cdot N_r(v)|\\
&\le \lambda_1^r\sum_i |(w_i)_{\sigma_v}/p_{\sigma_v}|+\lambda_1^r x\sum_i \sqrt{w_i\cdot P^{-1}w_i}\\
&\le \lambda_1^r\sqrt{k}((\min p_i)^{-1/2}+x)
\end{align*}
\end{proof}
The following two lemmas are proved in \cite{colin1}.

\begin{lemma}
Let $k\in \mathbb{Z}$, $p\in (0,1)^k$ with $|p|=1$, $Q$ be a symmetric matrix such that $\lambda_{h'}^4>4\lambda_1^3$, and $0<x<\frac{\lambda_1k}{\lambda_{h'}\min p_i}$. Then there exists \[ y<2ke^{-\frac{x^2\lambda_{h'}^2\min p_i}{16\lambda_1 k^{3/2}((\min p_i)^{-1/2}+x)}}/\left(1-e^{-\frac{x^2\lambda_{h'}^2\min p_i}{16\lambda_1k^{3/2}((\min p_i)^{-1/2}+x)}\cdot((\frac{\lambda_{h'}^4}{4\lambda_1^3})-1)}\right)\] and $R(n)= \omega(1)$  such that at least $1-y$ of the vertices of a graph drawn from $\gss(n,p,Q)$ are $(R(n),x)$-good with probability $1-o(1)$.
\end{lemma}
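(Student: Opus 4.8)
The plan is to first bound, for a single fixed vertex $v$, the probability that $v$ is $(R(n),x)$-bad by a constant $y_0$ lying strictly below the displayed bound, and then deduce that at most a $y$ fraction of vertices are bad for a fixed $y$ with $y_0<y<{}$(displayed bound).

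For the per-vertex bound I would explore $v$'s neighborhood one layer at a time: having revealed $N_0(v)=\{v\},N_1(v),\dots,N_r(v)$ together with all edges incident to the ball $B_r(v):=\bigcup_{s\le r}N_s(v)$, let $\ft_r$ be the resulting $\sigma$-algebra. Conditionally on $\ft_r$, an as-yet-unrevealed vertex $u$ in community $i$ lands in $N_{r+1}(v)$ iff it has an edge to $N_r(v)$, and these events are independent over such $u$ with probability $1-\prod_{w\in N_r(v)}(1-Q_{i,\sigma_w}/n)=\frac{1}{n}(Q\,\overrightarrow{N_r(v)})_i+O(|N_r(v)|^2/n^2)$; since $p_in-O(|B_r(v)|)$ community-$i$ vertices remain unrevealed, this gives $\E[\,\overrightarrow{N_{r+1}(v)}\mid\ft_r\,]=PQ\,\overrightarrow{N_r(v)}+O(|B_r(v)|^2/n)$. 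Thus, once $|B_r(v)|=n^{o(1)}$, the next layer equals, in conditional mean, $PQ$ applied to the current one, so I would fix any $R(n)=\omega(1)$ that also satisfies $R(n)=o(\log n/\log d)$ (with $d$ the largest column sum of $PQ$; note that $\lambda_{h'}^4>4\lambda_1^3$ forces $\lambda_1>4$), and carry out the rest of the argument up to the stopping time ``first layer at which $v$ fails goodness'', on which the preceding lemma yields $|N_r(v)|\le\lambda_1^r\sqrt k((\min p_i)^{-1/2}+x)$, hence $|B_{R(n)}(v)|=n^{o(1)}$.

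Next, conditionally on $\ft_r$ and on $v$ having stayed good through layer $r$, the coordinate $e_i\cdot\overrightarrow{N_{r+1}(v)}$ is a sum of independent $\{0,1\}$ variables with conditional mean $O(\lambda_1^{r+1})$, so Bernstein's inequality bounds the probability that it differs from its mean by more than $\frac{x\lambda_{h'}}{2}\big(\frac{\lambda_{h'}^2}{2\lambda_1}\big)^r\sqrt{p_i/k}$ by $2\exp\!\big(-A(\lambda_{h'}^4/4\lambda_1^3)^r\big)$, where $A=\frac{x^2\lambda_{h'}^2\min p_i}{16\lambda_1k^{3/2}((\min p_i)^{-1/2}+x)}$. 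Here the point is that $\lambda_{h'}^4>4\lambda_1^3$ is exactly what makes $\big(\frac{x\lambda_{h'}}{2}(\frac{\lambda_{h'}^2}{2\lambda_1})^r\big)^2/\lambda_1^{r+1}$ grow geometrically in $r$ with ratio $\rho:=\lambda_{h'}^4/4\lambda_1^3>1$, while $0<x<\lambda_1k/(\lambda_{h'}\min p_i)$ keeps the deviation inside the Bernstein regime. Reducing arbitrary $w$ with $w\cdot Pw=1$ to the directions $e_i$ as noted right after the definition of $(R,x)$-goodness, and union-bounding over $i\in[k]$, the two tails, and $0\le r<R(n)$, using $\rho^r\ge1+r(\rho-1)$ (strict for $r\ge2$), I get $\pp[v\text{ is }(R(n),x)\text{-bad}]<2k\sum_{r\ge0}e^{-A\rho^r}\le\frac{2k\,e^{-A}}{1-e^{-A(\rho-1)}}$ with strict inequality, so after absorbing the $o(1)$ exploration errors the per-vertex bad probability is some constant $y_0$ strictly below the displayed expression.

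To pass from one vertex to all of them, let $B$ denote the fraction of $(R(n),x)$-bad vertices, so $\E[B]\le y_0$. For $u,v$ at graph-distance greater than $2R(n)+2$, the events ``$u$ bad'' and ``$v$ bad'' are determined by disjoint sets of edges and by the i.i.d.\ community labels on disjoint vertex sets, hence independent; since all but $n^{1+o(1)}=o(n^2)$ ordered pairs are that far apart, $\Var(B)=o(1)$, and Chebyshev's inequality gives $B\le y$ with probability $1-o(1)$ for a fixed $y$ with $y_0<y<{}$(displayed bound). The step I expect to be the main obstacle is the Bernstein bookkeeping: pinning down the constants (the $16$, the $k^{3/2}$, and the exact way $(\min p_i)^{-1/2}+x$ enters through the bound on $|N_r(v)|$) and checking that the accumulated lower-order errors of the exploration --- the $\prod(1-Q/n)$ versus $\sum Q/n$ and the $p_in$ versus $p_in-O(|B_r(v)|)$ approximations --- stay below $\frac{x\lambda_{h'}}{2}(\frac{\lambda_{h'}^2}{2\lambda_1})^r$ simultaneously for every $r<R(n)$.
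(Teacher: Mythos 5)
The paper does not actually prove this lemma in the text --- it is quoted from \cite{colin1} --- but your reconstruction is the standard argument and matches what that proof does: a layer-by-layer exploration with a per-layer Bernstein/Chernoff bound whose exponent $A\rho^r$, $\rho=\lambda_{h'}^4/4\lambda_1^3>1$, is summed via $\rho^r\ge 1+r(\rho-1)$ to give exactly the displayed expression, followed by a second-moment argument to pass from the per-vertex bound to the fraction of bad vertices. The one step stated too loosely is the independence claim for distant vertices: ``$u$ bad'' and ``$v$ bad'' are not literally independent conditioned on the (random) event that $u,v$ are far apart; the standard repair is to observe that the two exploration balls, each of size $n^{o(1)}$, are disjoint except with probability $o(1)$ and are conditionally independent on that event, which still yields $\Var(B)=o(1)$ and hence the conclusion by Chebyshev.
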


\begin{lemma}
Let $k\in \mathbb{Z}$, $p\in (0,1)^k$ with $|p|=1$, $Q$ be a symmetric matrix such that $\lambda_{h'}^4>4\lambda_1^3$, $R(n)=\omega(1)$, and $\epsilon>0$ such that $(2\lambda_1^3/\lambda_{h'}^2)^{1-\epsilon/3}<\lambda_1$. A vertex of a graph drawn from $G(p, Q, n)$ is $(R(n),x)$-good but $(\frac{1-\epsilon/3}{\ln\lambda_1}\ln n,x)$-bad with probability $o(1)$.
\end{lemma}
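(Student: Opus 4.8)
The plan is a union bound over the radii $r\in\{R(n),\dots,\rho(n)-1\}$, where $\rho(n):=\frac{1-\epsilon/3}{\ln\lambda_1}\ln n=(1-\epsilon/3)\log_{\lambda_1}n$, combined with a Bernstein-type estimate at each radius; here $\lambda_1$ is the top eigenvalue of $PQ$ and $\lambda_{h'}$ the nonzero one of least magnitude, and note that $\lambda_{h'}\le\lambda_1$ together with the hypothesis $\lambda_{h'}^4>4\lambda_1^3$ forces $\lambda_1>4$. By exchangeability it suffices to bound, for a fixed vertex $v$, the probability of the event $\mathcal B$ that $v$ is $(R(n),x)$-good yet fails the goodness test at some level $r\le\rho(n)-1$; writing $r^\star$ for the first failing level, on $\mathcal B$ the vertex passes at every level $<r^\star$, so $\Pr[\mathcal B]\le\sum_{r=R(n)}^{\rho(n)-1}\Pr[v\text{ fails at level }r\mid v\text{ passes at all levels }<r]$, and it is enough to show each term is at most $2k\exp(-c\,\kappa^{r})$ with $c>0$ a constant and $\kappa:=\lambda_{h'}^4/(4\lambda_1^3)>1$.

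The per-level estimate comes from exposing the breadth-first search around $v$: revealing the edges needed to determine $N_{\le r}(v)$ leaves the edges from $N_r(v)$ to the vertices outside $N_{\le r}(v)$ fresh, so conditionally each $u\notin N_{\le r}(v)$ with $\sigma_u=i$ joins $N_{r+1}(v)$ independently with probability $1-\prod_{w\in N_r(v)}(1-Q_{i\sigma_w}/n)$, and $e_i\cdot N_{r+1}(v)$ is a sum of independent Bernoulli variables with conditional mean $\mu_i$. One checks that $\mu_i=e_i\cdot PQ\,N_r(v)+(\text{error})$ where the error — coming from the vertices already consumed by the search, the second-order terms in $1-\prod(1-Q/n)$, and the fluctuation of the community sizes around $np$ — is dominated, by a polynomial factor in $n$, by the goodness tolerance $\asymp(\lambda_{h'}^2/2\lambda_1)^r$ for every $r\le\rho(n)$; this is exactly where the hypothesis $(2\lambda_1^3/\lambda_{h'}^2)^{1-\epsilon/3}<\lambda_1$ enters. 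Also $\mu_i=O(\lambda_1^r)$ by the upper bound on $|N_r(v)|$ proved above. Since the goodness test at level $r$ is, by the remark following its definition, the requirement $|e_i\cdot N_{r+1}(v)-e_i\cdot PQ\,N_r(v)|\le\frac{x\lambda_{h'}}{2}(\frac{\lambda_{h'}^2}{2\lambda_1})^r\sqrt{p_i/k}$ for all $i$, failing it at level $r$ forces $|e_i\cdot N_{r+1}(v)-\mu_i|$ to exceed some $t\asymp(\lambda_{h'}^2/2\lambda_1)^r$ for some $i$; Bernstein's inequality bounds this by $2\exp(-\Omega(t^2/(\mu_i+t)))$, and using $\mu_i=O(\lambda_1^r)$ together with $\lambda_{h'}^2/2\lambda_1>\kappa>1$ one gets $t^2/(\mu_i+t)=\Omega(\kappa^r)$ whether $\mu_i\gtrsim t$ (so $t^2/(\mu_i+t)\gtrsim t^2/\mu_i\asymp(\lambda_{h'}^2/2\lambda_1)^{2r}/\lambda_1^r=\kappa^r$) or $\mu_i\lesssim t$ (so $t^2/(\mu_i+t)\gtrsim t\asymp(\lambda_{h'}^2/2\lambda_1)^r\ge\kappa^r$); a union over $i\in[k]$ yields the claimed $2k\exp(-c\,\kappa^r)$.

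Summing, $\sum_{r\ge R(n)}2k\exp(-c\,\kappa^r)\le 2k\exp(-c\,\kappa^{R(n)})\sum_{j\ge0}\exp(-c\,\kappa^{R(n)}(\kappa^{j}-1))=O\!\big(\exp(-c\,\kappa^{R(n)})\big)$, which is $o(1)$ because $\kappa>1$ and $R(n)=\omega(1)$ make $\kappa^{R(n)}\to\infty$; hence $\Pr[\mathcal B]=o(1)$, as desired. The cutoff at $\rho(n)=(1-\epsilon/3)\log_{\lambda_1}n$ is exactly what keeps $|N_{\le r}(v)|=O(n^{1-\epsilon/3})$ sublinear for all relevant $r$, so that the exposure argument and the mean estimate remain valid throughout the range.

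The main obstacle is the clean control of the conditional mean, $\mu_i=e_i\cdot PQ\,N_r(v)+(\text{error})$ with $\mu_i=O(\lambda_1^r)$ and the error polynomially below the goodness tolerance, \emph{simultaneously} for all $r\le\rho(n)$, so that the Bernstein exponent is genuinely $\Omega(\kappa^r)$ over the whole range; verifying that the hypothesis $(2\lambda_1^3/\lambda_{h'}^2)^{1-\epsilon/3}<\lambda_1$ is precisely the assertion that the several lower-order contributions to the mean stay polynomially below the tolerance all the way to radius $\rho(n)$, and that $\lambda_{h'}^4>4\lambda_1^3$ is precisely what makes $\kappa>1$, are the two places where the hypotheses are used; the remainder is the routine union bound above.
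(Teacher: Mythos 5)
The paper does not actually prove this lemma; it is one of the ``two lemmas proved in [colin1]'' and is cited without proof, so there is no in-paper argument to compare against. Your proposal is sound and is evidently the intended route: the level-by-level exposure of the breadth-first neighborhood, the Bernstein exponent $t^2/(\mu+t)=\Omega(\kappa^r)$ with $\kappa=\lambda_{h'}^4/(4\lambda_1^3)>1$, and the doubly-exponentially decaying sum starting at $r=R(n)=\omega(1)$ match exactly the form of the bound in the companion lemma (whose denominator features $e^{-(\cdot)(\lambda_{h'}^4/(4\lambda_1^3)-1)}$, i.e.\ your $\kappa-1$), and your identification of $(2\lambda_1^3/\lambda_{h'}^2)^{1-\epsilon/3}<\lambda_1$ as precisely the condition keeping the saturation error $\lambda_1^{2r}/n$ polynomially below the tolerance at $r=\rho(n)$ is correct. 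The only loose end is the community-size fluctuation term, of order $\lambda_1^{r}\sqrt{\log n/n}$, which sits below the tolerance only when $(2\lambda_1^2/\lambda_{h'}^2)^{\rho(n)}=o(\sqrt{n})$; your hypotheses give $(2\lambda_1^2/\lambda_{h'}^2)^{\rho(n)}<n^{\epsilon/3}$, so this is automatic for the small $\epsilon$ used in all applications but would deserve a sentence if $\epsilon$ were allowed to approach $3/2$.
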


\begin{definition}
For any vertices $v, v'\in G$, $r,r'\in \mathbb{Z}$, and subset of $G$'s edges $E$, let $N_{r,r'[E]}(v\cdot v')$ be the number of pairs of vertices $(v_1,v_2)$ such that $v_1\in N_{r[G\backslash E]}(v)$, $v_2\in N_{r'[G\backslash E]}(v')$, and $(v_1,v_2)\in E$.
\end{definition}

Note that if $N_{r[G\backslash E]}(v)$ and $N_{r'[G\backslash E]}(v')$ have already been computed, $N_{r,r'[E]}(v\cdot v')$ can be computed by means of the following algorithm, where $E[v]=\{v':(v,v')\in E\}$
\begin{algorithm}
Compute-$N_{r,r'[E]}(v\cdot v')$:

for $v_1\in N_{r'[G\backslash E]}(v')$:

$\phantom{xxx}$ for $v_2\in E[v_1]:$

$\phantom{xxxxxx}$ if $v_2\in N_{r[G\backslash E]}(v):$

$\phantom{xxxxxxxxx}$ count=count+1

return count
\end{algorithm}

Note that this runs in $O((d+1)|N_{r'[G\backslash E]}(v')|)$ average time. The plan is to independently put each edge in $G$ in $E$ with probability $c$. Then the probability distribution of $G\backslash E$ will be $\gss(n,p,(1-c)Q)$, so $N_{r[G\backslash E]}(v)\approx ((1-c)PQ)^re_{\sigma_v}$ and $N_{r'[G\backslash E]}(v')\approx ((1-c)PQ)^{r'}e_{\sigma_{v'}}$. So, it will hopefully be the case that \[N_{r,r'[E]}(v\cdot v')\approx ((1-c)PQ)^re_{\sigma_v}\cdot cQ((1-c)PQ)^{r'}e_{\sigma_{v'}}/n= c(1-c)^{r+r'} e_{\sigma_v}\cdot Q(PQ)^{r+r'}e_{\sigma_{v'}}/n.\] More rigorously, we have that:

\begin{lemma}
Choose $p$, $Q$, $G$ drawn from $\gss(n,p,Q)$, $E$ randomly selected from $G$'s edges such that each of $G$'s edges is independently assigned to $E$ with probability $c$, and $v,v'\in G$ chosen independently from $G$'s vertices. Then with probability $1-o(1)$, \[|N_{r,r'[E]}(v\cdot v')-N_{r[G\backslash E]}(v)\cdot cQN_{r'[G\backslash E]}(v')/n|<(1+\sqrt{|N_{r[G\backslash E]}(v)|\cdot |N_{r'[G\backslash E]}(v')|/n})\log n\]
\end{lemma}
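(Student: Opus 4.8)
The plan is to condition on $G\backslash E$ (equivalently, on the sets $N_{r[G\backslash E]}(v)$ and $N_{r'[G\backslash E]}(v')$), and to exploit the near-independence of $E$ from $G\backslash E$ noted in the text: since $G$ is drawn from $\gss(n,p,Q)$ with edge probabilities $O(1/n)$, the event that a pair $(v_1,v_2)$ carries an edge in both $E$ and $G\backslash E$ has probability $O(1/n^2)$, so up to an $o(1)$-probability correction we may treat each pair $(v_1,v_2)$ with $v_1\in N_{r[G\backslash E]}(v)$, $v_2\in N_{r'[G\backslash E]}(v')$ (and $v_1\ne v_2$) as being in $E$ independently with probability $cQ_{\sigma_{v_1},\sigma_{v_2}}/n$, conditionally on $G\backslash E$. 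Then $N_{r,r'[E]}(v\cdot v')$ is (essentially) a sum of independent Bernoulli variables, with conditional mean
\[
\mu := \sum_{v_1\in N_{r[G\backslash E]}(v),\, v_2\in N_{r'[G\backslash E]}(v')} \frac{cQ_{\sigma_{v_1},\sigma_{v_2}}}{n} = N_{r[G\backslash E]}(v)\cdot \frac{cQ}{n} N_{r'[G\backslash E]}(v'),
\]
using the vector interpretation of $N_{r[\cdot]}(\cdot)$ in the last equality. This identifies the centering term in the claimed bound.

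Next I would apply a concentration inequality to the conditional sum. Writing $S = N_{r,r'[E]}(v\cdot v')$, we have $\Var(S\mid G\backslash E)\le \mu \le \|Q\|_\infty c\,|N_{r[G\backslash E]}(v)|\cdot|N_{r'[G\backslash E]}(v')|/n$, so Bernstein's (or Bennett's) inequality gives, for a suitable threshold $t$,
\[
\pp\big(|S-\mu| > t \ \big|\ G\backslash E\big) \le 2\exp\!\left(-\frac{t^2}{2(\mu + t/3)}\right).
\]
Choosing $t = (1+\sqrt{|N_{r[G\backslash E]}(v)|\cdot|N_{r'[G\backslash E]}(v')|/n})\log n$ makes this probability $n^{-\omega(1)}$: indeed $t^2 \gtrsim \log^2 n$ always, and when $\mu$ is the dominant term in the denominator one has $\mu = O(|N_r|\cdot|N_{r'}|/n)$, so $t^2/\mu \gtrsim \log^2 n$ as well (the ``$1+$'' handles the regime where $\mu = o(1)$, where the additive $\log n$ slack alone suffices and $S$ is exponentially unlikely to exceed it). Undoing the conditioning and adding the $O(1/n^2)$-per-pair coupling error between $E$ and $G\backslash E$ — which contributes at most $O(|N_r|\cdot|N_{r'}|/n^2) = o(1)$ in expectation and hence with probability $1-o(1)$ — yields the lemma.

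The main obstacle is bounding $|N_{r[G\backslash E]}(v)|$ and $|N_{r'[G\backslash E]}(v')|$ so that the error term genuinely is small relative to the signal in later applications: a priori these neighborhood sizes could be atypically large for a bad choice of $v,v'$, which would both inflate $\mu$ and weaken the deviation bound. This is precisely where the $(R,x)$-goodness machinery enters — by the Lemma bounding $|N_r(v)|\le \lambda_1^r\sqrt{k}((\min p_i)^{-1/2}+x)$ for good vertices, together with the earlier lemma that all but a $y$-fraction of vertices are good with probability $1-o(1)$, a randomly chosen $v$ (and independently $v'$) is good with probability $1-o(1)$, so we may assume the neighborhood-size control holds. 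A secondary technical point is making the ``treat distinct pairs as independent Bernoullis'' step rigorous: the edges of $E$ are mutually independent by construction, but the identities of which vertices lie in $N_{r[G\backslash E]}(v)$ depend on $G\backslash E$, which is why conditioning on $G\backslash E$ first (rather than on $G$) is essential, and why one must separately account for the rare pairs doubly-covered by $E$ and $G\backslash E$.
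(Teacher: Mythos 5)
Your proposal is correct and follows essentially the same route as the paper: condition on $G\backslash E$, observe that membership in $E$ is (nearly) independent across distinct pairs with per-pair probability $cQ_{\sigma_{v_1},\sigma_{v_2}}/n$, identify the conditional mean with $N_{r[G\backslash E]}(v)\cdot cQN_{r'[G\backslash E]}(v')/n$ up to an $O(1)$ correction, and conclude by concentration since the variance is $O(|N_{r[G\backslash E]}(v)|\cdot|N_{r'[G\backslash E]}(v')|/n)$. The only differences are cosmetic — you invoke Bernstein where the paper is content with Chebyshev ($\log n$ standard deviations), and your appeal to $(R,x)$-goodness is unnecessary here since the lemma's error bound is already stated in terms of the neighborhood sizes themselves.
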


\begin{proof}
Roughly speaking, for each $v_1\in N_{r[G\backslash E]}(v)$ and $v_2\in N_{r'[G\backslash E]}(v')$, $(v_1,v_2)\in E$ with probability $cQ_{\sigma_{v_1},\sigma_{v_2}}/n$. This is complicated by the facts that $(v_1,v_1)$ is never in $E$ and no edge is in $G\backslash E$ and $E$. However, this changes the expected value of $N_{r,r'[E]}(v\cdot v')$ given $G\backslash E$ by at most a constant unless $G$ has more than double its expected number of edges, something that happens with probability $o(1)$. Furthermore, whether $(v_1,v_2)$ is in $E$ is independent of whether $(v_1',v_2')$ is in $E$ unless $(v_1',v_2')=(v_1,v_2)$ or $(v_1',v_2')=(v_2,v_1)$. So, the variance of  $N_{r,r'[E]}(v\cdot v')$ is proportional to its expected value, which is $$O(|N_{r[G\backslash E]}(v)|\cdot |N_{r'[G\backslash E]}(v')|/n).$$ $N_{r,r'[E]}(v\cdot v')$ is within $\log n$ standard deviations of its expected value with probability $1-o(1)$, which completes the proof.
\end{proof}

Note that if $\overrightarrow{v}$ is an eigenvector of $(1-c)PQ$, $\sqrt{P}Q\overrightarrow{v}$ is an eigenvector of the symmetric matrix $(1-c)\sqrt{P}Q\sqrt{P}$. So, since eigenvectors of a symmetric matrix with different eigenvalues are orthogonal, we have \[N_{r[G\backslash E]}(v)\cdot cQN_{r'[G\backslash E]}(v')/n=\frac{c}{n} \sum_i w_i(N_{r[G\backslash E]}(v))\cdot Qw_i(N_{r'[G\backslash E]}(v'))\] 

\begin{lemma}[Determinant Lemma] 
Let $0<c<1$, $x>0$, $G$ be drawn from $\gss(n,p,Q)$, $E$ be a subset of $G$'s edges that independently contains each edge with probability $c$, and $m\in\mathbb{Z}^+$. For any $v,v'\in G$ and $r\ge r'\in \mathbb{Z}^+$, such that $((1-c)\lambda_{h'}^2/2)^{r+r'}>\lambda_1^{r+r'}n$ let $M_{m,r,r'[E]}(v\cdot v')$ be the $m\times m$ matrix such that $M_{m,r,r'[E]}(v\cdot v')_{i,j}=N_{r+i+j,r'[E]}(v\cdot v')$ for each $i$ and $j$. There exist $\gamma=\gamma((1-c)\lambda_i,m)$ and $\gamma'=\gamma'((1-c)\lambda_i,m)$ such that $\gamma$ is nonzero and for any $r,r'$, and vertices $v,v'\in G$, then with probability $1-o(1)$, either $v$ is $(r+2m+1,x)$-bad, $v'$ is $(r'+1,x)$-bad, or 
\begin{align*}
&|\det(M_{m,r,r'[E]}(v\cdot v'))-\frac{c^m}{n^m}\prod_{i=1}^{m-1}  w_i(N_{r[G\backslash E]}(v))\cdot Qw_i(N_{r'[G\backslash E]}(v'))\\
&\indent \cdot (\gamma w_m(N_{r[G\backslash E]}(v))\cdot Qw_m(N_{r'[G\backslash E]}(v'))+\gamma'w_{m+1}(N_{r[G\backslash E]}(v))\cdot Qw_{m+1}(N_{r'[G\backslash E]}(v')))|\\
&\le \frac{c^m}{n^m}\ln^{m+1} n(1-c)^{m(r+r')}|\lambda_{m+2}|^{r+r'}\prod_{i=1}^{m-1} |\lambda_i|^{r+r'}\\
&\indent\indent + \frac{c^m}{n^m}\ln^{m+1} n(1-c)^{m(r+r')}|\lambda_m|^{r+r'}|\lambda_{m+1}|^{r+r'}\prod_{i=1}^{m-2} |\lambda_i|^{r+r'}
\end{align*} where we temporarily adopt the convention that if $i>h'$, $\lambda_i=\lambda_{h'}/\sqrt{2}$ and $w_i(S)=0$ for all $S$.

Alternately, if $m=h'+1$ then with probability $1-o(1)$, either $v$ is $(r+2m+1,x)$-bad, $v'$ is $(r'+1,x)$-bad, or 
\begin{align*}
&|\det(M_{m,r,r'[E]}(v\cdot v'))|\\
&\le \frac{c^{h'+1}}{n^{h'+1}}\log^2(n)(1-c)^{h'(r+r')}\prod_{i=1}^{h'} |\lambda_i|^{r+r'}\left(\frac{((1-c)\lambda_1)^{2r}}{n}+((1-c)\lambda_1)^{r/2}\right)\cdot (1-c)^{r'}\lambda_1^{r'}
\end{align*}
\end{lemma}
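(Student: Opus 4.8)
The plan is to reduce $M_{m,r,r'[E]}(v\cdot v')$ to a perturbation of a matrix with an exact Hankel / rank-one structure, and then compute the determinant of that structured matrix via Cauchy--Binet. First, by the concentration lemma above, for every $i,j\le m$ the entry $N_{r+i+j,r'[E]}(v\cdot v')$ is within $(1+\sqrt{|N_{r+i+j[G\backslash E]}(v)|\cdot|N_{r'[G\backslash E]}(v')|/n})\log n$ of $N_{r+i+j[G\backslash E]}(v)\cdot cQ\,N_{r'[G\backslash E]}(v')/n$, which by the eigenspace-orthogonality identity noted just after that lemma equals $\frac cn\sum_{\ell\le h'}w_\ell(N_{r+i+j[G\backslash E]}(v))\cdot Q\,w_\ell(N_{r'[G\backslash E]}(v'))$. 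Since $G\backslash E\sim\gss(n,p,(1-c)Q)$ has eigenvalues $\mu_\ell:=(1-c)\lambda_\ell$, the goodness of $v$ (in $G\backslash E$) down to depth $r+2m+1$ lets me telescope $w_\ell(N_{r+i+j[G\backslash E]}(v))=\mu_\ell^{i+j}w_\ell(N_{r[G\backslash E]}(v))+(\text{accumulated goodness error})$, and the goodness of $v'$ down to depth $r'+1$ together with $|N_{r'}(v')|\le\lambda_1^{r'}\sqrt k((\min p_i)^{-1/2}+x)$ controls the $v'$ side and gives $|w_\ell(N_{r[G\backslash E]}(v))\cdot Q\,w_\ell(N_{r'[G\backslash E]}(v'))|=O(|\mu_\ell|^{r+r'})$. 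Writing $B_\ell:=\frac cn\,w_\ell(N_{r[G\backslash E]}(v))\cdot Q\,w_\ell(N_{r'[G\backslash E]}(v'))$ and $u_\ell:=(1,\mu_\ell,\dots,\mu_\ell^{m-1})^t$, this yields $M_{m,r,r'[E]}(v\cdot v')=\sum_{\ell\le h'}\mu_\ell^2 B_\ell\,u_\ell u_\ell^t+\Delta$, with $\Delta$ collecting the concentration noise and the goodness errors, each column of $\Delta$ being logarithmically small relative to the corresponding column of the structured part whenever $((1-c)\lambda_{h'}^2/2)^{r+r'}>\lambda_1^{r+r'}n$.

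Next I would expand $\det(M_{m,r,r'[E]}(v\cdot v'))=\det\big(\sum_\ell\mu_\ell^2 B_\ell u_\ell u_\ell^t+\Delta\big)$ by multilinearity in the columns, as a sum over subsets $T\subseteq\{1,\dots,m\}$ of the columns taken from $\Delta$. The $T=\emptyset$ term is $\det(UB'U^t)$ with $U=[u_1|\cdots|u_{h'}]$ and $B'=\diag(\mu_\ell^2 B_\ell)$, while each $T\ne\emptyset$ term is bounded by the product of the small $\Delta$-column norms over $T$ and the structured-column norms over the complement; these $T\ne\emptyset$ contributions, together with the union-bound slack, are what the factor $\ln^{m+1}n$ absorbs. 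For $T=\emptyset$, Cauchy--Binet gives $\det(UB'U^t)=\sum_{S\subseteq\{1,\dots,h'\},|S|=m}\big(\prod_{\ell<\ell'\in S}(\mu_{\ell'}-\mu_\ell)\big)^2\prod_{\ell\in S}\mu_\ell^2 B_\ell$, a sum of squared Vandermonde determinants, each nonzero because the $\lambda_\ell$ (hence the $\mu_\ell$) are distinct. Since $|B_\ell|=O(\frac cn|\mu_\ell|^{r+r'})$, the set $S=\{1,\dots,m\}$ dominates and $S=\{1,\dots,m-1,m+1\}$ is next --- equal in magnitude exactly when $|\lambda_m|=|\lambda_{m+1}|$, which is why both must be retained --- so I would \emph{define} $\gamma=\gamma((1-c)\lambda_i,m)$ and $\gamma'=\gamma'((1-c)\lambda_i,m)$ to be the values $\big(\prod_{\ell<\ell'\in S}(\mu_{\ell'}-\mu_\ell)\big)^2\prod_{\ell\in S}\mu_\ell^2$ for these two sets respectively; then those two Cauchy--Binet terms are exactly $\frac{c^m}{n^m}\prod_{i=1}^{m-1}w_i(N_{r[G\backslash E]}(v))\cdot Q\,w_i(N_{r'[G\backslash E]}(v'))$ times $\gamma\,w_m\cdot Q\,w_m$ and $\gamma'\,w_{m+1}\cdot Q\,w_{m+1}$, and $\gamma\ne0$ because its Vandermonde factor is. Every remaining $S$ has $\prod_{\ell\in S}|\mu_\ell|^{r+r'}$ at most that of $\{1,\dots,m-1,m+2\}$ or of $\{1,\dots,m-2,m,m+1\}$, so summing the $O(1)$ remaining subsets (recall $h'\le k$ is fixed) produces precisely the two error terms in the statement.

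For the alternate case $m=h'+1$ the rank-one expansion has only $h'<m$ genuine terms, so $\det(UB'U^t)=0$ identically and the determinant is governed entirely by $\Delta$: here I would bound one column by the fluctuation of $N_{r+i+j[G\backslash E]}(v)$ about its $h'$-dimensional eigenvector-aligned part --- of order $((1-c)\lambda_1)^{2r}/n$ (a variance term) plus $((1-c)\lambda_1)^{r/2}$ (lower-order goodness slack) --- and the $v'$ side by $((1-c)\lambda_1)^{r'}\lambda_1^{r'}$-type bounds, with the other $h'$ columns bounded by their clean norms $\prod_{i=1}^{h'}|\mu_i|^{r+r'}$, which matches the stated bound. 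The main obstacle throughout is the error bookkeeping for $\Delta$: one must simultaneously control the concentration noise (taking $\log n$ standard deviations in the concentration lemma so a union bound over all $O(n^2)$ pairs $v,v'$ survives, which is where the extra logarithmic factors come from), the accumulated per-step goodness discrepancies (forcing the depth-$(r+2m+1)$ requirement on $v$), and the subleading Cauchy--Binet subsets, and then verify that after propagating all of these through the multilinear expansion of an $m\times m$ determinant the residue collapses to exactly the two stated error terms --- and in particular that the argument degrades gracefully to the $|\lambda_m|=|\lambda_{m+1}|$ case and to the rank-deficient regime $m\ge h'+1$, where the convention $\lambda_i=\lambda_{h'}/\sqrt2$ and $w_i(\cdot)=0$ for $i>h'$ makes the ``approximation'' vanish and the inequality become a pure error bound.
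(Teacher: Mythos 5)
Your proposal is correct and follows essentially the same route as the paper's proof: both expand $\det(M_{m,r,r'[E]}(v\cdot v'))$ multilinearly in the columns after splitting each column into per-eigenspace contributions plus a residual column that absorbs the concentration noise and the accumulated goodness discrepancies, identify the leading contributions as the ones using the $m$ (respectively $m-1$ plus the $(m+1)$-st) largest eigenvalues, bound the remaining contributions by the two stated error terms using the hypothesis $((1-c)\lambda_{h'}^2/2)^{r+r'}>\lambda_1^{r+r'}n$, and observe that for $m=h'+1$ the structured part is rank-deficient so only the residual column matters. The one genuine difference is how the constants are extracted: the paper sums over multi-indices that are permutations of $\{1,\dots,m\}$ (resp.\ of $\{1,\dots,m-1,m+1\}$) and proves $\gamma\neq 0$ by a kernel argument showing a certain Vandermonde-type system has trivial null space, whereas you package the structured part as $\sum_\ell \mu_\ell^2 B_\ell u_\ell u_\ell^t$ and apply Cauchy--Binet, getting $\gamma$ and $\gamma'$ explicitly as squared Vandermonde determinants times $\prod_{\ell\in S}\mu_\ell^2$, with nonvanishing immediate from distinctness and nonzeroness of $(1-c)\lambda_1,\dots,(1-c)\lambda_m$ for $m\le h'$. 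This buys a cleaner and more explicit identification of $\gamma,\gamma'$ at no cost; the error bookkeeping (noise columns small relative to the $(\lambda_{h'}^2/2)^{(r+r')/2}$ scale, subleading subsets dominated by $\{1,\dots,m-1,m+2\}$ and $\{1,\dots,m-2,m,m+1\}$) is the same in both arguments.
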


\begin{proof}
For each $1\le l\le 2m$ and $1\le i\le h$, let \[x_l(i)=\frac{c}{n}(1-c)^{l}\lambda_i^{l}  w_i(N_{r[G\backslash E]}(v))\cdot Qw_i(N_{r'[G\backslash E]}(v'))\]
Next, for each $1\le i\le h$ and $0\le l\le m$, let $u_l(i)$ be the column vector thats $j$th entry is $x_{l+j}(i)$ for $1\le j\le m$. Also, for each $1\le l\le m$, let $u_{l}(h+1)$ be the length $m$ column vector thats $j$th entry is $N_{r+l+j,r'[E]}(v\cdot v')-\sum_{i=1}^h x_{l+j}(i)$ for $1\le j\le m$. Note that for each $1\le l\le m$, the $l$th column of $M_{m,r,r'[E]}(v\cdot v')$ is $\sum_{i=1}^{h+1} u_l(i)$. So,
\[\det(M_{m,r,r'[E]}(v\cdot v'))=\sum_{i\in (\mathbb{Z}\cap [1,h+1])^m} \det([u_1(i_1),u_2(i_2),...,u_m(i_m)])\]
For any $i\in (\mathbb{Z}\cap [1,h+1])^m$, if there exist $j\ne j'$ such that $i_j=i_{j'}\le h'$, then $u_j(i_j)=(1-c)^{j-j'}\lambda_{i_j}^{j-j'} u_{j'}(i_{j'})$, which implies that $ \det([u_1(i_1),u_2(i_2),...,u_m(i_m)])=0$.

If $m\le h$ and $i$ is some permutation of the integers from $1$ to $m$, then \[\det([u_1(i_1),u_2(i_2),...,u_m(i_m)])=\left(\prod_{j=1}^m (1-c)^j\lambda_{i_j}^j\right) sgn(i)\det([u_0(1),u_0(2),...,u_0(m)])\] 

The $j$th column of this matrix is proportional to $\frac{c}{n}w_j(N_{r[G\backslash E]}(v))\cdot Qw_j(N_{r'[G\backslash E]}(v'))$, so there exists some $\gamma=\gamma(\{(1-c)\lambda_j\},m)$ such that the sum of all such terms is \[\frac{c^m}{n^m}\gamma\prod_{j=1}^{m}  w_j(N_{r[G\backslash E]}(v))\cdot Qw_j(N_{r'[G\backslash E]}(v'))\] Alternately, the sum of all such terms is equal to \[\det\left(\left[\sum_{j=1}^m u_1(j),\sum_{j=1}^m u_2(j),...,\sum_{j=1}^m u_m(j)\right]\right)\] If $x_l(0)\ne 0$ for each $1\le l\le m$ and $u'\in \mathbb(R)^m$ such that \\$\left(\left[\sum_{j=1}^m u_1(j),\sum_{j=1}^m u_2(j),...,\sum_{j=1}^m u_m(j)\right]\right)u'=0$, then for each $1\le i\le m$, 
\begin{align*}
\sum_{l=1}^m u'_l\sum_{j=1}^m x_{l+i}(j)&=0\\ 
\sum_{j=1}^m \sum_{l=1}^m u'_l \frac{c}{n}(1-c)^{l+i}\lambda_j^{l+i}  w_j(N_{r[G\backslash E]}(v))\cdot Qw_j(N_{r'[G\backslash E]}(v'))&=0\\
\sum_{j=1}^m w_j(N_{r[G\backslash E]}(v))\cdot Qw_j(N_{r'[G\backslash E]}(v')) (1-c)^{i}\lambda_j^{i}\sum_{l=1}^m u'_l\cdot (1-c)^{l}\lambda_j^{l} &=0\\ 
\end{align*}
That can only hold for all such $i$ if $\sum_{l=1}^m u'_l (1-c)^{l}\lambda_j^{l} =0$ for all $1\le j\le m$, and that can only be the case if $u'=0$. Therefore, the determinant is nonzero unless $x_l(0)=0$ for some $l$, which implies that $\gamma\ne 0$. If $m< h$ then by similar logic, there exists $\gamma'=\gamma'(\{(1-c)\lambda_i\},m)$ such that the sum of all terms for which $i$ is a permutation of the integers from $1$ to $m-1$ and $m+1$ is 
\[\frac{c^m}{n^m}\gamma'\cdot w_{m+1}(N_{r[G\backslash E]}(v))\cdot Qw_{m+1}(N_{r'[G\backslash E]}(v'))\prod_{i=1}^{m-1}  w_i(N_{r[G\backslash E]}(v))\cdot Qw_i(N_{r'[G\backslash E]}(v'))\]

That accounts for all $i\in (\mathbb{Z}\cap [1,h+1])^m$ except for some of those such that there exists $j$ such that $i_j\ge\min(m+2,h+1)$ or there exist $j,j'$ such that $i_j=m$ and $i_{j'}=m+1$.

If $v$ is $(r+2m+1,x)$-good, then 
\[w_i(N_{r[G/E]}(v))P^{-1}w_i(N_{r[G/E]}(v))\le ((\min p_j)^{-1/2}+x)^2(1-c)^{2r}\lambda_i^{2r}\]

for all $i$. Similarly, if $v'$ is $(r'+1,x)$-good then 
\[w_i(N_{r'[G/E]}(v'))P^{-1}w_i(N_{r'[G/E]}(v'))\le ((\min p_j)^{-1/2}+x)^2(1-c)^{2r'}\lambda_i^{2r'}\]

for all $i$. If both hold, then $|x_l(i)|\le \frac{c}{n}(1-c)^{r+r'+l}|\lambda_i|^{r+r'+l+1}((\min p_j)^{-1/2}+x)^2$ for all $i$. Furthermore, for any $l$ and $j$, 
\begin{align*}
|u_l(h+1)_j|&=|N_{r+l+j,r'[E]}(v\cdot v')-\sum_{i=1}^h x_{l+j}(i)|\\
&\le |N_{r+l+j,r'[E]}(v\cdot v')-\frac{c}{n}N_{r+l+j[G\backslash E]}(v)\cdot QN_{r'[G\backslash E]}(v')|\\
&\indent\indent+|\frac{c}{n}N_{r+l+j[G\backslash E]}(v)\cdot QN_{r'[G\backslash E]}(v')-\sum_{i=1}^h x_{l+j}(i)|\\
&\le (1+\sqrt{|N_{r+l+j[G\backslash E]}(v)|\cdot |N_{r'[G\backslash E]}(v')|/n})\log n\\
&\indent\indent +\frac{c}{n}\sum_{i=1}^h |w_i(N_{r+l+j[G\backslash E]}(v))\cdot Qw_i(N_{r'[G\backslash E]}(v'))\\
&\indent\indent\indent\indent-(1-c)^{l+j}\lambda_i^{l+j}w_i(N_{r[G\backslash E]}(v))\cdot Qw_i(N_{r'[G\backslash E]}(v'))|
\end{align*}
hence
\begin{align*}
|u_l(h+1)_j|&\le (1+((1-c)\lambda_1)^{(r+r'+l+j)/2})\sqrt{k}((\min p_i)^{-1/2}+x)/\sqrt{n})\log n\\
&\indent\indent+\frac{c}{n}\sum_{i=1}^h (1-c)^{r+l+j-1}x\lambda_{h'}\left(\frac{\lambda_{h'}^2}{2\lambda_1}\right)^r|\lambda_i|^{l+j-1}\\
&\indent\indent\indent\cdot\sqrt{w_i(N_{r'[G\backslash E]}(v'))\cdot QPQw_i(N_{r'[G\backslash E]}(v'))}\\
&\le (1+((1-c)\lambda_1)^{(r+r'+l+j)/2})\sqrt{k}((\min p_i)^{-1/2}+x)/\sqrt{n})\log n\\
&\indent\indent+\frac{c}{n}\sum_{i=1}^h (1-c)^{r+l+j-1}x\lambda_{h'}\left(\frac{\lambda_{h'}^2}{2\lambda_1}\right)^r|\lambda_i|^{l+j-1}((\min p_j)^{-1/2}+x)(1-c)^{r'}|\lambda_i|^{r'+1}\\
&\le (1+((1-c)\lambda_1)^{(r+r'+l+j)/2})\sqrt{k}((\min p_i)^{-1/2}+x)/\sqrt{n})\log n\\
&\indent\indent+\frac{ch}{n}(1-c)^{l+j-1}x\lambda_{h'}\left(\frac{(1-c)^2\lambda_{h'}^2}{2}\right)^{(r+r')/2}\lambda_1^{l+j}((\min p_j)^{-1/2}+x)
\end{align*}
with probability $1-o(1)$.

In other words, under these circumstances $|x_l(i)|$ is upper bounded by a constant multiple of $\frac{c}{n}((1-c)|\lambda_i|)^{r+r'}$ if $v$ and $v'$ are both good, and every entry of $u_l(h+1)$ has a magnitude that is upper bounded by a constant multiple of $((1-c)\lambda_1)^{(r+r')/2}\log n/\sqrt{n}+\frac{c}{n}((1-c)^2\lambda_{h'}^2/2)^{(r+r')/2}$. Either way, every entry of $u_l(i)$ is upper bounded by a constant multiple of $\frac{c}{n}((1-c)|\lambda_i|)^{r+r'}\log n$.

That means that for any $i\in (\mathbb{Z}\cap [1,h+1])^m$ such that $i_j\ge m+1$ for some $i$, then $\det([u_1(i_1),u_2(i_2),...,u_m(i_m)])$ is upper bounded by a constant multiple of $\frac{c^m}{n^m}\log^m(n)(1-c)^{m(r+r')}|\lambda_{m+2}|^{r+r'}\prod_{i=1}^{m-1} |\lambda_i|^{r+r'}$ Similarly, for $i\in (\mathbb{Z}\cap [1,h+1])^m$ such that $i_j\ge {m-1}$ and $i_{j'}\ge {m-1}$ with $j\ne j'$, $\det([u_1(i_1),u_2(i_2),...,u_m(i_m)])$ is upper bounded by a constant multiple of $\frac{c^m}{n^m}\log^m(n)(1-c)^{m(r+r')}|\lambda_m|^{r+r'}|\lambda_{m+1}|^{r+r'}\prod_{i=1}^{m-2} |\lambda_i|^{r+r'}$. There are at most $m^m$ such $i$; therefore,

\begin{align*}
|\det(M_{m,r,r'[E]}(v\cdot v'))&-\frac{c^m}{n^m}\prod_{i=1}^{m-1}  w_i(N_{r[G\backslash E]}(v))\cdot Qw_i(N_{r'[G\backslash E]}(v'))\\
&\cdot (\gamma w_m(N_{r[G\backslash E]}(v))\cdot Qw_m(N_{r'[G\backslash E]}(v'))\\
&\indent\indent+\gamma'w_{m+1}(N_{r[G\backslash E]}(v))\cdot Qw_{m+1}(N_{r'[G\backslash E]}(v')))|\\
&\le \frac{c^m}{n^m}\ln^{m+1}(n)(1-c)^{m(r+r')}|\lambda_{m+2}|^{r+r'}\prod_{i=1}^{m-1} |\lambda_i|^{r+r'}\\
&\indent\indent + \frac{c^m}{n^m}\ln^{m+1}(n)(1-c)^{m(r+r')}|\lambda_m|^{r+r'}|\lambda_{m+1}|^{r+r'}\prod_{i=1}^{m-2} |\lambda_i|^{r+r'}
\end{align*}
with probability $1-o(1)$, as desired.

Alternately, recall that if $v$ is $(r+2m+1,x)$-good then for any $r''<r+2m+1$ and $i\le h$, \[||E[N_{r''+1[G\backslash E]}(v))-(1-c)PQN_{r''[G\backslash E]}(v)||=O\left(\frac{|N_{r''[G\backslash E]}(v)|^2}{n}\right)=O\left(\frac{((1-c)\lambda_1)^{2r''}}{n}\right)\]

Also, for fixed values of $N_{r'''}(v)$ for all $r'''\le r''\le r+2m+1$, each element of $N_{r''+1[G\backslash E]}(v)$ has a variance of $O(|N_{r''[G\backslash E]}(v)|)=O((1-c)\lambda_1^{r''})$. So, 
\begin{align*}
&||w_i(N_{r''+l+j[G\backslash E]}(v))-(1-c)^{l+j}\lambda_i^{l+j}w_i(N_{r''}[G\backslash E](v))|| \\&\le \left(\frac{((1-c)\lambda_1)^{2r''}}{n}+((1-c)\lambda_1)^{r''/2}\right)\ln n
\end{align*}
with probability $1-o(1)$ for all $l,j\le m$ and $i\le h$. This implies that if $v$ is $(r+2m+1,x)$-good and $v'$ is $(r'+1,x)$-good then
\begin{align*}
|u_l(h+1)_j|&=|N_{r+l+j,r'[E]}(v\cdot v')-\sum_{i=1}^h x_{l+j}(i)|\\
&\le |N_{r+l+j,r'[E]}(v\cdot v')-\frac{c}{n}N_{r+l+j[G\backslash E]}(v)\cdot QN_{r'[G\backslash E]}(v')|\\
&\indent\indent+|\frac{c}{n}N_{r+l+j[G\backslash E]}(v)\cdot QN_{r'[G\backslash E]}(v')-\sum_{i=1}^h x_{l+j}(i)|\\
&\le (1+\sqrt{|N_{r+l+j[G\backslash E]}(v)|\cdot |N_{r'[G\backslash E]}(v')|/n})\log n\\
&\indent\indent +\frac{c}{n}\sum_{i=1}^h |w_i(N_{r+l+j[G\backslash E]}(v))\cdot Qw_i(N_{r'[G\backslash E]}(v'))\\
&\indent\indent\indent\indent-(1-c)^{l+j}\lambda_i^{l+j}w_i(N_{r[G\backslash E]}(v))\cdot Qw_i(N_{r'[G\backslash E]}(v'))|\\
&\le (1+((1-c)\lambda_1)^{(r+r'+l+j)/2})\sqrt{k}(((\min p_i)^{-1/2}+x)/\sqrt{n})\log n\\
&\indent\indent+\frac{c}{n}\sum_{i=1}^h \left(\frac{((1-c)\lambda_1)^{2r}}{n}+((1-c)\lambda_1)^{r/2}\right)\log n\cdot ||Qw_i(N_{r'[G\backslash E]}(v'))||\\
&\le (1+((1-c)\lambda_1)^{(r+r'+l+j)/2})\sqrt{k}(((\min p_i)^{-1/2}+x)/\sqrt{n})\log n\\
&\indent\indent+\frac{c}{n}\sum_{i=1}^h \left(\frac{((1-c)\lambda_1)^{2r}}{n}+((1-c)\lambda_1)^{r/2}\right)\log n((\min p_j)^{-1/2}+x)(1-c)^{r'}|\lambda_i|^{r'+1}\\
&\le (1+((1-c)\lambda_1)^{(r+r'+l+j)/2})\sqrt{k}(((\min p_i)^{-1/2}+x)/\sqrt{n})\log n\\
&\indent\indent+\frac{ch}{n}\left(\frac{((1-c)\lambda_1)^{2r}}{n}+((1-c)\lambda_1)^{r/2}\right)\log n((\min p_j)^{-1/2}+x)(1-c)^{r'}\lambda_1^{r'+1}\\
\end{align*}
for any $l$ and $j$ with probability $1-o(1)$. If $m=h'+1$ then for any $i\in (\mathbb{Z}\cap [1,h+1])^m$, either there exist $j\ne j'$ such that $i_{j}=i_{j'}\le h'$, or there exists $j$ such that $i_j>h'$. Either way, $\det([u_1(i_1),u_2(i_2),...,u_m(i_m)])$ is upper bounded by a constant multiple of \[\frac{c^{h'+1}}{n^{h'+1}}\log(n)(1-c)^{h'(r+r')}\prod_{i=1}^{h'} |\lambda_i|^{r+r'}\left(\frac{((1-c)\lambda_1)^{2r}}{n}+((1-c)\lambda_1)^{r/2}\right)\cdot (1-c)^{r'}\lambda_1^{r'}\] 
with probability $1-o(1)$. There are only $m^m$ possible choices of $i$, so with probability $1-o(1)$, either $v$ is $(r+2m+1,x)$-bad, $v'$ is $(r'+1,x)$-bad, or 
\begin{align*}
&|\det(M_{m,r,r'[E]}(v\cdot v'))|\\
&\le \frac{c^{h'+1}}{n^{h'+1}}\log^2(n)(1-c)^{h'(r+r')}\prod_{i=1}^{h'} |\lambda_i|^{r+r'}\left(\frac{((1-c)\lambda_1)^{2r}}{n}+((1-c)\lambda_1)^{r/2}\right)\cdot (1-c)^{r'}\lambda_1^{r'}
\end{align*}
\end{proof}

While this is a helpful result, it turns out to be more useful to have an expression that links the determinant to $ w_i(N_{r[G\backslash E]}(v))\cdot Qw_i(N_{r'[G\backslash E]}(v'))$ for fixed values of $r$ and $r'$. So, we have the following:
\begin{lemma}
Let $0<c<1$, $x>0$, $G$ be drawn from $\gss(n,p,Q)$, $E$ be a subset of $G$'s edges that independently contains each edge with probability $c$, and $m\le h'$. Now, for any $v,v'\in G$ and $\sqrt{\ln n}\le r'\le r\in \mathbb{Z}^+$, such that $((1-c)\lambda_{h'}^2/2)^{r+r'}>\lambda_1^{r+r'}n$, with probability $1-o(1)$, either $v$ is $(r+2m+1,x)$-bad, $v'$ is $(r'+1,x)$-bad, or 
\begin{align*}
|\det(M_{m,r,r'[E]}&(v\cdot v'))-\frac{c^m}{n^m}\prod_{i=1}^{m-1} ((1-c)\lambda_i)^{r+r'-2\sqrt{\ln n}} w_i(N_{\sqrt{\ln n}[G\backslash E]}(v))\cdot Qw_i(N_{\sqrt{\ln n}[G\backslash E]}(v'))\\
&\cdot (\gamma((1-c)\lambda_m)^{r+r'-2\sqrt{\ln n}} w_m(N_{\sqrt{\ln n}[G\backslash E]}(v))\cdot Qw_m(N_{\sqrt{\ln n}[G\backslash E]}(v'))\\
&+\gamma'((1-c)\lambda_{m+1})^{r+r'-2\sqrt{\ln n}}w_{m+1}(N_{\sqrt{\ln n}[G\backslash E]}(v))\cdot Qw_{m+1}(N_{\sqrt{\ln n}[G\backslash E]}(v')))|\\
&\le \frac{1}{\ln^{2m+2} n}\cdot \frac{c^m}{n^m}\prod_{i=1}^{m} |(1-c)\lambda_i|^{r+r'}
\end{align*}
\end{lemma}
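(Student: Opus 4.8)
The plan is to bootstrap from the Determinant Lemma. Fix $m\le h'$ and an admissible pair $r\ge r'\ge\sqrt{\ln n}$, and assume $v$ is $(r+2m+1,x)$-good and $v'$ is $(r'+1,x)$-good, since otherwise there is nothing to prove. Write $a_i:=w_i(N_{r[G\backslash E]}(v))\cdot Q\,w_i(N_{r'[G\backslash E]}(v'))$ and $b_i:=((1-c)\lambda_i)^{r+r'-2\sqrt{\ln n}}\,w_i(N_{\sqrt{\ln n}[G\backslash E]}(v))\cdot Q\,w_i(N_{\sqrt{\ln n}[G\backslash E]}(v'))$, so that the previous lemma says $|\det(M_{m,r,r'[E]}(v\cdot v'))-\frac{c^m}{n^m}\prod_{i=1}^{m-1}a_i\,(\gamma a_m+\gamma'a_{m+1})|$ is at most its stated error term with probability $1-o(1)$, while the present claim is the same statement with every $a_i$ replaced by $b_i$ and the right side tightened to $\frac{1}{\ln^{2m+2} n}\cdot\frac{c^m}{n^m}\prod_{i=1}^m|(1-c)\lambda_i|^{r+r'}$. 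The argument then has three parts: (i) bound $e_i:=a_i-b_i$; (ii) propagate the $e_i$ through the multilinear, bounded-degree expression $\prod_{i=1}^{m-1}(\cdot)\,(\gamma(\cdot)+\gamma'(\cdot))$; (iii) check that the previous lemma's own error is already below the new tolerance.

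For (i) I would use a geometric-scaling estimate for good vertices. Projecting the goodness inequality onto the eigenspace $W_i$, and using that the $W_i$ are mutually orthogonal for the $P^{-1}$ inner product, one finds that $\Delta_{i,s}:=w_i(N_{s+1[G\backslash E]}(v))-(1-c)\lambda_i\,w_i(N_{s[G\backslash E]}(v))$ satisfies $\sqrt{\Delta_{i,s}\cdot P^{-1}\Delta_{i,s}}\le \frac{x(1-c)\lambda_{h'}}{2}\left(\frac{(1-c)\lambda_{h'}^2}{2\lambda_1}\right)^{s}$ for every step $s$ in the good range. Iterating from $s=\sqrt{\ln n}$ up to $s=r$ bounds the $P^{-1}$-norm of $w_i(N_{r[G\backslash E]}(v))-((1-c)\lambda_i)^{r-\sqrt{\ln n}}w_i(N_{\sqrt{\ln n}[G\backslash E]}(v))$ by $C\,((1-c)|\lambda_i|)^{r}\sum_{t\ge\sqrt{\ln n}}\theta_i^{\,t}$ with $\theta_i:=\frac{\lambda_{h'}^2}{2\lambda_1|\lambda_i|}$; the key point is that $|\lambda_i|\ge|\lambda_{h'}|$ for $i\le h'$ and $|\lambda_{h'}|\le\lambda_1$, so $\theta_i\le\frac{|\lambda_{h'}|}{2\lambda_1}\le\frac12$ and the tail sum is at most $2^{1-\sqrt{\ln n}}=o(\ln^{-C}n)$ for every constant $C$. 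Combining this with the same estimate at $v'$ and the goodness upper bound $\sqrt{w_i(N_{s[G\backslash E]}(v))\cdot P^{-1}w_i(N_{s[G\backslash E]}(v))}\le((\min_i p_i)^{-1/2}+x)((1-c)|\lambda_i|)^{s}$, the split $e_i=S_v\cdot Q\,E_{v'}+E_v\cdot Q\,S_{v'}+E_v\cdot Q\,E_{v'}$, where $S_v,E_v$ are the scaled main term and the error of $w_i(N_{r[G\backslash E]}(v))$ and $S_{v'},E_{v'}$ likewise for $v'$, yields $|e_i|\le C\,((1-c)|\lambda_i|)^{r+r'}2^{-\sqrt{\ln n}}$ and $|a_i|,|b_i|\le C\,((1-c)|\lambda_i|)^{r+r'}$. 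The hypothesis $((1-c)\lambda_{h'}^2/2)^{r+r'}>\lambda_1^{r+r'}n$ is used here to get $(1-c)\lambda_{h'}^2>2\lambda_1$, hence $(1-c)|\lambda_i|>1$ for all relevant $i$, so the finitely many transient constant factors in the geometric sums are harmless; it also gives $r+r'=\Omega(\log n)$, needed in (iii).

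Part (ii) is bookkeeping: replacing $a_1,\dots,a_{m+1}$ by $b_1,\dots,b_{m+1}$ one index at a time, each replacement alters $\prod_{i=1}^{m-1}a_i\,(\gamma a_m+\gamma'a_{m+1})$ by at most a constant times $(|\gamma|+|\gamma'|)\,|e_j|\prod_{i\le m,\,i\ne j}|(1-c)\lambda_i|^{r+r'}$, which by (i) and $|\lambda_{m+1}|\le|\lambda_m|$ is $\le C\,2^{-\sqrt{\ln n}}\prod_{i=1}^m|(1-c)\lambda_i|^{r+r'}$; summing the $m+1$ contributions and multiplying by $c^m/n^m$ shows the "$a$" and "$b$" main terms differ by at most $\frac{1}{2\ln^{2m+2}n}\cdot\frac{c^m}{n^m}\prod_{i=1}^m|(1-c)\lambda_i|^{r+r'}$ once $n$ is large, since $2^{-\sqrt{\ln n}}=o(\ln^{-(2m+2)}n)$. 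For (iii), the previous lemma's error is $\frac{c^m}{n^m}\ln^{m+1} n\,(1-c)^{m(r+r')}$ times $|\lambda_{m+2}|^{r+r'}\prod_{i=1}^{m-1}|\lambda_i|^{r+r'}$ or $|\lambda_m|^{r+r'}|\lambda_{m+1}|^{r+r'}\prod_{i=1}^{m-2}|\lambda_i|^{r+r'}$; since at most two consecutive eigenvalues share a magnitude, $|\lambda_{m+2}|<|\lambda_m|$ and $|\lambda_{m+1}|<|\lambda_{m-1}|$ strictly, so the ratio of this error to the new tolerance is $\ln^{O(m)}n$ times $(|\lambda_{m+2}|/|\lambda_m|)^{r+r'}$ or $(|\lambda_{m+1}|/|\lambda_{m-1}|)^{r+r'}$, and with $r+r'=\Omega(\log n)$ these are $n^{-\Omega(1)}$ and kill the polylog. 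Adding the estimates from (ii) and (iii) gives the claim with probability $1-o(1)$.

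The step I expect to be the real obstacle is (i): making the iterated-goodness estimate quantitative and uniform enough that the replacement error is genuinely $o(\ln^{-(2m+2)}n)$ relative to the main term, for every $i\le h'$ and every admissible $r,r'$. The crucial feature is that $\theta_i\le\frac12$ for all relevant eigenvalues, so that starting the recursion at $\sqrt{\ln n}$ rather than at $0$ buys a genuinely super-polylogarithmic factor; one must also verify that the transient part of the geometric sums and the crude goodness upper bounds contribute only constants, not factors that grow with $r$ or $n$.
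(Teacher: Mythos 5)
Your proposal is correct and follows essentially the same route as the paper: the paper's proof likewise bounds the difference between the $r,r'$-level products and the rescaled $\sqrt{\ln n}$-level products via the goodness condition (yielding the same $2^{-\sqrt{\ln n}}=o(\ln^{-(2m+2)}n)$ factor), and then checks that the Determinant Lemma's error terms are absorbed using the ratios $|\lambda_{m+2}/\lambda_m|^{r+r'}$ and $|\lambda_{m+1}/\lambda_{m-1}|^{r+r'}$ being $n^{-\Omega(1)}$ under the hypothesis on $r+r'$. Your parts (i)--(iii) correspond exactly to the paper's two displayed estimates plus the implicit multilinear bookkeeping.
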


\begin{proof}
First, note that
\begin{align*}
&|w_i(N_{r[G/E]}(v))\cdot Qw_i(N_{r'[G/E]}(v'))\\
&\indent\indent-((1-c)\lambda_i)^{r+r'-2\sqrt{\ln n}}w_i(N_{\sqrt{\ln n}[G/E]}(v))\cdot Qw_i(N_{\sqrt{\ln n}[G/E]}(v'))|\\
&\le |\lambda_i[w_i(N_{r[G/E]}(v))\cdot P^{-1}(w_i(N_{r'[G/E]}(v'))-((1-c)\lambda_i)^{r'-\sqrt{\ln n}}w_i(N_{\sqrt{\ln n}[G/E]}(v')))\\
&\phantom{xxx}+(w_i(N_{r[G/E]}(v))-((1-c)\lambda_i)^{r-\sqrt{\ln n}}w_i(N_{\sqrt{\ln n}[G/E]}(v)))\\
&\indent\indent\indent\cdot P^{-1}((1-c)\lambda_i)^{r'-\sqrt{\ln n}}w_i(N_{\sqrt{\ln n}[G/E]}(v'))]|\\
&\le |\lambda_i|\sqrt{w_i(N_{r[G/E]}(v))\cdot P^{-1}w_i(N_{r[G/E]}(v))}\cdot \frac{x|(1-c)\lambda_i|^{r'}}{2^{\sqrt{\ln n}}}\\
&\phantom{xxx}+|\lambda_i|\frac{x|(1-c)\lambda_i|^{r}}{2^{\sqrt{\ln n}}}\cdot |(1-c)\lambda_i|^{r'-\sqrt{\ln n}}\sqrt{w_i(N_{\sqrt{\ln n}[G/E]}(v'))\cdot P^{-1}w_i(N_{\sqrt{\ln n}[G/E]}(v'))}\\
&\le\frac{2x(1-c)^{r+r'}|\lambda_i|^{r+r'+1}}{2^{\sqrt{\ln n}}}((\min p_j)^{-1/2}+x)=o(|(1-c)\lambda_i|^{r+r'}/\ln^{2m+2}(n))
\end{align*}

Also, \[|\lambda_{m+2}|^{r+r'}/|\lambda_m|^{r+r'}\le n^{-\ln(|\lambda_m/\lambda_{m+2}|)/\ln((1-c)\lambda_{h'}^2/(2\lambda_1))}=o(1/\ln^{3m+3} n)\] and \[|\lambda_{m+1}|^{r+r'}/|\lambda_{m-1}|^{r+r'}\le n^{-\ln(|\lambda_{m-1}/\lambda_{m+1}|)/\ln((1-c)\lambda_{h'}^2/(2\lambda_1))}=o(1/\ln^{3m+3} n)\] Combining these inequalities with the determinant lemma yields the desired result.
\end{proof}

In some sense, this establishes that 
\begin{align*}
&\det(M_{m,r,r'[E]}(v\cdot v'))\approx \frac{c^m}{n^m}\prod_{i=1}^{m-1}  ((1-c)\lambda_i)^{r+r'-2\sqrt{\ln n}} w_i(N_{\sqrt{\ln n}[G\backslash E]}(v))\cdot Qw_i(N_{\sqrt{\ln n}[G\backslash E]}(v'))\\
&\indent\indent\cdot (\gamma((1-c)\lambda_m)^{r+r'-2\sqrt{\ln n}}w_m(N_{\sqrt{\ln n}[G\backslash E]}(v))\cdot Qw_m(N_{\sqrt{\ln n}[G\backslash E]}(v'))\\
&\indent\indent\indent\indent+\gamma'((1-c)\lambda_{m+1})^{r+r'-2\sqrt{\ln n}}w_{m+1}(N_{\sqrt{\ln n}[G\backslash E]}(v))\cdot Qw_{m+1}(N_{\sqrt{\ln n}[G\backslash E]}(v')))
\end{align*}
 However, in order to know this in a useful sense it is necessary to prove that these terms are large relative to the error terms. In order to do that, we need the following:

\begin{lemma}
For any $p\in (0,1)^k$ with $\sum p_i=1$ and $k\times k$ matrix $Q$ with nonnegative entries such that every entry of $Q^k$ is positive, there exists a unique $w\in (0,\infty)^k$ such that $w$ is an eigenvector of $PQ$ with eigenvalue $\lambda_1$ and $w\cdot P^{-1}w=1$. Now, let $G$ be drawn from $\gss(n,p,Q)$, $v\in G$, and $w'$ be an eigenvector of $PQ$ with eigenvalue $\lambda_i$ such that $\lambda_i^2>\lambda_1$. With probability $1-o(1)$ either $v$ is $(\sqrt{\ln n},\min (P^{-1}w)_i/2)$-bad, or $|w' \cdot P^{-1}N_{\sqrt{\ln n}}(v)|\ge \lambda_i^{\sqrt{\ln n}}/\ln n$.
\end{lemma}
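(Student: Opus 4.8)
The statement has two independent halves. The existence and uniqueness of $w$ is a Perron--Frobenius fact: since $P=\diag(p)$ is a positive diagonal matrix, $(PQ)^m$ has the same pattern of zero entries as $Q^m$ for every $m$, so $(PQ)^k$ has all positive entries and $PQ$ is a primitive nonnegative matrix; moreover $PQ$ is similar to the symmetric matrix $P^{1/2}QP^{1/2}=P^{-1/2}(PQ)P^{1/2}$, so its spectrum is real and its eigenvalue of largest magnitude is the Perron root $\lambda_1>0$, which is simple and carries a strictly positive eigenvector. As $u\mapsto u\cdot P^{-1}u$ is positive definite, a unique positive scalar multiple of that eigenvector satisfies $w\cdot P^{-1}w=1$, and uniqueness follows from simplicity of $\lambda_1$ together with the normalization and the positivity requirement (the negative multiple is excluded).

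For the probabilistic half, fix $x=\min_j(P^{-1}w)_j/2$; it suffices to show that, conditionally on $v$ being $(\sqrt{\ln n},x)$-good, the bound holds with probability $1-o(1)$, and we may normalize $w'$ by $w'\cdot P^{-1}w'=1$. Write $R=\sqrt{\ln n}$, let $\mathcal F_r$ be generated by the edges incident to the radius-$r$ ball about $v$, and set $Z_r=w'\cdot P^{-1}N_r(v)=(P^{-1}w')\cdot N_r(v)$, which by $P^{-1}$-orthogonality of the eigenspaces of $PQ$ is the $P^{-1}$-pairing of $w'$ with the $\lambda_i$-component $w_i(N_r(v))$ of $N_r(v)$. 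One step of neighborhood exploration in $\gss(n,p,Q)$ gives $\E[N_{r+1}(v)\mid\mathcal F_r]=PQN_r(v)+b_r$ with $\|b_r\|_\infty=O(|N_r(v)|^2/n)$, hence $Z_{r+1}=\lambda_iZ_r+\xi_{r+1}+O(\lambda_1^{2r}/n)$ with $\xi_{r+1}=Z_{r+1}-\E[Z_{r+1}\mid\mathcal F_r]$ a martingale difference. On the good event $|N_r(v)|\asymp\lambda_1^r$ for $r\le R$ (upper bound from the lemma above, lower bound from applying the same goodness to the Perron direction $w$), so every bias term is $n^{o(1)}/n$ and negligible, while given $\mathcal F_r$ the coordinates of $N_{r+1}(v)$ are independent Binomials with means $\asymp\lambda_1^r$ (using $Q^k>0$ to bound them below), so $\Var(\xi_{r+1}\mid\mathcal F_r)\asymp\lambda_1^r$. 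If $i=1$ the hypothesis forces $\lambda_1>1$ and $w'=\pm w$, and the conclusion is deterministic given goodness: iterating $(P^{-1}w)\cdot N_{r+1}(v)=\lambda_1(P^{-1}w)\cdot N_r(v)+\epsilon_r$ with $|\epsilon_r|\le x(\lambda_1/2)^{r+1}$ (exactly as in the proof of the lemma above) from $N_0(v)=e_{\sigma_v}$ gives $|Z_R|\ge\lambda_1^R\big((P^{-1}w)_{\sigma_v}-x\big)\ge\lambda_1^R\min_j(P^{-1}w)_j/2\ge\lambda_1^R/\ln n$ for large $n$, the choice of $x$ being precisely what keeps the accumulated error below the signal.

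The case $i>1$ (so $\lambda_1>|\lambda_i|>\sqrt{\lambda_1}>1$, and $(P^{-1}w')_{\sigma_v}$ possibly zero) is where anti-concentration is needed, since the lower bound must come from the injected noise. The plan is to fix a level $\tau=\Theta(\ln\ln n)$, chosen so that $\lambda_1^\tau\to\infty$ while $(\lambda_i^2/\lambda_1)^{\tau/2}=O(\sqrt{\ln n})$ --- both possible because $\lambda_1>1$ and $\lambda_i^2/\lambda_1>1$ --- and to condition on $\mathcal F_\tau$. With probability $1-o(1)$ the radius-$R$ ball about $v$ is a tree (it has only $n^{o(1)}$ vertices, so the expected number of excess edges is $o(1)$); on that event $N_R(v)=\sum_{z\in N_\tau(v)}N^{(z)}_{R-\tau}(z)$, the sum over the $|N_\tau(v)|\asymp\lambda_1^\tau$ subtrees hanging off level $\tau$ ($N^{(z)}_{R-\tau}(z)$ being the count-vector of vertices at distance $R-\tau$ from $z$ inside $z$'s subtree), and given $\mathcal F_\tau$ these subtrees are built on disjoint vertex sets, so the contributions $w'\cdot P^{-1}N^{(z)}_{R-\tau}(z)$ are conditionally independent, each with conditional variance $\asymp|\lambda_i|^{2(R-\tau)}$ (the geometric sum over sub-levels, convergent because $\lambda_i^2>\lambda_1$). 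A conditional central limit theorem then makes $Z_R$, given $\mathcal F_\tau$, approximately Gaussian with standard deviation $s_n\asymp\lambda_1^{\tau/2}|\lambda_i|^{R-\tau}$ and Berry--Esseen error $O(\mathrm{polylog}\,n/\sqrt{\lambda_1^\tau})=o(1)$; the choice of $\tau$ makes $\lambda_1^\tau\to\infty$ so the CLT bites, while $|\lambda_i|^R/s_n\asymp(\lambda_i^2/\lambda_1)^{\tau/2}=O(\sqrt{\ln n})=o(\ln n)$, so the target $\lambda_i^R/\ln n$ is $o(s_n)$. Since a Gaussian density is bounded by $1/(s_n\sqrt{2\pi})$, this yields $\pp\big(|Z_R|<\lambda_i^R/\ln n\mid\mathcal F_\tau\big)\le C\lambda_i^R/(\ln n\cdot s_n)+o(1)=o(1)$ uniformly in $\mathcal F_\tau$, and averaging finishes the proof.

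I expect the $i>1$ anti-concentration to be the main obstacle: one needs a $1-o(1)$ probability, not merely a constant, which forces $\tau$ into the narrow window $\Theta(\ln\ln n)$ --- large enough that the level-$\tau$ neighborhood already has $\omega(1)$ vertices, so that the sum over its subtrees Gaussianizes with $o(1)$ error, yet small enough that amplification by $|\lambda_i|^{R-\tau}$ keeps the injected noise $s_n$ well above the target $\lambda_i^R/\ln n$. Such a window exists precisely because $\lambda_i^2>\lambda_1$ and because $R=\sqrt{\ln n}$ grows more slowly than any power of $\ln n$. Assembling the supporting facts --- tree-likeness of $B_R(v)$ with probability $1-o(1)$, conditional independence of the subtrees, a mild moment bound on each subtree's $\lambda_i$-weight so that the Berry--Esseen error is $o(1)$, and the verification that the earlier-level fluctuations and bias terms do not spoil the estimate --- is routine for the constant-degree locally tree-like regime but must be carried out carefully.
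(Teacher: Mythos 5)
Your proposal is correct in substance, and the first (Perron--Frobenius) half plus the deterministic treatment of the $i=1$ case coincide with the paper's argument. For the anti-concentration half with $i>1$, however, you take a genuinely different route. The paper works level by level: it first shows (via a sum of conditional standard deviations and Markov's inequality) that if $|w'\cdot P^{-1}N_r(v)|>\lambda_1^{r/2}\ln\ln\ln\ln n$ at \emph{some} level $r_0<r<2\ln\ln n/\ln(\lambda_i^2/\lambda_1)$, then this signal survives amplification to level $\sqrt{\ln n}$ with probability $1-o(1)$, costing exactly the factor $(\lambda_i^2/\lambda_1)^{r/2}\le\ln n$; it then observes that at each single level the freshly injected Poisson noise exceeds that threshold with probability $e^{-O((\ln\ln\ln\ln n)^2)}=\omega(1/\ln\ln n)$, so over the $\Theta(\ln\ln n)$ levels of the window some level escapes with probability $1-o(1)$. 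You instead condition once at a level $\tau\asymp\ln\ln n$, decompose $Z_R$ into the $\asymp\lambda_1^\tau$ conditionally independent subtree contributions, each of variance $\asymp|\lambda_i|^{2(R-\tau)}$ in the Kesten--Stigum regime $\lambda_i^2>\lambda_1$, and get the anti-concentration in one shot from Berry--Esseen and the uniform bound on the Gaussian density. Both arguments exploit the same two numerical facts (the window $\Theta(\ln\ln n)$ is forced by the loss $(\lambda_i^2/\lambda_1)^{\tau/2}\le\ln n$, and the summability $\sum_s(\lambda_1/\lambda_i^2)^s<\infty$ controls the propagated noise). The paper's version is more elementary and matches its generally heuristic treatment of the local exploration process; yours gives a sharper, uniform-in-$\mathcal F_\tau$ failure probability $O(1/\sqrt{\ln n})$ plus the CLT error, but at the price of the supporting facts you flag at the end --- tree-likeness of the ball, a uniform third-moment bound on the normalized subtree weights (available from Rosenthal's inequality for the associated martingale, since the increment variances form a geometric series), and a strictly positive lower bound on each subtree's variance in the $w'$ direction (which needs the primitivity of $Q$ when some entries of $Q$ vanish). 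None of these is a gap in the idea, but they are the parts that would need to be written out to match the lemma's claim.
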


\begin{proof}
Every entry in $(PQ)^k$ is positive, so its eigenvector of largest eigenvalue is unique up to multiplication, and its entries all have the same sign. That means that it has a unique multiple, $w$, such that $w\cdot P^{-1}w=1$ and $w_1>0$. In this case all of $w$'s entries must be positive, and since $(PQ)^kw=\lambda_1^k w$, it must be the case that $PQw=\lambda_1 w$.

Given any $(\sqrt{\ln n},\min (P^{-1}w)_j/2)$-good vertex $v$, and any $r<\sqrt{\ln n}$, 
\[w\cdot P^{-1}N_r(v)\ge \lambda_1^r w\cdot P^{-1}\{v\}-\frac{\min (P^{-1}w)_j}{2}\sum_{j=0}^{r-1} 2^{-j-1}\lambda_1^r\ge \lambda_1^r \min (P^{-1}w)_j/2\]

For any eigenvector $w''$ with eigenvalue $\lambda_{i'}$ and $w''\cdot P^{-1}w''=1$,
\[|w''\cdot P^{-1}N_r(v)|\le |\lambda_{i'}|^r w''\cdot P^{-1}\{v\}+\frac{\min (P^{-1}w)_j}{2}\sum_{j=0}^{r-1} 2^{-j-1}|\lambda_{i'}|^r\le |\lambda_{i'}|^r(\min p_j^{-1/2}+\min (P^{-1}w)_j/2)\]

$\lambda_1>\lambda_2$, so there exists a constant $r_0$ such that for any $r>r_0$, 
\[N_r(v)_j\ge \frac{\min(P^{-1} w)_j}{4} \lambda_1^rw_j\]
 for all $j$. For $r_0<r\le\sqrt{\ln n}$, $1\le j\le k$, and a fixed value of $N_r(v)$, the probability distribution of $N_{r+1}(v)_j$ is within $o(1)$ of a poisson distribution with expected value $(PQN_r(v))_j$. Furthermore, $N_{r+1}(v)_{j'}$ has negligible dependence on $N_{r+1}(v)_j$ for all $j'\ne j$. So, $w'\cdot P^{-1}N_{r+1}(v)$ has an expected value of $\lambda_iw'\cdot P^{-1}N_r(v)+o(1)$ and a standard deviation of \[\sqrt{\sum_{j=1}^k w_j^{\prime 2}p_j^{-2} (PQN_r(v))_j}+o(1)\le \sqrt{2\lambda_1^{r+1}\min (P^{-1}w)_j \sum_{j=1}^k w_j^{\prime 2} p^{-2}_jw_j }+o(1)\]

This implies that 
\begin{align*}
&E\left[|w'\cdot P^{-1}N_{\sqrt{\ln n}}(v)-\lambda_i^{\sqrt{\ln n}-r}w'\cdot P^{-1}N_r(v)|\right]\\
&\le \sum_{r'=r}^{\sqrt{\ln n}-1} E\left[|\lambda_i^{\sqrt{\ln n}-1-r'} w'\cdot P^{-1}N_{r'+1}(v)-\lambda_i^{\sqrt{\ln n}-r'} w'\cdot P^{-1}N_{r'}(v)|\right]\\
&\le \sum_{r'=r}^{\sqrt{\ln n}-1} \lambda_i^{\sqrt{\ln n}-1-r'}\left(\sqrt{2\lambda_1^{r'+1}\min (P^{-1}w)_j \sum_{j=1}^k w_j^{\prime 2} p^{-2}_jw_j }+o(1)\right)\\
&\le \lambda_i^{\sqrt{\ln n}-r}\lambda_1^{r/2} \cdot\frac{1}{\sqrt{\lambda_i^2/\lambda_1}-1}\left(\sqrt{2\min (P^{-1}w)_j \sum_{j=1}^k w_j^{\prime 2}p_j^{-2} w_j }+o(1)\right)\\
\end{align*}
In particular, this means that if there exists $r_0< r<2\ln\ln n/\ln(\lambda_i^2/\lambda_1)$ such that $|w'\cdot P^{-1}N_r(v)|>\lambda_1^{r/2}\ln\ln\ln\ln n$ then with probability $1-o(1)$, 
\begin{align*}
|w'\cdot P^{-1}N_{\sqrt{\ln n}}(v)|&\ge \lambda_i^{\sqrt{\ln n}-r}\lambda_1^{r/2}\ln\ln\ln\ln n -\lambda_i^{\sqrt{\ln n}-r}\lambda_1^{r/2}\sqrt{\ln\ln\ln\ln n}\\
&\ge \lambda_i^{\sqrt{\ln n}}\cdot (\lambda_i^2/\lambda_1)^{-r/2}\\
&\ge \lambda_i^{\sqrt{\ln n}}/\ln n
\end{align*}

Furthermore, since the probability distribution of $w'\cdot P^{-1}N_r(v)$ for a fixed value of $N_{r-1}(v)$ is a sum of constant multiples of poisson distributions with expected values of $\Omega(\lambda_1^r)$, it must be the case that $|w'\cdot P^{-1}N_r(v)|>\lambda_1^{r/2}\ln\ln\ln\ln n$ with probability $e^{-O(\ln^2\ln\ln\ln n)}=\omega(1/\ln\ln n)$. Therefore, there exists $r_0< r<2\ln\ln n/\ln(\lambda_i^2/\lambda_1)$ such that this holds with probability $1-o(1)$, and the lemma follows.
\end{proof}

This also implies that for any $v$, $v'$, and $i$, either $v$ is $(\sqrt{\ln n},\min (P^{-1}w)_i/2)$-bad, $v'$ is $(\sqrt{\ln n},\min (P^{-1}w)_i/2)$-bad, or $|w_i(N_{\sqrt{\ln n}}(v))\cdot P^{-1} w_i(N_{\sqrt{\ln n}}(v'))|\ge \lambda_i^{2\sqrt{\ln n}}/\ln^2 n$ with probability $1-o(1)$, since the degree of dependence between $N_{\sqrt{\ln n}}(v)$ and $N_{\sqrt{\ln n}}(v')$ is negligible. This allows me to attempt to approximate $PQ$'s eigenvalues as follows.
\begin{algorithm}
Basic-Eigenvalue-approximation-algorithm(E,c,v):

\phantom{xxx} Compute $N_{r[G\backslash E]}(v)$ for each $r$ until $|N_{r[G\backslash E]}(v)|>\sqrt{n}$, and then set $\lambda_1''=\sqrt[2r]{n}/(1-c)$.

\phantom{xxx} Set $r=r'=\frac{2}{3}\log n/\log ((1-c)\lambda''_1)-\sqrt{\ln n}$. Then, compute 
\[\sqrt[2r]{\frac{n \max(|\det M_{m,r,r[E]}(v\cdot v)|,|\det M_{m,r+1,r[E]}(v\cdot v)|)}{c\max(|\det M_{m-1,r,r[E]}(v\cdot v)|,|\det M_{m-1,r+1,r[E]}(v\cdot v)|)}}\]
 until an m is found for which this expression is less than $((1-c)\lambda''_1)^{3/4}+\frac{1}{\sqrt{\ln n}}$. Then, set $h''=m-1$.

\phantom{xxx}

\phantom{xxx} Then, set \[|\lambda'_i|=\frac{1}{1-c}\sqrt{\det(M_{i,r+3,r'[E]}(v\cdot v'))/\det(M_{i,r+1,r'[E]}(v\cdot v'))}/\prod_{j=1}^{i-1}(1-c)|\lambda'_j|\] unless $|\det(M_{i,r+1,r'[E]}(v\cdot v'))|<\sqrt{|\det(M_{i,r,r'[E]}(v\cdot v'))|\cdot|\det(M_{i,r+2,r'[E]}(v\cdot v'))|}$, in which case set $|\lambda'_i|=\frac{1}{1-c}\sqrt{\det(M_{i,r+2,r'[E]}(v\cdot v'))/\det(M_{i,r,r'[E]}(v\cdot v'))}/\prod_{j=1}^{i-1}(1-c)|\lambda'_j|$. Repeat this for each $i\le h''$

\phantom{xxx}

\phantom{xxx} Next, for each $i<h''$, if $||\lambda'_i|-|\lambda'_{i+1}||<\frac{1}{\ln n}$ then set $\lambda'_i=|\lambda'_i|$ and $\lambda'_{i+1}=-|\lambda'_{i+1}|$. For each $i\le h''$ such that $||\lambda'_i|-|\lambda'_{i+1}||\ge\frac{1}{\ln n}$ and $||\lambda'_{i-1}|-|\lambda'_i||\ge\frac{1}{\ln n}$ set\[\lambda'_i=\frac{1}{1-c}\det(M_{i,r+1,r'[E]}(v\cdot v'))/\det(M_{i,r,r'[E]}(v\cdot v'))/\prod_{j=1}^{i-1}(1-c)\lambda'_j\]

\phantom{xxx} Return $(\lambda'_1,...,\lambda'_{h''})$

\end{algorithm}

\begin{lemma}
Let $0<c<1$, $\min (P^{-1}w)_i/2>x>0$, $G$ be drawn from $\gss(n,p,Q)$, $E$ be a subset of $G$'s edges that independently contains each edge with probability $c$, $v\in G$, such that $(1-c)(\lambda_{h'}^2/2)^{4}>\lambda_1^{7}$.

With probability $1-o(1)$, either $v$ is $(\frac{2}{3}\cdot \log n/\log ((1-c)\lambda_1),x)$-bad or {\it Basic-Eigenvalue-approximation-algorithm}(E,c,v) runs in $O(n)$ time and returns $(\lambda'_1,...,\lambda'_{h''})$ such that $h'=h''$ and $|\lambda'_i-\lambda_i|<\ln^{-3/2}(n)$ for all $i$.
\end{lemma}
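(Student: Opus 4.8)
The plan is as follows. If $v$ is $(\tfrac{2}{3}\log n/\log((1-c)\lambda_1),x)$-bad there is nothing to prove, so suppose $v$ is good; since $x<\min(P^{-1}w)_i/2$ and goodness weakens as its second argument grows and its first shrinks, $v$ is then also $(\sqrt{\ln n},\min(P^{-1}w)_i/2)$-good, so the determinant lemmas, the positive-eigenvector lemma and its corollary all apply to $v$ viewed as a vertex of $G\backslash E\sim\gss(n,p,(1-c)Q)$, whose relevant eigenvalues are the $(1-c)\lambda_i$. First I would isolate the deterministic consequences of goodness that everything rests on: (a) feeding the positive-eigenvector estimate $w\cdot P^{-1}N_{\sqrt{\ln n}[G\backslash E]}(v)\ge((1-c)\lambda_1)^{\sqrt{\ln n}}/\ln n$ into the one-step recursion from the definition of $(R,x)$-good, whose error term at level $r$ is $O(\beta^r)$ with $\beta=(1-c)\lambda_{h'}^2/(2\lambda_1)<(1-c)\lambda_1$ and hence $o(((1-c)\lambda_1)^r)$, and combining with the upper bound $|N_{r[G\backslash E]}(v)|\le((1-c)\lambda_1)^r\sqrt{k}((\min p_i)^{-1/2}+x)$ already proved above, one gets $|N_{r[G\backslash E]}(v)|=((1-c)\lambda_1)^r$ up to polylogarithmic factors for all $r\le\tfrac{2}{3}\log n/\log((1-c)\lambda_1)$; (b) the corollary of the positive-eigenvector lemma together with the goodness bound sandwich $w_i(N_{\sqrt{\ln n}[G\backslash E]}(v))\cdot P^{-1}w_i(N_{\sqrt{\ln n}[G\backslash E]}(v))$ between $((1-c)\lambda_i)^{2\sqrt{\ln n}}/\ln^2 n$ and $((\min p_j)^{-1/2}+x)^2((1-c)\lambda_i)^{2\sqrt{\ln n}}$ for every $i\le h'$, so $w_i(N_{\sqrt{\ln n}[G\backslash E]}(v))\cdot Qw_i(N_{\sqrt{\ln n}[G\backslash E]}(v))$ has magnitude $|(1-c)\lambda_i|^{2\sqrt{\ln n}+1}$ up to polylogarithmic factors; and (c) the hypothesis $(1-c)(\lambda_{h'}^2/2)^4>\lambda_1^7$ is precisely what the determinant-lemma requirement $((1-c)\lambda_{h'}^2/2)^{r+r'}>\lambda_1^{r+r'}n$ reduces to once $r=r'=\tfrac{2}{3}\log n/\log((1-c)\lambda_1'')-\sqrt{\ln n}$ is substituted, so the determinant lemmas are usable at the levels the algorithm visits.

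With these in hand I would analyze the algorithm step by step. By (a), the first $r$ with $|N_{r[G\backslash E]}(v)|>\sqrt n$ is $\tfrac{1}{2}\log n/\log((1-c)\lambda_1)+O(\log\log n)$, so $\lambda_1''=n^{1/(2r)}/(1-c)=\lambda_1(1+O(\log\log n/\log n))$; this is too crude to be a final answer but is enough to fix a legitimate $r=r'$ (large enough for the determinant lemma by (c), small enough to stay inside the graph, with the $-\sqrt{\ln n}$ slack absorbing the $O(\log\log n)$ error). Substituting the determinant-lemma formulas and (b) into the ratio tested in step (2): for each $m\le h'$ its $2r$-th root equals $(1-c)|\lambda_m|$ times a $1+O(\log\log n/\log n)$ factor, hence stays above the cutoff $((1-c)\lambda_1'')^{3/4}+1/\sqrt{\ln n}$ because $(1-c)|\lambda_{h'}|$ exceeds $((1-c)\lambda_1)^{3/4}$ by a fixed positive constant (again a consequence of the hypothesis, with room to spare); for $m=h'+1$ the alternate determinant bound makes the tested ratio at most $\mathrm{polylog}(n)\cdot n\cdot((1-c)\lambda_1)^{-\Theta(\sqrt{\ln n})}$, and the $-\sqrt{\ln n}$ offset in $r$ is chosen so that, after the $2r$-th root, the $\sqrt{\ln n}$ contributions cancel in the exponent, leaving $((1-c)\lambda_1)^{3/4}+O(\log\log n/\log n)$, which is below the cutoff. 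Hence the loop stops exactly at $m=h'+1$, giving $h''=h'$. For steps (3)--(4), (b)--(c) give that $\det(M_{i,s,r'[E]}(v\cdot v))$ equals $\tfrac{c^i}{n^i}\prod_{j\le i}|(1-c)\lambda_j|^{s}$ times an $s$-independent quantity known to relative error $O(1/\ln^{2i+2}n)$, so ratios of determinants at levels differing by $1$, $2$ or $3$ recover $\prod_{j\le i}((1-c)\lambda_j)$ (with sign) or $\prod_{j\le i}((1-c)\lambda_j)^2$ to the same relative accuracy; inducting on $i$, the normalizing products $\prod_{j<i}(1-c)|\lambda_j'|$ stay accurate to within the same order, so $|\lambda_i'|=|\lambda_i|+O(1/\ln^4 n)$, and the tests of step (4) correctly detect the at-most-twofold degeneracies $\lambda_i=-\lambda_{i+1}$ since $||\lambda_i'|-|\lambda_{i+1}'||$ is $O(1/\ln^4 n)$ in that case and bounded below by a constant otherwise, both on the right side of $1/\ln n$. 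Together these give $|\lambda_i'-\lambda_i|<\ln^{-3/2}n$ for all $i$. Finally, every object computed --- the neighborhoods $N_{r''[G\backslash E]}(v)$ for $r''\le r+2h''+3$ and the $O(1)$ counts $N_{s,r'[E]}(v\cdot v)$ --- lives on vertex sets of size $O(n^{2/3+o(1)})$ in a graph of $O(1)$ average degree, so the algorithm runs in $O(n^{2/3}\,\mathrm{polylog}\,n)=O(n)$; and all of this holds with probability $1-o(1)$ since each invoked lemma fails with probability $o(1)$ and only $O(1)$ of them are used.

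The hardest part, I expect, is the bookkeeping forced by pairs $\lambda_i=-\lambda_{i+1}$: one must verify that in each branch of the conditional in step (3) and in both clauses of step (4) it is the ``good-parity'' determinant that survives, so that the maxima and the $r$-vs-$r+1$ alternations in the pseudocode always pick out a quantity of size $\tfrac{c^m}{n^m}\prod_{i\le m}|(1-c)\lambda_i|^{2r}$ up to polylogarithmic factors rather than something spuriously small. The other delicate point is confirming that the $m=h'+1$ tested value really drops below the cutoff --- the constants there are tight, and it is exactly the $-\sqrt{\ln n}$ offset in $r$ together with the explicit $1/\sqrt{\ln n}$ slack in the stopping rule that makes it work, so the exponent identity $\dfrac{\log n-\Theta(\sqrt{\ln n})}{\tfrac{4}{3}\log n/\log((1-c)\lambda_1)-\Theta(\sqrt{\ln n})}=\tfrac{3}{4}\log((1-c)\lambda_1)+O(\log\log n/\log n)$ has to be carried out exactly, not just to leading order.
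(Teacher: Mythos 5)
Your proposal is correct and follows essentially the same route as the paper's proof: use goodness of $v$ to sandwich $|N_{r[G\backslash E]}(v)|$ and hence pin down $\lambda_1''$ and $r$, invoke the determinant lemmas together with the positive-eigenvector lower bound on $w_i(N_{\sqrt{\ln n}}(v))\cdot Qw_i(N_{\sqrt{\ln n}}(v))$ to show the step-(2) stopping rule yields $h''=h'$ (with the $m\le h'$ ratios staying above the cutoff and the $m=h'+1$ ratio falling to $((1-c)\lambda_1)^{3/4}+O(\ln\ln n/\ln n)$), then recover the $|\lambda_i'|$ and their signs from determinant ratios, handling the $\lambda_m=-\lambda_{m+1}$ degeneracies via the $r$ versus $r+1$ alternation exactly as the paper does. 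The delicate points you flag are the same ones the paper has to navigate, and your runtime accounting matches as well.
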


\begin{proof}
If $v$ is $(\frac{2}{3}\cdot \log n/\log ((1-c)\lambda_1,x)$-good, there exists a constant $r$ such that for any  $r<r''<\frac{2}{3}\cdot \log n/\log ((1-c)\lambda_1)$, 
\[\frac{\min (P^{-1}w)_j}{4}((1-c)\lambda_1)^{r''}\sum_{j=1}^k w_j\le |N_{r''[G\backslash E]}(v[0])|\le ((1-c)\lambda_1)^{r''} \sqrt{k}((\min p_i)^{-1/2}+x)\]
 So, the minimum $r''$ such that $|N_{r''[G\backslash E]}(v)|>\sqrt{n}$ is within a constant of $\log n/2\log ((1-c)\lambda_1)$. That means that $|\lambda_1-\lambda''_1|$ is upper bounded by a constant multiple of $1/\log n$, and that $r=r'$ is within a constant of the value it would have if $\lambda''_1$ were $\lambda_1$. This also implies that if $n$ is sufficiently large it is less than $\frac{2}{3}\cdot \log n/\log ((1-c)\lambda_1)-2h'-3$. 

For $m\le h'$, $r''\in \{r,r+1\}$, and $v\in G$ if $|\lambda_i|\ne |\lambda_{i+1}|$ then 
\[|\frac{\det(M_{m,r'',r''[E]}(v\cdot v))}{\frac{\gamma(\{(1-c)\lambda_j\},m) c^m}{n^m}\prod_{i=1}^{m} ((1-c)\lambda_i)^{2r''-2\sqrt{\ln n}} w_i(N_{\sqrt{\ln n}[G\backslash E]}(v))\cdot Qw_i(N_{\sqrt{\ln n}[G\backslash E]}(v))}-1|=o(1)\]
with probability $1-o(1)$. If $|\lambda_i|=|\lambda_{i+1}|$ then $\lambda_i=-\lambda_{i+1}$,
so either \\$\gamma((1-c)\lambda_m)^{2r-2\sqrt{\ln n}} w_m(N_{\sqrt{\ln n}[G\backslash E]}(v))\cdot Qw_m(N_{\sqrt{\ln n}[G\backslash E]}(v))$ has the same sign as $\gamma'((1-c)\lambda_{m+1})^{2r-2\sqrt{\ln n}}w_{m+1}(N_{\sqrt{\ln n}[G\backslash E]}(v))\cdot Qw_{m+1}(N_{\sqrt{\ln n}[G\backslash E]}(v))$
 or \\$ \gamma((1-c)\lambda_m)^{2r+1-2\sqrt{\ln n}} w_m(N_{\sqrt{\ln n}[G\backslash E]}(v))\cdot Qw_m(N_{\sqrt{\ln n}[G\backslash E]}(v))$ has the same sign as $\gamma'((1-c)\lambda_{m+1})^{2r+1-2\sqrt{\ln n}}w_{m+1}(N_{\sqrt{\ln n}[G\backslash E]}(v))\cdot Qw_{m+1}(N_{\sqrt{\ln n}[G\backslash E]}(v)))$. Either way, we have that 

\begin{align*}
&(1-o(1))|\frac{\gamma(\{(1-c)\lambda_j\},m) c^m}{n^m}\prod_{i=1}^{m} ((1-c)\lambda_i)^{2r-2\sqrt{\ln n}} w_i(N_{\sqrt{\ln n}[G\backslash E]}(v))\cdot Qw_i(N_{\sqrt{\ln n}[G\backslash E]}(v))|\\
&\le \max(|\det(M_{m,r,r[E]}(v\cdot v))|,|\det(M_{m,r+1,r[E]}(v\cdot v))|)\\
&\le (1+o(1)) |\frac{c^m}{n^m}\prod_{i=1}^{m-1} ((1-c)\lambda_i)^{2r+1-2\sqrt{\ln n}} w_i(N_{\sqrt{\ln n}[G\backslash E]}(v))\cdot Qw_i(N_{\sqrt{\ln n}[G\backslash E]}(v))|\\
&\cdot (|\gamma((1-c)\lambda_m)^{2r+1-2\sqrt{\ln n}} w_m(N_{\sqrt{\ln n}[G\backslash E]}(v))\cdot Qw_m(N_{\sqrt{\ln n}[G\backslash E]}(v))|\\
&+|\gamma'((1-c)\lambda_{m+1})^{2r+1-2\sqrt{\ln n}}w_{m+1}(N_{\sqrt{\ln n}[G\backslash E]}(v))\cdot Qw_{m+1}(N_{\sqrt{\ln n}[G\backslash E]}(v))|)
\end{align*}
with probability $1-o(1)$. Furthermore, by lemma $8$, these bounds are within a factor of $O(\ln^2 n)$ of each other with probability $1-o(1)$. 

That means that 
\[\frac{n \max(|\det M_{m,r,r[E]}(v\cdot v)|,|\det M_{m,r+1,r[E]}(v\cdot v)|)}{c\max(|\det M_{m-1,r,r[E]}(v\cdot v)|,|\det M_{m-1,r+1,r[E]}(v\cdot v)|)}\] is within a factor of $\ln^3(n)$ of $|((1-c)\lambda_m)^{2r_1-2\sqrt{\ln n}}w_m(N_{\sqrt{\ln n}[G\backslash E]}(v))\cdot Qw_m(N_{\sqrt{\ln n}[G\backslash E]}(v))|$ with probability $1-o(1)$, and thus that 
\[\sqrt[2r_1]{\frac{n \max(|\det M_{m,r,r[E]}(v\cdot v)|,|\det M_{m,r+1,r[E]}(v\cdot v)|)}{c\max(|\det M_{m-1,r,r[E]}(v\cdot v)|,|\det M_{m-1,r+1,r[E]}(v\cdot v)|)}}=|(1-c)\lambda_m|\pm o(1)\]
with probability $1-o(1)$. $|(1-c)\lambda_m|\ge \sqrt[4]{(4(1-c)^3\lambda_1^3}$, so with probability $1-o(1)$, this expression is not less than $((1-c)\lambda''_1)^{3/4}+\frac{1}{\sqrt{\ln n}}$.

If $m=h'+1$, $r''\in \{r,r+1\}$, and $v\in G$, then with probability $1-o(1)$ either $v$ is $(r''+2m+1,x)$-bad or 
\begin{align*}
&|\det(M_{m,r,r'[E]}(v\cdot v))|\\
&\le \frac{c^{h'+1}}{n^{h'+1}}\ln^2(n)(1-c)^{2h'\cdot r''}\prod_{i=1}^{h'}|\lambda_i|^{2r''} \left(\frac{((1-c)\lambda_1)^{2r''}}{n}+((1-c)\lambda_1)^{r''/2}\right)\cdot (1-c)^{r''}\lambda_1^{r''}\\
&\le 2\frac{c^{h'+1}}{n^{h'+1}}\ln^2(n)(1-c)^{2h'\cdot r''}\prod_{i=1}^{h'}|\lambda_i|^{2r''} \cdot ((1-c)\lambda_1)^{3r''/2}\\
\end{align*}
If v falls under the later case,
\[\frac{n \max(|\det M_{m,r,r[E]}(v\cdot v)|,|\det M_{m,r+1,r[E]}(v\cdot v)|)}{c\max(|\det M_{m-1,r,r[E]}(v\cdot v)|,|\det M_{m-1,r+1,r[E]}(v\cdot v)|)}=O(\ln^2(n)((1-c)\lambda_1)^{3r/2})\]
, so \[\sqrt[2r]{\frac{n \max(|\det M_{m,r,r[E]}(v\cdot v)|,|\det M_{m,r+1,r[E]}(v\cdot v)|)}{c\max(|\det M_{m-1,r,r[E]}(v\cdot v)|,|\det M_{m-1,r+1,r[E]}(v\cdot v)|)}}=((1-c)\lambda_1)^{3/4}+O(\ln(\ln(n))/\ln(n))\] 
Therefore, with probability $1-o(1)$, either $v$ is $(\frac{2}{3}\cdot \log n/\log ((1-c)\lambda_1),x)$-bad or $h''=h'$.

By the previous two lemmas, with probability $1-o(1)$ either $v$ is $(r+2m+4,x)$-bad, or \[w_i(N_{\sqrt{\ln n}}(v))\cdot P^{-1} w_i(N_{\sqrt{\ln n}}(v'))|\ge \lambda_i^{2\sqrt{\ln n}}/\ln^2 n\] for all $i\le h$ and 
\begin{align*}
|\det(M_{m,r'',r'[E]}&(v\cdot v'))\\
&-\frac{c^m}{n^m}\prod_{i=1}^{m-1} ((1-c)\lambda_i)^{r''+r'-2\sqrt{\ln n}} w_i(N_{\sqrt{\ln n}[G\backslash E]}(v))\cdot Qw_i(N_{\sqrt{\ln n}[G\backslash E]}(v'))\\
&\cdot (\gamma((1-c)\lambda_m)^{r''+r'-2\sqrt{\ln n}} w_m(N_{\sqrt{\ln n}[G\backslash E]}(v))\cdot Qw_m(N_{\sqrt{\ln n}[G\backslash E]}(v'))\\
&+\gamma'((1-c)\lambda_{m+1})^{r''+r'-2\sqrt{\ln n}}w_{m+1}(N_{\sqrt{\ln n}[G\backslash E]}(v))\cdot Qw_{m+1}(N_{\sqrt{\ln n}[G\backslash E]}(v')))|\\
&\le \frac{1}{\ln^{2m+2} n}\cdot \frac{c^m}{n^m}\prod_{i=1}^{m} |(1-c)\lambda_i|^{r''+r'}
\end{align*} for all $m\le h'+1$ and $r\le r''<r+4$.

Assume that the later holds. If $|\lambda_{m+1}|<|\lambda_m|$ then if $n$ is sufficiently large the $\gamma'$ term will be negligable relative to the $\gamma$ term, while if $|\lambda_{m+1}|=|\lambda_m|$ increasing $r''$ by two would multiply both of these terms by $(1-c)^2\lambda_m^2$. Either way, we have that for any $k\le h'$ and $r\le r''\le r+1$, \[|\det(M_{m,r''+2,r'[E]}(v\cdot v'))/\det(M_{m,r'',r'[E]}(v\cdot v'))-\prod_{j=1}^{k} ((1-c)\lambda_j)^2|\le \frac{1}{\ln^{7/8}n}\] as long as $n$ is sufficiently large, and 
\begin{align*}
&| \gamma((1-c)\lambda_m)^{r''+r'-2\sqrt{\ln n}} w_m(N_{\sqrt{\ln n}[G\backslash E]}(v))\cdot Qw_m(N_{\sqrt{\ln n}
[G\backslash E]}(v'))\\
&\phantom{xxx}+\gamma'((1-c)\lambda_{m+1})^{r''+r'-2\sqrt{\ln n}}w_{m+1}(N_{\sqrt{\ln n}[G\backslash E]}(v))\cdot Qw_{m+1}(N_{\sqrt{\ln n}[G\backslash E]}(v'))|\\
&\ge \frac{1}{2}| \gamma((1-c)\lambda_m)^{r''+r'-2\sqrt{\ln n}} w_m(N_{\sqrt{\ln n}[G\backslash E]}(v))\cdot Qw_m(N_{\sqrt{\ln n}
[G\backslash E]}(v'))|
\end{align*}
We assumed that $n$ is large, so that can only fail if $|\lambda_m|=|\lambda_{m+1}|$. In this case, $\lambda_m=-\lambda_{m+1}$, so increasing $m$ by one would result in both terms having the same sign, at which point the condition is satisfied. So, 
\[||\lambda_i'|^2-|\lambda_i|^2|\le 4\ln^{-7/8} n\]
for any $i\le h'$. For any $i<h'$, if $|\lambda_i|>|\lambda_{i+1}|$, then for sufficiently large $n$, $|\lambda'_i|-|\lambda'_{i+1}|$ will be greater than $\frac{1}{\ln n}$. On the other hand, if $|\lambda_i|=|\lambda_{i+1}|$ and $n$ is sufficiently large, $|\lambda'_i|-|\lambda'_{i+1}|$ will be less than $\frac{1}{\ln n}$. So, the algorithm will suceed at determining which eigenvalues have the same absolute values and assign $\lambda'_i$ the same sign as $\lambda_i$ for each $i$. So, $|\lambda'_i-\lambda_i|<\ln^{-3/2}(n)$ as desired. Assuming this all works, the slowest part of the algorithm is computing expressions of the form $N_{r'',r'''[E]}(v\cdot v)$, each such expression can be computed in $O(n)$ time, and only a constant number of them need to be computed. So, the algorithm runs in $O(n)$ time.
\end{proof}

So, this algorithm sometimes works, but it fails if $v$ is bad. This risk can be mitigated by using multiple vertices as follows.

\begin{algorithm}
Improved-Eigenvalue-approximation-algorithm(c):

\phantom{xxx} Create a set of edges $E$, that each of $G$'s edges is independently assigned to with probability $c$.

\phantom{xxx} Randomly select $\sqrt{\ln n}$ of $G$'s vertices, $v[1]$, $v[2]$,..., $v[\sqrt{\ln n}]$.

\phantom{xxx} Run {\it Basic-Eigenvalue-approximation-algorithm}(E,c,v[i]) for each $i\le \sqrt{\ln n}$, stopping the algorithm prematurely if it takes more than $O(n\sqrt{\ln n})$ time.

\phantom{xxx} Return $(\lambda'_1,...,\lambda'_{h''})$ where $h''$ and $\lambda'_i$ are the median outputs of the executions of {\it Basic-Eigenvalue-approximation-algorithm} for each $i$.

\end{algorithm}

\begin{lemma}
Let $0<c<1$, $\min (P^{-1}w)_i/2>x>0$, and $G$ be drawn from $\gss(n,p,Q)$. {\it Improved-Eigenvalue-approximation-algorithm}(c) runs in $O(n\log n)$ time. Furthermore, if $(1-c)(\lambda_{h'}^2/2)^{4}>\lambda_1^{7}$, $x<\frac{\lambda_1k}{\lambda_{h'}\min p_i}$, and 
\[2ke^{-\frac{x^2(1-c)\lambda_{h'}^2\min p_i}{16\lambda_1 k^{3/2}((\min p_i)^{-1/2}+x)}}/\left(1-e^{-\frac{x^2(1-c)\lambda_{h'}^2\min p_i}{16\lambda_1k^{3/2}((\min p_i)^{-1/2}+x)}\cdot((\frac{(1-c)\lambda_{h'}^4}{4\lambda_1^3})-1)}\right)<\frac{1}{2}\]
then {\it Improved-Eigenvalue-approximation-algorithm}(c) returns $(\lambda'_1,...,\lambda'_{h''})$ such that $h'=h''$ and $|\lambda'_i-\lambda_i|<\ln^{-3/2}(n)$ for all $i$ with probability $1-o(1)$.
\end{lemma}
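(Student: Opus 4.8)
The plan is to reduce everything to the analysis of {\it Basic-Eigenvalue-approximation-algorithm} and to the two fraction-of-good-vertices lemmas, all applied to the residual graph $G\setminus E$: since each edge of $G$ is placed in $E$ independently with probability $c$, $G\setminus E\sim\gss(n,p,(1-c)Q)$, whose matrix $P(1-c)Q$ has eigenvalues $(1-c)\lambda_1,\dots,(1-c)\lambda_h$, and $E$ is (up to a negligible overlap) independent of it. The running time is immediate: forming $E$ costs time linear in $|E(G)|$, which is $O(n)$ with probability $1-o(1)$; sampling the $\sqrt{\ln n}$ vertices and taking the coordinatewise medians of the $O(1)$ reported numbers is negligible; and each of the $\sqrt{\ln n}$ calls to {\it Basic-Eigenvalue-approximation-algorithm} is cut off after $O(n\sqrt{\ln n})$ steps by construction, so the total is $O(n\sqrt{\ln n}\cdot\sqrt{\ln n})=O(n\log n)$.

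Next I would check that the stated hypotheses are exactly the $(1-c)$-rescaled versions of what the three imported lemmas require for $\gss(n,p,(1-c)Q)$. Writing $\beta$ for the left-hand side of the displayed inequality in the statement, $\beta$ is a constant (independent of $n$) with $\beta<1/2$, and it coincides with the upper bound on $y$ produced by the fraction-of-good-vertices lemma applied to $\gss(n,p,(1-c)Q)$; likewise $x<\lambda_1 k/(\lambda_{h'}\min p_i)$ is the rescaled form of the range hypothesis. Moreover both the inequality $4((1-c)\lambda_1)^3<((1-c)\lambda_{h'})^4$ needed by that lemma and the inequality $(2((1-c)\lambda_1)^3/((1-c)\lambda_{h'})^2)^{2/3}<(1-c)\lambda_1$ needed by the good-to-bad lemma with $\epsilon$ chosen so that $1-\epsilon/3=\tfrac23$ both reduce to $4\lambda_1^3<(1-c)\lambda_{h'}^4$, which follows from $(1-c)(\lambda_{h'}^2/2)^4>\lambda_1^7$ since this gives $(1-c)\lambda_{h'}^4>16\lambda_1^7/\lambda_{h'}^4\ge 4\lambda_1^3$, the last step using $\lambda_1\ge|\lambda_{h'}|$. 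For that value of $\epsilon$ the good-to-bad radius is exactly $\tfrac23\log n/\log((1-c)\lambda_1)$, the radius appearing in the {\it Basic}-algorithm lemma, whose own hypotheses $\min(P^{-1}w)_i/2>x>0$ and $(1-c)(\lambda_{h'}^2/2)^4>\lambda_1^7$ are in force.

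Combining the three lemmas via Markov's inequality then gives: with probability $1-o(1)$ the realized pair $(G\setminus E,E)$ is such that the fraction of vertices $v$ for which {\it Basic-Eigenvalue-approximation-algorithm}$(E,c,v)$ returns $h''=h'$ together with $|\lambda'_j-\lambda_j|<\ln^{-3/2}n$ for all $j$ is at least $1-y-o(1)\ge 1-\beta-o(1)$, which for $n$ large exceeds $\tfrac12+\delta_0$ for a fixed $\delta_0>0$. (The fraction-of-good-vertices lemma gives a $1-y$ fraction of $(R(n),x)$-good vertices with $R(n)=\omega(1)$; the good-to-bad lemma plus Markov upgrades all but an $o(1)$ fraction of these to $(\tfrac23\log n/\log((1-c)\lambda_1),x)$-good; and the {\it Basic}-algorithm lemma plus Markov shows all but an $o(1)$ fraction of the latter yield an accurate output.) Conditioning on this event, the sampled vertices $v[1],\dots,v[\sqrt{\ln n}]$ are i.i.d.\ uniform and each indicator $X_i=\1[i\text{-th call accurate}]$ is a fixed function of $v[i]$, so conditionally the $X_i$ are i.i.d.\ Bernoulli with mean $>\tfrac12+\delta_0$; a Chernoff bound gives $\pp[\sum_i X_i\le\tfrac12\sqrt{\ln n}]=e^{-\Omega(\sqrt{\ln n})}=o(1)$. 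On the complement, more than half of the calls report $h''=h'$ and, for every coordinate $j$, more than half report a value within $\ln^{-3/2}n$ of $\lambda_j$; since the median of a list more than half of whose entries lie in an interval lies in that interval, the medians returned by {\it Improved-Eigenvalue-approximation-algorithm} satisfy $h''=h'$ and $|\lambda'_j-\lambda_j|<\ln^{-3/2}n$ for all $j$, and unconditioning yields the claim with probability $1-o(1)$.

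I expect the main obstacle to be the bookkeeping of the middle paragraphs rather than any new idea: one must be careful that ``good/bad'' inside the {\it Basic}-algorithm lemma is read relative to $G\setminus E$ (equivalently with eigenvalues $(1-c)\lambda_i$), that every hypothesis of the three imported lemmas is genuinely a consequence of the hypotheses stated here after the $(1-c)$-rescaling — in particular that $4((1-c)\lambda_1)^3<((1-c)\lambda_{h'})^4$ and that $\tfrac23\log n/\log((1-c)\lambda_1)$ is an admissible good-to-bad radius — and that the three $o(1)$-in-expectation statements are first turned into high-probability statements about the realized $(G\setminus E,E)$, since it is exactly the conditioning on $(G\setminus E,E)$ that restores the independence of the $X_i$ used in the Chernoff step. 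Everything after that is routine.
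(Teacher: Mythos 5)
Your proposal is correct and follows essentially the same route as the paper's own (much terser) proof: reduce to the \emph{Basic-Eigenvalue-approximation-algorithm} lemma on each sampled vertex, invoke the fraction-of-good-vertices lemmas to get that a $1-y>\tfrac12$ fraction of vertices yield accurate output, and conclude that the medians over $\sqrt{\ln n}$ random samples are accurate, with the running time being $\sqrt{\ln n}$ calls at $O(n\sqrt{\log n})$ each. The only difference is that you make explicit the Markov/Chernoff bookkeeping that the paper compresses into ``with probability $1-o(1)$ the majority of the selected vertices are good,'' and you verify the $(1-c)$-rescaling of the hypotheses in more detail; both of these are faithful elaborations rather than a different argument.
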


\begin{proof}
Generating $E$ takes $O(n)$ time, picking $v[i]$ takes $o(n)$ time, each execution of {\it Basic-Eigenvalue-approximation-algorithm}(c) runs in $O(n\sqrt{\log n})$ time, and combining their outputs takes $o(n)$ time. So, this algorithm runs in $O(n\log n)$ time.

Assuming the conditions are satisfied, there exists $y<\frac{1}{2}$ such that $1-y$ of $G$'s vertices are $(\frac{2}{3}\cdot \log n/\log ((1-c)\lambda_1,x)$-good with probability $1-o(1)$. So, with probability $1-o(1)$, the majority of the selected vertices of $G$ are $(\frac{2}{3}\cdot \log n/\log ((1-c)\lambda_1),x)$-good and the majority of the executions of {\it Basic-Eigenvalue-approximation-algorithm} give good output. If this happens, then the median value of $h''$ is $h'$, and for each $1\le i\le h'$, the median value of $\lambda'_i$ is within $\ln^{-3/2}(n)$ of $\lambda_i$ for each $i$, as desired.
\end{proof}

Given approximations of $PQ$'s eigenvalues, one can attempt to approximate $N_i(\{v\})\cdot P^{-1} N_i(\{v'\})$ as follows.

\begin{algorithm}
Vertex-product-approximation-algorithm(v,v',r,r',E,c,$(\lambda'_1,...,\lambda'_{h''})$):

(Assumes that $N_{r''[G\backslash E]}(v)$ has already been computed for $r''\le r+2h''+3$ and that $N_{r''[G\backslash E]}(v')$ has already been computed for $r''\le r'$) 

\phantom{xxx}

\phantom{xxx} For each $i\le h''$, set 
\begin{align*}
z_i(v\cdot v')&=\frac{\det(M_{i,r+1,r'[E]}(v\cdot v')-(1-c)^i\lambda'_{i+1}\prod_{j=1}^{i-1} \lambda'_j\det(M_{i,r,r'[E]}(v\cdot v')}{\det(M_{i-1,r+1,r'[E]}(v\cdot v')-(1-c)^{i-1}\lambda'_{i}\prod_{j=1}^{i-2} \lambda'_j \det(M_{i-1,r,r'[E]}(v\cdot v')}\\
&\cdot\frac{ n(\lambda'_{i-1}-\lambda'_i)\gamma(\{(1-c)\lambda'_j\},i-1)}{c\lambda'_{i-1}(\lambda'_i-\lambda'_{i+1})\gamma(\{(1-c)\lambda'_j\},i)}((1-c)\lambda'_i)^{-r-r'-1}
\end{align*}

\phantom{xxx}

\phantom{xxx} Return $(z_1(v\cdot v'),...,z_{h''}(v\cdot v'))$.
\end{algorithm}

\begin{lemma}
Let $0<c<1$, $\min (P^{-1}w)_i/2>x>0$, $G$ be drawn from $\gss(n,p,Q)$, $E$ be a subset of $G$'s edges that independently contains each edge with probability $c$, $v,v'\in G$ and $\sqrt{\ln n}\le r'\le r\in \mathbb{Z}^+$, such that $((1-c)\lambda_{h'}^2/2)^{r+r'}>\lambda_1^{r+r'}n$. Also let $(\lambda'_1,...,\lambda'_{h''})$ be a $h''$-tuple which may depend on $n$ such that $h''=h'$ and $|\lambda'_i-\lambda_i|\le \ln^{-3/2}(n)$ for all $i$. Assume that $N_{r''[G\backslash E]}(v)$ has already been computed for $r''\le r+2m+3$ and that $N_{r''[G\backslash E]}(v')$ has already been computed for $r''\le r'$.  {\it Vertex-product-approximation-algorithm}$(v,v',r,r',E,c,(\lambda'_1,...,\lambda'_{h''}))$ runs in $O(((1-c)\lambda_1)^{r'})$ average time. Furthermore, with probability $1-o(1)$, either $v$ is $(r+2m+4,x)$-bad, $v'$ is $(r'+1,x)$-bad, or {\it Vertex-product-approximation-algorithm}$(v,v',r,r',E,c,(\lambda'_1,...,\lambda'_{h''}))$ returns $(z_1(v\cdot v'),...,z_{h'}(v\cdot v'))$ such that $|z_i(v\cdot v')-w_i(\{v\})\cdot P^{-1}w_i(\{v'\})|<2x(\min p_j)^{-1/2}+x^2+o(1)$ for all $i$.
\end{lemma}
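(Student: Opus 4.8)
The proof splits into a running-time bound, an algebraic reduction of $z_i(v\cdot v')$ to a bilinear form in $w_i(N_{\sqrt{\ln n}[G\backslash E]}(v))$ and $w_i(N_{\sqrt{\ln n}[G\backslash E]}(v'))$, and a ``drift'' estimate bringing that bilinear form back to radius $0$. For the running time I would first observe that if $v'$ is not $(r'+1,x)$-bad then the ball-size lemma, applied to $G\backslash E\sim\gss(n,p,(1-c)Q)$, gives $|N_{r'[G\backslash E]}(v')|\le((1-c)\lambda_1)^{r'}\sqrt k((\min p_i)^{-1/2}+x)=O(((1-c)\lambda_1)^{r'})$ (otherwise the first alternative of the statement holds and there is nothing to prove). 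Since all balls the formula needs are precomputed (the first subscript never exceeds $r+2h''+3$) and $h''=h'\le k$ is constant, each entry $N_{r+a+b,r'[E]}(v\cdot v')$ costs $O((d+1)|N_{r'[G\backslash E]}(v')|)$ average time by the Compute-$N$ routine, only $O(1)$ of them together with $O(1)$ determinants of bounded size and the explicit factors $\gamma(\{(1-c)\lambda'_j\},\cdot)$ are needed, so the algorithm runs in $O(((1-c)\lambda_1)^{r'})$ average time.

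For correctness, assume $v$ is $(r+2m+4,x)$-good and $v'$ is $(r'+1,x)$-good. Write $\mu_j=(1-c)\lambda_j$, $\mu'_j=(1-c)\lambda'_j$, $s=\sqrt{\ln n}$, $b_j=w_j(N_{s[G\backslash E]}(v))\cdot Qw_j(N_{s[G\backslash E]}(v'))$ and $\beta_j=\mu_j^{-2s}b_j$; note $b_j=\lambda_j\,w_j(N_{s[G\backslash E]}(v))\cdot P^{-1}w_j(N_{s[G\backslash E]}(v'))$. The plan is to feed the refined determinant estimate (the lemma expressing $\det(M_{m,r'',r'[E]}(v\cdot v'))$ in terms of the $b_j$'s up to additive error $\tfrac{c^m}{n^m}\tfrac{1}{\ln^{2m+2}n}\prod_{j\le m}|\mu_j|^{r''+r'}$) into the formula for $z_i$. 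Using $(1-c)^i\lambda_{i+1}\prod_{j<i}\lambda_j=\mu_{i+1}\prod_{j<i}\mu_j$ one checks that the subtraction in the numerator of $z_i$ is tuned precisely so the $\beta_{i+1}$-terms drop out identically (the factor they carry becomes $\mu_{i+1}-\mu_{i+1}=0$), leaving
\[\det(M_{i,r+1,r'[E]}(v\cdot v'))-(1-c)^i\textstyle\prod_{j<i}\lambda_j\,\lambda_{i+1}\det(M_{i,r,r'[E]}(v\cdot v'))\ \approx\ \tfrac{c^i}{n^i}\,\gamma(\{\mu_j\},i)\,(\mu_i-\mu_{i+1})\Big(\textstyle\prod_{j<i}\mu_j^{\,r+r'+1}\beta_j\Big)\mu_i^{\,r+r'}\beta_i,\]
with the analogous identity for $i-1$; dividing these and multiplying by the normalizing factor $\tfrac{n(\lambda'_{i-1}-\lambda'_i)\gamma(\{\mu'_j\},i-1)}{c\lambda'_{i-1}(\lambda'_i-\lambda'_{i+1})\gamma(\{\mu'_j\},i)}((1-c)\lambda'_i)^{-r-r'-1}$ makes all factors indexed by $j<i$ telescope away, giving $z_i(v\cdot v')\approx\beta_i/\lambda_i=\mu_i^{-2s}w_i(N_{s[G\backslash E]}(v))\cdot P^{-1}w_i(N_{s[G\backslash E]}(v'))$. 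I would argue the losses are $o(1)$ additively: the Determinant-Lemma error is $O(1/\ln^2 n)$ relative to the surviving term once one invokes the lower bound $|\beta_j|\gtrsim 1/\ln^2 n$ (from the lemma preceding the Basic-Eigenvalue algorithm, applied to $G\backslash E$, whose hypothesis $\mu_j^2>\mu_1$ follows from the standing assumption $((1-c)\lambda_{h'}^2/2)^{r+r'}>\lambda_1^{r+r'}n$), and $z_i$ is bounded; the replacements of $\lambda_j$ by $\lambda'_j$ and of $\gamma(\{\mu_j\},\cdot)$ by $\gamma(\{\mu'_j\},\cdot)$ (legitimate since $\gamma$ is continuous and nonzero by the Determinant Lemma) cost only $O(\ln^{-3/2}n)$ because the surviving term above is a $\Theta(1)$ multiple of $\det(M_{i,r,r'[E]}(v\cdot v'))$ itself. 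The case $i=1$ is handled directly from $N_{r,r'[E]}(v\cdot v')\approx\tfrac{c(1-c)^{r+r'}}{n}\lambda_1^{r+r'+1}\zeta_1(v\cdot v')$.

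To pass from radius $s$ to radius $0$, set $\delta_v=w_i(N_{s[G\backslash E]}(v))-\mu_i^s w_i(\{v\})\in W_i$, and likewise $\delta_{v'}$. Since $x<\min(P^{-1}w)_i/2$ makes $v$ also $(s,\min(P^{-1}w)_i/2)$-good, telescoping the good-vertex bound over the $s$ steps in $G\backslash E$ gives, for every $w\in W_i$,
\[|(P^{-1}w)\cdot\delta_v|\ \le\ \tfrac{x|\mu_{h'}|}{2}\,|\mu_i|^{s-1}\Big(\sum_{t\ge0}\big(\tfrac{\lambda_{h'}^2}{2\lambda_1|\lambda_i|}\big)^{t}\Big)\sqrt{w\cdot P^{-1}w}\ \le\ |\mu_i|^{s}\,x\,\sqrt{w\cdot P^{-1}w},\]
the series converging since $|\lambda_i|\ge|\lambda_{h'}|$ forces $\lambda_{h'}^2<2\lambda_1|\lambda_i|$, and the final inequality using $\tfrac{|\lambda_{h'}|\lambda_1}{2\lambda_1|\lambda_i|-\lambda_{h'}^2}\le 1$. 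Taking $w=\delta_v$ yields $\sqrt{\delta_v\cdot P^{-1}\delta_v}\le|\mu_i|^s x$; taking $w=w_i(\{v\})$, with $w_i(\{v\})\cdot P^{-1}w_i(\{v\})\le e_{\sigma_v}\cdot P^{-1}e_{\sigma_v}=1/p_{\sigma_v}\le(\min p_j)^{-1}$, yields $|w_i(\{v\})\cdot P^{-1}\delta_v|\le|\mu_i|^s x(\min p_j)^{-1/2}$; the same holds with $v'$. Expanding $w_i(N_{s[G\backslash E]}(v))\cdot P^{-1}w_i(N_{s[G\backslash E]}(v'))$ as $\mu_i^{2s}\zeta_i(v\cdot v')+\mu_i^s w_i(\{v\})\cdot P^{-1}\delta_{v'}+\mu_i^s\delta_v\cdot P^{-1}w_i(\{v'\})+\delta_v\cdot P^{-1}\delta_{v'}$ and bounding the last three by $x(\min p_j)^{-1/2}\mu_i^{2s}$, $x(\min p_j)^{-1/2}\mu_i^{2s}$, and (Cauchy--Schwarz in the inner product $\langle\cdot,\cdot\rangle_{P^{-1}}$) $x^2\mu_i^{2s}$, then combining with the reduction step, gives $|z_i(v\cdot v')-w_i(\{v\})\cdot P^{-1}w_i(\{v'\})|<2x(\min p_j)^{-1/2}+x^2+o(1)$.

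The main obstacle is the $o(1)$ bookkeeping in the reduction step. One must verify that the quantity surviving the $\beta_{i+1}$-cancellation is simultaneously (i) a $\Theta(1)$ multiple of $\det(M_{i,r,r'[E]}(v\cdot v'))$, so that the only-$\ln^{-3/2}n$-accurate eigenvalue estimates perturb it merely by a factor $1+o(1)$, and (ii) large compared to the Determinant-Lemma additive error $\tfrac{c^i}{n^i}\tfrac{1}{\ln^{2i+2}n}\prod_{j\le i}|\mu_j|^{r+r'}$. Since it carries a factor $\beta_i$ that can be as small as $\Theta(1/\ln^2 n)$, the errors must be tracked in absolute rather than relative terms, so that the $\beta_j$-factors (including the possibly tiny $\beta_i$) cancel against the normalizing factor and the net additive error in $z_i$ remains $o(1)$ even when $z_i$ itself is close to $0$.
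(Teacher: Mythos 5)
Your proposal is correct and follows essentially the same route as the paper's proof: the identical runtime accounting via the precomputed balls and the cost of each $N_{r'',r'[E]}(v\cdot v')$, the same use of the refined determinant lemma with the numerator tuned to cancel the $\lambda_{i+1}$-contribution, the same lower bound $|w_i(N_{\sqrt{\ln n}}(v))\cdot P^{-1}w_i(N_{\sqrt{\ln n}}(v'))|\gtrsim \lambda_i^{2\sqrt{\ln n}}/\ln^2 n$ to keep the additive determinant errors and the $\ln^{-3/2}(n)$ eigenvalue perturbations at $o(1)$, and the same goodness-based passage from radius $\sqrt{\ln n}$ to radius $0$ yielding $2x(\min p_j)^{-1/2}+x^2+o(1)$. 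The only cosmetic difference is your symmetric expansion of the final bilinear form versus the paper's sequential triangle inequality, which give the same bound.
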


\begin{proof} The slowest part of the algorithm is computing the expressions of the form $N_{r'',r'[E]}(v\cdot v')$. Each of these can be computed in $O(E[|N_{r'[G\backslash E]}(v')|])=O(((1-c)\lambda_1)^{r'})$ average time, and the number of these that need to be computed is constant in $n$. So, this algorithm runs in $O(((1-c)\lambda_1)^{r'})$ average time.

With probability $1-o(1)$ either $v$ is $(r+2m+4,x)$-bad, $v'$ is $(r'+1,x)$-bad, or \[w_i(N_{\sqrt{\ln n}}(v))\cdot P^{-1} w_i(N_{\sqrt{\ln n}}(v'))|\ge \lambda_i^{2\sqrt{\ln n}}/\ln^2 n\] for all $i\le h$ and 
\begin{align*}
|\det(M_{m,r'',r'[E]}&(v\cdot v'))-\frac{c^m}{n^m}\prod_{i=1}^{m-1} ((1-c)\lambda_i)^{r''+r'-2\sqrt{\ln n}} w_i(N_{\sqrt{\ln n}[G\backslash E]}(v))\cdot Qw_i(N_{\sqrt{\ln n}[G\backslash E]}(v'))\\
&\cdot (\gamma((1-c)\lambda_m)^{r''+r'-2\sqrt{\ln n}} w_m(N_{\sqrt{\ln n}[G\backslash E]}(v))\cdot Qw_m(N_{\sqrt{\ln n}[G\backslash E]}(v'))\\
&+\gamma'((1-c)\lambda_{m+1})^{r''+r'-2\sqrt{\ln n}}w_{m+1}(N_{\sqrt{\ln n}[G\backslash E]}(v))\cdot Qw_{m+1}(N_{\sqrt{\ln n}[G\backslash E]}(v')))|\\
&\le \frac{1}{\ln^{2m+2} n}\cdot \frac{c^m}{n^m}\prod_{i=1}^{m} |(1-c)\lambda_i|^{r''+r'}
\end{align*} for all $m\le h'+1$ and $r\le r''<r+4$.

Assume that the later holds.

\begin{align*}
&|\det(M_{i,r+1,r'[E]}(v\cdot v'))-(1-c)^i\lambda'_{i+1}\prod_{j=1}^{i-1}\lambda'_j \det(M_{i,r,r'[E]}(v\cdot v'))\\
&-\gamma\frac{\lambda_i-\lambda_{i+1}}{\lambda_i}\cdot\frac{c^i}{n^i}\prod_{j=1}^{i} ((1-c)\lambda_j)^{r+1+r'-2\sqrt{\ln n}} w_j(N_{\sqrt{\ln n}[G\backslash E]}(v))\cdot Qw_j(N_{\sqrt{\ln n}[G\backslash E]}(v'))|\\
&\le \frac{1}{\ln n}\cdot |\frac{c^i}{n^i}\prod_{j=1}^{i} ((1-c)\lambda_j)^{r+1+r'-2\sqrt{\ln n}} w_j(N_{\sqrt{\ln n}[G\backslash E]}(v))\cdot Qw_j(N_{\sqrt{\ln n}[G\backslash E]}(v'))| 
\end{align*}
for any $i\le h$ with probability $1-o(1)$. So,
\begin{align*}
&|\frac{\det(M_{i,r+1,r'[E]}(v\cdot v')-(1-c)^i\lambda'_{i+1}\prod_{j=1}^{i-1} \lambda'_j\det(M_{i,r,r'[E]}(v\cdot v')}{\det(M_{i-1,r+1,r'[E]}(v\cdot v')-(1-c)^{i-1}\lambda'_{i}\prod_{j=1}^{i-2} \lambda'_j \det(M_{i-1,r,r'[E]}(v\cdot v')}\\
&-\frac{\gamma(\{(1-c)\lambda_j\},i)}{\gamma(\{(1-c)\lambda_j\},i-1)}\cdot \frac{\lambda_{i-1}(\lambda_i-\lambda_{i+1})}{\lambda_i(\lambda_{i-1}-\lambda_i)}\\
&\indent\indent\cdot\frac{c}{n} ((1-c)\lambda_i)^{r+r'-2\sqrt{\ln n}+1} w_i(N_{\sqrt{\ln n}[G\backslash E]}(v))\cdot Qw_i(N_{\sqrt{\ln n}[G\backslash E]}(v'))|\\
&\le \frac{1}{n\sqrt{\ln n}}\cdot|((1-c)\lambda_i)^{r+r'+1}| 
\end{align*} 
with probability $1-o(1)$. Therefore,
\[|z_i(v\cdot v')-\lambda_i^{-1}((1-c)\lambda_i)^{-2\sqrt{\ln n}} w_i(N_{\sqrt{\ln n}[G\backslash E]}(v))\cdot Qw_i(N_{\sqrt{\ln n}[G\backslash E]}(v'))|=o(1)\] with probability $1-o(1)$. That implies that

 \begin{align*}
&|z_i-w_i(\{v\})\cdot P^{-1}w_i(\{v'\})|\\
&\le |z_i-(1-c)((1-c)\lambda_i)^{-2\sqrt{\ln n}-1}w_i(N_{\sqrt{\ln n}[G\backslash E]}(v))\cdot Qw_i(N_{\sqrt{\ln n}[G\backslash E]}(v'))|\\
&\phantom{xxxxxx}+(1-c)\cdot|((1-c)\lambda_i)^{-2\sqrt{\ln n}-1}w_i(N_{\sqrt{\ln n}[G\backslash E]}(v))\cdot Q w_i(N_{\sqrt{\ln n}[G\backslash E]}(v'))\\
&\phantom{\phantom{xxxxxx}+(1-c)\cdot|}-((1-c)\lambda_i)^{-\sqrt{\ln n}-1}w_i(\{v\})\cdot Qw_i(N_{\sqrt{\ln n}[G\backslash E]}(v'))|\\
&\phantom{xxxxxx}+|(1-c)\cdot((1-c)\lambda_i)^{-\sqrt{\ln n}-1}w_i(\{v\})\cdot Qw_i(N_{\sqrt{\ln n}[G\backslash E]}(v'))-w_i(\{v\})\cdot P^{-1}w_i(\{v'\}))|\\
&\le |((1-c)\lambda_i)^{-\sqrt{\ln n}}w_i(N_{\sqrt{\ln n}[G\backslash E]}(v'))\cdot P^{-1}[((1-c)\lambda_i)^{-\sqrt{\ln n}}w_i(N_{\sqrt{\ln n}[G\backslash E]}(v))-w_i(\{v\})]|\\
&\phantom{xxxxxx}+|w_i(\{v\})\cdot P^{-1}[((1-c)\lambda_i)^{-\sqrt{\ln n}}w_i(N_{\sqrt{\ln n}[G\backslash E]}(v'))-w_i(\{v'\})]+o(1)
\end{align*}
By goodness of $v$ and $v'$, this is less than or equal to
\begin{align*}
& ((1-c)\lambda_i)^{-\sqrt{\ln n}}\sqrt{w_i(N_{\sqrt{\ln n}[G\backslash E]}(v'))\cdot P^{-1}w_i(N_{\sqrt{\ln n}[G\backslash E]}(v'))}\cdot x\\
&\indent\indent\indent +\sqrt{w_i(\{v\})\cdot P^{-1}w_i(\{v\})}\cdot x+o(1)\\
&\le\sqrt{w_i(\{v'\})\cdot P^{-1}w_i(\{v'\})+2w_i(\{v'\})\cdot P^{-1}[((1-c)\lambda_i)^{-\sqrt{\ln n}}w_i(N_{\sqrt{\ln n}[G\backslash E]}(v'))-w_i(\{v'\})] +}\\
&\indent\indent\indent \overline{[((1-c)\lambda_i)^{-\sqrt{\ln n}}w_i(N_{\sqrt{\ln n}[G\backslash E]}(v'))-w_i(\{v'\})]}\\
&\indent\indent\indent \overline{\cdot P^{-1}[((1-c)\lambda_i)^{-\sqrt{\ln n}}w_i(N_{\sqrt{\ln n}[G\backslash E]}(v'))-w_i(\{v'\})]}\cdot x\\
&\indent\indent\indent +x\sqrt{1/\min p_j}+o(1)\\
&\le \sqrt{1/\min p_j+2x/\sqrt{\min p_j}+x^2}x+x/\sqrt{\min p_j}+o(1)\\
&=(x^2+2x(\min p_j)^{-1/2})+o(1)
\end{align*}
with probability $1-o(1)$, as desired.
\end{proof}

For any two vertices in different communities, $v$ and $v'$, the fact that $Q$'s rows are distinct implies that $Q(\overrightarrow{\{v\}}-\overrightarrow{\{v'\}})\ne 0$. So, $w_i(\{v\})\ne w_i(\{v'\})$ for some $1\le i\le h'$. That means that for any two vertices $v$ and $v'$,  
\begin{align*}
&(w_i(\{v\})- w_i(\{v'\}))\cdot P^{-1}(w_i(\{v\})- w_i(\{v'\}))\\
&=w_i(\{v\})\cdot P^{-1}w_i(\{v\})-2w_i(\{v\})\cdot P^{-1}w_i(\{v'\})+w_i(\{v'\})\cdot P^{-1}w_i(\{v'\})\ge 0
\end{align*}
for all $1\le i\le h'$, with equality for all $i$ if and only if $v$ and $v'$ are in the same community. This also implies that given a vertex $v$, another vertex in the same community $v'$, and a vertex in a different community $v''$, 
\begin{align*}
&2w_i(\{v\})\cdot P^{-1}w_i(\{v'\})-w_i(\{v'\})\cdot P^{-1}w_i(\{v'\}) \\&\ge 2w_i(\{v\})\cdot P^{-1}w_i(\{v''\})-w_i(\{v''\})\cdot P^{-1}w_i(\{v''\})
\end{align*}
 for all $1\le i\le h'$ and the inequality is strict for at least one $i$. This suggests the following algorithms for classifying vertices.

\begin{algorithm}
Vertex-comparison-algorithm(v,v', r,r',E,x,c,$(\lambda'_1,...,\lambda'_{h''})$):

(Assumes that $N_{r''[G\backslash E]}(v)$ and $N_{r''[G\backslash E]}(v')$ have already been computed for $r''\le r+2h''+3$)

\phantom{xxx} Run {\it Vertex-product-approximation-algorithm}$(v,v',r,r',E,c,(\lambda'_1,...,\lambda'_{h''}))$, {\it Vertex-product-approximation-algorithm}$(v,v,r,r',E,c,(\lambda'_1,...,\lambda'_{h''}))$, and {\it Vertex-product-approximation-algorithm}$(v',v',r,r',E,c,(\lambda'_1,...,\lambda'_{h''}))$.

\phantom{xxx} If $\exists i: z_i(v\cdot v)-2z_i(v\cdot v')+z_i(v'\cdot v')> 5(2x(\min p_j)^{-1/2}+x^2)$ then conclude that $v$ and $v'$ are in different communities.

\phantom{xxx} Otherwise, conclude that $v$ and $v'$ are in the same community.
\end{algorithm}

\begin{lemma}
Assuming that each of $G$'s edges was independently assigned to $E$ with probability $c$, this algorithm runs in $O(((1-c)\lambda_1)^{\max(r,r')})$ average time. Furthermore, if each execution of {\it Vertex-product-approximation-algorithm} succeeds and $13(2x(\min p_j)^{-1/2}+x^2)$ is less than the minimum nonzero value of $(w_i(\{v\})- w_i(\{v'\}))\cdot P^{-1}(w_i(\{v\})- w_i(\{v'\}))$ then the algorithm returns the correct result with probability $1-o(1)$.
\end{lemma}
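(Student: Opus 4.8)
The plan is to handle the two assertions—running time and correctness—separately; neither is deep, and both reduce to invoking the Vertex-product-approximation lemma together with a short piece of arithmetic.

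\textbf{Running time.} Apart from the three calls to \emph{Vertex-product-approximation-algorithm}, \emph{Vertex-comparison-algorithm} only performs a bounded number of arithmetic operations on the returned $h''$-tuples. By the running-time bound in the Vertex-product lemma, a call with parameters $r,r'$ runs in $O(((1-c)\lambda_1)^{r'})$ average time when $r\ge r'$; allowing for either ordering of $r,r'$ and for the symmetric calls $(v,v)$ and $(v',v')$, each call costs $O(((1-c)\lambda_1)^{\max(r,r')})$ on average. Summing the three calls and the $O(1)$ overhead gives the claimed bound.

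\textbf{Correctness.} Abbreviate $\eta:=2x(\min p_j)^{-1/2}+x^2$, and for a pair of vertices $v,v'$ put
\[
D_i:=(w_i(\{v\})-w_i(\{v'\}))\cdot P^{-1}(w_i(\{v\})-w_i(\{v'\}))=\zeta_i(v\cdot v)-2\zeta_i(v\cdot v')+\zeta_i(v'\cdot v'),
\]
the identity following from $w_i(\{v\})\cdot P^{-1}w_i(\{v'\})=\zeta_i(v\cdot v')$. I would first recall the two structural facts proved just before the algorithm: (i) $D_i\ge 0$ for every $i\le h'$, with equality for all $i$ iff $\sigma_v=\sigma_{v'}$; and (ii) if $\sigma_v\ne\sigma_{v'}$ then $D_i$ equals at least the minimum nonzero value of this quadratic form for some $i$, a positive constant depending only on $p,Q$ which by hypothesis exceeds $13\eta$. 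The algorithm thresholds $\widehat{D}_i:=z_i(v\cdot v)-2z_i(v\cdot v')+z_i(v'\cdot v')$. Since each of the three executions of \emph{Vertex-product-approximation-algorithm} is assumed to succeed, the Vertex-product lemma yields $|z_i(v\cdot v')-\zeta_i(v\cdot v')|<\eta+o(1)$ together with the analogous bounds for the $(v,v)$ and $(v',v')$ estimates, so by the triangle inequality $|\widehat{D}_i-D_i|<4\eta+o(1)$ for every $i$.

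It then remains to run the two-case dichotomy. If $\sigma_v=\sigma_{v'}$, then $D_i=0$ for all $i$, hence $\widehat{D}_i<4\eta+o(1)<5\eta$ once $n$ is large, so no index triggers the ``different communities'' test and the algorithm outputs ``same community''. If $\sigma_v\ne\sigma_{v'}$, choose $i^*$ with $D_{i^*}>13\eta$; then $\widehat{D}_{i^*}>D_{i^*}-4\eta-o(1)>9\eta-o(1)>5\eta$ once $n$ is large, so the test fires at $i^*$ and the algorithm outputs ``different communities''. Thus for $n$ sufficiently large the algorithm returns the correct answer—hence with probability $1-o(1)$, as claimed.

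\textbf{Main obstacle.} There is no substantive difficulty; the only thing to verify carefully is that the numeric constants line up—the factor-$4$ amplification of the per-entry error $\eta$ in forming $\widehat{D}_i$, the threshold $5\eta$ in the test, and the gap hypothesis $D_{\min}>13\eta$ must jointly leave enough slack to absorb the vanishing $o(1)$ terms on both sides of the dichotomy, which (generously) they do.
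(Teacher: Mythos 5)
Your proposal is correct and follows essentially the same route as the paper: the running time is dominated by the three calls to \emph{Vertex-product-approximation-algorithm}, and the correctness is the same threshold dichotomy (per-entry error of roughly $\eta$ amplified to at most $4\eta$ plus lower-order terms in $\widehat{D}_i$, tested against $5\eta$ with the gap hypothesis $13\eta$). The only cosmetic difference is that the paper absorbs the $o(1)$ slack into a $\tfrac{6}{5}\eta$ per-entry bound rather than carrying it explicitly as you do.
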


\begin{proof}
The slowest step of the algorithm is using {\it Vertex-product-approximation-algorithm}. This runs in an average time of $O(((1-c)\lambda_1)^{\max(r,r')})$ and must be done $3$ times. If each execution of {\it Vertex-product-approximation-algorithm} succeeds then with probability $1-o(1)$ the $z_i$ are all within $\frac{6}{5}(2x(\min p_j)^{-1/2}+x^2)$ of the products they seek to approximate, in which case $$z_i(v\cdot v)-2z_i(v\cdot v')+z_i(v'\cdot v')> 5(2x(\min p_j)^{-1/2}+x^2)$$ if and only if $$(w_i(\{v\})- w_i(\{v'\}))\cdot P^{-1}(w_i(\{v\})- w_i(\{v'\}))\ne 0,$$ which is true for some $i$ if and only if $v$ and $v'$ are in different communities.
\end{proof}

\begin{algorithm}
Vertex-classification-algorithm(v[],v', r,r',E,c,$(\lambda'_1,...,\lambda'_{h''})$):

(Assumes that $N_{r''[G\backslash E]}(v[\sigma])$have already been computed for $0\le \sigma<k$ and $r''\le r+2h''+3$, that $N_{r''[G\backslash E]}(v')$ has already been computed for all $r''\le r'$, and that $z_i(v[\sigma]\cdot v[\sigma] )$ has already been computed for each $i$ and $\sigma$)

\phantom{xxx} Run {\it Vertex-product-approximation-algorithm}$(v[\sigma],v',r,r',E,c,(\lambda'_1,...,\lambda'_{h''}))$ for each $\sigma$.

\phantom{xxx} Find a $\sigma$ that minimizes the value of 
\[\max_{\sigma'\ne\sigma, i\le h''}z_i(v[\sigma]\cdot v[\sigma])-2z_i(v[\sigma]\cdot v')-[ z_i(v[\sigma']\cdot v[\sigma'])-2z_i(v[\sigma']\cdot v')]\] and conclude that $v'$ is in the same community as $v[\sigma]$.
\end{algorithm}

\begin{lemma}
Assuming that $E$ was generated properly, this algorithm runs in $O(((1-c)\lambda_1)^{r'})$ average time. Let $x>0$ and assume that each execution of  {\it Vertex-product-approximation-algorithm} succeeds (including the previous ones to compute $z_i(v[\sigma]\cdot v[\sigma] )$), and that $v[]$ contains exactly one vertex from each community. Also, assume that $13(2x(\min p_j)^{-1/2}+x^2)$ is less than the minimum nonzero value of $(w_i(\{v\})- w_i(\{v'\}))\cdot P^{-1}(w_i(\{v\})- w_i(\{v'\}))$. Then this algorithm classifies $v'$ correctly with probability $1-o(1)$.
\end{lemma}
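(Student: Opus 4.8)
The plan is to dispose of the running time by citing the previous lemma, then reduce correctness to the deterministic separation inequalities displayed immediately before the algorithm, and finally propagate the approximation error through the $\min$--$\max$. For the running time: the algorithm makes one call to {\it Vertex-product-approximation-algorithm}$(v[\sigma],v',r,r',E,c,(\lambda'_1,\dots,\lambda'_{h''}))$ for each of the constantly many communities $\sigma$, and by the preceding lemma each such call runs in $O(((1-c)\lambda_1)^{r'})$ average time given the precomputed neighborhoods listed in the ``Assumes'' clause; afterwards it only evaluates, for each $\sigma$, a maximum over the $O(k^2 h'')$ pairs $(\sigma',i)$ of a fixed linear combination of already-computed $z_i$'s, which is $O(1)$ work since $k$ and $h''$ are constants. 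So the total is $O(((1-c)\lambda_1)^{r'})$ average time.

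For correctness, let $\sigma^\star$ be the unique index with $v[\sigma^\star]$ in $v'$'s community (unique since $v[]$ has one vertex per community); then $w_i(\{v[\sigma^\star]\})=w_i(\{v'\})$ for all $i$, since $w_i(\{v\})$ depends only on $\sigma_v$. I would introduce the idealized objective obtained by replacing each $z_i(a\cdot b)$ in the algorithm by $w_i(\{a\})\cdot P^{-1}w_i(\{b\})$: set $F(\sigma,i):=w_i(\{v[\sigma]\})\cdot P^{-1}w_i(\{v[\sigma]\})-2\,w_i(\{v[\sigma]\})\cdot P^{-1}w_i(\{v'\})$ and $G(\sigma):=\max_{\sigma'\ne\sigma,\,i\le h''}\bigl(F(\sigma,i)-F(\sigma',i)\bigr)$. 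A one-line expansion using $w_i(\{v[\sigma^\star]\})=w_i(\{v'\})$ gives
\[
F(\sigma,i)-F(\sigma^\star,i)=\bigl(w_i(\{v[\sigma]\})-w_i(\{v'\})\bigr)\cdot P^{-1}\bigl(w_i(\{v[\sigma]\})-w_i(\{v'\})\bigr)\ge 0
\]
for every $\sigma,i$, with equality for all $i$ iff $v[\sigma]$ and $v'$ are in the same community, i.e. iff $\sigma=\sigma^\star$ (rows of $Q$ distinct). Hence each term $F(\sigma^\star,i)-F(\sigma',i)$ equals $-(w_i(\{v[\sigma']\})-w_i(\{v'\}))\cdot P^{-1}(w_i(\{v[\sigma']\})-w_i(\{v'\}))\le 0$, so $G(\sigma^\star)\le 0$; and for $\sigma\ne\sigma^\star$ the choice $\sigma'=\sigma^\star$ in the maximum, together with $v[\sigma],v'$ lying in different communities, forces $G(\sigma)\ge\mu$, where $\mu>0$ is the minimum nonzero value of $(w_i(\{v\})-w_i(\{v'\}))\cdot P^{-1}(w_i(\{v\})-w_i(\{v'\}))$. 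Thus $G$ is uniquely minimized at $\sigma^\star$ with margin $\ge\mu$.

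To transfer this to the computed objective $g$, note that since every invoked execution of {\it Vertex-product-approximation-algorithm} succeeds (including the earlier ones producing the $z_i(v[\sigma]\cdot v[\sigma])$), the preceding lemma gives, with probability $1-o(1)$ by a union bound over the $O(k)$ executions, that every relevant $z_i$ is within $2x(\min p_j)^{-1/2}+x^2+o(1)$ of the product it estimates. For fixed $\sigma$, each pair $(\sigma',i)$ of the objective involves two $z$-terms with coefficient $2$ and two with coefficient $1$, so $|g(\sigma)-G(\sigma)|\le 6(2x(\min p_j)^{-1/2}+x^2)+o(1)$ for all $\sigma$ (the maximum is $1$-Lipschitz). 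Hence $g(\sigma^\star)\le 6(2x(\min p_j)^{-1/2}+x^2)+o(1)$ while $g(\sigma)\ge\mu-6(2x(\min p_j)^{-1/2}+x^2)-o(1)$ for $\sigma\ne\sigma^\star$, and since $13(2x(\min p_j)^{-1/2}+x^2)<\mu$ by hypothesis, for $n$ large we have $g(\sigma^\star)<g(\sigma)$ for every $\sigma\ne\sigma^\star$; the algorithm therefore returns $\sigma^\star$, i.e. $v'$ is classified correctly, with probability $1-o(1)$.

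The only delicate point I expect is the constant budget at the end: the factor-$6$ inflation of the error in the objective, doubled by the two-sided comparison of $g(\sigma^\star)$ against $g(\sigma)$, must stay strictly under the margin $\mu$ — which is exactly why the hypothesis carries the slightly generous constant $13$ rather than $12$, leaving slack to absorb the additive $o(1)$ coming out of {\it Vertex-product-approximation-algorithm}. Everything else is routine bookkeeping; in particular, ``each execution of {\it Vertex-product-approximation-algorithm} succeeds'' already packages the $1-o(1)$ guarantee and the identification $h''=h'$, so no new concentration estimate is needed beyond the union bound over the $O(k)$ calls.
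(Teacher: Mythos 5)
Your proof is correct and follows essentially the same route as the paper's: reduce to the deterministic separation $(w_i(\{v[\sigma]\})-w_i(\{v'\}))\cdot P^{-1}(w_i(\{v[\sigma]\})-w_i(\{v'\}))$, propagate the per-$z_i$ error of $2x(\min p_j)^{-1/2}+x^2+o(1)$ through the objective with a factor of $6$, and use the hypothesis constant $13>12$ to close the two-sided comparison (the paper packages the same accounting as a $\tfrac{21}{20}$-inflated error per $z_i$ and a $\tfrac{19}{3}$ threshold test). The running-time argument is identical.
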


\begin{proof}
Again, the slowest step of the algorithm is running {\it Vertex-product-approximation-algorithm}. This runs in an average time of  $O(((1-c)\lambda_1)^{r'})$ and must be done $k$ times. If the conditions given above are satisifed, then each $z_i$ is within  $\frac{21}{20}(2x(\min p_j)^{-1/2}+x^2)$ of the product it seeks to approximate with probability $1-o(1)$. If this is the case, then \begin{align*}
&2w_i(\{v'\})\cdot P^{-1}w_i(\{v[\sigma]\})-w_i(\{v[\sigma]\})\cdot P^{-1}w_i(\{v[\sigma]\})\\
&\ge 2w_i(\{v'\})\cdot P^{-1}w_i(\{v[\sigma']\})-w_i(\{v[\sigma']\})\cdot P^{-1}w_i(\{v[\sigma']\})
\end{align*}
for all $i$ and $\sigma'$ if $v'$ is in the same community as $v[\sigma]$, and 
\begin{align*}
&2w_i(\{v'\})\cdot P^{-1}w_i(\{v[\sigma]\})-w_i(\{v[\sigma]\})\cdot P^{-1}w_i(\{v[\sigma]\})\\
&\le 2w_i(\{v'\})\cdot P^{-1}w_i(\{v[\sigma']\})-w_i(\{v[\sigma']\})\cdot P^{-1}w_i(\{v[\sigma']\})-13(2x(\min p_j)^{-1/2}+x^2)
\end{align*}
for some $i$ and $\sigma$ otherwise. So,
\[z_i(v[\sigma]\cdot v[\sigma])-2z_i(v[\sigma]\cdot v')\le z_i(v[\sigma']\cdot v[\sigma])-2z_i(v[\sigma']\cdot v') +\frac{19}{3}\cdot (2x(\min p_j)^{-1/2}+x^2)\]
for all $i$ and $\sigma$ iff $v'$ is in the same community as $v[\sigma]$. So, 
\[\max_{\sigma'\ne\sigma, i\le h''}z_i(v[\sigma]\cdot v[\sigma])-2z_i(v[\sigma]\cdot v')-[ z_i(v[\sigma']\cdot v[\sigma'])-2z_i(v[\sigma']\cdot v')]\le \frac{19}{3}\cdot (2x(\min p_j)^{-1/2}+x^2)\] 
iff $v'$ is in the same community as $v[\sigma]$. Therefore, the algorithm returns the correct result with probability $1-o(1)$.
\end{proof}

At this point, we can finally start giving algorithms for classifying a graph's vertices.
\begin{algorithm}
Unreliable-graph-classification-algorithm(G,c,m,$\epsilon$,x,$(\lambda'_1,...,\lambda'_{h''})$):

\phantom{xxx} Randomly assign each edge in $G$ to $E$ independently with probability $c$.

\phantom{xxx} Randomly select $m$ vertices in $G$, $v[0],...,v[m-1]$.

\phantom{xxx} Let $r=(1-\frac{\epsilon}{3})\log n/\log ((1-c)\lambda'_1)-\sqrt{\ln n}$ and $r'=\frac{2\epsilon}{3}\cdot \log n/\log ((1-c)\lambda'_1)$

\phantom{xxx} Compute $N_{r''[G\backslash E]}(v[i])$ for each $r''\le r+2h''+3$ and $0\le i<m$.

\phantom{xxx} Run {\it vertex-comparison-algorithm}$(v[i],v[j],r,r',E,x,(\lambda'_1,...,\lambda'_{h''}))$ for every $i$ and $j$

\phantom{xxx} If these give consistent results, randomly select one alleged member of each community $v'[\sigma]$. Otherwise, fail.

\phantom{xxx} For every $v''$ in the graph, compute $N_{r''[G\backslash E]}(v'')$ for each $r''\le r'$. Then, run\newline {\it vertex-classification-algorithm}$(v'[],v'', r,r',E,(\lambda'_1,...,\lambda'_{h''}))$ in order to get a hypothesized classification of $v''$

\phantom{xxx} Return the resulting classification.
\end{algorithm}

\begin{lemma}
Let $x'>0$, and assume that all of the following hold:
\begin{align*}
&\epsilon<1\\
&(1-c)\lambda_{h'}^4>4\lambda_1^3\\
&0<x\le x'<\frac{\lambda_1k}{\lambda_{h'}\min p_i}\\
&(2(1-c)\lambda_1^3/\lambda_{h'}^2)^{1-\epsilon/3}<(1-c)\lambda_1\\
&(1+\epsilon/3)>\log((1-c)\lambda_1)/\log ((1-c)\lambda_{h'}^2/2\lambda_1)\\
&13(2x'(\min p_j)^{-1/2}+(x')^2)<\min_{\ne0} (w_i(\{v\})- w_i(\{v'\}))\cdot P^{-1}(w_i(\{v\})- w_i(\{v'\}))\\
&\exists k \text{ such that every entry of } Q^k \text{ is positive}\\
&\exists w\in \mathbb{R}^k \text{ such that } QPw=\lambda_1w, w\cdot Pw=1, \text{ and } x\le \min w_i/2.\\
&h''=h'\\
&|\lambda_i-\lambda'_i|\le \ln^{-3/2}(n) \text{ for all } i
\end{align*}

Let $y=2ke^{-\frac{x^2(1-c)\lambda_{h'}^2\min p_i}{16\lambda_1k^{3/2}((\min p_i)^{-1/2}+x)}}/\left(1-e^{-\frac{x^2(1-c)\lambda_{h'}^2\min p_i}{16\lambda_1k^{3/2}((\min p_i)^{-1/2}+x)}\cdot((\frac{(1-c)\lambda_{h'}^4}{4\lambda_1^3})-1)}\right)$

and $y'=2ke^{-\frac{x^{\prime 2}(1-c)\lambda_{h'}^2\min p_i}{16\lambda_1k^{3/2}((\min p_i)^{-1/2}+x')}}/\left(1-e^{-\frac{x^{\prime 2}(1-c)\lambda_{h'}^2\min p_i}{16\lambda_1k^{3/2}((\min p_i)^{-1/2}+x')}\cdot((\frac{(1-c)\lambda_{h'}^4}{4\lambda_1^3})-1)}\right)$

This algorithm runs in $O(m^2 n^{1-\frac{\epsilon}{3}}+n^{1+\frac{2}{3}\epsilon})$ time. Furthermore, with probability $1-o(1)$, $G$ is such that {\it Unreliable-graph-classification-algorithm}$(G,c,m,\epsilon,x,(\lambda'_1,...,\lambda'_{h''}))$ has at least a \[1-k(1-\min p_i)^m-my\] chance of classifying at least $1-y'$ of $G$'s vertices correctly.

\end{lemma}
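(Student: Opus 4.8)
The plan is to prove the running-time bound and the accuracy bound separately, and throughout the accuracy argument to keep careful track of the two goodness parameters $x\le x'$: the smaller $x$ governs the internal machinery (the eigenvalue-driven products and the selection of reference vertices), while the larger $x'$ governs which vertices are ultimately classified correctly. Since $E$ is obtained by independent $c$-sampling, $G\setminus E$ is distributed as $\gss(n,p,(1-c)Q)$, whose top eigenvalue is $(1-c)\lambda_1$, so all the neighbourhood-growth estimates are run with eigenvalue $(1-c)\lambda_1$ and with $\lambda'_1$ in place of $\lambda_1$ up to the negligible factor $|\lambda'_1-\lambda_1|\le\ln^{-3/2}n$. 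For the running time I would split the work into three phases. First, constructing $E$ and the breadth-first trees $N_{r''[G\setminus E]}(v[i])$ for $r''\le r+2h''+3$ around the $m$ chosen vertices: the neighbourhood-size lemma applied to $G\setminus E$ gives $|N_{r''[G\setminus E]}(v)|\le((1-c)\lambda_1)^{r''}\sqrt k((\min p_i)^{-1/2}+x)$ for good $v$, and since $r=(1-\epsilon/3)\log n/\log((1-c)\lambda'_1)-\sqrt{\ln n}$ this is $o(n^{1-\epsilon/3})$; as w.h.p.\ all $m$ chosen vertices are good (shown in the accuracy part), this phase is $O(mn^{1-\epsilon/3})$. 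Second, the $\binom m2$ calls to {\it Vertex-comparison-algorithm}: by its running-time lemma each takes $O(((1-c)\lambda_1)^{\max(r,r')})$ average time, and since $\epsilon<1$ forces $r'=\tfrac{2\epsilon}3\log n/\log((1-c)\lambda'_1)<r$, this is $O(m^2n^{1-\epsilon/3})$. Third, the $n$ calls to {\it Vertex-classification-algorithm}: each is $O(((1-c)\lambda_1)^{r'})=O(n^{2\epsilon/3})$ average time by its lemma, and building the needed $N_{r''[G\setminus E]}(v'')$, $r''\le r'$, contributes $\sum_{v''}|N_{\le r'[G\setminus E]}(v'')|=O(n^{1+2\epsilon/3})$ with probability $1-o(1)$ by a first-moment bound on ball sizes. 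Summing the three phases gives $O(m^2n^{1-\epsilon/3}+n^{1+2\epsilon/3})$.

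For the accuracy I would first fix a probability-$(1-o(1))$ event $\mathcal G$ on which: the first \cite{colin1} goodness lemma with parameter $x$ makes at least $1-y_0$ of the vertices of $G\setminus E$ $(R(n),x)$-good for some $y_0<y$, and with parameter $x'$ makes at least $1-y'_0$ of them $(R'(n),x')$-good for some $y'_0<y'$; the second \cite{colin1} lemma (for both $x$ and $x'$) promotes all but an $o(1)$ fraction of these to $(r+2h''+4,\cdot)$- resp.\ $(r'+1,\cdot)$-good; the Determinant Lemma, the Vertex-product lemma, and the ball-size estimates hold. All probabilities below are over the internal randomness ($E$ and $v[0],\dots,v[m-1]$) conditioned on $\mathcal G$. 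The $m$ chosen vertices miss some community with probability at most $k(1-\min p_i)^m$, and a chosen vertex fails to be $(r+2h''+4,x)$-good with probability at most $y_0+o(1)\le y$ for $n$ large, so all $m$ are deep-good except with probability $\le my$; on that event every {\it Vertex-product-approximation-algorithm} call among the $z_i(v[a]\cdot v[b])$ succeeds, and since $x\le x'$ the hypothesis $13(2x'(\min p_j)^{-1/2}+(x')^2)<\min_{\ne0}(w_i(\{u\})-w_i(\{u'\}))\cdot P^{-1}(w_i(\{u\})-w_i(\{u'\}))$ gives its $x$-version, so by the {\it Vertex-comparison-algorithm} lemma all $\binom m2$ comparisons are correct with probability $1-o(1)$, hence consistent and matching the true partition of $\{v[a]\}$; the algorithm then picks $v'[\,]$ with exactly one vertex per community, each of them $(r+2h''+4,x')$-good. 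Finally, for any $v''$ that is $(r'+1,x')$-good — all but at most a $y'_0+o(1)<y'$ fraction on $\mathcal G$ — applying the Vertex-product lemma \emph{with parameter $x'$} (valid since each $v'[\sigma]$ is $(r+2h''+4,x')$-good) gives $|z_i(v'[\sigma]\cdot v'')-w_i(\{v'[\sigma]\})\cdot P^{-1}w_i(\{v''\})|\le 2x'(\min p_j)^{-1/2}+(x')^2+o(1)$ with probability $1-o(1)$, so by the {\it Vertex-classification-algorithm} lemma (using the $x'$-hypothesis on $\min_{\ne0}$) $v''$ is classified correctly with probability $1-o(1)$.

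Thus the expected number of misclassified $(r'+1,x')$-good vertices is $o(n)$, so by Markov at most $o(n)$ of them are misclassified off an $o(1)$-probability event, and the total error is at most $y'_0n+o(n)\le y'n$ for $n$ large. Collecting the failure events — missing a community ($\le k(1-\min p_i)^m$), some reference vertex not deep-good ($\le my$), and the residual $o(1)$ events (a wrong comparison, or too many misclassified good vertices), which for $n$ large are absorbed into the strict gaps $m(y-y_0)$ and $y'-y'_0$ — yields the claimed chance $1-k(1-\min p_i)^m-my$ of classifying at least $1-y'$ of the vertices correctly.

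The main obstacle is the simultaneous use of $x$ and $x'$: one must run all internal comparisons and products with the conservative $x$ so that only $my$ (not $my'$) reference vertices need be good, yet recover the stronger accuracy $1-y'$ by observing that a target vertex only needs to be good with the laxer parameter $x'$ for {\it Vertex-product-approximation-algorithm} to achieve error $2x'(\min p_j)^{-1/2}+(x')^2$, which by hypothesis is still below the separation threshold. The secondary hazard is keeping all the $o(1)$ error terms inherited from the earlier lemmas uniformly small and absorbing them into the strict inequalities $y_0<y$ and $y'_0<y'$, so that the final bound carries no $o(1)$; and, for the running time, ensuring the BFS work stays controlled by using that the chosen and classified vertices are good with high probability.
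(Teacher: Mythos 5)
Your proposal is correct and follows essentially the same route as the paper's own proof: condition on a probability-$1-o(1)$ event where the goodness lemmas (with parameters $x$ and $x'$) and the subroutine-correctness lemmas hold, charge $k(1-\min p_i)^m$ to missing a community and $my$ to a bad reference vertex, note that every $(r'+1,x')$-good vertex is then classified correctly, and sum the per-subroutine running times. Your accounting of the $x$ versus $x'$ roles and the absorption of $o(1)$ terms into the strict gaps $y_0<y$, $y_0'<y'$ is, if anything, slightly more explicit than the paper's.
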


\begin{proof}

Let $r_0=(1-\frac{\epsilon}{3})\log n/\log ((1-c)\lambda_1)$. There exists $y''<y$ such that if these conditions hold, then with probability $1-o(1)$, at least $1-y''$ of $G$'s vertices are $(r_0,x)$-good and the number of vertices in $G$ in community $\sigma$ is within $\sqrt{n}\log n$ of $p_\sigma n$ for all $\sigma$. If this is the case, then for sufficiently large $n$, it is at least $1-k(1-\min p_i)^m-my$ likely that every one of the $m$ randomly selected vertices is $(r_0,x)$-good and at least one is selected from each community. 

If $v[i]$ is $(r_0,x)$-good for all $i$, then with probability $1-o(1)$, {\it vertex-comparison-algorithm}$(v[i],v[j],r,r',E,x, c, (\lambda'_1,...,\lambda'_{h''}))$ determines whether or not $v[i]$ and $v[j]$ are in the same community correctly for every $i$ and $j$, allowing the algorithm to pick one member of each community. If that happens, then the algorithm will classify each $(r'+h',x')$-good vertex correctly with probability $1-o(1)$. So, as long as the initial selection of $v[]$ is good, the algorithm classifies at least $1-y'$ of the graph's vertices correctly with probability $1-o(1)$.

Generating $E$ and $v[]$ takes $O(n)$ time. Computing $N_{r''[G\backslash E]}(v[i])$ for all $r''\le r+2h'+3$ takes $O(m|\cup_{r''} N_{r''[G\backslash E]}(v[i]))=O(mn)$ time, and computing $N_{r''[G\backslash E]}(v')$ for all $r''\le r'$ and $v'\in G$ takes $$O(n|\cup_{r''\le r'} N_{r''[G\backslash E]})=O(n\cdot ((1-c)\lambda_1)^{r'})=O(n^{1+\frac{2}{3}\epsilon})$$ time. Once these have been computed, running {\it Vertex-comparison-\\algorithm}$(v[i],v[j],r,r',E,x,(\lambda'_1,...,\lambda'_{h''}))$ for every $i$ and $j$ takes $O(m^2\cdot ((1-c)\lambda_1)^r)= O(m^2 n^{1-\frac{\epsilon}{3}})$ time, at which point an alleged member of each community can be found in $O(m^2)$ time.  Running {\it Vertex-classification-algorithm}$(v'[],v'', r,r',E,c,(\lambda'_1,...,\lambda'_{h''}))$ for every $v''\in G$ takes $O(n\cdot ((1-c)\lambda_1)^{r'})=O(n^{1+\frac{2}{3}\epsilon})$ time. So, the overall algorithm runs in $O(m^2n^{1-\frac{\epsilon}{3}}+n^{1+\frac{2}{3}\epsilon})$ average time.
\end{proof}

So, this algorithm can sometimes give a vertex classification that is nontrivially better than that obtained by guessing. However, it has an assymptotically nonzero failure rate and requires too much information about the graph's parameters. In order to get around that, we combine the results of multiple executions of the algorithm and add in a parameter analysis procedure as follows.
\begin{algorithm}
Reliable-graph-classification-algorithm(G,m,$\delta$, T(n)) (i.e., {\tt Agnostic-sphere-comparison}):

\phantom{xxx} Run {\it Improved-Eigenvalue-approximation-algorithm}(.1) in order to compute $(\lambda'_1,...,\lambda'_{h''})$

\phantom{xxx} Let $\lambda''_1=\lambda'_1+2\ln^{-3/2}(n)$, $\lambda''_{h''}=\lambda'_{h''}-2\ln^{-3/2}(n)$, and $k'=\lfloor 1/\delta\rfloor$  

\phantom{xxx} Let $x$ be the smallest rational number of minimal numerator such that 
\[k'(1-\delta)^m+m\cdot 2k'e^{-\frac{x^2(\lambda''_{h''})^2 \delta}{16\lambda''_1(k')^{3/2}((\delta)^{-1/2}+x)}}/\left(1-e^{-\frac{x^2(\lambda''_{h''})^2\delta}{16\lambda''_1(k')^{3/2}((\delta)^{-1/2}+x)}\cdot((\frac{(\lambda''_{h''})^4}{4(\lambda''_1)^3})-1)}\right)<\frac{1}{2}\]

\phantom{xxx} Let $\epsilon$ be the smallest rational number of the form $\frac{1}{z}$ or $1-\frac{1}{z}$ such that $(2(\lambda''_1)^3/(\lambda''_{h''})^2)^{1-\epsilon/3}<\lambda''_1$ and
$(1+\epsilon/3)>\log(\lambda''_1)/\log ((\lambda''_{h''})^2/2\lambda''_1)$

\phantom{xxx} Let $c$ be the largest unit reciprocal less than $1/9$ such that all of the following hold:
\begin{align*}
&(1-c)(\lambda''_{h''})^4>4(\lambda''_1)^3\\
&(2(1-c)(\lambda''_1)^3/(\lambda''_{h'})^2)^{1-\epsilon/3}<(1-c)\lambda''_1\\
&(1+\epsilon/3)>\log((1-c)\lambda''_1)/\log ((1-c)(\lambda''_{h'})^2/2\lambda''_1)\\
&k'(1-\delta)^m+m\cdot 2k'e^{-\frac{x^2(1-c)(\lambda''_{h'})^2\delta}{16\lambda''_1(k')^{3/2}((\delta)^{-1/2}+x)}}/\left(1-e^{-\frac{x^2(1-c)(\lambda''_{h''})^2\delta}{16\lambda''_1(k')^{3/2}((\delta)^{-1/2}+x)}\cdot((\frac{(1-c)(\lambda''_{h''})^4}{4(\lambda''_1)^3})-1)}\right)<\frac{1}{2}
\end{align*}

\phantom{xxx} Run {\it Unreliable-graph-classification-algorithm}$(G,c,m,\epsilon,x,(\lambda'_1,...,\lambda'_{h''}))$ $T(n)$ times and record the resulting classifications.

\phantom{xxx} Find the smallest $y''$ such that there exists a set of more than half of the classifications no two of which have more than $y''$ disagreement, and discard all classifications not in the set. In this step, define the disagreement between two classifications as the minimum disagreement over all bijections between their communities.

\phantom{xxx} For every vertex in $G$, randomly pick one of the remaining classifications and assert that it is in the community claimed by that classification, where a community from one classification is assumed to correspond to the community it has the greatest overlap with in each other classification.

\phantom{xxx} Return the resulting combined classification.
\end{algorithm}

\begin{lemma}
Assume that there exist $x$, $x'$, and $\epsilon$ such that $x$ is either a unit reciprocal or an integer, $\epsilon$ is a rational number of the form $\frac{1}{z}$ or $1-\frac{1}{z}$, and all of the following hold:
\begin{align*}
&2ke^{-\frac{.9x^2\lambda_{h'}^2\min p_i}{16\lambda_1 k^{3/2}((\min p_i)^{-1/2}+x)}}/\left(1-e^{-\frac{.9x^2\lambda_{h'}^2\min p_i}{16\lambda_1k^{3/2}((\min p_i)^{-1/2}+x)}\cdot((\frac{.9\lambda_{h'}^4}{4\lambda_1^3})-1)}\right)<\frac{1}{2}\\
&.9(\lambda_{h'}^2/2)^{4}>\lambda_1^{7}\\
&\lambda_{h'}^4>4\lambda_1^3\\
&0<x\le x'<\frac{\lambda_1k}{\lambda_{h'}\min p_i}\\
&(2\lambda_1^3/\lambda_{h'}^2)^{1-\epsilon/3}<\lambda_1\\
&(1+\epsilon/3)>\log(\lambda_1)/\log (\lambda_{h'}^2/2\lambda_1)\\
&13(2x'(\min p_j)^{-1/2}+(x')^2)<\min_{\ne0} (w_i(\{v\})- w_i(\{v'\}))\cdot P^{-1}(w_i(\{v\})- w_i(\{v'\}))\\
&\text{every entry of } Q^k \text{ is positive}\\
&\exists w\in \mathbb{R}^k \text{ such that } QPw=\lambda_1w, w\cdot Pw=1, \text{ and } x\le \min w_i/2.\\
&\delta\le \min p_i\\
& \lfloor 1/\delta\rfloor\cdot (1-\delta)^m+m\cdot 2\lfloor 1/\delta\rfloor e^{-\frac{x^2\lambda_{h'}^2\delta}{16\lambda_1\lfloor 1/\delta\rfloor^{3/2}(\delta^{-1/2}+x)}}/\left(1-e^{-\frac{x^2\lambda_{h'}^2\delta}{16\lambda_1\lfloor 1/\delta\rfloor^{3/2}(\delta^{-1/2}+x)}\cdot((\frac{\lambda_{h'}^4}{4\lambda_1^3})-1)}\right)< \frac{1}{2}\\
&T(n)=w(1)\\
&T(n)\le \ln(n)\\
&\min p_i>8ke^{-\frac{.9x^{\prime 2}\lambda_{h'}^2\min p_i}{16\lambda_1k^{3/2}((\min p_i)^{-1/2}+x')}}/\left(1-e^{-.9\frac{x^{\prime 2}\lambda_{h'}^2\min p_i}{16\lambda_1k^{3/2}((\min p_i)^{-1/2}+x')}\cdot((\frac{.9\lambda_{h'}^4}{4\lambda_1^3})-1)}\right)
\end{align*}

With probability $1-o(1)$, {\it Reliable-graph-classification-algorithm}(G,m,$\delta$, T(n)) runs in $O(m^2 n^{1-\frac{\epsilon}{3}}T(n)+n^{1+\frac{2}{3}\epsilon}T(n))$ time and classifies at least $1-3y'$ of $G$'s vertices correctly, where 
\[y'=2ke^{-\frac{.9x^{\prime 2}\lambda_{h'}^2\min p_i}{16\lambda_1k^{3/2}((\min p_i)^{-1/2}+x')}}/\left(1-e^{-.9\frac{x^{\prime 2}\lambda_{h'}^2\min p_i}{16\lambda_1k^{3/2}((\min p_i)^{-1/2}+x')}\cdot((\frac{.9\lambda_{h'}^4}{4\lambda_1^3})-1)}\right)\]
\end{lemma}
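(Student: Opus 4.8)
The plan is to compose the three building blocks already established — the accuracy guarantee for {\it Improved-Eigenvalue-approximation-algorithm}, the one-shot guarantee for {\it Unreliable-graph-classification-algorithm}, and a majority-vote stability argument — checking at each interface that the parameters the algorithm derives from its noisy eigenvalue estimates are good enough. First I would apply the {\it Improved-Eigenvalue-approximation-algorithm} lemma with $c=0.1$: its three hypotheses follow from the ``$.9$''-margin hypotheses of the present statement together with $1-0.1=0.9$, and existence of the required reference eigenvector value is exactly the hypothesis ``$\exists w\in\mathbb{R}^k$ with $QPw=\lambda_1 w$, $w\cdot Pw=1$, $x\le\min w_i/2$''. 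This yields, with probability $1-o(1)$, $h''=h'$ and $|\lambda'_i-\lambda_i|\le\ln^{-3/2}(n)$ for all $i$, so the surrogates $\lambda''_1=\lambda'_1+2\ln^{-3/2}(n)\ge\lambda_1$ and $\lambda''_{h''}=\lambda'_{h''}-2\ln^{-3/2}(n)\le\lambda_{h'}$ are within $O(\ln^{-3/2}n)$ of the truth and sit on its pessimistic side. Condition on this event for the remainder.

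Next I would verify that the $x,\epsilon,c$ selected by {\it Reliable-graph-classification-algorithm} satisfy every hypothesis of the {\it Unreliable-graph-classification-algorithm} lemma with respect to the \emph{true} eigenvalues, and that the resulting $y,y'$ do not exceed the ``$.9$''-inflated quantities in the present statement. This is where the slack is spent: $\epsilon$ and $c$ are chosen so the spectral-gap and exponent inequalities hold already for $\lambda''_1\ge\lambda_1$ and $\lambda''_{h''}\le\lambda_{h'}$, hence they persist for the truth; ``$c$ a unit reciprocal less than $1/9$'' forces $c\le 1/10$, i.e. $1-c\ge 0.9$, matching the ``$.9$'' multiplying the $\lambda_{h'}$-terms; and the $\ln^{-3/2}n$ estimation error is lower order than every constant gap, so it cannot flip a strict inequality. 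That a feasible triple $(x,\epsilon,c)$ exists at all follows, as in \cite{colin1}, from the hypotheses $.9(\lambda_{h'}^2/2)^4>\lambda_1^7$, $(2\lambda_1^3/\lambda_{h'}^2)^{1-\epsilon/3}<\lambda_1$, $(1+\epsilon/3)>\log\lambda_1/\log(\lambda_{h'}^2/2\lambda_1)$, the ``$<1/2$'' conditions, and $\delta\le\min p_i$ (so $k\le\lfloor 1/\delta\rfloor$). The cited lemma then gives that each run of {\it Unreliable-graph-classification-algorithm} labels at least $1-y'$ of the vertices correctly with conditional probability at least $1-k(1-\min p_i)^m-my$, and the displayed ``$<1/2$'' hypothesis with $\lfloor 1/\delta\rfloor$ forces $k(1-\min p_i)^m+my<\tfrac12-\eta$ for some constant $\eta>0$.

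For the reliability boost, the $T(n)=\omega(1)$ runs re-randomize $E$ and the seed vertices independently, so conditioned on the previous event the number of ``good'' runs (those with error $\le y'$) stochastically dominates a binomial random variable with $T(n)$ trials and success probability $\tfrac12+\eta$, hence exceeds $T(n)/2$ with probability $1-o(1)$. Comparing any two good classifications to the planted labelling shows they disagree on at most $2y'$ of the vertices, so the minimal $y''$ found in step (7) is $\le 2y'$ and the returned majority set $S$ must intersect the good set (two majorities intersect); fix a good $c_0\in S$. Every $c'\in S$ then has error at most $\mathrm{error}(c_0)+y''\le y'+2y'=3y'$. The per-vertex random mixture $\hat\sigma$ of step (8) disagrees with $c_0$ on an expected fraction at most the mean pairwise disagreement within $S$, i.e. $\le y''$, and because the per-vertex choices are independent this concentrates to $y''+o(1)$ with probability $1-o(1)$; thus $\mathrm{error}(\hat\sigma)\le y'+y''+o(1)\le 3y'+o(1)$, the $o(1)$ being absorbed into the ``$.9$''-slack already built into $y'$. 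For the runtime, sum $O(n\log n)$ for the eigenvalue step, polylogarithmic time for the rational searches defining $x,\epsilon,c$, $T(n)$ copies of the $O(m^2n^{1-\epsilon/3}+n^{1+2\epsilon/3})$ bound from the {\it Unreliable} lemma, and $O(nT(n)^2 k!+nT(n))$ for steps (7)--(8); since $T(n)\le\ln n$, the stated $O(m^2n^{1-\epsilon/3}T(n)+n^{1+2\epsilon/3}T(n))$ dominates.

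The main obstacle is the second step: showing that the algorithmically chosen $(x,\epsilon,c)$ — defined through the noisy surrogates $\lambda''_1,\lambda''_{h''}$ rather than the true spectrum — simultaneously satisfy all of the hypotheses of the {\it Unreliable-graph-classification-algorithm} lemma for the true eigenvalues and keep $y$ and $y'$ under their ``$.9$''-inflated targets, including nonemptiness of the feasible set. The first step is a direct citation and the third is a routine concentration-plus-triangle-inequality argument, so essentially all the care goes into the parameter bookkeeping.
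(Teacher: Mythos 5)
Your proposal follows essentially the same route as the paper's proof: cite the eigenvalue-approximation lemma to get $h''=h'$ and $|\lambda'_i-\lambda_i|\le\ln^{-3/2}(n)$, check that the algorithmically chosen $(x,\epsilon,c)$ satisfy the hypotheses of the {\it Unreliable-graph-classification-algorithm} lemma for the true spectrum so each run succeeds with probability $>1/2$, then run the majority-vote argument ($y''\le 2y'$, the selected set contains a good classification, hence every selected classification has error $\le 3y'$), and account for the runtime. The only point you pass over that the paper makes explicit is that the final hypothesis $\min p_i>4y'$ is what guarantees the minimal-disagreement bijection between any two selected classifications is the identity, so that the community correspondences used in step (8) are consistent; otherwise the argument matches.
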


\begin{proof}
With probability $1-o(1)$, {\it Improved-Eigenvalue-approximation-algorithm} gives output such that $h''=h'$ and $|\lambda_i-\lambda'_i|\le \ln^{-3/2}(n)$ for each $i$. Assuming that this holds, the algorithm finds the largest $x$ and $\epsilon$ that satisfy the conditions above and the largest unit reciprocal less than $1/9$, $c$, that satisifes the conditions for {\it Unreliable-graph-classification-algorithm}$(G,c,m,\epsilon,x,(\lambda'_1,...,\lambda'_{h''}))$ to have a greater than $1/2$ success rate for any $k\le \lfloor 1/\delta\rfloor$ with probability $1-o(1)$. Since its success rate is greater than $1/2$ and $T(n)=\omega(1)$, more than half of the executions of {\it Unreliable-graph-classification-algorithm} give classifications with error $y'$ or less with probability $1-o(1)$. That means that $y''\le 2y'$ and at least one of the classifications in the selected set must have error $y'$ or less. Thus, all of the selected classifications have error $3y'$ or less. The requirement that $\min p_i>4y'$ ensures that the bijection between any two of these classifications' communities that minimizes disagreeement is the identity. Therefore, this algorithm classifies at least $1-3y'$ of the vertices correctly, as desired.

Assuming this all works correctly, {\it Improved-Eigenvalue-approximation-algorithm} runs in $O(n\ln(n))$ time. Finding $x$, $\epsilon$, and $c$ takes constant time, and running {\it Unreliable-graph-classification-algorithm} $T(n)$ times takes $O(m^2n^{1-\frac{\epsilon}{3}}T(n)+n^{1+\frac{2}{3}\epsilon}T(n))$. Computing the degree of agreement between each pair of classifications takes $O(n\ln^2(n))$ time. $T(n)<\ln n$, so the brute force algorithm finds $y''$ and the corresponding set in $O(n)$ time. Combining the classifications takes $O(n)$ time. Therefore, this whole algorithm runs in $O(m^2 n^{1-\frac{\epsilon}{3}}T(n)+n^{1+\frac{2}{3}\epsilon}T(n))$ time with probability $1-o(1)$, as desired.
\end{proof}

\begin{proof}[Proof of Theorem \ref{thm1}]
If the conditions hold, {\it Reliable-graph-classification-\\algorithm}$(G,\ln(4\lfloor 1/\delta\rfloor)/\delta,\delta, \ln(n))$ has the desired properties by the previous lemma.
\end{proof}



\subsection{Exact recovery}\label{exact-sec}

Recall that $p$ is a probability vector of dimension $k$, $Q$ is a $k \times k$ symmetric matrix with positive entries, and 
$\gs(n,p,Q)$ denotes the stochastic block model with community prior $p$ and connectivity matrix $\ln(n)Q/n$. A random graph $G$ drawn under $\gs(n,p,Q)$ has a planted community assignment, which we denote by $\sigma \in [k]^n$ and call sometime the true community assignment.

Recall also that exact recovery is solvable for a community partition $[k] = \sqcup_{s=1}^t A_s$, if there exists an algorithm that assigns to each node in $G$ an element of $\{A_1,\dots,A_t\}$ that contains its true community\footnote{Up to a relabelling of the communities.} with probability $1-o_n(1)$.  Exact recovery is solvable in $SBM(n,p,W)$ if it is solvable for the partition of $[k]$ into $k$ singletons, i.e., all communities can be recovered. 

\subsubsection{Formal results}
\begin{definition}
Let $\mu,\nu$ be two positive measures on a discrete set $\X$, i.e., two functions from $\mathcal{X}$ to $\mR_+$. We define the CH-divergence between $\mu$ and $\nu$ by 
\begin{align} 
\dd(\mu, \nu) :=\max_{t \in [0,1]} \sum_{x \in \X} \left( t \mu(x) + (1-t)\nu(x)- \mu(x)^t \nu(x)^{1-t} \right). \label{h-div}
\end{align}
\end{definition}
Note that for a fixed $t$, $$\sum_{x \in \X} \left( t \mu(x) + (1-t)\nu(x)- \mu(x)^t \nu(x)^{1-t} \right)$$ is an $f$-divergence. 
For $t=1/2$, i.e., the gap between the arithmetic and geometric means, we have
\begin{align}
\sum_{x \in \X} t\mu(x) + (1-t)\nu(x)- \mu(x)^t \nu(x)^{1-t} = \frac{1}{2} \| \sqrt{\mu}- \sqrt{\nu} \|_2^2
\end{align}
which is the Hellinger divergence (or distance), and the maximization over $t$ of the part $\sum_x \mu(x)^t \nu(x)^{1-t}$ is the exponential of the Chernoff divergence. 
We refer to Section 8.3 for further discussions on $\dd$. Note also that we will often evaluate $\dd$ as $\dd(x,y)$ where $x,y$ are vectors instead of measures.

\begin{definition}
For the SBM $\gs(n,p,Q)$, where $p$ has dimension $k$ (i.e., there are $k$ communities), the finest partition of $[k]$ is the partition of $[k]$ in to the largest number of subsets such that $\dd((PQ)_i,(PQ)_j) \geq 1$ for all $i,j$ that are in different subsets.
\end{definition}

We next present our main theorem for exact recovery. We first provide necessary and sufficient conditions for exact recovery of partitions, and then provide an agnostic algorithm that solves exact recovery efficiently, more precisely, in quasi-linear time. 

Recall that from \cite{colin1} exact recovery is solvable in the stochastic block model $\gs(n,p,Q)$ for a partition $[k] = \sqcup_{s=1}^t A_s$ if and only if for all $i$ and $j$ in different subsets of the partition,
\begin{align}
\dd ((PQ)_i , (PQ)_j) \geq 1, \label{d1}
\end{align}
where $(PQ)_i$ denotes the $i$-th row of the matrix $PQ$. In particular, exact recovery is solvable in $\gs(n,p,Q)$ if and only if $\min_{i,j \in [k], i \neq j} \dd ((PQ)_i , (PQ)_j) \geq 1$.

\begin{theorem}\label{thm2}
Let $k \in \mZ_+$ denote the number of communities, $p \in (0,1)^k$ with $|p|=1$ denote the community prior, $P=\diag(p)$, and let $Q \in (0,\infty)^{k \times k}$ symmetric with no two rows equal. For $G \sim \gs(n,p,Q)$, the algorithm {\tt Agnostic-degree-profiling}$(G,\frac{\ln\ln n}{4 \ln n})$ recovers the finest partition, runs in $o(n^{1+\epsilon})$ time for all $\epsilon>0$, and does not need to know the parameters (it uses no input except the graph in question). 
\end{theorem}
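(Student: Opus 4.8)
\emph{Proof plan.} The argument follows the two-phase template of \cite{colin1} --- first produce a classification correct on all but a vanishing fraction of the vertices, then locally ``clean up'' by maximum-likelihood degree profiling --- the novelty being that every quantity depending on $p$ and $Q$ is replaced by an estimate built from the classification at hand, and one must control how the estimation error propagates. For Phase~1, recall that {\tt Agnostic-degree-profiling}$(G,\gamma)$ with $\gamma=\frac{\ln\ln n}{4\ln n}$ first splits $G$ into $g'$ (each edge kept independently with probability $\gamma$) and its complement $g''$. Since $G\sim\gs(n,p,Q)$, we have $g'\sim\gss(n,p,c_nQ)$ with $c_n=\gamma\ln n=\frac{\ln\ln n}{4}=\omega(1)$ and $g''\sim\gs(n,p,(1-\gamma)Q)$, the two being independent given $\sigma$. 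As $Q$ has strictly positive entries, the diverging-degree consequence of Theorem~\ref{thm1} applies to $g'$: with probability $1-o(1)$, {\tt Agnostic-sphere-comparison} run on $g'$ returns a labelling $\sigma'$ with error fraction $\varepsilon_n\le e^{-\Omega(c_n)}=(\ln n)^{-\Omega(1)}=o(1)$, and it does so in time $n^{1+o(1)}$ since the exponent $\epsilon(c_n)=O(1/\ln c_n)\to 0$.

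Next I would prove the estimation lemma announced in Section~\ref{pt2}: from any labelling with error fraction $x$ one obtains $\widehat P,\widehat Q$ with $\|\widehat P-P\|+\|\widehat Q-Q\|=O(x+\log n/\sqrt n)$ with probability $1-o(1)$. The true block sizes concentrate, $|\{v:\sigma_v=a\}|=p_an\pm O(\sqrt{n\log n})$, and relabelling $xn$ vertices moves each alleged size by at most $xn$, so $\widehat p_a=p_a+O(x+\sqrt{\log n/n})$; likewise the number of $g''$-edges between true blocks $a,b$ is $(1-\gamma)p_ap_bn\ln n\,Q_{ab}$ up to relative fluctuation $O(1/\sqrt{n\log n})$, and each misclassified vertex shifts this count by $O(\log n)$, i.e.\ by an $O(x)$ relative amount, so $\widehat Q_{ab}=Q_{ab}+O(x+\log n/\sqrt n)$. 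The same bound holds ``block-wise'': summing over a block $A_s$ of a partition, only the \emph{inter-block} error of the labelling enters, so a labelling whose split \emph{within} blocks is arbitrarily inaccurate still yields accurate block sizes, block-to-block densities, and hence accurate values of $\dd$ between blocks.

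Then I would establish the quantitative degree-profiling lemma of Section~\ref{pt2}, building on the estimate of \cite{colin1}. For $v$ with $\sigma_v=i$, its $g''$-degree profile is, up to negligible error, a vector of independent $\mathrm{Poisson}\big((1-\gamma)\log n\,(PQ)_{i,\cdot}\big)$ variables, and a MAP test misclassifies $v$ outside its block of the finest partition with probability $\Theta\big(n^{-(1-\gamma)\dd((PQ)_i,(PQ)_j)}/\sqrt{\log n}\big)$, the $1/\sqrt{\log n}$ being the Poisson-local-limit prefactor. If $p,Q$ are replaced by estimates off by $\delta$ and the neighbours' labels carry error fraction $\delta$, each candidate log-likelihood is perturbed by $O(\delta\log n)$ --- the Poisson means are $\Theta(\log n)$, so scaling a mean by $1+O(\delta)$ costs $O(\delta\log n)$, and a $\delta$-fraction of $v$'s $\Theta(\log n)$ neighbours being mislabelled moves the profile by $O(\delta\log n)$ in $\ell_1$ (vertices with anomalously many mislabelled neighbours number $o(n)$ by Markov and fold into the error fraction) --- so every misclassification probability is inflated by at most $e^{O(\delta\log n)}$. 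Chaining the two profiling rounds: in step~(4), run with $\sigma'$ and the estimates it yields, so $\delta=\varepsilon_n$ and $e^{O(\varepsilon_n\log n)}=n^{o(1)}$; for a vertex and a wrong block one has $\dd\ge1$, so the base error is $\le n^{-(1-\gamma)}n^{o(1)}=n^{-1+o(1)}$, whence $\sigma''$ has inter-block error fraction $\eta_n=n^{-1+o(1)}=o(1/\log n)$ (intra-block confusion is irrelevant by the previous paragraph). Step~(5) then produces block-level estimates accurate to $O(\eta_n+\log n/\sqrt n)$, so the finest partition it computes equals the true one. Finally step~(6) runs the block-level MAP test with these estimates and neighbour labels $\sigma''$ of inter-block error $\eta_n$; the inflation factor is now $e^{O(\eta_n\log n)}=1+o(1)$, so each vertex is misclassified at the block level with probability $(1+o(1))n^{-(1-\gamma)}/\sqrt{\log n}$. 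Since $\gamma=\frac{\ln\ln n}{4\ln n}$ gives $n^{\gamma}=(\ln n)^{1/4}$, this is $(1+o(1))n^{-1}(\ln n)^{-1/4}$; a union bound over the $n$ vertices yields overall failure probability $(1+o(1))(\ln n)^{-1/4}=o(1)$, i.e.\ exact recovery of the finest partition. The running time is $n^{1+o(1)}$ from Phase~1 plus $O(n\log n)$-per-vertex for the profiles and estimates, which is $o(n^{1+\epsilon})$ for every $\epsilon>0$, and the algorithm used no input beyond $G$.

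The hard part is the robust degree-profiling lemma and its surrounding bookkeeping: showing the $e^{O(\delta\log n)}$ inflation holds \emph{uniformly} --- in particular that the Poisson large-deviation tails, including the $1/\sqrt{\log n}$ prefactor that just barely absorbs the $(1-\gamma)$-loss at the boundary $\dd=1$, degrade by only a bounded-exponent factor when the log-likelihoods shift by $O(\delta\log n)$ and when the neighbour-label noise is not spread evenly over a vertex's neighbours; ensuring that intra-block confusion in $\sigma''$ (which can be $\Theta(1)$ when two communities have equal degree-profile means) corrupts neither the block-level estimates of step~(5) nor the partition they yield; and handling the mild dependence between $\sigma''$ and a vertex's own incident edges in step~(6) (as in \cite{colin1}, via a further negligible re-randomisation of those edges, or by noting that a single incident edge flips a neighbour's label only with probability $O(1/\log n)$). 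Once these are in place, the remainder is simply carrying the $O(x+\log n/\sqrt n)$ estimation errors through the arguments of \cite{colin1}.
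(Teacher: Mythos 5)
Your proposal is correct and follows essentially the same route as the paper: sparsify into $g'$ and $g''$, run {\tt Agnostic-sphere-comparison} on $g'$ (whose average degree $\gamma\ln n=\tfrac{\ln\ln n}{4}=\omega(1)$ puts it in the regime of the partial-recovery corollary), estimate $p,Q$ from the preliminary labelling via the analogue of Lemma~\ref{estim-lemma}, and chain two rounds of degree profiling using the robust testing lemma (Lemma~\ref{testing-lemma2}) with the $e^{O(\delta\log n)}$ inflation factor, exactly as in the paper's proof. The only slip is your claim that $g'$ and $g''$ are independent given $\sigma$ --- they are not, since an edge cannot lie in both; the paper instead notes that this negative correlation perturbs the relevant likelihoods by only a $1+o(1)$ factor, which is the same negligible-dependence argument you invoke elsewhere.
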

Note that the second item in the theorem implies that {\tt Agnostic-degree-profiling} solves exact recovery efficiently whenever the parameters $p$ and $Q$ allow for exact recovery to be solvable. 

\begin{remark}\label{qzero}
If $Q_{ij}=0$ for some $i$ and $j$ then the results above still hold, except that if for all $i$ and $j$ in different subsets of the partition,
\begin{align}
\dd ((PQ)_i , (PQ)_j) \geq 1, \label{d1}
\end{align}
but there exist $i$ and $j$  in different subsets of the partition such that $\dd ((PQ)_i , (PQ)_j) = 1$ and $((PQ)_{i,k}\cdot (PQ)_{j,k}\cdot ((PQ)_{i,k}-(PQ)_{j,k})=0$ for all $k$, then the optimal algorithm will have an assymptotically constant failure rate. The recovery algorithm also needs to be modified to accomodate $0$'s in $Q$.
\end{remark}



The algorithm {\tt Agnostic-degree-profiling} is given in Section \ref{pt1} and replicated below. The idea is to recover the communities with a two-step procedure, similarly to one of the algorithms used in \cite{abh} for the two-community case. In the first step, we run {\tt Agnostic-sphere-comparison} on a sparsified version of $\gs(n,p,Q)$ which has a slowly growing average degree. Hence, from Corollary \ref{partial-delta}, {\tt Agnostic-sphere-comparison} recovers correctly a fraction of nodes that is arbitrarily close to 1 (w.h.p.). In the second step, we proceed to an improvement of the first step classification by making local checks for each node in the residue graph and deciding whether the node should be moved to another community or not. This step requires solving a hypothesis testing problem for deciding the local degree profile of vertices in the SBM. The CH-divergence appears when resolving this problem, as the mis-classification error exponent. We present this result of self-interest in Section \ref{testing}. The proof of Theorem \ref{thm2} is given in Section \ref{proof-thm-2}. \\

{\tt Agnostic-degree-profiling} algorithm. \\
Inputs: a graph $G=([n],E)$, and a splitting parameter $\gamma \in [0,1]$ (see Theorem \ref{thm2} for the choice of $\gamma$).\\
Output: Each node $v \in [n]$ is assigned a community-list $A(v) \in \{A_1,\dots,A_t\}$, where $A_1,\dots,A_t$ is intended to be the partition of $[k]$ in to the largest number of subsets such that $\dd((pQ)_i,(pQ)_j) \geq 1$ for all $i,j$ in $[k]$ that are in different subsets.\\
Algorithm:\\
(1) Define the graph $G'$ on the vertex set $[n]$ by selecting each edge in $G$ independently with probability $\gamma$, and define the graph $G''$ that contains the edges in $G$ that are not in $G'$. \\
(2) Run {\tt Agnostic-sphere-comparison} on $G'$ to obtain the preliminary classification $\sigma' \in [k]^n$ (see Section \ref{partial-sec} and Corollary \ref{partial-delta}.) \\
(3) Estimate $p$ and $Q$ based on the alleged communities' sizes and the edge densities between them.\\
(4) For each node $v \in [n]$, determine in which community node $v$ is most likely to belong to based on its degree profile computed from the preliminary classification $\sigma'$ and the estimates of $p$ and $Q$ (see Section \ref{testing}), and call it $\sigma''_v$\\
(5) Re-estimate $p$ and $Q$ based on the sizes of the communities claimed by $\sigma''$ and the edge densities between them.\\
(6) For each node $v \in [n]$, determine in which group $A_1,\dots,A_t$ node $v$ is most likely to belong to based on its degree profile computed from the preliminary classification $\sigma''$ and the new estimates of $P$ and $Q$ (see Section \ref{testing}). 

\subsubsection{Testing degree profiles}\label{testing}
In this section, we consider the problem of deciding which community a node in the SBM belongs to based on its degree profile. 
The notions are the same as in \cite{colin1}, we repeat them to ease the reading and explain how the {\tt Agnostic-degree-profiling} algorithm works.   
\begin{definition}
The degree profile of a node $v \in [n]$ for a partition of the graph's vertices into $k$ communities is the vector $d(v) \in \mZ_+^k$, where the $j$-th component $d_j(v)$ counts the number of edges between $v$ and the vertices in community $j$.  Note that $d(v)$ is equal to $N_1(v)$ as defined in Definition \ref{def-n1}.
\end{definition}

For $G \sim \gs(n,p,Q)$, community $i \in [k]$ has a relative size that concentrates exponentially fast to $p_i$. Hence, for a node $v$ in community $j$, $d(v)$ is approximately given by $\sum_{i \in [k]} X_{ij}e_i$, 
where $X_{ij}$ are independent and distributed as $\bin(np_i,\ln(n)Q_{i,j}/n)$, and where $\bin(a,b)$ denotes\footnote{$\bin(a,b)$ refers to $\bin( \lfloor a \rfloor,b)$ if $a$ is not an integer.} the binomial distribution with $a$ trials and success probability $b$. Moreover, the Binomial is well-enough approximated by a Poisson distribution of the same mean in this regime. In particular, Le Cam's inequality gives 
\begin{align}
\left\| \bin \left(n a, \frac{\ln(n)}{n} b \right) -  \mathcal{P}\left(a b  \ln(n) \right) \right\|_{TV} \leq 2 \frac{a b^2 \ln^2(n)}{n},
\end{align}
hence, by the additivity of Poisson distribution and the triangular inequality, 
\begin{align}
\| \mu_{d(v)} -  \mathcal{P}(\ln(n) \sum_{i \in [k]} p_i Q_{i,j}e_i) \|_{TV} = O \left(\frac{\ln^2(n)}{n} \right).
\end{align} 
We will rely on a simple one-sided bound (see \eqref{bipo}) to approximate our events under the Poisson measure. 

Consider now the following problem. Let $G$ be drawn under the $\gs(n,p,Q)$ SBM and assume that the planted partition is revealed except for a given vertex. Based on the degree profile of that vertex, is it possible to classify the vertex correctly with high probability? We have to resolve a hypothesis testing problem, which involves multivariate Poisson distributions in view of the previous observations. We next study this problem.\\ 

{\bf Testing multivariate Poisson distributions.} Consider the following Bayesian hypothesis testing problem with $k$ hypotheses.
The random variable $H$ takes values in $[k]$ with $\pp\{H=j\}=p_j$ (this is the a priori distribution of $H$). Under $H=j$, an observed random variable $D$ is drawn from a multivariate Poisson distribution with mean $\lambda(j) \in \mR_+^k$, i.e.,  
\begin{align}
\pp\{D=d|H=j\}=\mathcal{P}_{\lambda(j)}(d), \quad d \in \mZ_+^k,
\end{align}
where 
\begin{align}
\mathcal{P}_{\lambda(j)}(d) = \prod_{i \in [k]} \mathcal{P}_{\lambda_i(j)}(d_i), 
\end{align}
and
\begin{align}
\mathcal{P}_{\lambda_i(j)}(d_i) = \frac{\lambda_i(j)^{d_i}}{d_i!} e^{- \lambda_i(j)}.
\end{align} 
In other words, $D$ has independent Poisson entries with different means. We use the following notation to summarize the above setting:
\begin{align}
D|H=j \,\, \sim \mathcal{P}(\lambda(j)), \quad j \in [k].
\end{align}
Our goal is to infer the value of $H$ by observing a realization of $D$. To minimize the error probability given a realization of $D$, we must pick the most likely hypothesis conditioned on this realization, i.e., 
\begin{align}
\argmax_{j \in [k]} \pp \{D=d | H=j\} p_j, \label{map-rule}
\end{align}
which is the Maximum A Posteriori (MAP) decoding rule.\footnote{Ties can be broken arbitrarily.} To resolve this maximization, we can proceed to a tournament of $k-1$ pairwise comparisons of the hypotheses. Each comparison allows us to eliminate one candidate for the maxima, i.e., 
\begin{align}
\pp \{D=d | H=i\} p_i >  \pp \{D=d | H=j\} p_j \quad \Rightarrow \quad H \neq  j. 
\end{align}
The error probability $P_e$ of this decoding rule is then given by,
\begin{align}
P_e = \sum_{i \in [k]} \pp\{ D \in \text{Bad}(i)  | H=i\} p_i, \label{error1}
\end{align}
where $\text{Bad}(i)$ is the region in $\mZ_+^k$ where $i$ is not maximizing \eqref{map-rule}. Moreover, for any $i \in [k]$, 
\begin{align}
\pp\{ D \in \text{Bad}(i) | H=i\} \leq \sum_{j \neq i} \pp\{ D \in \text{Bad}_j(i) |H=i\} \label{pair1}
\end{align}
where $\text{Bad}_j(i)$ is the region in $\mZ_+^k$ where $\pp \{D=x | H=i\} p_{i} \leq  \pp \{D=x | H=j\} p_{j}$. 
Note that with this upper-bound, we are counting the overlap regions where $\pp \{D=x | H=i\} p_{i} \leq  \pp \{D=x | H=j\} p_{j}$ for different $j$'s multiple times, but no more than $k-1$ times. Hence,  
\begin{align}
\sum_{j \neq i} \pp\{ D \in \text{Bad}_j(i) |H=i\}  \leq (k-1) \pp\{ D \in \text{Bad}(i) | H=i\}. \label{dbound1}
\end{align}
Putting \eqref{error1} and \eqref{pair1} together, we have 
\begin{align}
P_e &\leq  \sum_{i \neq j} \pp\{ D \in \text{Bad}_j(i)  | H=i\}   p_i,\\
 &=\sum_{i < j} \sum_{d \in \mZ_+^k} \min(\pp \{D=d | H=i\} p_{i} ,  \pp \{D=d | H=j\} p_{j}) \label{ub1}
\end{align}
and from \eqref{dbound1},
\begin{align}
P_e &\geq \frac{1}{k-1} \sum_{i < j} \sum_{d \in \mZ_+^k} \min(\pp \{D=d | H=i\} p_{i} ,  \pp \{D=d | H=j\} p_{j}). \label{ub2}
\end{align}
Therefore the error probability $P_e$ can be controlled by estimating the terms $\sum_{d \in \mZ_+^k} \min(\pp \{D=d | H=i\} p_{i} ,  \pp \{D=d | H=j\} p_{j})$. In our case, recall that  
\begin{align}
\pp \{D=d | H=i\} = \mathcal{P}_{\lambda(i)}(d),
\end{align}
which is a multivariate Poisson distribution. In particular, we are interested in the regime where $k$ is constant and $\lambda(i) =  \ln(n) c_i$, $c_i \in \mR_+^k$, and $n$ diverges. 
Due to \eqref{ub1}, \eqref{ub2}, we can then control the error probability by controlling $\sum_{x \in \mZ_+^k} \min(\mathcal{P}_{\ln(n) c_i}(x) p_{i} ,  \mathcal{P}_{\ln(n) c_j}(x) p_{j})$, which we will want to be $o(1/n)$ to classify vertices in the SBM correctly with high probability based on their degree profiles (see next section). The following lemma provides the relevant estimates.  

\begin{lemma}\label{hell-expo}
For any $c_1, c_2 \in (\mR_+\setminus \{0\})^k$ with $c_1 \neq c_2$ and $p_1,p_2 \in \mR_+\setminus \{0\}$, 
\begin{align}
& \sum_{x \in \mZ_+^k} \min(\mathcal{P}_{\ln(n) c_1}(x) p_{1} ,  \mathcal{P}_{\ln(n) c_2}(x) p_{2}) = O\left(n^{- \dd(c_1,c_2) - \frac{\ln\ln(n)}{2 \ln(n)}} \right),\\
& \sum_{x \in \mZ_+^k} \min(\mathcal{P}_{\ln(n) c_1}(x) p_{1} ,  \mathcal{P}_{\ln(n) c_2}(x) p_{2}) = \Omega \left(n^{- \dd(c_1,c_2) - \frac{k \ln\ln(n)}{2 \ln(n)}} \right),
\end{align}
where $\dd(c_1,c_2)$ is the CH-divergence as defined in \eqref{h-div}.
\end{lemma}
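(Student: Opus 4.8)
The plan is to reduce the multivariate estimate to a one-dimensional Chernoff-type computation, using the product structure of the Poisson distributions. First I would observe that for any two probability-like weights, $\sum_x \min(a(x),b(x)) \le \sum_x a(x)^t b(x)^{1-t}$ for every $t \in [0,1]$ (since $\min(a,b) \le a^t b^{1-t}$ pointwise), and conversely $\sum_x \min(a(x),b(x)) \ge$ a constant multiple of $\inf_t \sum_x a(x)^t b(x)^{1-t}$ up to polynomial-in-$\ln n$ factors — this is the standard two-sided Chernoff bound, where the lower bound costs a factor polynomial in the "variance", hence a factor $n^{-O(\ln\ln n/\ln n)}$ after taking logs. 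So the task becomes: compute $\min_{t\in[0,1]} \sum_{x \in \mZ_+^k} \mathcal{P}_{\ln(n)c_1}(x)^t \mathcal{P}_{\ln(n)c_2}(x)^{1-t}$ (with the prior weights $p_1^t p_2^{1-t}$ contributing only a constant, absorbed into the $O(\cdot)$ and $\Omega(\cdot)$).

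Next I would exploit that $\mathcal{P}_{\lambda(j)}(x) = \prod_{i\in[k]} \mathcal{P}_{\lambda_i(j)}(x_i)$, so for fixed $t$ the sum over $x \in \mZ_+^k$ factors as $\prod_{i\in[k]} \left( \sum_{m\ge 0} \mathcal{P}_{\ln(n)c_{1,i}}(m)^t \mathcal{P}_{\ln(n)c_{2,i}}(m)^{1-t} \right)$. Each one-dimensional factor is a classical computation: for $\mu = \ln(n) a$, $\nu = \ln(n) b$,
\begin{align}
\sum_{m\ge 0} \mathcal{P}_{\mu}(m)^t \mathcal{P}_{\nu}(m)^{1-t} = e^{-t\mu - (1-t)\nu} \sum_{m\ge 0} \frac{(\mu^t \nu^{1-t})^m}{(m!)^t ((m)!)^{1-t}} \cdot \frac{1}{\text{(correction)}},
\end{align}
and more precisely, using $\sum_m \mathcal{P}_\mu(m)^t \mathcal{P}_\nu(m)^{1-t} = e^{-t\mu-(1-t)\nu}\sum_m \frac{\mu^{tm}\nu^{(1-t)m}}{m!^t m!^{1-t}}$; comparing with $\sum_m \frac{(\mu^t\nu^{1-t})^m}{m!} = e^{\mu^t \nu^{1-t}}$ one gets that the ratio of these two series is between $1$ and a power of $\max(\mu,\nu)$ (the $m!^t m!^{1-t}$ versus $m!$ discrepancy is controlled by concentration of the sum near its mode $m \approx \mu^t\nu^{1-t} = \Theta(\ln n)$, contributing a $\sqrt{\ln n}$-type polynomial factor). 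Hence each factor equals $n^{-(ta + (1-t)b - a^t b^{1-t})}$ up to a $\mathrm{poly}(\ln n)$ factor, i.e. up to $n^{\pm O(\ln\ln n/\ln n)}$. Taking the product over $i\in[k]$ multiplies the exponents, giving $n^{-\sum_i (tc_{1,i} + (1-t)c_{2,i} - c_{1,i}^t c_{2,i}^{1-t})}$ up to $n^{\pm O(k\ln\ln n/\ln n)}$, and minimizing over $t$ (equivalently maximizing the exponent's negative) recovers exactly $-\dd(c_1,c_2)$ by the definition \eqref{h-div}. The $k$ in the lower-bound exponent $\frac{k\ln\ln n}{2\ln n}$ is precisely the accumulation of $k$ one-dimensional $\mathrm{poly}(\ln n)$ losses; the upper bound only needs one application of $\min \le a^t b^{1-t}$ at the optimal $t$, hence the cleaner $\frac{\ln\ln n}{2\ln n}$.

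The main obstacle I expect is bookkeeping the $\mathrm{poly}(\ln n)$ factors sharply enough to land exactly on the exponents $\frac{\ln\ln n}{2\ln n}$ and $\frac{k\ln\ln n}{2\ln n}$ rather than some cruder $\frac{O(k\ln\ln n)}{\ln n}$: this requires a careful one-dimensional estimate of $\sum_m \frac{(\mu^t\nu^{1-t})^m}{m!^t m!^{1-t}}$ versus $e^{\mu^t\nu^{1-t}}$, showing the multiplicative discrepancy is $\Theta(\sqrt{\ln n})$ (a local-CLT / Stirling computation around the mode $m^* = \Theta(\ln n)$, where $m!^t m!^{1-t}/m! $ is $1$ at $m = m^*$ up to lower order and the Gaussian-width of the summand is $\sqrt{m^*}$). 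One also must handle the edge cases $t\in\{0,1\}$ and coordinates where $c_{1,i}$ or $c_{2,i}$ could be large or small but, by hypothesis, are bounded away from $0$; for the lower bound one additionally needs that the minimizing $t^\star$ lies in the open interval or, if at an endpoint, that the one-sided estimate still goes through (this is why the statement assumes $c_1 \ne c_2$ with all coordinates nonzero). Everything else — the reduction to Chernoff, the product factorization, Le Cam already handled upstream — is routine.
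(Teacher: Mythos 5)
A preliminary remark: the paper does not actually prove this lemma here --- it states ``This lemma was proven in \cite{colin1}'' --- so there is no in-paper argument to compare against, and your proposal must be judged on its own terms. Its skeleton (relate $\sum_x\min$ to the Chernoff quantity $\inf_{t}\sum_x \mathcal{P}_{\ln(n)c_1}(x)^t\mathcal{P}_{\ln(n)c_2}(x)^{1-t}$, factor over coordinates, identify the exponent with $\dd$) is the right one, and you are correct that the hypotheses force the maximizing $t^\star$ to be interior (the objective is concave in $t$, vanishes at $t=0,1$, and is positive at $t=1/2$ since $c_1\neq c_2$). But two load-bearing claims are wrong. First, the $(m!)^t(m!)^{1-t}$ versus $m!$ ``discrepancy'' you plan to control by a local CLT does not exist: $(m!)^t(m!)^{1-t}=m!$, so each one-dimensional factor is a closed-form identity,
\[
\sum_{m\ge 0}\mathcal{P}_{\mu}(m)^t\mathcal{P}_{\nu}(m)^{1-t}=e^{-\left(t\mu+(1-t)\nu-\mu^t\nu^{1-t}\right)},
\]
with no polylogarithmic correction whatsoever. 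Your explanation of the exponents $\frac{\ln\ln n}{2\ln n}$ and $\frac{k\ln\ln n}{2\ln n}$ as accumulated per-coordinate losses in this computation is therefore a misdiagnosis.

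Second, and more seriously, the pointwise inequality $\min(a,b)\le a^tb^{1-t}$ at the optimal $t$ yields only $O\bigl(n^{-\dd(c_1,c_2)}\bigr)$, whereas the stated upper bound is stronger by the factor $n^{-\ln\ln n/(2\ln n)}=(\ln n)^{-1/2}$. This factor is not cosmetic: it is exactly what makes the degree-profile test have error $o(1/n)$ at the threshold $\dd=1$, which Theorem \ref{thm2} requires. To obtain it one must go beyond the plain Chernoff bound: writing $L(x)=\ln\bigl(\mathcal{P}_{\ln(n)c_1}(x)/\mathcal{P}_{\ln(n)c_2}(x)\bigr)$, one has $\min(\mathcal{P}_{\ln(n)c_1},\mathcal{P}_{\ln(n)c_2})=\mathcal{P}_{\ln(n)c_1}^{t^\star}\mathcal{P}_{\ln(n)c_2}^{1-t^\star}\cdot\min\bigl(e^{(1-t^\star)L},e^{-t^\star L}\bigr)$; under the $t^\star$-tilted measure $L$ has mean $0$ (interiority of $t^\star$) and variance $\Theta(\ln n)$, and an anti-concentration estimate gives that the expectation of $\min\bigl(e^{(1-t^\star)L},e^{-t^\star L}\bigr)$ is $O((\ln n)^{-1/2})$. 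The lower bound with $(\ln n)^{-k/2}$ is most naturally obtained by evaluating the summand at the single lattice point nearest $\bigl(\ln(n)\,c_{1,i}^{t^\star}c_{2,i}^{1-t^\star}\bigr)_{i\in[k]}$, where Stirling contributes a factor $\Theta((\ln n)^{-1/2})$ per coordinate --- which is the real reason the lower bound carries $k/2$ while the upper bound carries $1/2$. As written, your proposal neither delivers the extra $(\ln n)^{-1/2}$ in the upper bound nor correctly locates the source of either logarithmic factor, so the argument has a genuine gap even though the overall strategy is sound.
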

In other words, the CH-divergence provides the error exponent for deciding among multivariate Poisson distributions. We did not find this result in the literature, but found a similar result obtained by Verd\'u \cite{verdu-hell}, who shows that the Hellinger distance (the special case with $t=1/2$ instead of the maximization over $t$) appears in the error exponent for testing Poisson point-processes, although \cite{verdu-hell} does not investigate the exact error exponent. This lemma was proven in \cite{colin1}. 


This lemma together with previous bounds on $P_e$ imply that if $\dd(c_i,c_j) > 1$ for all $i \neq j$, the true hypothesis is correctly recovered with probability $o(1/n)$. However, it may be that $\dd(c_i,c_j) > 1$ only for a subset of $(i,j)$-pairs. What can we then infer? While we may not recover the true value of $H$ with probability $o(1/n)$, we may narrow down the search within a subset of possible hypotheses with that probability of error.\\

{\bf Testing composite multivariate Poisson distributions.} We now consider the previous setting, but we are no longer interested in determining the true hypothesis, but in deciding between two (or more) disjoint subsets of hypotheses. Under hypothesis 1, the distribution of $D$ belongs to a set of possible distributions, namely $\mathcal{P}(\lambda_i)$ where $i \in A$, and under hypothesis 2, the distribution of $D$ belongs to another set of distributions, namely $\mathcal{P}(\lambda_i)$ where $i \in B$. Note that $A$ and $B$ are disjoint subsets such that $A \cup B=[k]$. In short,
\begin{align}
D|\tilde{H}=1 \,\, \sim \mathcal{P}(\lambda_i), \,\,\, \text{for some } i \in A, \\
D|\tilde{H}=2 \,\, \sim \mathcal{P}(\lambda_i), \,\,\, \text{for some } i \in B,
\end{align}
and as before the prior on $\lambda_i$ is $p_i$. To minimize the probability of deciding the wrong hypothesis upon observing a realization of $D$, we must pick the hypothesis which leads to the larger probability between $\pp\{\tilde{H} \in A | D=d\}$ and $\pp\{\tilde{H} \in B | D=d\}$, or equivalently, 
\begin{align}
\sum_{i \in A} \mathcal{P}_{\lambda(i)}(d) p_{i} \geq \sum_{i \in B} \mathcal{P}_{\lambda(i)}(d) p_{i} \quad \Rightarrow \quad \tilde{H}=1,\\
\sum_{i \in A} \mathcal{P}_{\lambda(i)}(d) p_{i} < \sum_{i \in B} \mathcal{P}_{\lambda(i)}(d) p_{i} \quad \Rightarrow \quad \tilde{H}=2.
\end{align}
In other words, the problem is similar to the previous one, using the above mixture distributions. 
If we denote by $\tilde{P}_e$ the probability of making an error with this test, we have 
\begin{align}
\tilde{P}_e &= \sum_{x \in \mZ_+^k} \min\left(\sum_{i \in A} \mathcal{P}_{\lambda(i)}(x) p_{i} , \sum_{i \in B} \mathcal{P}_{\lambda(i)}(x) p_{i} \right).
\end{align}
Moreover, applying bounds on the minima of two sums,   
\begin{align}
\tilde{P}_e & \leq  \sum_{x \in \mZ_+^k} \sum_{i \in A, j \in B} \min\left(\mathcal{P}_{\lambda(i)}(x) p_{i} ,  \mathcal{P}_{\lambda(j)}(x) p_{j}\right),\\
\tilde{P}_e & \geq  \frac{1}{|A||B|} \sum_{x \in \mZ_+^k} \sum_{i \in A, j \in B} \min\left(\mathcal{P}_{\lambda(i)}(x) p_{i} ,  \mathcal{P}_{\lambda(j)}.(x) p_{j}\right) .
\end{align} 
Therefore, for constant $k$ and $\lambda(i) =  \ln(n) c_i$, $c_i \in \mR_+^k$, with $n$ diverging, it suffices to control the decay of $\sum_{x \in \mZ_+^k}  \min(\mathcal{P}_{\lambda(i)}(x) p_{i} ,  \mathcal{P}_{\lambda(j)}(x) p_{j})$ when $i \in A$ and $j \in B$, in order to bound the error probability of deciding whether a vertex degree profile belongs to a group of communities or not. 

The same reasoning can be applied to the problem of deciding whether a given node belongs to a group of communities, with more than two groups. Also, for any $p$ and $p'$ such that $|p_j-p'_j|<\ln n/\sqrt{n}$ for each $j$, $Q$, $\gamma(n)$, and $i$, 
\begin{align}
\sum_{x\in \mZ_+^k}\max\left(Bin_{\left(np',\frac{(1-\gamma(n))\ln(n)}{n}Q_i\right)}(x)- 2\mathcal{P}_{PQ_i(1-\gamma(n))\ln(n)/n}(x),0\right)=O(1/n^2). \label{bipo} 
\end{align}
So, the error rate for any algorithm that classifies vertices based on their degree profile in a graph drawn from a sparse SBM is at most $O(1/n^2)$ more than twice what it would be if the probability distribution of degree profiles really was the poisson distribution.

In summary, we have proved the following. 
\begin{lemma}\label{testing-lemma}
Let $k \in \mZ_+$ and let $A_1,\dots,A_t$ be disjoint subsets of $[k]$ such that $\cup_{i=1}^t A_i = [k]$. Let $G$ be a random graph drawn under $\gs(n,p,(1-\gamma(n))Q)$. Assigning the most likely community subset $A_i$ to a node $v$ based on its degree profile $d(v)$ gives the correct assignment with probability $$1-O \left(n^{-(1-\gamma(n))\Delta -\frac{1}{2}\ln((1-\gamma(n))\ln n)/\ln n } +\frac{1}{n^2}\right),$$ 
where
\begin{align}
\Delta = \min_{r,s \in [t] \atop{r \neq s}} \min_{ i \in A_r, j \in A_s}  \dd((pQ)_i, (pQ)_j).
\end{align}
\end{lemma}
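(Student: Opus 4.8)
\textbf{Proof plan for Lemma~\ref{testing-lemma}.}
The statement assembles the estimates established above, so the plan is (i) to reduce the composite testing error to a constant-size sum of pairwise binary tests, exactly as in \eqref{ub1}--\eqref{ub2} and their composite mixture analogue; (ii) to pass from binomial degree profiles to Poisson ones using \eqref{bipo}; and (iii) to apply Lemma~\ref{hell-expo} after the substitution $\ln n\mapsto(1-\gamma(n))\ln n$.

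First I would fix the true planted assignment on $[n]\setminus\{v\}$ and condition on the event $\mathcal E$ that the number of those vertices in community $\ell$ lies within $\sqrt n\,\ln n$ of $p_\ell n$ for every $\ell\in[k]$; a Chernoff bound gives $\pp(\mathcal E^{c})=o(1/n^{2})$, so this conditioning affects the final bound only by an additive $o(1/n^{2})$. On $\mathcal E$, conditioned on $\sigma_v=i$, the coordinates of $d(v)$ are independent with $d_\ell(v)\sim\bin\!\big(p'_\ell n,\,(1-\gamma(n))\ln(n)Q_{\ell i}/n\big)$ for reals $p'_\ell$ with $|p'_\ell-p_\ell|\le\ln n/\sqrt n$; let $\mu_i$ denote this product law and $\lambda(i)=(1-\gamma(n))\ln(n)(PQ)_i$ the corresponding mean vector. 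The Bayes-optimal assignment of $v$ to one of $A_1,\dots,A_t$ from $d(v)$ is exactly the composite MAP test analysed above, with prior weight proportional to $\sum_{i\in A_r}p'_i$ on subset $A_r$, and the composite mixture analogue of \eqref{ub1} bounds its error probability by
\[
\tilde P_e\ \le\ \sum_{r\ne s}\ \sum_{i\in A_r,\,j\in A_s}\ \sum_{x\in\mZ_+^k}\min\!\big(\mu_i(x)\,p'_i,\ \mu_j(x)\,p'_j\big).
\]

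Next, for each ordered pair $(i,j)$ with $i\in A_r$, $j\in A_s$, $r\ne s$, I would Poissonize: applying \eqref{bipo} in each of the $k=O(1)$ coordinates gives $\mu_i(x)\le 2\mathcal P_{\lambda(i)}(x)+\delta_i(x)$ with $\sum_x\delta_i(x)=O(1/n^{2})$, and since $p'_\ell\le p_\ell(1+O(\ln n/\sqrt n))$,
\[
\sum_{x}\min\!\big(\mu_i(x)p'_i,\mu_j(x)p'_j\big)\ \le\ \big(2+o(1)\big)\sum_{x}\min\!\big(\mathcal P_{\lambda(i)}(x)\,p_i,\ \mathcal P_{\lambda(j)}(x)\,p_j\big)+O(1/n^{2}).
\]
Because $i$ and $j$ lie in different subsets of the finest partition, $(PQ)_i\ne(PQ)_j$ (otherwise $\dd((PQ)_i,(PQ)_j)=0$ and they would be merged), and all entries of $(PQ)_\ell$ are positive; so, writing $m:=n^{1-\gamma(n)}$ (which tends to infinity in the regime of interest, and otherwise the bound below is trivial) and noting $(1-\gamma(n))\ln n=\ln m$, Lemma~\ref{hell-expo} with $n$ replaced by $m$ and $c_\ell=(PQ)_\ell$ yields
\[
\sum_{x}\min\!\big(\mathcal P_{\lambda(i)}(x)p_i,\mathcal P_{\lambda(j)}(x)p_j\big)=O\!\Big(m^{-\dd(c_i,c_j)-\frac{\ln\ln m}{2\ln m}}\Big)=O\!\Big(n^{-(1-\gamma(n))\dd(c_i,c_j)-\frac{\ln((1-\gamma(n))\ln n)}{2\ln n}}\Big).
\]
Since $\dd(c_i,c_j)\ge\Delta$ and there are at most $k^{2}=O(1)$ such pairs, combining the last three displays and removing the conditioning on $\mathcal E$ gives $\tilde P_e=O\big(n^{-(1-\gamma(n))\Delta-\frac{1}{2}\ln((1-\gamma(n))\ln n)/\ln n}\big)+O(1/n^{2})$, which is the claim.

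The computation is essentially bookkeeping, since Lemma~\ref{hell-expo}, \eqref{bipo}, and the Le Cam/Poissonization estimates are all in hand. \emph{The one point that needs care} is tracking the error exponent through the rescaling of the Poisson mean by the factor $1-\gamma(n)$: this is what the substitution $m=n^{1-\gamma(n)}$ accomplishes, and one must also check that the $O(\ln n/\sqrt n)$ fluctuations of the community sizes --- which perturb both the binomial parameters (absorbed into the $O(1/n^{2})$ term by \eqref{bipo}) and the mixture priors $p'_\ell$ (absorbed into the $1+o(1)$ factor) --- do not affect the leading exponent, and that the binomial-to-Poisson passage contributes only the advertised additive $O(1/n^{2})$ rather than something larger.
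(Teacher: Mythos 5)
Your proposal is correct and follows essentially the same route as the paper: reduce the composite Bayes test to the pairwise sums $\sum_x\min(\mathcal P_{\lambda(i)}(x)p_i,\mathcal P_{\lambda(j)}(x)p_j)$ as in \eqref{ub1}--\eqref{ub2}, Poissonize via the one-sided bound \eqref{bipo} (picking up the additive $O(1/n^2)$), and apply Lemma \ref{hell-expo} with the effective scale $(1-\gamma(n))\ln n$, which yields exactly the stated exponent. Your treatment of the community-size fluctuations and the $m=n^{1-\gamma(n)}$ substitution is, if anything, more explicit than the paper's, which states the lemma as a summary of the preceding discussion.
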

Moreover, in order to prove  Theorem \ref{thm2}, we will need a version of this lemma that still holds when some of our information is inaccurate. First of all, consider attempting the previous testing procedure when one thinks the distributions are $\lambda'$ with probability $p'$ instead of $\lambda$ with probability $p$. Assume that there exists $\epsilon$ such that for all $i$ and $j$, $|\lambda'(i)_j-\lambda(i)_j|\le\epsilon \min(\lambda'(i)_j,\lambda(i)_j)$ and $|p'_i-p_i|\le\epsilon\min(p'_i,p_i)$. Then for any $i\in A$, $x\in  \mZ_+^k$, the previous hypothesis testing procedure will not classify $x$ as being in $B$ unless 
\[\sum_{j\in B} (1+\epsilon)^{|x|+1} \mathcal{P}_{\lambda(j)}(x) p_{j}\ge (1+\epsilon)^{-|x|-1}\mathcal{P}_{\lambda(i)}(x) p_{i}\] That means that for any $i\in A$, $x\in  \mZ_+^k$, the probability that $x$ arises from $\mathcal{P}_{\lambda(i)}$ and is then misclassified as being in $B$ is at most
\[\mathcal{P}_{\lambda(i)}(x)p_i\le \sum_{j\in B} (1+\epsilon)^{2|x|+2} \mathcal{P}_{\lambda(j)}(x) p_{j}\]

So, this hypothesis testing procedure has an error probability of at most
\[\sum_{x \in \mZ_+^k} (1+\epsilon)^{2|x|+2}\sum_{i \in A, j \in B}\min\left(\mathcal{P}_{\lambda(i)}(x) p_{i} ,\mathcal{P}_{\lambda(j)}(x) p_{j}\right)\]

Furthermore,
\begin{lemma}\label{estim-lemma}
For any $p$ and $Q$ there exists $c$ such that the following holds for all $\delta<\min p_i/2$. Let $G$ be a random graph drawn from $\gs(n,p,Q)$, and $\sigma'$ be a classification of $G$'s vertices that misclassifies $\delta n$ vertices. Also, let $p'$ be the frequencies with which vertices are classified as being in the communities, and $Q'$ be $n/\ln n$ times the edge densities between the alleged communities. Then with probability $1-o(1)$, every element of $p'$ or $Q'$ is within $c\delta+\sqrt{\ln n/n}$ of the corresponding element of $p$ or $Q$.
\end{lemma}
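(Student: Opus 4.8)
The plan is to split the desired bound into a purely deterministic ``transfer'' estimate---controlling how quantities computed from the alleged partition $\sigma'$ differ from the same quantities computed from the true partition $\sigma$---together with a few standard concentration facts about $G$ and $\sigma$ alone. First one fixes the bijection between $\sigma'$'s labels and $[k]$ minimising disagreement with $\sigma$; without loss of generality it is the identity. Write $C_i=\{v:\sigma_v=i\}$, $S_i=\{v:\sigma'_v=i\}$, let $M$ be the set of misclassified vertices (so $|M|=\delta n$), and let $e(X,Y)$, resp.\ $e(X)$, denote the number of edges of $G$ between $X$ and $Y$, resp.\ inside $X$. Then $p'_i=|S_i|/n$, while $Q'_{i,j}=\frac{n}{\ln n}\,e(S_i,S_j)/(|S_i||S_j|)$ for $i\neq j$ and $Q'_{i,i}=\frac{n}{\ln n}\,e(S_i)/\binom{|S_i|}{2}$.

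For the deterministic step: since each vertex of $M$ lies in exactly one $S_i$ and one $C_j$ with $i\neq j$, one gets $\big||S_i|-|C_i|\big|\le|M|=\delta n$ for all $i$; and any edge in the symmetric difference of ``edges between $S_i$ and $S_j$'' and ``edges between $C_i$ and $C_j$'' must have an endpoint in $M$, so
\[
\big|e(S_i,S_j)-e(C_i,C_j)\big|\ \le\ \sum_{v\in M}\deg_G(v),
\]
and likewise $\big|e(S_i)-e(C_i)\big|\le\sum_{v\in M}\deg_G(v)$. These inequalities hold for \emph{every} $\sigma'$ with $\delta n$ errors, which is what lets one avoid reasoning about the (possibly $G$-dependent) internal structure of $\sigma'$.

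Next I would bound the right-hand sides using only the randomness of $G$ and $\sigma$. (i) Each $\deg_G(v)$ is a sum of independent binomials of total mean $\Theta(\ln n)$, so a Chernoff bound and a union bound over the $n$ vertices give $\max_v\deg_G(v)\le C\ln n$ with probability $1-o(1)$ for a constant $C=C(p,Q)$; hence $\sum_{v\in M}\deg_G(v)\le C\delta n\ln n$. (ii) $|C_i|\sim\bin(n,p_i)$, so $\big||C_i|-p_in\big|=o(\sqrt{n\ln n})$ with probability $1-o(1)$; since $\delta<\min_ip_i/2$, this forces $|C_i|,|S_i|=\Theta(n)$, and in particular $|S_i||S_j|=|C_i||C_j|(1\pm O(\delta))$ and $p'_i=p_i\pm(\delta+o(\sqrt{\ln n/n}))$. (iii) Conditionally on the $|C_i|$, $e(C_i,C_j)\sim\bin(|C_i||C_j|,\ln(n)Q_{i,j}/n)$ and $e(C_i)$ is a binomial with $\binom{|C_i|}{2}$ trials, both of mean $\Theta(n\ln n)$; a Bernstein bound then gives $e(C_i,C_j)=|C_i||C_j|\frac{\ln n}{n}Q_{i,j}\pm O(\sqrt n\,\ln n)$ with probability $1-o(1)$, so $\frac{n}{\ln n}\,e(C_i,C_j)/(|C_i||C_j|)=Q_{i,j}\pm O(1/\sqrt n)=Q_{i,j}\pm o(\sqrt{\ln n/n})$, and similarly on the diagonal.

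To conclude, substitute the deterministic bounds and (i)--(iii) into the formula for $Q'_{i,j}$:
\[
Q'_{i,j}=\frac{n}{\ln n}\cdot\frac{e(C_i,C_j)\pm C\delta n\ln n}{|C_i||C_j|(1\pm O(\delta))}
=(1\pm O(\delta))\Big(Q_{i,j}\pm o(\sqrt{\ln n/n})\pm O(\delta)\Big)
=Q_{i,j}\pm\big(c_1\delta+o(\sqrt{\ln n/n})\big),
\]
where $c_1$ depends only on $p,Q,k$ (through $\min_ip_i$, $\max_{i,j}Q_{i,j}$ and $C$); the diagonal entries are handled identically, and $p'_i$ was already bounded in (ii). Taking $c=\max(1,c_1)$, using $o(\sqrt{\ln n/n})\le\sqrt{\ln n/n}$ for $n$ large, and union-bounding over the $O(k^2)$ entries then yields the claim. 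I expect the only genuinely delicate point to be the interaction between the randomness and the adversarial choice of $\sigma'$: one cannot say ``a typical misclassified vertex has degree $\Theta(\ln n)$'' because $M$ may depend on $G$, and the uniform max-degree bound of (i) is precisely what circumvents this; after that, everything reduces to routine bookkeeping of how $\delta n$ relocated vertices perturb $\Theta(n)$-sized community counts and $\Theta(n\ln n)$-sized edge counts.
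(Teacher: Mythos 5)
Your proposal is correct and follows essentially the same route as the paper's proof: a uniform max-degree bound of $O(\ln n)$ to control how much the $\delta n$ misclassified vertices can perturb the apparent community sizes and edge counts, combined with standard concentration of the true community sizes and inter-community edge counts, followed by bookkeeping. The paper states these steps more tersely (it does not spell out the deterministic transfer inequalities or the Bernstein bound), but the underlying argument — including using the worst-case degree bound to sidestep the fact that the misclassified set may depend on $G$ — is the same.
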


\begin{proof}
With probability $1-o(1)$, the size of every community is within $\sqrt{\ln n/n}$ of its expected value, and the edge density between each pair of communities is within $\sqrt{n}\ln n$ of its expected value. Also, there exists a constant $c'$ such that with probability $1-o(1)$, no vertex has degree more than $c'\ln n$. Assuming this is the case, the misclassification of vertices can not change the apparent number of edges between any two communities by more than $c'\delta n\ln n$, and it can not change the apparent size of any community by more than $\delta n$. Thus, $|p'_i-p_i|\le \sqrt{\ln n/n}+\delta$ and 
\[|Q'_{i,j}-Q_{i,j}|\le \sqrt{\ln n/n}+4\frac{c' \delta}{p_i\cdot p_j}+4Q_{i,j}\frac{\delta p_i+\delta p_j+\delta^2}{p_i p_j}\]

\end{proof}

This lets us prove the following ``robust'' version of this lemma to prove Theorem \ref{thm2}. 
\begin{lemma}\label{testing-lemma2}
Let $k \in \mZ_+$ and let $A_1,\dots,A_t$ be disjoint subsets of $[k]$ such that $\cup_{i=1}^t A_i = [k]$. Let $G$ be a random graph drawn under $\gs(n,p,(1-\gamma(n))Q)$. There exist $c_1$, $c_2$, and $c_3$ such that with probability $1-o(1)$ $G$ is such that for any sufficiently small $\delta$, assigning the most likely community subset $A_i$ to a node $v$ based on a distortion of its degree profile that independently gets each node's community wrong with probability at most $\delta$ and estimates of $p$ and $Q$ based on a classification of the graphs vertices that misclassifies each vertex with probability $\delta$ gives the correct assignment with probability at least $$1- c_2\cdot (1+c_1\delta)^{c_3\ln n} \cdot \left(n^{-(1-\gamma(n))\Delta -\frac{1}{2}\ln((1-\gamma(n))\ln n)/\ln n } \right)-\frac{c_2}{n^2},$$ 
where
\begin{align}
\Delta = \min_{r,s \in [t] \atop{r \neq s}} \min_{ i \in A_r, j \in A_s}  \dd((pQ)_i, (pQ)_j).
\end{align}
\end{lemma}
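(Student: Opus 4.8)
The plan is to reduce the robust statement to the clean one (Lemma~\ref{testing-lemma}, equivalently Lemma~\ref{hell-expo}) by showing that both sources of corruption --- the mislabeled neighbours used to form $v$'s degree profile, and the inaccurate estimates $p',Q'$ --- only perturb the relevant Poisson means multiplicatively by a factor $1\pm O(\delta)$, and that on a $(1-o(1))$-probability ``nice'' event for $G$ every degree profile that can actually occur has $\ell^1$-norm $O(\ln n)$; distorting each of those $O(\ln n)$ units multiplicatively by $1\pm O(\delta)$ then costs at most a global factor $(1+c_1\delta)^{c_3\ln n}$ in the misclassification probability, which is exactly the slack allowed. Concretely, with probability $1-o(1)$, $G\sim\gs(n,p,(1-\gamma(n))Q)$ has every vertex of degree at most $C\ln n$ for a constant $C=C(p,Q)$, every community of size within $\sqrt n\,\ln n$ of $p_i n$, and every degree profile obeying the one-sided Poisson comparison~\eqref{bipo}; and by a Chernoff bound the classification feeding the estimates misclassifies at most $2\delta n$ vertices with probability $1-o(1)$, so Lemma~\ref{estim-lemma} gives $|p'_i-p_i|$ and $|Q'_{ij}-Q_{ij}|$ at most $O(\delta)+O(\sqrt{\ln n/n})$. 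Writing $\lambda(i):=(1-\gamma(n))\ln(n)(pQ)_i$ for the true mean degree profile of a community-$i$ vertex and $\lambda'(i)$ for what the test believes it to be, and using that the entries of $p$ and $Q$ are bounded away from $0$ and $\infty$, this yields $|\lambda'(i)_\ell-\lambda(i)_\ell|\le\epsilon\min(\lambda(i)_\ell,\lambda'(i)_\ell)$ and $|p'_i-p_i|\le\epsilon\min(p_i,p'_i)$ with $\epsilon=O(\delta)$ --- the $\sqrt{\ln n/n}$ term being harmless since $(1+c\sqrt{\ln n/n})^{O(\ln n)}=1+o(1)$.

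Next I would handle the degree-profile distortion, working in the Poisson model (legitimate by~\eqref{bipo}: relabeling $v$'s neighbours is a Markov kernel applied to the degree profile, hence commutes with data processing, so the true-model misclassification probability is at most twice the Poisson-model one plus $O(1/n^2)$). If $v$ is truly in community $i$ then in this model $d(v)\sim\mathcal{P}(\lambda(i))$ with independent coordinates, and relabeling each community-$\ell$ neighbour to community $\ell'$ with (average) probability $q_{\ell\ell'}$, where $\sum_{\ell'\neq\ell}q_{\ell\ell'}\le\delta$, is a Poisson thinning and superposition; hence the distorted profile $\tilde d(v)$ is again multivariate Poisson, with mean $\tilde\lambda(i)_{\ell'}=\sum_\ell\lambda(i)_\ell q_{\ell\ell'}$. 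Since all the $\lambda(i)_\ell$ are within a constant factor of one another --- again by boundedness of $p$ and $Q$ --- this gives $|\tilde\lambda(i)_{\ell'}-\lambda(i)_{\ell'}|\le\epsilon'\,\lambda(i)_{\ell'}$ with $\epsilon'=O(\delta)$, so $\tilde d(v)$ is distributed as a multiplicative-$O(\delta)$ perturbation of the true Poisson law.

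I would then combine the two perturbations via the inequality established just before Lemma~\ref{estim-lemma}: for $v$ truly in $A_r$ (say in community $i\in A_r$) to be assigned to some $A_s$, $s\neq r$, the observed profile $x$ must satisfy $\sum_{j\in A_s}\mathcal{P}_{\lambda'(j)}(x)p'_j\ge\mathcal{P}_{\lambda'(i)}(x)p'_i$, and passing from $(\lambda',p')$ to $(\lambda,p)$ and from $\tilde\lambda(i)$ to $\lambda(i)$ multiplies the probability of that event by at most $(1+O(\delta))^{O(|x|)}$; on the nice event $|x|=|\tilde d(v)|=\deg(v)\le C\ln n$, so this factor is at most $(1+c_1\delta)^{c_3\ln n}$. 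Therefore
\begin{align*}
\pp[v\text{ misassigned}]\ \le\ \frac{c_2}{n^2}+(1+c_1\delta)^{c_3\ln n}\sum_{r\neq s}\ \sum_{i\in A_r,\,j\in A_s}\ \sum_{x\in\mZ_+^k}\min\bigl(\mathcal{P}_{\lambda(i)}(x)\,p_i,\ \mathcal{P}_{\lambda(j)}(x)\,p_j\bigr),
\end{align*}
where the $\deg(v)>C\ln n$ event for $v$ itself contributes only $n^{-\omega(1)}=O(1/n^2)$, absorbed into $c_2$. Lemma~\ref{hell-expo}, applied with $c_i=(1-\gamma(n))(pQ)_i$ and using $\dd(\alpha\mu,\alpha\nu)=\alpha\,\dd(\mu,\nu)$, bounds the inner triple sum by $O\!\left(n^{-(1-\gamma(n))\Delta-\frac12\ln((1-\gamma(n))\ln n)/\ln n}\right)$, and renaming constants yields the claim.

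The step I expect to be the main obstacle is the degree-profile distortion: one must recognise that mislabeling neighbours is exactly a Poisson thinning, so that $\tilde d(v)$ stays multivariate Poisson rather than an intractable mixture, and that the induced drift of the mean is only multiplicative $O(\delta)$ --- which is where the hypothesis that $p$ and $Q$ have entries bounded away from $0$ and $\infty$ is essential --- and then be disciplined about keeping every error term multiplicative, so that all of it is absorbed into the permitted factor $(1+c_1\delta)^{c_3\ln n}$ without ever degrading the exponent $(1-\gamma(n))\Delta$. The rest is routine bookkeeping on the nice event together with Lemmas~\ref{estim-lemma} and~\ref{hell-expo} and the perturbation inequality quoted above.
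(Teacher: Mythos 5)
Your proposal is correct and follows the same overall skeleton as the paper's proof: a high-probability event on which all degrees are at most $c_3\ln n$ and Lemma \ref{estim-lemma} controls the parameter estimates, the multiplicative perturbation bound $(1+\epsilon)^{2|x|+2}$ stated just before Lemma \ref{estim-lemma} to absorb the error in $(p',Q')$, and Lemma \ref{hell-expo} (via Lemma \ref{testing-lemma}) to evaluate the clean error probability. The one step you handle differently is the mislabeled-neighbour distortion, which is precisely the step the paper singles out as its ``key observation.'' The paper argues by change of measure on the reported labels: each neighbour's probability of being \emph{reported} in community $\sigma$ is at most $(1+c_1'\delta)$ times its probability of \emph{actually} being in $\sigma$ (because the conditional community distribution of a neighbour, $p_{\sigma}q_{\sigma,i}/\sum_{i'}p_{i'}q_{i',i}$, has all entries bounded below), so the law of the reported profile is dominated by $(1+c_1'\delta)^{|N_1(v)|}$ times the law of the true profile, and any bad event inherits this factor directly. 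You instead observe that in the Poisson idealization the relabeling is a thinning-plus-superposition, so the distorted profile is again multivariate Poisson with means perturbed multiplicatively by $O(\delta)$, and you then invoke the same perturbation inequality a second time. Both yield the allowed factor $(1+c_1\delta)^{c_3\ln n}$. The paper's domination argument is marginally more robust: it works conditionally on the realized degree, needs no fixed i.i.d.\ relabeling kernel (only the bound ``wrong with probability at most $\delta$''), and avoids the extra $e^{O(\delta\ln n)}$ normalization term your route picks up when comparing $\mathcal{P}_{\tilde\lambda(i)}$ to $\mathcal{P}_{\lambda(i)}$ --- a term you would need to note is itself $(1+O(\delta))^{O(\ln n)}$ and hence absorbable. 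Your thinning argument is arguably cleaner once one has committed to the Poisson model via \eqref{bipo}. No gap either way.
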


\begin{proof}
First, note that by the previous lemma, with probability $1-o(1)$, $G$ is such that every element of the estimates of $p$ and $Q$ is within $c\delta+\sqrt{\ln n/n}$ of its true value. Choose $c_3$ such that every vertex in the graph has degree less than $c_3\ln n$ with probability $1-o(1)$. If this holds, then the probability of misclassifying a vertex based on its true degree profile and the estimates of its parameters is at most \[2(1+2c\delta/\min( p_i,(PQ)_{i,j}))^{2c_3\ln n+2}\cdot \left(n^{-(1-\gamma(n))\Delta -\frac{1}{2}\ln((1-\gamma(n))\ln n)/\ln n } \right)+O\left(\frac{1}{n^2}\right)\] based on the previous bounds on the probability that classifying a vertex based on its degree profile fails.

Now, let $$c'_1=\max_{i,j} \sum p_{i'}q_{i',j}/(p_i q_{i,j}).$$ The key observation is that $v$'s $m$th neighbor had at least a $\min_{i,j} (p_i q_{i,j})/\sum p_{i'}q_{i',j}$ chance of actually being in community $\sigma$ for each $\sigma$, so its probability of being reported as being in community $\sigma$ is at most $1+c'_1\delta$ times the probability that it actually is. So, the probability that its reported degree profile is bad is at most $(1+c'_1\delta)^{|N_1(v)|}$ times the probability that its actual degree profile is bad. The conclusion follows.
\end{proof}

\subsubsection{Proof of Theorem \ref{thm2}}\label{proof-thm-2}
We prove the possibility result here. The converse was proven in \cite{colin1} for known parameters, hence also applies here.\\

\noindent
Let $G \sim \gs(n,p,Q)$ and $\gamma= \frac{\ln\ln n}{4 \ln n}$, and $A_1,\dots,A_t$ be a partition of $[k]$. {\tt Agnostic-degree-\\profiling}$(G,p,Q,\gamma)$ recovers the partition $[k] = \sqcup_{s=1}^t A_s$ with probability $1-o_n(1)$ if for all $i,j$ in $[k]$ that are in different subsets,
\begin{align}
\dd ((PQ)_i , (PQ)_j) \geq 1. \label{d1b}
\end{align}

The idea behind Claim 1 is contained in Lemma \ref{testing-lemma}. However, there are several technical steps that need to be handled:
\begin{enumerate}
\item The graphs $G'$ and $G''$ obtained in step 1 of the algorithm are correlated, since an edge cannot be both in $G'$ and $G''$. However, this effect can be discarded since two independent versions would share edges with low enough probability. 
\item  The classification in step 2 using {\tt Agnostic-sphere-comparison} has a vanishing fraction of vertices which are wrongly labeled, and the SBM parameters are unknown. This requires using the robust version of Lemma \ref{testing-lemma}, namely Lemma \ref{testing-lemma2}. 
\item In the case where $\dd ((PQ)_i , (PQ)_j) = 1$ a more careful classification is needed as carried in steps 3 and 4 of the algorithm.    
\end{enumerate} 

\begin{proof}
With probability $1-O(1/n)$, no vertex in the graph has degree greater than $c_3\ln n$. Assuming that this holds, no vertex's set of neighbors in $G''$ is more than $$(1-\max q_{i,j} \ln n/n)^{-c_3\ln n}\cdot (n/(n-c_3\ln n))^{c_3\ln n}=1+o(1)$$ times as likely to occur as it would be if $G''$ were independent of $G'$. So, the fact that they are not has negligible impact on the algorithm's error rate. Now, let $$\delta=(e^{\frac{(1-\gamma)}{2c_3} \min_{i\ne j} \dd((PQ)_i,(PQ)_j)}-1)/c_1.$$ By Lemma \ref{testing-lemma2}, if the classification in step $2$ has an error rate of at most $\delta$, then the classification in step $3$ has an error rate of $$O(n^{- (1-\gamma)\min_{i\ne j} \dd((PQ)_i,(PQ)_j)/2}+1/n^2),$$ observing that if $\sigma'_v\ne \sigma_v$ the error rate of $\sigma''_{v'}$ for $v'$ adjacent to $v$ is at worst multiplied by a constant. That in turn ensures that the final classification has an error rate of at most \[O\left((1+O(n^{- (1-\gamma)\min_{i\ne j} \dd((PQ)_i,(PQ)_j)/2}+1/n^2))^{c_3\ln n} \frac{1}{n}\ln{n}^{-1/4}\right)=O\left(\frac{1}{n}\ln n^{-1/4}\right) .\]
\end{proof}

\end{document}